\documentclass[a4paper,10pt,reqno]{amsart}
\usepackage{amssymb,amsthm,amsmath,mathrsfs}
\usepackage{mathtools}
\usepackage{bbm}
\usepackage{stackengine}
\usepackage{enumitem}
\usepackage{soul}
\usepackage{placeins}

\usepackage{xurl}
\usepackage[foot]{amsaddr}

\usepackage{caption,color,graphicx}
\usepackage[dvipsnames]{xcolor}
\usepackage{soul}

\usepackage{hyperref}
\usepackage{doi}
\hypersetup{colorlinks  = true,
            urlcolor    = blue,
            linkcolor   = red,
            citecolor   = blue}

\usepackage{verbatim}

\usepackage{environ}
\mathtoolsset{showonlyrefs,showmanualtags}

\usepackage{graphicx}

\usepackage[numbers]{natbib}

\usepackage[centering, top=3cm, bottom=2.5cm]{geometry}

\usepackage{tikz}
\usetikzlibrary{arrows.meta,calc,decorations.pathreplacing,positioning}
\usepackage{float}
\tikzset{
bl base/.style={
    font=\small,
    line width=.6pt,
    >=Stealth,
    color=black
},
bl block/.style={
    draw=black,
    fill=white,
    rounded corners=1.5pt
},
bl muted/.style={
    draw=black,
    fill=white,
    dashed
},
bl tree/.style={
    draw=black,
    line width=.65pt
},
bl node/.style={
    circle,
    draw=black,
    fill=white,
    inner sep=1.6pt
},
bl dot/.style={
    circle,
    fill=black,
    inner sep=1.5pt
},
bl note/.style={
    font=\footnotesize,
    align=center
},
bl formula/.style={
    draw=black,
    fill=white,
    rounded corners=2pt,
    inner sep=8pt,
    align=center
},
bl arrow/.style={
    ->,
    draw=black,
    line width=.8pt
},
bl brace/.style={
    decorate,
    decoration={brace,amplitude=4pt},
    draw=black
}
}

\usepackage{needspace}
\usepackage{centernot}

\newcommand{\Z}{\mathbb{Z}}

\newcommand{\R}{\mathbb{R}}

\newcommand{\E}{\mathbb{E}}
\renewcommand{\P}{\mathbb{P}}

\newcommand{\supp}{\operatorname{supp}\,}

\renewcommand{\d}{\operatorname{d}\!}

\newcommand{\dist}{\operatorname{dist}}

\newcommand{\vertiii}[1]{{\left\vert\kern-0.25ex\left\vert\kern-0.25ex\left\vert #1 \right\vert\kern-0.25ex\right\vert\kern-0.25ex\right\vert}}

\DeclareMathOperator{\Law}{Law}
\DeclareMathOperator{\Lip}{Lip}
\let\oldtocsection=\tocsection
\let\oldtocsubsection=\tocsubsection
\renewcommand{\tocsection}[2]{\hspace{0em}\oldtocsection{#1}{#2}}
\renewcommand{\tocsubsection}[2]{\hspace{1em}\oldtocsubsection{#1}{#2}}

\theoremstyle{plain}
\newtheorem*{theorem*}{Theorem}
\newtheorem{theorem}{Theorem}[section]
\newtheorem{lemma}[theorem]{Lemma}
\newtheorem{proposition}[theorem]{Proposition}
\newtheorem{corollary}[theorem]{Corollary}

\newtheorem{maintheorem}{Theorem}

\theoremstyle{definition}
\newtheorem{definition}[theorem]{Definition}
\newtheorem{remark}[theorem]{Remark}

\newtheorem{notation}[theorem]{Notation}

\newenvironment{stepproof}[1][]{%
  \begin{proof}[#1]%
}{%
  \end{proof}%
}

\newtheorem{manualstep}{\bf Step}
\newenvironment{step}[1][]{%
    \renewcommand\themanualstep{#1}%
    \begin{manualstep}\itshape
}{\end{manualstep}}

\newtheorem{manualhyp}{Hypothesis}

\IfPackageLoadedTF{MnSymbol}{}{}

\providecommand{\T}{\mathbb{T}}
\providecommand{\one}{\mathbf{1}}

\makeatletter
\renewcommand{\part}{%
  \@startsection{part}{0}%
  \z@{\linespacing\@plus\linespacing}{.5\linespacing}%
  {\normalfont\raggedright}}
\makeatother

\makeatletter
\renewcommand{\l@part}{\@tocline{-1}{9pt plus2pt}{0pt}{}{}}
\makeatother

\theoremstyle{definition}
\newtheorem{question}{Question}[section]
\theoremstyle{plain}

\DeclareRobustCommand{\itemref}[1]{\hyperref[#1]{\textup{(\ref*{#1})}}}
\author[Bernat Bassols Cornudella]{Bernat Bassols Cornudella$^{1}$}
\email{\href{mailto:bernat.bassols-cornudella20@imperial.ac.uk}{bernat.bassols-cornudella20@imperial.ac.uk}}

\author[Matheus M.~Castro]{Matheus M. Castro$^{2}$}
\email{\href{mailto:mmcastro@ime.unicamp.br}{mmcastro@ime.unicamp.br}}

\address{$^{1}$Department of Mathematics, Imperial College London, London SW7 2AZ, UK.}
\address{$^{2}$Departamento de Matemática – Universidade Estadual de Campinas (UNICAMP), 13083-859, Campinas, SP, Brazil.}

\title[Finite-order exponential mixing does not imply the CLT]{Finite-order exponential mixing does not imply the central limit theorem}

\date{\today}
\subjclass[2020]{37A25, 37D20, 60F05, 60G10}
\keywords{Exponential decay of correlations, CLT, stationary process, finite-order independence, singular invariant measure, physical measure, hyperbolic coding}

\begin{document}
\begin{abstract}
   We show that finite-order exponential mixing does not imply the central limit theorem for smooth diffeomorphisms. 
   For every integer $R\geq2$, we construct a $\mathcal{C}^{\infty}$ diffeomorphism $T:M\to M$ of a compact smooth manifold $M$ and a $T$-invariant probability measure $\mu$ such that $(T,\mu)$ is exponentially mixing of order $R$ for $\mathcal{C}^1(M)$ observables, but not of order $2R+2$. For some centred observable $f_0\in\mathcal{C}^{\infty}(M)$, no subsequence of the normalised sums $\frac{1}{\sqrt n}\sum_{i=0}^{n-1}f_0\circ T^i$ converges in distribution to a Gaussian law. In particular, exponential decay of two-point correlations implies neither exponential mixing of all orders nor the central limit theorem. 
   We realise $(T,\mu)$ in two ways. In the first, $T$ is a fixed linear Anosov diffeomorphism of $\mathbb T^2$ and $\mu$ is singular with respect to Lebesgue measure. In the second, $M$ can be any compact smooth surface without boundary and $\mu$ is a physical measure whose statistical basin contains a nonempty open set. 
   The main theorems have been formalised in Lean~4.

\end{abstract}
\maketitle

\begingroup
\small
\setcounter{tocdepth}{1}
\tableofcontents
\endgroup

\newpage

\section{Introduction}\label{sec:introduction}

The statistics of a given transformation $T:M \to M$ are encoded by the behaviour of the partial sums
\begin{align*}
S_nf:=\sum_{i=0}^{n-1}f\circ T^i
\end{align*}
for suitable observables $f: M \to \mathbb R$. When $T$ preserves a probability measure $\mu$, the sequence of functions $(f\circ T^i:(M,\mu) \to \mathbb R)_{i\in \mathbb N}$ may be seen as a stationary process and, in particular, the random variables $f\circ T^i$ are identically distributed. 

If $T$ is $\mu$-ergodic and $f\in L^1(M,\mu)$, Birkhoff's ergodic theorem~\cite{Birkhoff1931} yields
\begin{equation}\label{eq:BET}
\frac{S_nf(x)}{n}\xrightarrow[]{n\to\infty}\mu(f):=\int f \d \mu \quad\text{for $\mu$-almost every $x$.}
\end{equation}
This is the dynamical analogue of the strong law of large numbers for independent and identically distributed (i.i.d.) random variables. While \eqref{eq:BET} determines the long-term average behaviour of the systems, this convergence is not effective. Hence, Birkhoff's ergodic theorem provides no information about the fluctuations of $S_n f-n\mu(f)$, their appropriate scaling, or their limiting distribution.

For i.i.d.~random variables with finite variance, the classical Lindeberg--L\'evy central limit theorem (CLT) states that their centred sums, normalised by $\sqrt n$, converge in distribution to a centred normal distribution~\cite{Feller1971}.  The corresponding question in dynamics concerns whether $(S_nf-n\mu(f))/\sqrt n$ also converges to a normal distribution. Replacing $f$ by $f-\mu(f)$, we may assume that $\mu(f)=0$. We say that $f$ satisfies a CLT if
\begin{align}
\frac{S_nf}{\sqrt n} \mathrel{\xRightarrow{n\to\infty}} \mathcal N(0,\sigma_f^2), \label{eq:clt-introduction}
\end{align}
for some $\sigma_f^2\geq0$, where $\Rightarrow$ denotes convergence in distribution with respect to $\mu$. 

The random variables $(f\circ T^i)_{i\in\mathbb N}$ are, in general, far from being independent. Therefore, it is natural to ask which conditions on the dependence of observations $(f\circ T^i)_{i\in\mathbb N}$ ensure the CLT for regular observables. A basic measure of this dependence is the \emph{correlation} $\mu(g\,h\circ T^i)-\mu(g)\mu(h)$ between observations separated by $i$ iterates. On a compact Riemannian manifold $M$, we say that correlations decay at rate $\rho(i)\to0$ for $\mathcal{C}^r$ observables if
$$
\left|\mu(g\,h\circ T^i)-\mu(g)\mu(h)\right| \leq C\rho(i) \|g\|_{\mathcal{C}^r(M)} \|h\|_{\mathcal{C}^r(M)}
$$
for all $g,h\in\mathcal{C}^r(M)$ and $i\geq1$, with $C$ independent of $g$, $h$ and $i$. \emph{Exponential decay of correlations} corresponds to $\rho(i)=e^{-ci}$ for some $c>0$.

A remarkable theorem of Bj\"orklund and Gorodnik~\cite{BjorklundGorodnik} establishes exponential mixing of all orders as a sufficient condition for the CLT. This condition controls joint correlations of arbitrarily many observables. We say that $(T,\mu)$ is \emph{exponentially mixing of all orders} for $\mathcal{C}^1$ observables\footnote{By \cite[Theorem B.1]{DKRH2024}, given $T:M\to M\in \mathcal C^r$, with $r\geq 1$, we have that $T$ is $\mu$-exponentially mixing of order $R$ for $\mathcal C^\beta$ observables for some $\beta\in (0,r)$ if and only if $T$ is $\mu$-exponentially mixing of order $R$ for $\mathcal C^\alpha$ observables for all $\alpha \in (0,r]$. The choice of $\beta=1$ in this paper is just for simplicity.} if, for every integer $s\geq2$, there exist constants $C_s,c_s>0$ such that
\begin{align}
\left| \mu\left(\prod_{i=1}^s f_i\circ T^{n_i}\right) -\prod_{i=1}^s\mu(f_i) \right| \leq C_s e^{-c_s\min_{1\leq i<s}(n_{i+1}-n_i)} \prod_{i=1}^s\|f_i\|_{\mathcal{C}^1(M)} \label{eq:exponential-mixing-all-orders}
\end{align}
for all $f_1,\ldots,f_s\in\mathcal{C}^1(M)$ and integers $0\leq n_1<\cdots<n_s$.  In this setting, if $T$ is a $\mathcal{C}^1$ diffeomorphism, the result of Bj\"orklund and Gorodnik~\cite[Theorem 1.1]{BjorklundGorodnik} implies that every centred real observable $f\in\mathcal{C}^1(M)$ satisfies the CLT \eqref{eq:clt-introduction}.

In this paper, we show that the hypothesis of exponential mixing of all orders in the theorem of Bj\"orklund and Gorodnik is sharp with respect to the number of mixing orders. This result holds even for systems with physical invariant measures. Our main results have been formalised in Lean~4 (see Section~\ref{sec:lean-formalisation}).

For every nonempty compact smooth surface $M$ without boundary and every integer $R\geq2$, we construct a $\mathcal{C}^{\infty}$ diffeomorphism $T:M\to M$ preserving a physical probability measure $\mu$ such that $(T,\mu)$ is exponentially mixing of order $R$ for $\mathcal{C}^1$ observables, while the normalised sums $S_nf/\sqrt n$
have no Gaussian weak subsequential limit for some centred
$f\in\mathcal{C}^{\infty}(M)$. Moreover, the statistical basin of $\mu$ contains a nonempty open set.

The same mixing and limit-law properties can also be realised by a fixed real-analytic linear Anosov diffeomorphism of $\mathbb T^2$. Both constructions fail exponential mixing at order $2R+2$. As a corollary, we obtain, to the best of our knowledge, the first examples in smooth dynamics of exponential mixing without exponential mixing of all orders, including examples with physical measures for $\mathcal{C}^{\infty}$ diffeomorphisms.

To place these results in context, we briefly recall some classical central limit theorems. 
Beyond the i.i.d.~setting, the theorems of Lyapunov \cite{Lyapunov1901} and Lindeberg~\cite{Lindeberg1922} give sufficient conditions for (non-identical) independent random variables to satisfy a CLT after normalisation by the standard deviation of the sum. For dependent observations, central limit theorems hold under suitable moment and dependence assumptions. Early results for Markov chains were obtained by Nagaev~\cite{Nagaev1957}. For stationary mixing processes, classical results include those of Rosenblatt~\cite{Rosenblatt1956}, Ibragimov~\cite{Ibragimov1962} and Doukhan, Massart and Rio~\cite{DMR}.

In dynamics, early central limit theorems were obtained by Fortet~\cite{Fortet1940} and Kac~\cite{Kac1946} for a special class of expanding endomorphisms of the circle. These results already highlighted the importance of regularity of the observable, only covering suitable classes of H\"older continuous functions. Further early results include Sinai's CLT for geodesic flows on manifolds of constant negative curvature \cite{Sinai1960} and Leonov's theorem for suitable classes of observables of ergodic endomorphisms of compact commutative groups~\cite{Leonov1960}.

Exponential decay of correlations and central limit theorems for sufficiently regular observables have been established in several important classes of dynamical systems. These include mixing Axiom~A diffeomorphisms with Gibbs measures~\cite{Bowen1975, GuivarchHardy1988} and classes of piecewise expanding interval maps~\cite{Liverani1995Expanding, Gouezel2015}. For nonuniformly hyperbolic systems, Young~\cite{Young1998} developed a framework in which suitable assumptions on return times yield both exponential decay of correlations and the CLT. For geodesic flows on compact manifolds of negative curvature, central limit theorems were established by Ratner \cite{Ratner1973}, while exponential decay of correlations was obtained by Dolgopyat~\cite{Dolgopyat1998} and Liverani~\cite{Liverani2004}. For surveys of central limit theorems in dynamics and their proof methods, we refer to~\cite{Denker1989,Gouezel2015} and the references therein.

\subsection{Decay of correlations and the central limit theorem}

For Markov chains, quantitative convergence to equilibrium provides classical criteria for the CLT. For example, uniform ergodicity ensures the CLT for every square-integrable observable, without any continuity assumption~\cite{Nagaev1957,Jones2004}. More generally, for stationary processes, sufficiently fast decay of the $\alpha$-mixing coefficients, together with suitable moment assumptions, implies the CLT~\cite{Rosenblatt1956,Ibragimov1962,DMR}. In particular, exponential $\alpha$-mixing suffices when a moment of order $2+\delta$ is finite for some $\delta>0$.

The corresponding question in smooth dynamics is more delicate, since correlation estimates are usually available only for observables with prescribed regularity, such as $\mathcal{C}^1(M)$, and their bounds depend on the corresponding regularity norms. Such estimates do not directly give the uniform control over measurable events required by $\alpha$-mixing criteria. Nevertheless, suitable correlation estimates yield the CLT in several dynamical settings. For ergodic noninvertible maps, a sufficient condition for the CLT is summable decay of correlations for regular observables against bounded measurable observables~\cite{Gordin1969,Liverani1996}. For example, suppose that there exist $C>0$ and $\rho(n)\geq0$, with $\sum_{n\geq1}\rho(n)<\infty$, such that
\begin{align}
    \left|\mu(f\,g\circ T^n)-\mu(f)\mu(g)\right| \leq C\rho(n) \|f\|_{\mathcal{C}^1(M)} \|g\|_{L^\infty(\mu)} \label{eq:regular-bounded-correlations}
\end{align}
for every $f\in\mathcal{C}^1(M)$, $g\in L^\infty(\mu)$, and $n\geq1$. Then every centred $f\in\mathcal{C}^1(M)$ satisfies the CLT through martingale approximation (see~\cite[Theorem~1.1]{Liverani1996}). The estimate \eqref{eq:regular-bounded-correlations} requires no regularity of the second observable and therefore gives stronger control than an estimate involving $\|f\|_{\mathcal{C}^1(M)} \|g\|_{\mathcal{C}^1(M)}$. The estimate \eqref{eq:regular-bounded-correlations} cannot hold for invertible maps unless $\mu$ is a point mass.

Other criteria yield the CLT by controlling correlations between functions of blocks of observations. Under such decorrelation assumptions, Le Borgne and P\`ene \cite{LeBorgnePene2005} and P\`ene~\cite{Pene2004} also establish Berry--Esseen-type rates of convergence in the CLT.
The theorem of Bj\"orklund and Gorodnik~\cite{BjorklundGorodnik} instead gives a sufficient condition in terms of exponential mixing of all orders for regular observables without effective rates of convergence.

Our results show that exponential decay of correlations for $\mathcal{C}^1$ observables, even when extended to exponential mixing of any prescribed finite order, does not suffice to guarantee the CLT. Indeed, for every $R\geq2$, we construct systems that are exponentially mixing of order $R$ for $\mathcal{C}^1$ observables, yet admit centred smooth observables for which the CLT fails.
Thus, no fixed finite number of exponential mixing estimates can replace the all-orders hypothesis in the theorem of Bj\"orklund and Gorodnik.

\subsection{Rokhlin's problem and its quantitative counterparts}
Rokhlin's classical multiple mixing problem~\cite{Rokhlin1949} asks whether mixing implies mixing of all orders. To state the question precisely, for ordered times $0\leq n_1<\cdots<n_s$, write
$$
\Delta(n_1,\ldots,n_s):=\min_{1\leq i<s}(n_{i+1}-n_i).
$$
Here, the order $s$ counts the number of observables or sets involved.

\begin{question}[Rokhlin's multiple mixing problem]
    \label{question:rokhlin}
    Let $T$ be an invertible probability-preserving transformation of a standard probability space. If $ \mu(A\cap T^{-n}B)\xrightarrow[]{n\to\infty}\mu(A)\mu(B) $ for all measurable $A,B$, must
    $$
    \mu\left(\bigcap_{i=1}^sT^{-n_i}A_i\right) \longrightarrow\prod_{i=1}^s\mu(A_i) \quad\text{as }\Delta(n_1,\ldots,n_s)\to\infty
    $$
    hold for every $s\geq3$ and every collection of measurable sets $A_1,\ldots,A_s$?
\end{question}

Rokhlin's problem and its variants have motivated extensive work on multiple mixing (see ~\cite{Kalikow1984,Host1991,Ryzhikov1993, SchmidtWard1993,FayadKanigowski2016,KKU2019, BergelsonZelada,KanigowskiRavotti2024} and the references therein). For an overview and further references, see~\cite{Ryzhikov2024}. Rokhlin's problem was recently answered affirmatively in~\cite[Theorem~1.1 and Corollary~8.1]{OpenAI2026Rokhlin}\footnote{The main theorem was also formally verified in Lean.}, where it has been shown that every mixing probability-preserving transformation is mixing of all finite orders.

In smooth dynamics, exponential mixing has strong qualitative consequences. Dolgopyat, Kanigowski and Rodriguez--Hertz~\cite{DKRH2024} prove that any $\mathcal{C}^{1+\alpha}$ diffeomorphism of a compact manifold preserving a smooth probability measure is Bernoulli whenever it is exponentially mixing. Maldonado~\cite[Theorem~A]{Maldonado2025} further extends this conclusion to invariant SRB measures. Since the Bernoulli property implies mixing of all orders, these results provide a partial answer to Question~\ref{question:rokhlin}. 

A finer quantitative version of Rokhlin's problem asks whether exponential mixing for $\mathcal C^1$ observables implies exponential mixing of all orders for $\mathcal C^1$ observables~\cite{DKRH2024}. Notice that the Bernoulli conclusion alone does not provide these higher-order exponential mixing estimates.

A related question concerns the CLT. Dolgopyat, Dong, Kanigowski and N\'andori~\cite[Appendix~C]{DDKN} ask whether exponential mixing implies the classical CLT for smooth observables, in the setting of smooth systems preserving a smooth probability measure. By the theorem of Bj\"orklund and Gorodnik~\cite{BjorklundGorodnik}, a counterexample to this implication would provide a negative answer to the quantitative (exponential) version of Rokhlin's problem. 

The following question summarises the discussion of the paragraphs above. Figure~\ref{fig:mixing-implications} highlights the implications discussed thus far and the contributions of this paper. 

\begin{figure}[!ht]
\centering
\begin{tikzpicture}[
    x=1.15cm,
    y=1cm,
    >=Stealth,
    property/.style={
        draw=black!80,
        line width=.55pt,
        rounded corners=2pt,
        text width=2.40cm,
        minimum height=.86cm,
        align=center,
        inner sep=3pt,
        font=\footnotesize
    },
    endpoint/.style={
        property,
        line width=1pt,
        fill=black!5
    },
    implication/.style={
        -{Stealth[length=2.1mm,width=1.6mm]},
        line width=.75pt,
        shorten >=1.2pt,
        shorten <=1.2pt
    },
    notslash/.style={
        line width=1.6pt,
        black
    },
    nonimplication/.style={
        implication,
        dash pattern=on 5pt off 3pt
    },
    reference/.style={
        font=\scriptsize,
        align=center,
        inner sep=1pt
    }
]

\node[property] (all) at (0,7.55)
    {Exponential mixing\\of all orders};

\node[property] (finite) at (0,5.10)
    {Exponential mixing\\of order $R$};

\node[property] (two) at (0,2.65)
    {Exponential mixing\\(of order $2$)};

\node[property] (mixing) at (0,.15)
    {Mixing};

\node[property] (multiple) at (5.15,.15)
    {Mixing of\\all orders};

\node[property] (bernoulli) at (9.45,2.65)
    {Bernoulli};

\node[endpoint] (clt) at (9.45,5.10)
    {CLT for centred\\observables};

\draw[implication] (finite) -- (two);
\draw[implication] (two) -- (mixing);

\draw[implication]
    ($(all.south)+(-.25,0)$) --
    ($(finite.north)+(-.25,0)$);

\draw[nonimplication]
    ($(finite.north)+(.25,0)$) --
    ($(all.south)+(.25,0)$);

\path
    ($(finite.north)+(.25,0)$) --
    coordinate[pos=.5] (notAll)
    ($(all.south)+(.25,0)$);

\draw[notslash]
    ($(notAll)+(-.14,-.16)$) --
    ($(notAll)+(.14,.16)$);

\node[reference,text width=2.65cm] at (1.25,6.33)
    {Theorems\\~\ref{thm:analytic},
     \ref{thm:physical},~\ref{thm:symbolic}};

\draw[implication]
    ($(multiple.west)+(0,0.14)$) --
    ($(mixing.east)+(0,0.14)$);

\draw[implication]
    ($(mixing.east)+(0,-0.14)$) --
    ($(multiple.west)+(0,-0.14)$);

\node[reference] at (2.575,-0.48)
    {Rokhlin's problem\\OpenAI~\cite{OpenAI2026Rokhlin}\\
     Thm.~1.1; Cor.~8.1};

\draw[implication]
    (all.east) -- (9.45,7.55) -- (clt.north);

\node[reference] at (5.85,7.96)
    {Bj\"orklund--Gorodnik~\cite{BjorklundGorodnik}\\
     Thm.~1.1};

\draw[implication]
    (two.east) -- (bernoulli.west);

\node[reference,text width=4.0cm] at (5.15,2.95)
    {$T\in\mathcal C^{1+\alpha}$ and $\mu$ smooth or SRB};

\node[reference,text width=7.5cm] at (4.8,2.25)
    {Dolgopyat--Kanigowski--Rodriguez-Hertz
     ~\cite{DKRH2024}, Thm.~1.1\\
     Maldonado~\cite{Maldonado2025}, Thm.~A};

\draw[implication]
    (bernoulli.south) --
    (9.45,.15) --
    (multiple.east);

\draw[
    nonimplication,
    line width=1.15pt,
    -{Stealth[length=2.8mm,width=2mm]}
]
    (finite.east) -- (clt.west);

\path (finite.east) --
    coordinate[pos=.55] (notFinite) (clt.west);

\draw[notslash]
    ($(notFinite)+(-.14,-.16)$) --
    ($(notFinite)+(.14,.16)$);

\node[reference,font=\small] at ($(notFinite)+(0,.63)$)
    {Theorems\\
     \ref{thm:analytic},
     \ref{thm:physical},~\ref{thm:symbolic}};

\draw[nonimplication]
    (bernoulli.north) -- (clt.south);

\path (bernoulli.north) --
    coordinate[pos=.5] (notBernoulli) (clt.south);

\draw[notslash]
    ($(notBernoulli)+(-.14,-.16)$) --
    ($(notBernoulli)+(.14,.16)$);

\node[reference] at (7.85,4.12)
    {Bradley~\cite{Bradley2010}};

\node[reference] at (7.85,3.88)
    {Thm.~1.1; Rem.~1.2};

\end{tikzpicture}

\caption{\footnotesize
Implications (solid arrows) and non-implications
(crossed dashed arrows) between mixing properties
and the CLT.}

\label{fig:mixing-implications}
\end{figure}

\begin{question}[{Exponential mixing and the CLT~ \cite{DKRH2024, DDKN}}]
    \label{question:exponential-literature}
    Let $T$ be a smooth diffeomorphism of a compact manifold preserving a smooth probability measure $\mu$. Suppose that $(T, \mu)$ has exponential decay of correlations for smooth observables.
    \begin{enumerate}
        \item\label{it:question-exponential-i} Does $(T, \mu)$ have exponential mixing of all orders for smooth observables?
        \item\label{it:question-exponential-ii} Does every centred smooth observable satisfy the CLT with respect to $\mu$?
    \end{enumerate}
\end{question}

Here, exponential mixing of order $s$ means that \eqref{eq:exponential-mixing-all-orders} holds for this value of $s$, with constants $C_s,c_s>0$ independent of the observables and the observation times.

While Question~\ref{question:exponential-literature} concerns smooth invariant measures, our results show that both implications fail in the broader setting of invariant probability measures, including realisations with physical measures. The case of smooth invariant measures is not resolved by our constructions.

Similarly, we may extend Question~\ref{question:exponential-literature} to the setting of equilibrium states for H\"older continuous potentials:

\begin{question}[Exponential mixing and CLT for equilibrium states]
    \label{question:equilibrium-states}
    Let $T:M\to M$ be a smooth diffeomorphism of a compact smooth manifold, and let $\mu$ be an equilibrium state for a H\"older continuous potential $\varphi:M\to\mathbb{R}$. Suppose that $(T,\mu)$ has exponential decay of correlations for $\mathcal{C}^1(M)$ observables.
    \begin{enumerate}
        \item\label{it:question-equilibrium-i} Is $(T,\mu)$ exponentially mixing of all orders for $\mathcal{C}^1(M)$ observables?
        \item\label{it:question-equilibrium-ii} Does every centred smooth observable satisfy the CLT with respect to $\mu$?
    \end{enumerate}
\end{question}

For $\mathcal{C}^{\infty}$ diffeomorphisms of closed surfaces with positive topological entropy, both conclusions hold for mixing measures of maximal entropy ($\varphi\equiv0$)~\cite{BuzziCrovisierSarig2025}, and more generally for mixing equilibrium states of high-temperature H\"older potentials, i.e.~satisfying $\sup_M\varphi-\inf_M\varphi<h_{\mathrm{top}}(T)$~\cite{BuzziCrovisierSarig2025, LuoYang2026}.

\subsection{Stationary processes without a CLT}
The failure of the CLT for dependent stochastic processes has been studied for many decades. A recurring question is how much independence can be retained while the CLT still fails. Janson constructed stationary pairwise-independent counterexamples to the CLT~\cite{Janson1988}. Bradley obtained a $\beta$-mixing counterexample~\cite{Bradley1989}, which he later showed to be triplewise independent~\cite{Bradley2007}. For each $R\in\mathbb N$, Pruss constructed $R$-tuplewise independent counterexamples in which the CLT fails~\cite{Pruss1998}, and Bradley and Pruss subsequently obtained this property for strictly stationary sequences~\cite{BradleyPruss}.

Failure of the CLT can also coexist with stronger ergodic properties. For each $R\in\mathbb N,$ Nguyen constructed a strictly stationary counterexample that is $R$-tuplewise independent and mixing in the ergodic-theoretic sense~\cite{Nguyen}. Bradley constructed a stationary $\{-1,1\}$-valued process that is five-tuplewise independent and Bernoulli, but whose normalised sums have no Gaussian weak subsequential limit~\cite{Bradley2010}.

These examples show that arbitrary finite orders of independence can coexist with failure of the CLT, even for Bernoulli processes. Our main probabilistic contribution is to add quantitative (exponential) finite-order mixing to this picture. For every $R\geq2$, Theorem~\ref{thm:symbolic} constructs a stationary symmetric $\{-1,1\}$-valued process which is $(2R+1)$-tuplewise independent and exponentially mixing of order $R$, while the CLT fails. In fact, its $\sqrt{n}$-normalised partial sums admit no Gaussian weak limit along any subsequence. Moreover, exponential mixing fails at order $2R+2$. Thus, arbitrarily high finite orders of independence and of exponential mixing are compatible with failure of the CLT.

Theorems~\ref{thm:analytic} and~\ref{thm:physical} provide smooth realisations of the stationary processes constructed in Theorem~\ref{thm:symbolic}. In particular, this implies that exponential mixing of any prescribed order $R$ for $\mathcal{C}^1(M)$ observables does not imply the CLT. Hence the all-orders hypothesis in the theorem of Bj\"orklund and Gorodnik~\cite{BjorklundGorodnik} is sharp with respect to the number of mixing orders.

For every integer $R\geq2$, we construct a smooth diffeomorphism $T:M\to M$ preserving a probability measure $\mu$ such that there are constants $C_R,c_R>0$ satisfying
\begin{align}
    \left| \mu\left(\prod_{i=1}^R f_i\circ T^{n_i}\right) -\prod_{i=1}^R\mu(f_i) \right| \leq C_R e^{-c_R\Delta(n_1,\ldots,n_R)} \prod_{i=1}^R\|f_i\|_{\mathcal{C}^1(M)} \label{eq:intro-multiple}
\end{align}
for all $f_1,\ldots,f_R\in\mathcal{C}^1(M)$ and all integers $0\leq n_1<\cdots<n_R$, while there exists $f_0\in\mathcal{C}^{\infty}(M)$ with $\mu(f_0)=0$ such that
$$
\frac{1}{\sqrt n}\sum_{i=0}^{n-1}f_0\circ T^i \quad\text{has no Gaussian weak subsequential limit},
$$
including degenerate Gaussian laws (Dirac masses). The construction also gives constants ${c,C>0}$ such that, for every sufficiently large integer $N$, there are times $0\leq n_1<\cdots<n_{2R+2}<N$, satisfying
\begin{align}
    \Delta(n_1,\ldots,n_{2R+2})\geq \frac{N}{C} \quad\text{and}\quad \left|\mu\left(\prod_{i=1}^{2R+2} f_0\circ T^{n_i}\right)\right| \geq cN^{-R-1}. \label{eq:intro-higher-obstruction}
\end{align}
This polynomial lower bound rules out exponential mixing of order $2R+2$ for $\mathcal{C}^1(M)$ observables. Thus $(T,\mu)$ is exponentially mixing of order $R$, but not exponentially mixing of all orders.

We realise the dynamical system $(T,\mu)$ described above in two ways.
In Theorem~\ref{thm:analytic}, $T$ is a fixed linear Anosov diffeomorphism of $\mathbb T^2$ and $\mu$ is supported on a fixed two-symbol horseshoe $\Lambda$. The same pair $(T,\Lambda)$ applies to every order of mixing $R$ chosen, whereas the invariant measure $\mu$ depends on $R$.
In Theorem~\ref{thm:physical}, for every order $R$, $T$ is a $\mathcal{C}^{\infty}$ diffeomorphism of any prescribed nonempty compact smooth surface without boundary, and $\mu$ is a physical measure.

Both realisations come from the shift $(\sigma,\nu)$ on $\{-1,1\}^{\mathbb Z}$, with the invariant measure $\nu$ constructed in Theorem~\ref{thm:symbolic}. Under $\nu$, the shift is exponentially mixing of order $R$, but not of order $2R+2$, and the $\sqrt{n}$-normalised Birkhoff sums of the coordinate observable $f_0({\omega}):=\omega_0$ have no Gaussian weak subsequential limit. These properties pass to $(T,\mu)$ through a regular conjugacy $\pi$ between the shift and the restriction of $T$ to the horseshoe, with $\mu=\pi_*\nu$.

In the linear Anosov realisation of Theorem \ref{thm:analytic}, we take $T$ to be a fixed power of the classical cat map and use standard hyperbolic coding \cite{Smale1967,Bowen1970,KatokHasselblatt}. The map, the horseshoe and the smooth observable are independent of the order $R$. For each invariant measure $\mu$, the failure of the CLT persists under small $\mathcal{C}^1$ perturbations of the observable, so the observable can also be chosen to be a real trigonometric polynomial. 

The physical realisation in Theorem~\ref{thm:physical} is based on methods developed by Colli and Vargas \cite{ColliVargas2001} and Kiriki, Li, Nakano, Soma and Vargas \cite{KLNSV} (see also \cite{KirikiSoma2017,KirikiNakanoSoma2024}). In Appendix~\ref{app:smooth-realisation-short}, we adapt these methods to a symmetric affine horseshoe and prove Proposition~\ref{prop:cv-all-measures}, which realises every ergodic invariant measure of the full
two-symbol shift as a physical measure of a $\mathcal{C}^{\infty}$ diffeomorphism of any prescribed compact smooth surface $M$ without boundary.

\subsection{Organisation of the paper}

The paper is divided into two parts. Part~\ref{part:dynamical} concerns the dynamical realisations, i.e.~Theorems~\ref{thm:analytic} and~\ref{thm:physical} and one appendix. Section~\ref{sec:statements} states the main results. Section~\ref{sec:stochastic-consequences} proves both theorems assuming Theorem~\ref{thm:symbolic}. 
The analytic (Theorem~\ref{thm:analytic}) and physical (Theorem~\ref{thm:physical}) realisations then follow from
hyperbolic coding and the smooth physical realisation of
symbolic measures, respectively (Subsections~\ref{sec:realisation} and~\ref{sec:physical}). The latter construction is proved in
Appendix~\ref{app:smooth-realisation-short}.

Part~\ref{part:probabilistic} concerns the proof of Theorem~\ref{thm:symbolic}. In Section~\ref{sec:geometry}, Proposition~\ref{prop:hierarchy} constructs random stationary nested interval partitions of $\mathbb Z$.
Using these partitions, we construct a particular probability measure $\nu$ on $\{-1,1\}^{\mathbb Z}$ in Sections~\ref{sec:block-laws} and \ref{sec:limit}, under which the coordinate process $Z=(Z_i)_{i\in\mathbb Z}$, defined by $Z_i(\omega):=\omega_i$, is stationary (Proposition~\ref{prop:limit}).
In Section~\ref{sec:correlations}, Propositions~\ref{prop:windows} and~\ref{prop:holder} establish independence of sufficiently separated coordinates of $Z$ and exponential mixing of order $R$ for $(\sigma,\nu)$. Finally, Section~\ref{sec:moment} proves failure of the CLT for $Z$ by showing that its normalised partial sums $n^{-1/2}\sum_{i=0}^{n-1}Z_i$ have no Gaussian weak subsequential limit (Corollary~\ref{cor:no-gaussian}). Section~\ref{sec:moment} also proves failure of exponential mixing at order $2R+2$ (Corollary~\ref{cor:higher-order}). Both conclusions follow from the estimate in Proposition~\ref{prop:key-estimate}. Figure~\ref{fig:dependencies} summarises the main proof dependencies. 

Appendix~\ref{app:smooth-realisation-short} proves Proposition~\ref{prop:cv-all-measures}, which realises every ergodic invariant probability measure of the full two-symbol shift as a physical measure of a $\mathcal{C}^{\infty}$ diffeomorphism of any prescribed nonempty compact smooth surface without boundary.
This proposition is used in the proof of Theorem~\ref{thm:physical}.

\begin{figure}[htbp]
    \centering
    \begin{tikzpicture}[
        box/.style={
            draw, rounded corners, align=center,
            text width=3.8cm, minimum height=9mm,
            inner sep=4pt, font=\small
        },
        main/.style={box, text width=3.5cm, thick},
        heading/.style={align=center, font=\small\bfseries},
        arrow/.style={
            -{Stealth[length=1.8mm]},
            thick, rounded corners=2mm
        }
        ]
        \node[heading] at (0,1)
        {Part~\ref{part:probabilistic}. Probabilistic construction};
        
        \node[box] (geometry) at (-3,0)
        {\textbf{Nested marker sets}\\
            Proposition~\ref{prop:hierarchy}};
        
        \node[box] (blocks) at (3,0)
        {\textbf{Recursive block laws}\\
            Lemmas~\ref{lem:block-properties}--\ref{lem:block-correlations}};
        
        \node[box] (process) at (0,-1.5)
        {\textbf{Stationary process}\\
            Proposition~\ref{prop:limit}};
        
        \node[box] (mixing) at (-3,-3.2)
        {\textbf{Finite-window}\\
            \textbf{independence}\\
            and exponential mixing\\
            Propositions~\ref{prop:windows} and~\ref{prop:holder}};
        
        \node[box] (obstruction) at (3,-3.2)
        {\textbf{Failure of the CLT}\\
            \textbf{and of higher-order}\\
            \textbf{exponential mixing}\\
            Proposition~\ref{prop:key-estimate}\\
            Corollaries~\ref{cor:moment}--\ref{cor:no-gaussian}
            and~\ref{cor:higher-order}};
        
        \node[main] (symbolic) at (0,-5)
        {\textbf{Theorem~\ref{thm:symbolic}}\\
            Symbolic theorem};
        
        \draw[arrow] (geometry.east) -- (blocks.west);
        \draw[arrow] (geometry) -- (process);
        \draw[arrow] (blocks) -- (process);
        \draw[arrow] (process) -- (mixing);
        \draw[arrow] (process) -- (obstruction);
        \draw[arrow] (mixing) -- (symbolic);
        \draw[arrow] (obstruction) -- (symbolic);
        
        \node[heading, text width=4cm] at (0,-6.3)
        {Part~\ref{part:dynamical}\\
            Dynamical realisations};
        
        \node[main] (analytic) at (-3,-7.6)
        {\textbf{Theorem~\ref{thm:analytic}}\\
            Analytic realisation};
        
        \node[main] (physical) at (3,-7.6)
        {\textbf{Theorem~\ref{thm:physical}}\\
            Physical realisation};
        
        \draw[arrow]
        (symbolic.west) -- (-3,-5) --
        node[midway, left, align=right, font=\scriptsize]
        {Hyperbolic coding\\
            Lemmas~\ref{lem:fixed-horseshoe}--\ref{lem:transfer}}
        (analytic.north);
        
        \draw[arrow]
        (symbolic.east) -- (3,-5) --
        node[midway, right, align=left, font=\scriptsize]
        {Smooth physical realisation\\
    Proposition~\ref{prop:cv-all-measures}\\
    Lemma~\ref{lem:transfer}}
        (physical.north);
    \end{tikzpicture}
    
    \caption{Main proof dependencies.
        Part~\ref{part:probabilistic} proves Theorem~\ref{thm:symbolic}, which Part~\ref{part:dynamical} assumes to obtain Theorems~\ref{thm:analytic} and~\ref{thm:physical}. The dynamical realisations are presented before the probabilistic construction.}
    \label{fig:dependencies}
\end{figure}

\part{Dynamical realisations}
\label{part:dynamical}
\addtocontents{toc}{\protect\addvspace{4pt}}

In this part we prove Theorems~\ref{thm:analytic} and~\ref{thm:physical} using Theorem~\ref{thm:symbolic}, whose proof is given in Part~\ref{part:probabilistic}. We first establish stability of CLT failure under perturbations of the observable, then construct the fixed linear Anosov realisation and the physical realisation, the latter using Proposition~\ref{prop:cv-all-measures}. The proofs of Theorems~\ref{thm:analytic}, \ref{thm:physical} and~\ref{thm:symbolic} have been formalised in Lean~4 (see Section~\ref{sec:lean-formalisation}).

\section{Main results}
\label{sec:statements}
We use the notation $S_nf$ and $\Delta(n_1,\ldots,n_s)$ as in Section \ref{sec:introduction}. On a compact Riemannian manifold $M$, set $ \|f\|_{\mathcal{C}^1(M)} :=\|f\|_\infty+\|Df\|_\infty.$ We recall two notions of convergence  which are used in the statements and proofs below.
\begin{definition}
    For probability measures $\lambda_n$ and $\lambda$ on a metric space $E$, we write $\lambda_n\mathrel{\xRightarrow{n\to\infty}}\lambda$ and say that $\lambda_n$ \emph{converges weakly} to $\lambda$ if
    \begin{align*}
        \int_E u\,\d\lambda_n\longrightarrow\int_E u\,\d\lambda \quad\text{for every }u\in\mathcal{C}_b(E),
    \end{align*}
    where $\mathcal{C}_b(E)$ is the space of bounded continuous real-valued functions on $E$.
    
    For real-valued random variables $Y_n$ on $(M,\mu)$, we write $Y_n\mathrel{\xRightarrow{n\to\infty}}\lambda$ and say that $Y_n$ \emph{converges in distribution} to a probability measure $\lambda$ on $\mathbb R$ if $\Law_\mu(Y_n)\mathrel{\xRightarrow{n\to\infty}}\lambda$. Here $\Law_\mu(Y_n)(A):=\mu\{x\in M:Y_n(x)\in A\}$ for every Borel set $A\subset\mathbb R$. Thus convergence in distribution of random variables means weak convergence of their laws~\cite[Section~3, p.~25]{Billingsley}.
\end{definition}

\begin{definition} 
    Let $X=(X_i)_{i\in\mathbb Z}$ be a real-valued process on a probability space $(\Omega,\mathcal F,\mathbb P)$. For an integer $p\geq1$, the process is \emph{$p$-tuplewise independent} if every collection of at most $p$ coordinates with distinct indices is jointly independent.
\end{definition}

All assertions that an observable is $\{-1,1\}$-valued are understood almost surely with respect to the stated measure. Gaussian laws include point masses. We use $\centernot{\Rightarrow}$ to denote failure of convergence in distribution.

\subsection{Linear Anosov realisation of finite-order exponential mixing without the CLT}

The first result realises finite-order exponential mixing without the CLT on a single real-analytic Anosov diffeomorphism. Both the map and the smooth observable are fixed. Varying the invariant measure produces examples with arbitrarily high prescribed orders of exponential mixing.

\begin{maintheorem}[Linear Anosov realisation]
    \label{thm:analytic}
    Let $A:\mathbb T^2\to\mathbb T^2$ be the cat map
    $$
    A(x,y)=(2x+y,x+y)\pmod1.
    $$
    There exist an integer $k_0\geq2$, a two-symbol horseshoe $\Lambda$ for $T=A^{k_0}$, and a real-valued observable $f_0\in\mathcal{C}^{\infty}(\mathbb T^2)$ such that, for every integer $R\geq2$, there is a $T$-invariant probability measure $\mu$ with the following properties.
    \begin{enumerate}[label=\textup{(\roman*)},ref=\roman*]
        \item\label{it:analytic-i} $\operatorname{supp}\mu=\Lambda$.
        
        \item\label{it:analytic-ii} There are constants $C_R,c_R>0$ such that
        \begin{align}
            \left| \mu\left(\prod_{i=1}^R f_i\circ T^{n_i}\right) -\prod_{i=1}^R\mu(f_i) \right| &\leq C_R e^{-c_R\Delta(n_1,\ldots,n_R)} \prod_{i=1}^R\|f_i\|_{\mathcal{C}^1(\mathbb T^2)} \label{eq:main-multiple}
        \end{align}
        for all $f_1,\ldots,f_R\in\mathcal{C}^1(\mathbb T^2)$ and all integers $0\leq n_1<\cdots<n_R$.
        
        \item\label{it:analytic-iii} Under $\mu$, the stationary process $(f_0\circ T^i)_{i\in\mathbb Z}$ is $\{-1,1\}$-valued, satisfies $\mu(f_0=-1)=\mu(f_0=1)=1/2$, and is $(2R+1)$-tuplewise independent. In particular $\mu(f_0)=0.$

        \item\label{it:analytic-iv} The laws of $S_nf_0/\sqrt n$ under $\mu$ have no Gaussian weak subsequential limit; i.e. for every strictly increasing sequence of positive integers $n_k\to\infty$ and every $\sigma^2\geq0$,
        \begin{align*}
            \frac{1}{\sqrt{n_k}} \sum_{i=0}^{n_k-1}f_0\circ T^i \centernot{\Longrightarrow}\mathcal N(0,\sigma^2) \quad\text{as }k\to\infty,
        \end{align*}
        where convergence in distribution is taken with respect to $\mu$.

        \item\label{it:analytic-v} There is an open neighbourhood $\mathcal O$ of $f_0$ in $\mathcal{C}^1(\mathbb T^2)$ such that, for every $g\in\mathcal O$, the laws of $S_n(g-\mu(g))/\sqrt n$ under $\mu$ have no Gaussian weak subsequential limit.
        In particular, such a $g$ can be chosen to be a real trigonometric polynomial.
        
        \item\label{it:analytic-vi}
There are constants $c,C>0$ and $N_0\geq1$ such that, for every integer $N\geq N_0$, there are integers $0\leq n_1<\cdots<n_{2R+2}<N$ satisfying
\begin{align}
\Delta(n_1,\ldots,n_{2R+2})\geq\frac{N}{C} \quad\text{and}\quad \left|\mu\left(\prod_{i=1}^{2R+2} f_0\circ T^{n_i}\right)\right| \geq cN^{-R-1}. \label{eq:main-higher-order}
\end{align}
Hence, $(T,\mu)$ is not exponentially mixing of order $2R+2$ in $\mathcal{C}^1(\mathbb T^2)$.
    \end{enumerate}
\end{maintheorem}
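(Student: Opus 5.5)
The plan is to deduce Theorem~\ref{thm:analytic} from the symbolic Theorem~\ref{thm:symbolic} via hyperbolic coding. Theorem~\ref{thm:symbolic} gives, for each $R\geq2$, a shift-invariant measure $\nu=\nu_R$ on $\{-1,1\}^{\mathbb Z}$ with the following properties:
\begin{itemize}
\item the coordinate process $Z_i(\omega)=\omega_i$ is symmetric and $(2R+1)$-tuplewise independent;
\item $(\sigma,\nu)$ is exponentially mixing of order $R$ for H\"older (Lipschitz in the metric $d(\omega,\omega')=\theta^{\min\{|i|:\omega_i\neq\omega'_i\}}$) observables;
\item $n^{-1/2}\sum_{i<n}Z_i$ has no Gaussian subsequential limit;
\item the $(2R+2)$-point lower bound holds with $\Delta\geq N/C$.
\end{itemize}

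\textbf{Step 1: coding.} Fix once and for all $k_0\geq2$ and two disjoint closed rectangles $P_{-1},P_{1}\subset\mathbb T^2$, bounded by segments of the stable and unstable eigendirections of $A$. Choose $k_0$ large enough that $A^{k_0}P_a$ crosses each $P_b$ in a full Markov fashion. Then $\Lambda=\bigcap_{n\in\mathbb Z}T^{-n}(P_{-1}\cup P_1)$ is a two-symbol horseshoe, and there is a homeomorphism $\pi:\{-1,1\}^{\mathbb Z}\to\Lambda$ with $\pi\circ\sigma=T\circ\pi$ and $\pi(\omega)\in P_{\omega_0}$. Since $T$ is linear hyperbolic with rates $\lambda^{\pm k_0}$, $\pi$ is Lipschitz for $d$ with $\theta=\lambda^{-k_0}$ (Lemma~\ref{lem:fixed-horseshoe}). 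Choose $f_0\in\mathcal C^\infty(\mathbb T^2)$ equal to $a$ on a neighbourhood of $P_a$; this is possible because the rectangles are disjoint. Then $f_0\circ\pi=Z_0$. None of these choices depends on $R$.

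\textbf{Step 2: transfer.} Set $\mu=\pi_*\nu$.
\begin{itemize}
\item For (i), note that $\nu$ has full support, being $(2R+1)$-tuplewise independent and, in the construction, charging every cylinder; hence $\operatorname{supp}\mu=\pi(\operatorname{supp}\nu)=\Lambda$. If full support is not explicit in Theorem~\ref{thm:symbolic}, I would check it from the construction.
\item For (ii), if $f\in\mathcal C^1$ then $f\circ\pi$ is $d$-Lipschitz with constant at most $\mathrm{Lip}(\pi)\|f\|_{\mathcal C^1}$. Correlations of the $f_i\circ T^{n_i}$ under $\mu$ equal correlations of the $f_i\circ\pi\circ\sigma^{n_i}$ under $\nu$, so (ii) follows (Lemma~\ref{lem:transfer}).
\item Items (iii), (iv) and (vi) follow immediately, since the law of $(f_0\circ T^i)_i$ under $\mu$ equals the law of $Z$ under $\nu$.
\end{itemize}

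\textbf{Step 3: stability (v), the main obstacle.} For $g$ with $\|g-f_0\|_{\mathcal C^1}<\varepsilon$, write $g-\mu(g)=f_0+h$, where $h$ is centred with $\|h\|_{\mathcal C^1}\leq 2\varepsilon$. I would show $\limsup_n \|S_nh\|_{L^2(\mu)}^2/n\leq C\varepsilon^2$. This uses only the order-2 exponential bound: $\mathrm{Var}(S_nh)\leq n\sum_{k}|\mu(h\,h\circ T^k)|\leq nC'\|h\|_{\mathcal C^1}^2$.

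Now suppose $S_{n_k}(f_0+h)/\sqrt{n_k}\Rightarrow\mathcal N(0,\sigma^2)$. The laws of $S_nf_0/\sqrt n$ are tight (bounded variance), so we may pass to a further subsequence along which $S_{n_k}f_0/\sqrt{n_k}\Rightarrow\lambda$. Two facts are then needed:
\begin{itemize}
\item \emph{Quantitative separation.} $\lambda$ is at Lévy--Prokhorov (or Wasserstein-1) distance at least $\delta>0$ from every Gaussian law. By compactness of the set of possible limits, this should follow from the key estimate (Proposition~\ref{prop:key-estimate}). The estimate gives a uniform defect, e.g.\ a moment identity such as a fourth-moment or $(2R+2)$-moment deviation from the Gaussian value. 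This deviation persists in the limit, given uniform integrability of higher moments from independence up to order $2R+1$.
\item \emph{Closeness.} Since the Wasserstein-1 distance satisfies $W_1\leq\|S_nh/\sqrt n\|_{L^2}\leq C\varepsilon$, the limit $\mathcal N(0,\sigma^2)$ is within $C\varepsilon$ of $\lambda$.
\end{itemize}
Choosing $\varepsilon<\delta/C$ gives a contradiction. The main work is extracting the uniform $\delta$ from Corollary~\ref{cor:no-gaussian}: one needs that the set of subsequential limits stays a fixed distance from the (closed in $W_1$, once variances are bounded) family of Gaussians. I would argue by contradiction with a diagonal sequence, using that limits of limit laws are again limit laws.

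Finally, trigonometric polynomials are $\mathcal C^1$-dense by Fejér/Stone--Weierstrass approximation of $f_0$ together with its derivatives, so $\mathcal O$ contains one.
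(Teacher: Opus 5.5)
Your proposal is correct and follows the paper's proof essentially step for step. The paper's chain is the same: a fixed horseshoe of $A^{k_0}$ with a Lipschitz coding, then $\mu=\pi_*\nu$ with a smooth $f_0$ separating the two cylinders as in Lemma~\ref{lem:transfer}, then item~\itemref{it:analytic-v} from the $L^2$ bound on $S_nh/\sqrt n$ and the positive distance between the compact set of subsequential limits and the closed Gaussian family. That last step is Proposition~\ref{prop:robustness} with $W_1$ in place of $d_{\mathrm{BL}}$. You build the horseshoe from Markov rectangles rather than Smale's homoclinic theorem; then $A^{k_0}P_a\cap P_b$ may consist of several crossings, so keep only one full crossing strip for each pair $(a,b)$ to get an invariant set conjugate to the full $2$-shift. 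The moment-deficit detour in your Step~3 is unnecessary (and only lower semicontinuity, not uniform integrability, is available at order $2R+2$), since Theorem~\ref{thm:symbolic}\itemref{it:S-moments} already excludes each Gaussian limit and compactness gives the uniform $\delta$.
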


The proof of Theorem \ref{thm:analytic} is given in Subsection~\ref{sec:realisation}, which transports the process of Theorem~\ref{thm:symbolic} to a horseshoe using the Birkhoff--Smale theorem and H\"older symbolic coding~\cite[Sections~6.5 and~18.7]{KatokHasselblatt}. The coding transfers the correlation estimates, while a smooth function separating the two symbol cylinders realises the coordinate process exactly. Proposition~\ref{prop:robustness} then gives stability under perturbations of the observable and allows approximation by a real trigonometric polynomial.

\subsection{Physical realisation of finite-order exponential mixing without the CLT}

Fix a smooth volume $m$ on $M$. The \emph{statistical basin} of an invariant probability measure $\mu$ is
\begin{align}
    B_T(\mu) :=\left\{x\in M: \frac1n\sum_{i=0}^{n-1}f\circ T^i(x) \xrightarrow[]{n\to\infty} \int f\d\mu,\ \text{for every }f\in\mathcal C^0(M)\right\}. \label{eq:physical-basin-definition}
\end{align}
The measure is \emph{physical} if $m(B_T(\mu))>0$. This definition is independent of the choice of smooth volume, since any two smooth positive densities are mutually absolutely continuous.

The next theorem shows that finite-order exponential mixing without the CLT can also occur for a physical measure. The construction applies to every nonempty compact smooth surface without boundary and yields a $\mathcal{C}^{\infty}$ diffeomorphism. Moreover, the statistical basin contains a nonempty open set on which the time averages of every continuous observable converge uniformly to its integral against $\mu$. Thus $\mu$ describes the long-term averages of an open set of initial conditions, while the normalised sums of a smooth observable have no Gaussian weak subsequential limit with respect to $\mu$.

\begin{maintheorem}[Physical realisation]
    \label{thm:physical}
    Let $M$ be a nonempty compact smooth surface without boundary. For every integer $R\geq2$, there exist a $\mathcal{C}^{\infty}$ diffeomorphism $T:M\to M$, a two-symbol horseshoe $\Lambda$, a $T$-invariant probability measure $\mu$, and a real-valued observable $f_0\in\mathcal{C}^{\infty}(M)$ with the following properties.
    \begin{enumerate}[label=\textup{(\roman*)},ref=\roman*]
        \item\label{it:physical-i} $\operatorname{supp}\mu=\Lambda$.
        
        \item\label{it:physical-ii} There are constants $C_R,c_R>0$ such that
        \begin{align}
            \left| \mu\left(\prod_{i=1}^R f_i\circ T^{n_i}\right) -\prod_{i=1}^R\mu(f_i) \right| &\leq C_R e^{-c_R\Delta(n_1,\ldots,n_R)} \prod_{i=1}^R\|f_i\|_{\mathcal{C}^1(M)} \label{eq:physical-multiple}
        \end{align}
        for all $f_1,\ldots,f_R\in\mathcal{C}^1(M)$ and all integers $0\leq n_1<\cdots<n_R$.
        
        \item\label{it:physical-iii} Under $\mu$, the stationary process $(f_0\circ T^i)_{i\in\mathbb Z}$ is $\{-1,1\}$-valued, satisfies $\mu(f_0=-1)=\mu(f_0=1)=1/2$, and is $(2R+1)$-tuplewise independent.

        Moreover, the laws of $S_nf_0/\sqrt n$ under $\mu$ have no Gaussian weak subsequential limit; i.e. for every strictly increasing sequence of positive integers $n_k\to\infty$ and every $\sigma^2\geq0$,
        \begin{align*}
            \frac{1}{\sqrt{n_k}} \sum_{i=0}^{n_k-1}f_0\circ T^i \centernot{\Longrightarrow}\mathcal N(0,\sigma^2) \quad\text{as }k\to\infty,
        \end{align*}
        where convergence in distribution is taken with respect to $\mu$.

        \item\label{it:physical-iv} There is an open neighbourhood $\mathcal O$ of $f_0$ in $\mathcal{C}^1(M)$ such that, for every $g\in\mathcal O$, the laws of $S_n(g-\mu(g))/\sqrt n$ under $\mu$ have no Gaussian weak subsequential limit.
        
        \item\label{it:physical-v} There is a nonempty open set $W\subset M$ such that, for every $g\in\mathcal{C}^0(M)$,
        \begin{align}
            \lim_{n\to\infty}\sup_{y\in W} \left|\frac1nS_ng(y)-\mu(g)\right|=0. \label{eq:physical-uniform-basin}
        \end{align}
        Thus $W\subset B_T(\mu)$, and hence $\mu$ is physical.
        
        \item\label{it:physical-vi} The observable $f_0$ satisfies \eqref{eq:main-higher-order}, with constants depending on this realisation.
        Hence, $(T,\mu)$ is not exponentially mixing of order $2R+2$ in $\mathcal{C}^1(M)$.
    \end{enumerate}
\end{maintheorem}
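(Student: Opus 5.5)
We plan to deduce Theorem~\ref{thm:physical} from the symbolic Theorem~\ref{thm:symbolic}, transporting it through Proposition~\ref{prop:cv-all-measures}, and to reuse the transfer and robustness arguments of the linear Anosov case. Fix $R\geq2$ and let $\nu$ be the shift-invariant measure on $\{-1,1\}^{\mathbb Z}$ from Theorem~\ref{thm:symbolic}. We first check that $\nu$ is ergodic; this should follow from the exponential mixing of order $R\geq2$, since mixing implies ergodicity. Proposition~\ref{prop:cv-all-measures}, applied to the prescribed surface $M$, then gives a $\mathcal C^\infty$ diffeomorphism $T$, a two-symbol horseshoe $\Lambda$ with a H\"older conjugacy $\pi:\{-1,1\}^{\mathbb Z}\to\Lambda$ satisfying $\pi\circ\sigma=T\circ\pi$, and an open set $W$ on which the Birkhoff averages of continuous observables converge uniformly to $\int g\,\d(\pi_*\nu)$. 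We set $\mu=\pi_*\nu$. This yields (\ref{it:physical-v}) directly, and hence physicality.

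For (\ref{it:physical-i}), we will use that the symbolic measure has full support. Every cylinder has positive $\nu$-mass, which should follow from the $(2R+1)$-tuplewise independence together with the block construction, or else be stated in Theorem~\ref{thm:symbolic}. Since $\pi$ is a homeomorphism onto $\Lambda$, this gives $\operatorname{supp}\mu=\Lambda$.

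For (\ref{it:physical-ii}), we apply Lemma~\ref{lem:transfer}. If $f\in\mathcal C^1(M)$, then $f\circ\pi$ is H\"older on the shift with H\"older norm at most $C\|f\|_{\mathcal C^1(M)}$. The H\"older exponential mixing of order $R$ for $(\sigma,\nu)$, obtained from Proposition~\ref{prop:holder}, then transfers verbatim to $(T,\mu)$. Note that the correlations are taken on $\Lambda$ only, so the dynamics of $T$ outside $\Lambda$ is irrelevant.

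For (\ref{it:physical-iii}) and (\ref{it:physical-vi}), the two symbol rectangles $\pi([-1])$ and $\pi([1])$ are disjoint compact sets. We choose $f_0\in\mathcal C^\infty(M)$ equal to $\pm1$ on a neighbourhood of each, so that $f_0\circ\pi(\omega)=\omega_0$. Then $(f_0\circ T^i)_i$ under $\mu$ has the same law as the coordinate process $Z$. Consequently the tuplewise independence, the marginal laws, the absence of Gaussian subsequential limits, and the polynomial lower bound~\eqref{eq:main-higher-order} all carry over from Theorem~\ref{thm:symbolic}. The failure of exponential mixing of order $2R+2$ follows because $N^{-R-1}$ cannot be dominated by $e^{-cN/C}$.

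For (\ref{it:physical-iv}), we invoke Proposition~\ref{prop:robustness}, exactly as in Theorem~\ref{thm:analytic}. The idea is that for $g$ $\mathcal C^1$-close to $f_0$, the centred sum $S_n(g-\mu(g))$ differs from $S_nf_0$ by $S_nh$, where $h=g-f_0-\mu(g-f_0)$ is small in $\mathcal C^1$. The order-$2$ exponential mixing gives $\|S_nh\|_{L^2(\mu)}^2\leq Cn\|h\|_{\mathcal C^1}^2$. Any Gaussian subsequential limit for $g$ would then force the laws of $S_nf_0/\sqrt n$ to lie within a small L\'evy distance of Gaussians. This is ruled out by the quantitative non-Gaussianity in Proposition~\ref{prop:key-estimate}, which gives a fixed separation from every Gaussian.

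The main obstacle lies in the inputs rather than in this assembly. The first is the content of Proposition~\ref{prop:cv-all-measures}, namely realising an arbitrary ergodic shift measure as a physical measure with a uniform basin on an arbitrary surface. The second is making sure the robustness is quantitative, so that the non-Gaussian gap is uniform in $n$. At the level of this theorem, the point requiring most care is that the conjugacy is H\"older with respect to the metric on $M$, so that $\mathcal C^1$ norms control the symbolic H\"older norms uniformly.
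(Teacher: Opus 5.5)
Your proposal is correct and follows the paper's proof. Proposition~\ref{prop:cv-all-measures} supplies $T$, $\Lambda$, $\pi$, $W$ and item (v); Lemma~\ref{lem:transfer}, using the full support already stated in Theorem~\ref{thm:symbolic}, gives (i)--(iii) and (vi); and Proposition~\ref{prop:robustness} gives (iv), where the uniform separation from Gaussians comes from tightness and compactness rather than directly from Proposition~\ref{prop:key-estimate}. The only variation is ergodicity: you obtain it from the exponential mixing estimate applied to cylinder indicators, padded with constant observables, whereas the paper uses finite-window independence via Corollary~\ref{cor:mixing}; both arguments are valid.
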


The proof in Subsection~\ref{sec:physical} combines Theorem~\ref{thm:symbolic} with Proposition~\ref{prop:cv-all-measures}.
The proposition gives the uniform convergence of time averages in \eqref{eq:physical-uniform-basin} and shows that the invariant measure is physical. Lemma~\ref{lem:transfer} transfers the statistical properties through the horseshoe conjugacy.

The preceding theorems show that exponential mixing of any prescribed order $R$ can coexist with failure of exponential mixing at a higher order. In particular, exponential decay of correlations does not, in general, imply exponential mixing of all orders.

\begin{corollary}
    \label{cor:quantitative-rokhlin}
    There exists a real-analytic Anosov diffeomorphism $T:\mathbb T^2\to\mathbb T^2$ such that, for every integer $R\geq2$, there is a $T$-invariant probability measure $\mu_R$ for which $(T,\mu_R)$ is exponentially mixing of order $R$, but not of order $2R+2$, for $\mathcal{C}^1(\mathbb T^2)$ observables.
    
    For every nonempty compact smooth surface $M$ without boundary and every integer $R\geq2$, there also exists a $\mathcal{C}^{\infty}$ diffeomorphism of $M$ preserving a physical measure with the same mixing properties for $\mathcal{C}^1(M)$ observables.
\end{corollary}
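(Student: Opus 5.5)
This corollary is a direct repackaging of Theorems~\ref{thm:analytic} and~\ref{thm:physical}, so the plan is simply to read off the relevant items.

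For the first assertion, we take $T=A^{k_0}$ from Theorem~\ref{thm:analytic}. Since $A$ is a hyperbolic toral automorphism, $T$ is a real-analytic linear Anosov diffeomorphism of $\mathbb T^2$. The integer $k_0$, the horseshoe $\Lambda$ and the observable $f_0$ are fixed independently of $R$. For each $R\geq2$ we let $\mu_R$ be the invariant measure that Theorem~\ref{thm:analytic} produces.

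\begin{itemize}
\item Item~\itemref{it:analytic-ii} is exactly exponential mixing of order $R$ for $\mathcal C^1(\mathbb T^2)$ observables, in the sense of \eqref{eq:exponential-mixing-all-orders} with $s=R$.
\item Item~\itemref{it:analytic-vi} supplies the negation at order $2R+2$. Suppose constants $C',c'>0$ satisfied \eqref{eq:exponential-mixing-all-orders} with $s=2R+2$ and all $f_i=f_0$. Since $\mu_R(f_0)=0$ by item~\itemref{it:analytic-iii}, this would give
\[
\Bigl|\mu_R\Bigl(\prod_{i=1}^{2R+2} f_0\circ T^{n_i}\Bigr)\Bigr|\leq C'e^{-c'N/C}\|f_0\|_{\mathcal C^1}^{2R+2}
\]
for the times chosen in \eqref{eq:main-higher-order}. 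The left-hand side is at least $cN^{-R-1}$, and this is incompatible with the exponential bound as $N\to\infty$.
\end{itemize}

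For the second assertion, we fix $M$ and $R$ and apply Theorem~\ref{thm:physical}.

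\begin{itemize}
\item Item~\itemref{it:physical-v} shows that $\mu$ is physical.
\item Item~\itemref{it:physical-ii} gives exponential mixing of order $R$.
\item Item~\itemref{it:physical-vi}, together with $\mu(f_0)=0$ from item~\itemref{it:physical-iii}, gives the failure at order $2R+2$ by the same polynomial-versus-exponential contradiction as above.
\end{itemize}

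No step is a genuine obstacle. The only point needing care is the observation that the lower bound $cN^{-R-1}$ is attained at times whose minimal gap $\Delta$ grows linearly in $N$; this is precisely what rules out any exponential rate $e^{-c'\Delta}$.
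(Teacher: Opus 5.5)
Your proposal is correct and follows essentially the same route as the paper, which also just applies items (ii) and (vi) of Theorems~\ref{thm:analytic} and~\ref{thm:physical} and observes that the polynomial lower bound at times with minimal gap at least $N/C$ contradicts any exponential upper bound. Your additional use of item (iii) for $\mu(f_0)=0$ and of item (v) for physicality simply makes explicit details that the paper's one-line proof leaves implicit.
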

\begin{proof}
    Apply items~\itemref{it:analytic-ii} and~\itemref{it:analytic-vi} of Theorem~\ref{thm:analytic}, and items~\itemref{it:physical-ii} and~\itemref{it:physical-vi} of Theorem~\ref{thm:physical}. The lower bound in \eqref{eq:main-higher-order} is incompatible with an exponential upper bound at times whose minimum gap is at least $N/C$.
\end{proof}

\section{Dynamical realisations under Theorem~{\ref{thm:symbolic}}}
\label{sec:stochastic-consequences}
The proofs in this section use a stationary process with the properties in Theorem~\ref{thm:symbolic}. We state that theorem first, then establish stability under changes of the observable, the fixed analytic realisation and the physical realisation. The existence of the process is proved afterwards.

\subsection{Symbolic realisation of finite-order exponential mixing without the CLT}
We state the probabilistic result first and postpone its proof to Sections~\ref{sec:construction}--\ref{sec:moment}. Throughout the present section, we assume this theorem and use only its stated properties.

\begin{notation}
    We use the following notation for the symbolic system and its coordinate process.
    
    \begin{itemize}
        \item Let $\Sigma:=\{-1,1\}^{\mathbb Z}$ and let $\sigma:\Sigma\to\Sigma$ be the left shift, $(\sigma\omega)_i:=\omega_{i+1}$.
        For $i\in\mathbb Z$, let $Z_i(\omega):=\omega_i$, and write $\varphi:=Z_0$.
        
        \item For a finite integer interval $I=[u,v]\cap\mathbb Z$, write $Z[I]:=(Z_u,\ldots,Z_v)$.
        For finite $I,J\subset\mathbb Z$ we use $\dist(I,J):=\min\{|i-j|:i\in I,\ j\in J\}$.
        
        \item For $0<\theta<1$, equip $\Sigma$ with the metric $d_\theta(\omega,\omega'):=\theta^{\min\{|i|:\omega_i\neq\omega_i'\}}$, and let $\mathcal F_\theta(\Sigma)$ denote the space of functions that are Lipschitz for $d_\theta$, with $\|F\|_\theta:=\|F\|_\infty+\Lip_\theta(F)$.
        Since $d_\theta^{\,\eta}=d_{\theta^\eta}$, varying $\theta$ already covers every H\"older exponent, and we use no separate exponent.
        
        \item For $0\leq n_1<\cdots<n_s$ we abbreviate $\mathbf n:=(n_1,\ldots,n_s)$ and set $$\Delta(\mathbf n):=\min_{1\leq i<s}(n_{i+1}-n_i).$$
        
        \item Write $S_n:=\sum_{i=0}^{n-1}Z_i$.
    \end{itemize}
\end{notation}

Theorem~\ref{thm:symbolic} constructs a stationary $\{-1,1\}$-valued process with exponential mixing of any prescribed order $R$, but whose normalised sums have no Gaussian weak subsequential limit. It is the main result from which Theorems~\ref{thm:analytic} and~\ref{thm:physical} are derived.

\begin{maintheorem}
    \label{thm:symbolic}
    For every integer $R\geq2$, there exists a full-support $\sigma$-invariant Borel probability measure $\nu$ on $\Sigma$ with the following properties.
    \begin{enumerate}[label=\textup{(\roman*)},ref=\roman*]
        \item
        \label{it:S-independence}
        Under $\nu$, the coordinate process $(Z_i)_{i\in\mathbb Z}$ is stationary, is $\{-1,1\}$-valued, satisfies $\nu(Z_0=-1)=\nu(Z_0=1)=1/2$, and is $(2R+1)$-tuplewise independent. Thus any collection of at most $2R+1$ coordinates with distinct indices is jointly independent.
        
        \item
        \label{it:S-windows}
        There exists $K_R<\infty$ such that, for all disjoint nonempty finite intervals $I_1,\ldots,I_R$ ordered from left to right,
            \begin{align}
                \min_{i<R}\dist(I_i,I_{i+1})>K_R\max_i|I_i| \quad\text{then}\quad Z[I_1],\ldots,Z[I_R]\text{ are jointly independent}. \label{eq:window-main}
            \end{align}
        
        \item
        \label{it:S-correlations}
        For every $0<\theta<1$, there exist constants $C_{R,\theta},c_\theta>0$ such that
        \begin{align}
            \left| \nu\left(\prod_{i=1}^RF_i\circ\sigma^{n_i}\right) -\prod_{i=1}^R\nu(F_i) \right| &\leq C_{R,\theta}e^{-c_\theta\Delta(\mathbf n)} \prod_{i=1}^R\|F_i\|_\theta \label{eq:symbolic-main}
        \end{align}
        for all $F_1,\ldots,F_R\in\mathcal F_\theta(\Sigma)$ and all integers $0\leq n_1<\cdots<n_R$.
        
        \item
        \label{it:S-moments}
        There exist $\delta>0$ and $n_0\geq1$ such that
        \begin{align}
            \E_\nu\left(\frac{S_n}{\sqrt n}\right)^{2R+2} \leq (2R+1)!!-\delta \quad\text{for every }n\geq n_0. \label{eq:symbolic-moment-main}
        \end{align}
        In particular, the laws of $S_n/\sqrt n$ under $\nu$ have no Gaussian weak subsequential limit. Explicitly, for every strictly increasing sequence of positive integers $n_k\to\infty$ and every $\sigma^2\geq0$,
        \begin{align*}
            \frac{S_{n_k}}{\sqrt{n_k}} \centernot{\Longrightarrow}\mathcal N(0,\sigma^2) \quad\text{as }k\to\infty,
        \end{align*}
        where convergence in distribution is taken with respect to $\nu$.

        \item
            \label{it:S-higher-order}
            There exist constants $c,C>0$ and $N_0\geq1$ such that, for every integer $N\geq N_0$, there are integers $0\leq n_1<\cdots<n_{2R+2}<N$ satisfying
            \begin{align}
                \Delta(\mathbf n)\geq\frac{N}{C} \quad\text{and}\quad -\E_\nu\left[\prod_{i=1}^{2R+2} Z_{n_i}\right] \geq cN^{-R-1}. \label{eq:symbolic-higher-order-main}
            \end{align}
            Hence, $(\Sigma,\sigma,\nu)$ is not exponentially mixing of order $2R+2$ in $\mathcal F_\theta(\Sigma)$ for any $0<\theta<1$.
    \end{enumerate}
\end{maintheorem}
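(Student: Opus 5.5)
The plan is to build $\nu$ as the law of a hierarchically tilted sign process, and then read off (iii)–(v) from (i), (ii) and one moment estimate. The main obstacle, Step 3 below, is to get an order-one deficit in the $(2R+2)$-th moment at every scale $n$, not just along the block scales.

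\emph{Step 1 (single-block tilt).} For a block of length $m$ with i.i.d.\ uniform signs $\xi_1,\dots,\xi_m$, I will look for a density $h_m(S)$, depending only on $S=\sum\xi_j$, with three properties.
\begin{itemize}
\item[(a)] $h_m\ge0$ and $\E h_m=1$.
\item[(b)] The Krawtchouk (Walsh–symmetric) coefficients of $h_m$ of degrees $1,\dots,2R+1$ vanish.
\item[(c)] The degree-$(2R+2)$ coefficient is $-\varepsilon\, m^{-(R+1)}$, with $\varepsilon>0$ independent of $m$.
\end{itemize}
I will take $h_m=1+\varepsilon\,\phi(S/\sqrt m)$ with $\phi$ bounded and smooth, chosen so that its Hermite coefficients in degrees $1,\dots,2R+1$ vanish and its degree-$(2R+2)$ coefficient is negative. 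Exact vanishing at finite $m$ is obtained by a finite-dimensional correction, which is small by the Krawtchouk-to-Hermite asymptotics.

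Property (b) makes every set of at most $2R+1$ coordinates in the block uniform and independent. Property (c) gives
\[
\E\big(S/\sqrt m\big)^{2R+2}=(2R+1)!!-\varepsilon c_R+o(1).
\]

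\emph{Step 2 (stationary hierarchy).} Using the random nested stationary interval partitions of $\mathbb Z$ (scales $L_k=L^k$ with random offsets), I will define the law recursively, with the construction organised so that distinct blocks at any level remain conditionally independent given the partition. At level $k$, the tilt $h_{L_k}$ is applied to sums of level-$k$ blocks, with a summable weight $\varepsilon_k$ so that positivity survives. I then pass to a stationary limit $\nu$ by averaging over offsets.

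Two properties should then follow.
\begin{itemize}
\item Item (i): every tilt is orthogonal to all Walsh characters of degree at most $2R+1$, and this survives mixing over partitions, which gives $(2R+1)$-tuplewise independence.
\item Item (ii): dependence between coordinates occurs only inside a common block. If $R$ windows are separated by more than $K_R\max|I_i|$, then with probability one they lie in different blocks at every level at which a tilt could couple them. To make this hold almost surely rather than with high probability, the partition geometry has to be chosen so that separation at ratio $K_R$ forces cutting. This is the role of the marker construction.
\end{itemize}

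\emph{Step 3 (mixing and obstructions).} For item (iii), I approximate each $F_i\in\mathcal F_\theta$ by $\E[F_i\mid Z[-r,r]]$, with error at most $\Lip_\theta(F_i)\theta^{r}$. I choose $r\approx\Delta(\mathbf n)/(3K_R)$ so that item (ii) applies to the shifted windows, and this yields $e^{-c_\theta\Delta}$ with $c_\theta$ proportional to $|\log\theta|/K_R$.

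For item (iv), the key estimate is $\E(S_n/\sqrt n)^{2R+2}\le(2R+1)!!-\delta$ for all large $n$. I expand the moment into $(2R+2)$-point correlations. All terms with repeated indices reproduce the Gaussian count, because joint laws of at most $2R+1$ distinct coordinates are independent. The fully distinct terms contribute $\sum_k \varepsilon_k(\cdots)$, which is nonpositive by (c). For $n$ comparable to some $L_k$, the level-$k$ tilt contributes at least a fixed fraction of $\varepsilon c_R$. This uniformity in $n$, not just along $n=L_k$, is the main obstacle: what I would try first is choosing $L$ fixed and $\varepsilon_k=\varepsilon$ constant, so that the tilt is self-similar. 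If that fails, I would average over random offsets to handle windows that straddle block boundaries.

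Granting the moment bound, no Gaussian subsequential limit exists. Suppose $S_{n_k}/\sqrt{n_k}\Rightarrow\mathcal N(0,\sigma^2)$. The fourth moments are bounded, so the squares are uniformly integrable and $\sigma^2=\lim\E S_{n_k}^2/n_k=1$ by pairwise independence. Fatou's lemma then gives $\liminf\E(S_{n_k}/\sqrt{n_k})^{2R+2}\ge(2R+1)!!$, contradicting the bound.

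Item (v) comes from the same computation: pick $2R+2$ equally spaced indices at gap about $N/C$ inside one level-$k$ block with $L_k\asymp N$. The correlation then equals $-\varepsilon\,L_k^{-(R+1)}$ times the probability, bounded below, that they share that block, which is $\gtrsim N^{-R-1}$.
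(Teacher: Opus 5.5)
Your proposal has a genuine gap in Steps 1--2. The interaction you build is an \emph{exchangeable} tilt $1+\varepsilon\phi(S_B/\sqrt{|B|})$ of the whole block, and such a tilt is incompatible with items (ii)--(iii).

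\textbf{Why (ii)--(iii) fail.} By (c), every $(2R+2)$-point correlation of distinct coordinates in a common block has the same value, however the points are arranged. Take the cylinder function $F=\prod_{j=0}^{R}\omega_j\in\mathcal F_\theta(\Sigma)$. By (i), $\nu(F)=0$, and for $n>R$ the quantity $\nu\bigl(F\cdot F\circ\sigma^n\bigr)=\E\bigl[\prod_{j=0}^{R}Z_jZ_{n+j}\bigr]$ is a $(2R+2)$-point correlation. Your own argument for (v) applies verbatim: a level-$k$ block with $L_k\asymp n$ contains both clusters with probability bounded below. So this correlation has absolute value $\gtrsim n^{-(R+1)}$. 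Hence the process is not even exponentially mixing of order $2$, and the windows $[0,R]$ and $[n,n+R]$ are dependent for all large $n$, contradicting (ii). Your justification of (ii) cannot work for nested partitions: any two windows eventually share a block, and your tilt sees every coordinate of that block.

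\textbf{Two further problems.}
\begin{itemize}
\item The Walsh orthogonality in (b) is computed against i.i.d.\ signs, and it does not survive stacking tilts across levels. The cross term $\phi(S_B)\phi(S_{B'})$ with $B\subset B'$ generically has nonzero degree-$2$ coefficients. Tilting recursively against the product of child laws does not help either: those laws are not exchangeable, so a function of $S_B$ alone cannot kill all low-degree coefficients.
\item Partitions with exact lengths $L^k$ and a random offset are determined globally by the offset. So conditional independence of far windows given the partition does not survive averaging.
\end{itemize}

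\textbf{The missing idea} is an interaction that sees \emph{which children} the points lie in. The paper sets $\rho_B=\bigl(1-\tfrac12\prod_{i=1}^{L}\psi_i(x[B_i])\bigr)\bigotimes_{i=1}^{r}\rho_{B_i}$ with $L=2R+2$, where each $\psi_i$ is bounded, odd and centred under the \emph{child law}. This gives the following.
\begin{itemize}
\item The perturbation preserves exactly the joint law of any $L-1$ children, whatever the child laws are. So the recursion is consistent, marginals stabilise, and $(2R+1)$-tuplewise independence is automatic.
\item An $L$-point correlation is nonzero only when the points lie one in each of the first $L$ children. Since $R$ windows at the matched scale $(4L)^k\approx\max_i|I_i|$ meet at most $2R=L-2$ level-$k$ blocks, no higher-level interaction sees them.
\item Finitely dependent markers with range proportional to $(4L)^k$, built from Holroyd--Liggett colourings, then give exact window independence after averaging over the markers.
\item The biasing lemma (Hahn--Banach on the zonoid, plus $\E|a\cdot X|\ge\|a\|_2/\sqrt3$) supplies $\psi_i$ with $\E[X_j\psi_i]=1/\sqrt{3|B_i|}$ for \emph{every} $j$, despite non-exchangeable child laws. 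So points spread one per child get correlation $-\tfrac12\, 3^{-L/2}\prod_i|B_i|^{-1/2}$.
\item All $L$-point correlations are nonpositive. Hence a single complete level-$k$ block inside $[0,n)$ already gives an order-one moment deficit at every $n$, which removes the obstacle you flagged in Step 3.
\end{itemize}
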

The proof of Theorem~\ref{thm:symbolic} is carried out in Part~\ref{part:probabilistic} and completed at the end of Section~\ref{sec:moment}. We now assume Theorem~\ref{thm:symbolic} and prove Theorems~\ref{thm:analytic} and~\ref{thm:physical}. We first record a consequence of the finite-interval independence assertion that will be used for the physical realisation.

\begin{corollary}
    \label{cor:mixing}
    Every stationary probability measure on $\Sigma$ satisfying \eqref{eq:window-main} is mixing, and hence ergodic.
\end{corollary}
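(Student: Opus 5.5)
The plan is to prove mixing of $(\sigma,\nu)$ using the case $R\ge 2$ of \eqref{eq:window-main} with only two windows, and then to deduce ergodicity by the standard argument. Mixing means that $\nu(A\cap\sigma^{-n}B)\to\nu(A)\nu(B)$ for all Borel $A,B\subset\Sigma$. I will first establish this for cylinder sets and then extend by approximation.

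\emph{Step 1 (cylinders).} Let $A$ and $B$ be cylinder sets determined by the coordinates in a finite interval $I=[-m,m]\cap\mathbb Z$. Then $\sigma^{-n}B$ is determined by the coordinates in $I+n$. For $n>(2K_R+1)(2m+1)$, the intervals $I$ and $I+n$ are disjoint, and their distance $n-2m$ exceeds $K_R|I|$. Take the intervals $I_1=I$ and $I_2=I+n$ (if $R>2$, also add further singleton or short intervals far to the right, spaced so the hypothesis holds, and then marginalise them out). Condition \eqref{eq:window-main} gives independence of $Z[I]$ and $Z[I+n]$. Hence $\nu(A\cap\sigma^{-n}B)=\nu(A)\nu(\sigma^{-n}B)=\nu(A)\nu(B)$ by stationarity, so the correlation is exactly equal to $\nu(A)\nu(B)$ for all large $n$.

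Padding is needed only if the statement is read as requiring exactly $R$ intervals. In that case I choose $I_3,\dots,I_R$ of the same length $|I|$, placed successively at distance $n-2m$. The joint independence of all $R$ windows then implies independence of the first two.

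\emph{Step 2 (extension).} Finite unions of cylinders form an algebra generating the Borel $\sigma$-algebra. Step 1 extends to this algebra by finite additivity: a finite union of cylinders depending on $[-m,m]$ is again a set determined by those coordinates. For general Borel $A,B$ and $\varepsilon>0$, choose sets $A',B'$ in the algebra with $\nu(A\triangle A')<\varepsilon$ and $\nu(B\triangle B')<\varepsilon$. By invariance, $\nu(\sigma^{-n}B\triangle\sigma^{-n}B')<\varepsilon$, so
\[
|\nu(A\cap\sigma^{-n}B)-\nu(A)\nu(B)|\le 4\varepsilon+|\nu(A'\cap\sigma^{-n}B')-\nu(A')\nu(B')|.
\]
The last term vanishes for large $n$, which gives mixing.

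\emph{Step 3 (ergodicity).} Let $A$ be invariant, with $\sigma^{-1}A=A$. Mixing with $B=A$ gives $\nu(A)=\nu(A\cap\sigma^{-n}A)\to\nu(A)^2$. Hence $\nu(A)\in\{0,1\}$.

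There is no real obstacle here; the only point needing care is the bookkeeping in Step 1, namely checking the separation condition (and the padding with extra windows when $R>2$).
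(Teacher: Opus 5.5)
Your proposal is correct and follows the paper's proof. For cylinder events you get the exact factorisation $\nu(A\cap\sigma^{-n}B)=\nu(A)\nu(B)$ once $n$ is large enough that the two windows satisfy the separation in \eqref{eq:window-main}, then extend by the standard approximation argument and deduce ergodicity from mixing as usual; your padding with extra windows when $R>2$, so that \eqref{eq:window-main} is applied with exactly $R$ intervals, is a detail the paper leaves implicit.
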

\begin{proof}
    Let $A,B$ be cylinder events. They depend on coordinates in two fixed finite intervals. For every sufficiently large $n$, those intervals, after translating the second by $n$, satisfy the separation condition in \eqref{eq:window-main}. Therefore $ \nu(A\cap\sigma^{-n}B)=\nu(A)\nu(B)$ for all sufficiently large $n$. Since the cylinder sets form an algebra generating the Borel $\sigma$-algebra of $\Sigma$, the standard approximation argument for mixing completes the proof (see~\cite[Section~1.7]{Walters}).
\end{proof}

\subsection{Stability of CLT failure under perturbations of the observable}
\label{sec:robustness}
We prove that the absence of Gaussian weak subsequential limits persists under small $\mathcal{C}^1$ perturbations of the observable. The argument requires only summable two-point correlations and will be used in both smooth realisations, including analytic approximation on the torus. We use the following distance to compare the laws of the normalised sums and measure their separation from Gaussian laws.

\begin{notation}[Bounded-Lipschitz distance]
    For probability measures $\mathbb P,\mathbb Q$ on $\mathbb R$, set
    $$
    d_{\mathrm{BL}}(\mathbb P,\mathbb Q):=\sup_{\|u\|_\infty\leq1,\ \Lip(u)\leq1}|\mathbb P(u)-\mathbb Q(u)|.
    $$
    Let $\mathcal G$ denote the set of all Gaussian probability measures on $\mathbb R$, including point masses, and write
    $$
    \dist_{\mathrm{BL}}(\mathbb P,\mathcal G):=\inf_{\mathbb Q\in\mathcal G}d_{\mathrm{BL}}(\mathbb P,\mathbb Q).
    $$
\end{notation}

The following lemma gives a uniform second-moment bound for the normalised sums. We use this bound to obtain tightness and to control how their laws change when the observable is perturbed.

\begin{lemma}
    \label{lem:variance}
    Suppose $T$ preserves a probability measure $\mu$ on a compact manifold and
    \begin{align}
        \left|\int f g\circ T^n \d \mu - \int f \d \mu \int g \d \mu\right| \le C\gamma(n)\|f\|_{\mathcal{C}^1}\|g\|_{\mathcal{C}^1}\quad\text{and}\quad \sum_{n\ge1}\gamma(n)<\infty, \label{eq:summable-correlations}
    \end{align}
    where $\gamma(n)\ge0$. Then every centred $h\in \mathcal{C}^1(M)$ satisfies
    \begin{align}
        \sup_{n\ge1}\frac{\|S_nh\|_{L^2(\mu)}}{\sqrt n} \le C_0\|h\|_{\mathcal{C}^1}\quad \text{with}\quad C_0^2:=1+2C\sum_{i\ge1}\gamma(i). \label{eq:variance-seminorm}
    \end{align}
\end{lemma}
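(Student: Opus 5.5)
We expand the square of the Birkhoff sum and use stationarity to reduce everything to two-point correlations. Since $\mu$ is $T$-invariant and $\mu(h)=0$,
\begin{align}
    \|S_nh\|_{L^2(\mu)}^2
    =\sum_{i,j=0}^{n-1}\mu\bigl(h\circ T^i\,h\circ T^j\bigr)
    =n\,\mu(h^2)+2\sum_{k=1}^{n-1}(n-k)\,\mu\bigl(h\,h\circ T^k\bigr).
\end{align}
For the second equality we group the pairs with $|i-j|=k$. There are $n-k$ such ordered pairs with $i<j$, and the same number with $i>j$. Invariance gives $\mu(h\circ T^i\,h\circ T^{i+k})=\mu(h\,h\circ T^k)$.

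We then bound each term separately. The diagonal term satisfies $\mu(h^2)\leq\|h\|_\infty^2\leq\|h\|_{\mathcal C^1}^2$, using the definition $\|h\|_{\mathcal C^1}=\|h\|_\infty+\|Dh\|_\infty$. For $k\geq1$, we apply \eqref{eq:summable-correlations} with $f=g=h$. Since $\int h\,\d\mu=0$, this gives
$$|\mu(h\,h\circ T^k)|\leq C\gamma(k)\|h\|_{\mathcal C^1}^2.$$
Using $n-k\leq n$, we obtain
$$\|S_nh\|_{L^2(\mu)}^2\leq n\|h\|_{\mathcal C^1}^2\Bigl(1+2C\sum_{k\geq1}\gamma(k)\Bigr)=nC_0^2\|h\|_{\mathcal C^1}^2.$$
Dividing by $n$ and taking square roots gives \eqref{eq:variance-seminorm}, uniformly in $n\geq1$.

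No step is a genuine obstacle. The only points needing care are the bookkeeping of the factor $2$ from the symmetric off-diagonal pairs, which produces exactly the constant $C_0^2$ in the statement, and the use of $\mu(h)=0$ to remove the product-of-means term in \eqref{eq:summable-correlations}. Summability of $\gamma$ guarantees that the bound does not depend on $n$.
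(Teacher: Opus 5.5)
Your proposal is correct and follows essentially the same argument as the paper: you expand $\|S_nh\|_{L^2(\mu)}^2$ via invariance into the diagonal term plus $2\sum_{k=1}^{n-1}(n-k)\mu(h\,h\circ T^k)$. You then bound the diagonal by $\|h\|_\infty^2$ and the off-diagonal terms by the centred correlation estimate with $f=g=h$, which gives exactly $C_0^2$.
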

\begin{proof}
    For every $n\geq1$, invariance of $\mu$ and \eqref{eq:summable-correlations} give
    \begin{align*}
        \frac1n\|S_nh\|_{L^2(\mu)}^2 &=\mu(h^2) +2\sum_{i=1}^{n-1} \left(1-\frac{i}{n}\right)\mu(h\,h\circ T^i)\leq \|h\|_\infty^2 +2C\sum_{i\geq1}\gamma(i) \|h\|_{\mathcal{C}^1(M)}^2 \\
        &\leq C_0^2\|h\|_{\mathcal{C}^1(M)}^2.
    \end{align*}
    Taking square roots and the supremum over $n\geq1$ proves \eqref{eq:variance-seminorm}.
\end{proof}

\begin{proposition}
    \label{prop:robustness}
    Assume \eqref{eq:summable-correlations}. Let $f\in \mathcal{C}^1(M)$ be centred and suppose that $S_nf/\sqrt n$ has no Gaussian weak subsequential limit. Then there are $\delta>0$, $c_0>0$ and $n_0\geq1$ such that
    \begin{align}
        \dist_{\mathrm{BL}}\left( \Law_\mu\left(\frac{S_n(g-\mu(g))}{\sqrt n}\right), \mathcal G \right)\geq c_0 \label{eq:robust-separation}
    \end{align}
    for every $n\geq n_0$ and every $g\in \mathcal{C}^1(M)$ with $\|g-f\|_{\mathcal{C}^1}<\delta$.
\end{proposition}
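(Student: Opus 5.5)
The plan is to first show that the law of $S_nf/\sqrt n$ stays a definite distance from $\mathcal G$ for all large $n$, and then carry this over to nearby $g$ through the $L^2$ estimate of Lemma~\ref{lem:variance}.

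\textbf{Step 1: uniform separation for $f$ itself.} Write $\lambda_n:=\Law_\mu(S_nf/\sqrt n)$. I claim there are $c_1>0$ and $n_1$ with $\dist_{\mathrm{BL}}(\lambda_n,\mathcal G)\geq 2c_1$ for all $n\geq n_1$. Suppose not. Then along some subsequence $n_k$ there are Gaussians $\gamma_k=\mathcal N(m_k,s_k^2)$ with $d_{\mathrm{BL}}(\lambda_{n_k},\gamma_k)\to0$.

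By Lemma~\ref{lem:variance}, $\E_\mu(S_nf/\sqrt n)^2\leq C_0^2\|f\|_{\mathcal C^1}^2$. Hence $(\lambda_n)$ is tight, and after passing to a further subsequence $\lambda_{n_k}\Rightarrow\lambda$ for some probability measure $\lambda$. Because $d_{\mathrm{BL}}$ metrises weak convergence, this gives $\gamma_k\Rightarrow\lambda$ as well. Now $\lambda$ is itself tight, so $(\gamma_k)$ is tight, which forces $m_k$ and $s_k$ to be bounded. Extracting once more, $m_k\to m$ and $s_k\to s$, so $\gamma_k\Rightarrow\mathcal N(m,s^2)$ and therefore $\lambda=\mathcal N(m,s^2)$. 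In particular $\lambda$ is a Gaussian weak subsequential limit of $S_nf/\sqrt n$ (whether it is centred is irrelevant). This contradicts the hypothesis.

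One point needs care: the hypothesis is stated for centred Gaussians $\mathcal N(0,\sigma^2)$, as in the theorems. So I must show $m=0$. Since $\sup_n\E_\mu(S_nf/\sqrt n)^2<\infty$, the sequence is uniformly integrable, so the means pass to the limit and $m=\lim\E_\mu(S_{n_k}f)/\sqrt{n_k}=0$. This is the step I would check most carefully.

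\textbf{Step 2: perturbation.} Let $g\in\mathcal C^1(M)$ with $\|g-f\|_{\mathcal C^1}<\delta$, and put $h:=(g-\mu(g))-f$. This $h$ is centred, and since $|\mu(g)|=|\mu(g-f)|\leq\delta$ we get $\|h\|_{\mathcal C^1}\leq 2\delta$. By Lemma~\ref{lem:variance}, $\|S_nh/\sqrt n\|_{L^2(\mu)}\leq 2C_0\delta$. For any $u$ with $\Lip(u)\leq1$, Cauchy--Schwarz then gives
\[
\bigl|\E_\mu u(S_n(g-\mu(g))/\sqrt n)-\E_\mu u(S_nf/\sqrt n)\bigr|\leq \E_\mu|S_nh|/\sqrt n\leq 2C_0\delta .
\]
Taking the supremum over $u$ bounds the $d_{\mathrm{BL}}$ distance between the two laws by $2C_0\delta$.

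\textbf{Step 3: conclusion.} By the triangle inequality for $\dist_{\mathrm{BL}}(\cdot,\mathcal G)$, every $n\geq n_0:=n_1$ satisfies $\dist_{\mathrm{BL}}\geq 2c_1-2C_0\delta$. Choosing $\delta:=c_1/(2C_0)$ gives \eqref{eq:robust-separation} with $c_0:=c_1$.

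I expect Step~1 to be the main obstacle, because of the compactness argument. Specifically, $\mathcal G$ is not closed under weak limits unless the parameters $m_k,s_k$ stay bounded, and that boundedness has to come from the tightness supplied by the variance bound.
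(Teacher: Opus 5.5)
Your proof is correct and follows the same route as the paper. You get tightness from Lemma~\ref{lem:variance} and a compactness argument showing $\liminf_n \dist_{\mathrm{BL}}(\Law_\mu(S_nf/\sqrt n),\mathcal G)>0$, then transfer this to nearby $g$ via the $L^2$ bound $d_{\mathrm{BL}}\le 2C_0\|g-f\|_{\mathcal C^1}$. The only differences are that you prove weak closedness of $\mathcal G$ directly (bounded parameters from tightness of the weakly convergent $\gamma_k$), where the paper cites it, and that you check via uniform integrability that the limiting mean is $0$, so the argument also works if the hypothesis only excludes centred Gaussian limits.
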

\begin{proof}
    Write $\mathbb P_n:=\Law_\mu(S_nf/\sqrt n)$. Lemma~\ref{lem:variance} gives
    $$
    \sup_{n\geq1}\int x^2\,\mathbb P_n(dx) \leq C_0^2\|f\|_{\mathcal{C}^1(M)}^2,
    $$
    so $(\mathbb P_n)$ is tight. We first show that
    \begin{align}
        \liminf_{n\to\infty} \dist_{\mathrm{BL}}(\mathbb P_n,\mathcal G)>0. \label{eq:gaussian-separation}
    \end{align}
    Otherwise, there would be $n_j\to\infty$ and $\mathbb G_j\in\mathcal G$ with $d_{\mathrm{BL}}(\mathbb P_{n_j},\mathbb G_j)\to0$. By Prohorov's theorem~\cite[Theorem~5.1]{Billingsley}, pass to a subsequence for which $\mathbb P_{n_j}$ converges in the weak topology to a probability measure $\mathbb P$. Since $d_{\mathrm{BL}}$ metrises this topology~\cite[Theorem~11.3.3]{Dudley},
    $$
    d_{\mathrm{BL}}(\mathbb G_j,\mathbb P) \leq d_{\mathrm{BL}}(\mathbb G_j,\mathbb P_{n_j}) +d_{\mathrm{BL}}(\mathbb P_{n_j},\mathbb P)\longrightarrow0.
    $$
    The Gaussian laws, including point masses, are closed in the space of probability measures on $\mathbb R$ with this weak topology (see~\cite[Lemma~B.1, item~1]{PSSW2015} or~\cite[Exercise~2.7.49]{BogachevWeak}). Thus $\mathbb P$ would be a Gaussian weak subsequential limit of $(\mathbb P_n)$, contrary to the hypothesis. This proves \eqref{eq:gaussian-separation}. Choose $c_0>0$ and $n_0$ so that the distance is at least $2c_0$ for every $n\geq n_0$.
    
    Now let $g\in\mathcal{C}^1(M)$ and put $h:=g-\mu(g)-f$. Because $\mu(f)=0$,
    $$
    \mu(h)=0,\quad |\mu(g)|=|\mu(g-f)|\leq\|g-f\|_\infty\quad\text{and}\quad \|h\|_{\mathcal{C}^1(M)}\leq2\|g-f\|_{\mathcal{C}^1(M)}.
    $$
    Write $\mathbb Q_n:=\Law_\mu(S_n(g-\mu(g))/\sqrt n)$. For a test function $u$ with $\|u\|_\infty\leq1$ and $\Lip(u)\leq1$, evaluate both sums on the same probability space. Their difference is $S_nh/\sqrt n$, so
    \begin{align*}
        |\mathbb P_n(u)-\mathbb Q_n(u)| &\leq\frac{\|S_nh\|_{L^1(\mu)}}{\sqrt n} \leq\frac{\|S_nh\|_{L^2(\mu)}}{\sqrt n}\leq C_0\|h\|_{\mathcal{C}^1(M)} \leq2C_0\|g-f\|_{\mathcal{C}^1(M)}.
    \end{align*}
    Taking the supremum over $u$ gives
    \begin{align}
        d_{\mathrm{BL}}(\mathbb P_n,\mathbb Q_n) \leq2C_0\|g-f\|_{\mathcal{C}^1(M)}. \label{eq:observable-law-distance}
    \end{align}
    Choose $\delta<c_0/(2C_0)$. For $n\geq n_0$ and $\|g-f\|_{\mathcal{C}^1(M)}<\delta$,
    $$
    \dist_{\mathrm{BL}}(\mathbb Q_n,\mathcal G) \geq\dist_{\mathrm{BL}}(\mathbb P_n,\mathcal G) -d_{\mathrm{BL}}(\mathbb P_n,\mathbb Q_n) >2c_0-2C_0\delta>c_0.
    $$
    This proves \eqref{eq:robust-separation}.
\end{proof}

\begin{corollary}
    \label{cor:analytic-observable}
    In the setting of Proposition~\ref{prop:robustness} on $\T^d$, a real trigonometric polynomial can be chosen whose centred normalised sums have no Gaussian weak subsequential limit.
\end{corollary}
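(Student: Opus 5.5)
The plan is to combine Proposition~\ref{prop:robustness} with density of real trigonometric polynomials in $\mathcal{C}^1(\T^d)$. Proposition~\ref{prop:robustness} is applied to the given centred observable $f\in\mathcal{C}^1(\T^d)$, whose normalised sums have no Gaussian weak subsequential limit. It gives $\delta>0$, $c_0>0$ and $n_0$ such that \eqref{eq:robust-separation} holds for every $g\in\mathcal{C}^1(\T^d)$ with $\|g-f\|_{\mathcal{C}^1}<\delta$. It therefore suffices to produce a real trigonometric polynomial $g$ with $\|g-f\|_{\mathcal{C}^1}<\delta$.

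The approximation step is the only real issue, since $f$ is merely $\mathcal{C}^1$ and its Fourier series need not converge in $\mathcal{C}^1$. I would use Fejér means, that is, convolution with the $d$-dimensional Fejér kernel $F_N$, which is the product of one-dimensional Fejér kernels. The kernel $F_N$ is a nonnegative trigonometric polynomial of unit mass, so $F_N*f$ is a trigonometric polynomial, and it is real because $F_N$ and $f$ are real. Convolution commutes with differentiation, so $\partial_j(F_N*f)=F_N*\partial_j f$. Since $F_N$ is an approximate identity, $F_N*u\to u$ uniformly for every continuous $u$ on $\T^d$. Applying this to $f$ and to each $\partial_j f$ gives $\|F_N*f-f\|_{\mathcal{C}^1}\to0$.

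If $\T^d$ carries a Riemannian metric other than the flat one, then $\|Df\|_\infty$ is only equivalent to the flat norm, but the convergence is unaffected. (Alternatively, one could first approximate $f$ in $\mathcal{C}^1$ by a $\mathcal{C}^\infty$ function and then use the rapid decay of its Fourier coefficients, so that truncated Fourier series converge in $\mathcal{C}^1$.)

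Finally, choose $N$ with $\|F_N*f-f\|_{\mathcal{C}^1}<\delta$ and set $g:=F_N*f$. Then $g-\mu(g)$ is a real trigonometric polynomial, since subtracting a constant keeps it in that class. By \eqref{eq:robust-separation}, the laws of $S_n(g-\mu(g))/\sqrt n$ stay at $d_{\mathrm{BL}}$-distance at least $c_0$ from $\mathcal G$ for all $n\geq n_0$.

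It remains to rule out Gaussian subsequential limits. Suppose some subsequence converged weakly to a Gaussian law $\mathbb G$. Since $d_{\mathrm{BL}}$ metrises weak convergence, the distance from these laws to $\mathbb G$ would tend to $0$, which contradicts the uniform lower bound $c_0$. I do not expect any real obstacle, because $\mathcal{C}^1$ approximation by Fejér means is standard.
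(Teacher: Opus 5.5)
Your proposal is correct and follows the paper's proof: you apply Proposition~\ref{prop:robustness} and choose a real trigonometric polynomial in its $\mathcal{C}^1$ neighbourhood, using density of such polynomials in $\mathcal{C}^1(\T^d)$, which the paper cites from Grafakos and you verify directly via Fej\'er means. You also make explicit a step the paper leaves implicit: the uniform $d_{\mathrm{BL}}$ separation from $\mathcal G$ for $n\geq n_0$ excludes every Gaussian weak subsequential limit, because $d_{\mathrm{BL}}$ metrises weak convergence.
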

\begin{proof}
    Real trigonometric polynomials are dense in $\mathcal{C}^1(\T^d)$ in the $\mathcal{C}^1$ norm (see~\cite[Corollary~3.1.11 and its proof]{Grafakos}). Choose such a polynomial in the neighbourhood from Proposition~\ref{prop:robustness}. It is real analytic, and subtracting its mean preserves that class.
\end{proof}
The neighbourhood in Proposition~\ref{prop:robustness} is for a fixed map and a fixed measure. It need not be uniform in the requested mixing order.

\subsection{Realisation on a fixed analytic diffeomorphism}
\label{sec:realisation}
\subsubsection{Hyperbolic coding}
For Theorem~\ref{thm:analytic}, the geometric construction is a two-symbol horseshoe in a fixed iterate of the cat map. We use the following classical results of the theory of hyperbolic dynamical systems.

\begin{theorem}[Homoclinic horseshoe theorem {\cite{KatokHasselblatt}}]
    \label{thm:horseshoe-input}
    Let a $\mathcal{C}^1$ diffeomorphism have a hyperbolic periodic point with a transverse homoclinic intersection. A positive iterate has a compact invariant hyperbolic Cantor set on which the iterate is conjugate to the full two-sided shift on two symbols.
\end{theorem}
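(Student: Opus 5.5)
The plan is the classical Smale argument: a transverse homoclinic point organises an invariant Cantor set on which a suitable iterate behaves like the full shift. Let $f$ be the $\mathcal{C}^1$ diffeomorphism and $p$ the hyperbolic periodic point, of period $m$. Replacing $f$ by $g:=f^m$, we may assume $p$ is a hyperbolic fixed point of $g$. Let $q\neq p$ be a transverse homoclinic point, so $q\in W^s(p)\pitchfork W^u(p)$.

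First I choose local coordinates near $p$ in which $W^s_{\mathrm{loc}}(p)$ and $W^u_{\mathrm{loc}}(p)$ are the coordinate planes. In these coordinates I take a small box $Q$ around $p$ that also contains a point of the orbit of $q$. Iterating $q$ forward and backward, I pick $a$ with $g^{-a}(q)$ in the local unstable disc and $b$ with $g^{b}(q)$ in the local stable disc. For large $N$, the image $g^{N}(Q)$ should cross $Q$ in two disjoint "horizontal" strips. One strip is the part of $Q$ near $W^s_{\mathrm{loc}}(p)$ that stays near $p$. The other passes close to the homoclinic orbit: it goes out along $W^u$ and returns along $W^s$ after a fixed transition time $a+b$.

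The key analytic input is the inclination lemma (the $\lambda$-lemma). Transversality at $q$ implies that discs transverse to $W^s_{\mathrm{loc}}$ are stretched under forward iteration to become $\mathcal{C}^1$-close to $W^u_{\mathrm{loc}}$, and symmetrically under backward iteration. From this I will show that $Q\cap g^{N}(Q)$ consists of two full-height strips whose tangent directions lie in an unstable cone field. Likewise, $Q\cap g^{-N}(Q)$ consists of two full-width strips with tangents in a stable cone field. Hyperbolicity at $p$, together with uniform control over the finite transition segment near the homoclinic orbit, gives invariant cone fields on these strips. On them, $Dg^N$ expands the unstable cone and $Dg^{-N}$ expands the stable cone, by a factor $\lambda>1$ once $N$ is large.

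Set $F:=g^N$ and $\Lambda:=\bigcap_{k\in\mathbb Z}F^k(Q)$. For each symbol sequence $\omega\in\{0,1\}^{\mathbb Z}$, the nested intersections of strips $\bigcap_{|k|\le n}F^{-k}(\text{strip }\omega_k)$ are "rectangles" whose widths shrink like $\lambda^{-n}$ in both directions, by the cone expansion. Hence each such intersection is a single point $\pi(\omega)$. The resulting map $\pi$ is a continuous bijection onto $\Lambda$ satisfying $F\circ\pi=\pi\circ\sigma$. Since $\{0,1\}^{\mathbb Z}$ is compact, $\pi$ is a homeomorphism, so $\Lambda$ is a Cantor set. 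Hyperbolicity of $\Lambda$ follows from the invariant cone fields: $E^s$ and $E^u$ are defined as intersections of the iterated cones. The positive iterate in the statement is $f^{mN}$.

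I expect the main obstacle to be the uniform cone and strip geometry near the homoclinic excursion, namely showing that the excursion strip is mapped fully across $Q$ with cones preserved. The tool I would try first is the graph transform. Because the transition segment has fixed length $a+b$, the map $Dg^{a+b}$ is a fixed linear map sending $E^u$ transversally across $E^s$. Choosing $N$ large then makes the hyperbolic passage near $p$ dominate any distortion from this fixed transition. Since this is the classical theorem in \cite{KatokHasselblatt}, in the paper I would simply cite it rather than reproduce these estimates.
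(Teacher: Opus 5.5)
The paper does not prove this statement. It quotes it from \cite{KatokHasselblatt}, and cites Smale's construction \cite{Smale1967} when applying it to the cat map, so your closing decision to cite it is exactly what the paper does. Your outline (reduction to a fixed point, inclination lemma, invariant cones, nested strips, compactness of the shift) is the standard argument behind that reference. One step in it is false as written, however, and would need repairing if you wrote the argument out.

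\textbf{The false step.} You claim that $Q\cap g^{N}(Q)$ (and likewise $Q\cap g^{-N}(Q)$) consists of exactly two strips, and you then define $\Lambda:=\bigcap_{k\in\mathbb Z}F^k(Q)$. Besides the strip of orbits that stay near $p$, every split $N=n_1+(a+b)+n_2$ with $n_1$ and $n_2$ both large enough gives its own crossing strip. These are the orbits that spend $n_1$ steps near $p$, leave along $W^u_{\mathrm{loc}}(p)$ near $g^{-a}(q)$, make the excursion, re-enter near $g^{b}(q)$, and spend the remaining $n_2$ steps near $p$. So for large $N$ there are of the order of $N$ crossing strips. There are also components from orbits that go around the homoclinic loop several times, and non-crossing pieces when $n_1$ or $n_2$ is small.

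Consequently $\bigcap_k F^k(Q)$ in general contains points whose $F$-orbits pass through these other components. It is strictly larger than $\pi(\{0,1\}^{\mathbb Z})$, your map $\pi$ is not onto it, and it need not be conjugate to the two-shift at all.

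\textbf{The fix.} Keep the direct strip $H_0$ and a single excursion strip $H_1$, corresponding to one split with both $n_1$ and $n_2$ large. That choice is what the $\lambda$-lemma needs for $H_1$ to cross $Q$ in the stable direction and for $F(H_1)$ to cross it in the unstable direction. Check the cone conditions only on $H_0\cup H_1$, and set $\Lambda:=\bigcap_{k\in\mathbb Z}F^{-k}(H_0\cup H_1)$, the maximal $F$-invariant subset of $H_0\cup H_1$. This set is exactly the image of your coding map, and the rest of your argument then goes through unchanged.
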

\begin{theorem}[Regularity of hyperbolic coding {\cite[Chapter~18]{KatokHasselblatt}}]
    \label{thm:coding-input}
    For a finite Markov coding $\pi$ of a uniformly hyperbolic invariant set, there are $C>0$ and $0<\varrho<1$ such that the image of each cylinder specified on $[-\ell,\ell]$ has diameter at most $C\varrho^\ell$. In the full-shift horseshoe case, the coding may be taken to be a homeomorphism.
\end{theorem}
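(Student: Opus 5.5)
The plan is the standard argument from uniform hyperbolicity and local product structure. Let $\Lambda$ be the hyperbolic set, $T$ the diffeomorphism, and $\{P_1,\dots,P_m\}$ the Markov rectangles defining $\pi$. The coding is
$$
\pi(\omega)=\bigcap_{i\in\mathbb Z}T^{-i}P_{\omega_i},
$$
where each $P_j$ has diameter smaller than the scale $\varepsilon$ at which local stable and unstable manifolds and the bracket $[\cdot,\cdot]$ are defined.

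Fix hyperbolicity constants: $d(T^nx,T^ny)\le C_1\lambda^n d(x,y)$ for $y\in W^s_\varepsilon(x)$ and $n\ge0$, and the analogous bound with $T^{-n}$ on $W^u_\varepsilon(x)$, where $0<\lambda<1$.

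\textbf{Cylinder bound.} Take $\omega,\omega'$ agreeing on $[-\ell,\ell]$, and set $x=\pi(\omega)$, $y=\pi(\omega')$. Then $T^ix$ and $T^iy$ lie in the same rectangle $P_{\omega_i}$ for $|i|\le\ell$. Form the bracket $z=[x,y]=W^s_\varepsilon(x)\cap W^u_\varepsilon(y)$. The Markov property (stable boundaries map into stable boundaries forward, unstable boundaries into unstable boundaries backward) gives two inclusions for $0\le i\le\ell$:
\begin{itemize}
\item $T^iz\in W^s_{P_{\omega_i}}(T^ix)\cap P_{\omega_i}$;
\item $T^{-i}z\in W^u_{P_{\omega_{-i}}}(T^{-i}y)$.
\end{itemize}
I would prove both by induction on $i$, and I expect this to be the main technical point: the bracket must stay inside the correct rectangle along the orbit for all $\ell$ steps. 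The key input is that $z$ shares the stable itinerary of $x$ and the unstable itinerary of $y$, using the rectangle property $[P_j,P_j]\subset P_j$.

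Given these inclusions, I apply the contraction estimates at time $\ell$. Since $T^\ell x$ and $T^\ell z$ lie on a common local stable leaf inside $P_{\omega_\ell}$, which has diameter less than $\varepsilon$,
$$
d(x,z)\le C_1\lambda^\ell\,\varepsilon .
$$
The symmetric backward argument, with $T^{-\ell}y$ and $T^{-\ell}z$ on a common local unstable leaf, gives $d(z,y)\le C_1\lambda^\ell\varepsilon$. The triangle inequality then yields
$$
d(x,y)\le 2C_1\varepsilon\,\lambda^\ell ,
$$
which is the claim with $\varrho=\lambda$ and $C=2C_1\varepsilon$. If the metric along leaves is only comparable to the ambient metric, this changes only $C$.

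\textbf{Homeomorphism.} The diameter bound shows that $\pi$ is uniformly continuous from $(\Sigma,d_\theta)$, for any $\theta$, into $\Lambda$. In the full-shift horseshoe case of Theorem~\ref{thm:horseshoe-input}, the two rectangles are disjoint compact sets. Hence every point of $\Lambda$ has a unique itinerary, so $\pi$ is injective. It is surjective because every point of $\Lambda$ has an itinerary whose cylinder intersections are nonempty by compactness. A continuous bijection from the compact space $\Sigma$ to the Hausdorff space $\Lambda$ is a homeomorphism, and it conjugates $\sigma$ to $T|_\Lambda$ by construction of $\pi$.
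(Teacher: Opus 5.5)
Your overall strategy (form $z=[x,y]$, then apply hyperbolic estimates to the two legs) is the standard one. The paper itself does not prove this statement and only cites Katok--Hasselblatt, so your argument has to stand on its own. The final step applies the contraction estimates in the wrong time direction, and as written it does not give the bound. From $z\in W^s_\varepsilon(x)$ and $d(T^\ell x,T^\ell z)<\varepsilon$ you cannot conclude $d(x,z)\le C_1\lambda^\ell\varepsilon$. Your estimate applied to the pair $(x,z)$ reads $d(T^\ell x,T^\ell z)\le C_1\lambda^\ell d(x,z)$, which bounds the \emph{later} distance by the earlier one. Stable distances shrink forward and grow backward, so smallness at time $+\ell$ says nothing about time $0$. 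The same reversal occurs for the unstable pair $(z,y)$ at time $-\ell$.

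A quick check shows the argument proves too much. Your two inclusions use only the forward itinerary of $x$ and the backward itinerary of $y$. By the Markov property they therefore hold whenever $x$ and $y$ merely share the rectangle at time $0$. If your last step were valid, it would give $d(x,y)\le 2C_1\varepsilon\lambda^\ell$ for every $\ell$, hence $x=y$ for any two points of the same rectangle.

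The fix is to follow each leg in the time direction in which it expands.

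\emph{Stable leg.} Prove by induction on $0\le i\le\ell$ that $T^{-i}z=[T^{-i}x,T^{-i}y]$, so that $T^{-i}z$ lies on a local stable leaf of $T^{-i}x$ of size comparable to $\max_j\operatorname{diam}P_j$. The induction uses two facts. First, $T^{-i}x$ and $T^{-i}y$ lie in the same small set $P_{\omega_{-i}}$, which is where agreement of $\omega,\omega'$ on $[-\ell,0]$ enters. Second, uniqueness of the local product structure at scale $\|DT^{-1}\|\max_j\operatorname{diam}P_j$ lets you identify $T^{-1}$ of a bracket with the bracket of the preimages. Your forward contraction estimate, applied to the pair $(T^{-\ell}x,T^{-\ell}z)$ over $\ell$ steps, then gives $d(x,z)\le C_1\lambda^\ell\varepsilon$.

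\emph{Unstable leg.} Symmetrically, show $T^{i}z=[T^{i}x,T^{i}y]\in W^u_{\varepsilon}(T^iy)$ for $0\le i\le\ell$, using agreement on $[0,\ell]$. Backward contraction from time $\ell$ then gives $d(z,y)\le C_1\lambda^\ell\varepsilon$.

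Done this way, the Markov property is not needed at all; only smallness of the pieces relative to the product-structure scale is used. The result is the usual lemma that orbits which are $\delta$-close on $[-\ell,\ell]$ are $C\lambda^\ell$-close at time $0$. Your homeomorphism argument (injectivity from disjoint rectangles, then compact-to-Hausdorff) is fine once the corrected diameter bound is in place. For the horseshoe you may first need to refine the two rectangles to cylinders below the product-structure scale.
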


The following lemma is standard. Smale's horseshoe construction~\cite[Section~I.5, Proposition~5.3 and Theorem~5.5]{Smale1967}, applied to a transverse homoclinic point of the cat map, gives a two-symbol horseshoe after passing to a suitable iterate $T=A^{k_0}$. The estimate \eqref{eq:pullback} follows from Theorem~\ref{thm:coding-input}, and the zero-area conclusion follows from ergodicity of $T$ with respect to Haar area. These choices are independent of $R$.

\begin{lemma}
    \label{lem:fixed-horseshoe}
    There are $k_0\geq2$, a compact two-symbol horseshoe $\Lambda$ for $T=A^{k_0}$, and a coding homeomorphism $\pi:\Sigma\to\Lambda$ such that $T\circ\pi=\pi\circ\sigma$. Moreover, for some $0<\theta<1$ and $C_1<\infty$,
    \begin{align}
        \|f\circ\pi\|_\theta \leq C_1\|f\|_{\mathcal{C}^1(\mathbb T^2)} \quad\text{for every }f\in\mathcal{C}^1(\mathbb T^2). \label{eq:pullback}
    \end{align}
    Finally, $\Lambda$ has zero Haar area.
\end{lemma}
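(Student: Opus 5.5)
We split Lemma~\ref{lem:fixed-horseshoe} into three parts: construct the horseshoe, prove the pullback estimate, and show $\Lambda$ has zero area. None depends on $R$.

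\emph{Construction of the horseshoe.} The cat map $A$ is hyperbolic with eigenvalues $\lambda^{\pm1}$, where $\lambda=(3+\sqrt5)/2$. The origin is a hyperbolic fixed point. Its stable and unstable manifolds are the projections of the eigenlines, which have irrational slopes. Both are dense in $\mathbb T^2$ and cross transversally at points other than $0$, for instance at any intersection point of the two lines in the universal cover that differs from the origin by a nonzero lattice vector. This gives a transverse homoclinic point $q\neq0$. Theorem~\ref{thm:horseshoe-input} (Smale's construction) then supplies an iterate $T=A^{k_0}$ and a compact $T$-invariant hyperbolic Cantor set $\Lambda$ with a conjugacy $\pi:\Sigma\to\Lambda$, $T\circ\pi=\pi\circ\sigma$. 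Enlarging $k_0$ if necessary, we may assume $k_0\geq2$; the construction still applies to any larger multiple of the return time. By Theorem~\ref{thm:coding-input}, $\pi$ may be taken to be a homeomorphism.

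\emph{Pullback estimate \eqref{eq:pullback}.} Take $\omega\neq\omega'$ and let $\ell=\min\{|i|:\omega_i\neq\omega'_i\}$, so $d_\theta(\omega,\omega')=\theta^\ell$. The two sequences agree on $[-(\ell-1),\ell-1]$, so $\pi\omega$ and $\pi\omega'$ lie in the image of a single cylinder on that window. By Theorem~\ref{thm:coding-input}, $d(\pi\omega,\pi\omega')\le C\varrho^{\ell-1}$. Set $\theta:=\varrho$. There is one subtlety: on the torus, a segment between two nearby points realising their distance lies in $\mathbb T^2$, so the mean value inequality applies. This gives
\[
|f(\pi\omega)-f(\pi\omega')|\le \|Df\|_\infty\, C\varrho^{-1}\theta^{\ell}.
\]
When $\ell=0$, the trivial bound $2\|f\|_\infty$ suffices, since then $d_\theta=1$. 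Hence $\operatorname{Lip}_\theta(f\circ\pi)\le \max(2, C\varrho^{-1})\|f\|_{\mathcal C^1}$. Together with $\|f\circ\pi\|_\infty\le\|f\|_\infty$, this yields \eqref{eq:pullback} with $C_1=1+\max(2,C\varrho^{-1})$.

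\emph{Zero area.} Haar measure $m$ is $A$-invariant, and $A$ is ergodic; indeed it is mixing, since no nonzero integer vector is fixed by $A^{\top n}$. So $T=A^{k_0}$ is also ergodic, as every power of a hyperbolic toral automorphism is hyperbolic and hence ergodic. Since $\Lambda$ is $T$-invariant, $m(\Lambda)\in\{0,1\}$. It is not $1$, because $\Lambda$ is a proper closed subset: a Cantor set has empty interior, while a closed set of full Haar measure would be all of $\mathbb T^2$. Therefore $m(\Lambda)=0$.

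The main obstacle is the first step, namely verifying the hypotheses for the cat map explicitly (transversality at $q$) and tracking that Smale's construction yields a full two-symbol shift with a homeomorphic coding. Here we would rely on the cited theorems rather than redo the construction. The Hölder estimate and the zero-area argument are routine once the coding diameter bound is available.
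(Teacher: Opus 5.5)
Your proposal is correct and follows the same route as the paper. The paper cites Smale's homoclinic horseshoe construction for an iterate of the cat map, derives \eqref{eq:pullback} from the cylinder-diameter bound of Theorem~\ref{thm:coding-input}, and obtains zero area from ergodicity of $T$ with respect to Haar measure; your write-up supplies the details it omits (the $\ell=0$ case, the mean value inequality on $\mathbb T^2$, and the $0$--$1$ law combined with full support of Haar measure).

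One wording fix: the transverse homoclinic points are the projections of $E^u\cap(n+E^s)$ for $n\in\mathbb Z^2\setminus\{0\}$, where $E^u,E^s$ are the eigenlines through $0$ (which meet each other only at $0$). They are not points differing from the origin by a lattice vector, since such points project to $0$ itself.
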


\subsubsection{Transporting the measure and the coordinate observable}

The following lemma transfers the symbolic correlation bounds to the horseshoe and realises the coordinate process by a smooth observable. We use the same argument in both realisations to preserve the laws of the sums and the higher-order correlations.

\begin{lemma}
    \label{lem:transfer}
    Let $T$ be a $\mathcal{C}^1$ diffeomorphism of a compact smooth Riemannian manifold $M$, and let $\pi:\Sigma\to\Lambda$ be a homeomorphism onto a compact invariant set, with $T\circ\pi=\pi\circ\sigma$. Suppose that, for some $0<\theta<1$ and $C_1<\infty$, $ \|f\circ\pi\|_\theta\leq C_1\|f\|_{\mathcal{C}^1(M)} \quad\text{for }f\in\mathcal{C}^1(M).$ For $R\geq2$, let $\nu$ be as in  Theorem~\ref{thm:symbolic} and set $\mu:=\pi_*\nu$. Then the following hold.
    \begin{enumerate}[label=\textup{(\roman*)},ref=\roman*]
        \item\label{it:transfer-i} The measure $\mu$ is $T$-invariant and has support $\Lambda$.
        There are constants $C_R,c_R>0$ such that
        \begin{align}
            \left|\mu\left(\prod_{i=1}^R f_i\circ T^{n_i}\right) -\prod_{i=1}^R\mu(f_i)\right| &\leq C_R e^{-c_R\Delta(\mathbf n)} \prod_{i=1}^R\|f_i\|_{\mathcal{C}^1(M)} \label{eq:correlation-transfer}
        \end{align}
        for all $f_1,\ldots,f_R\in\mathcal{C}^1(M)$ and $0\leq n_1<\cdots<n_R$, where $C_R=C_{R,\theta}C_1^R$ and $c_R=c_\theta$.
        \item\label{it:transfer-ii} There is $f_0\in\mathcal{C}^{\infty}(M)$, depending only on $\pi$, such that
        \begin{align}
            (f_0\circ T^i)\circ\pi=Z_i\quad(i\in\mathbb Z) \quad\text{and}\quad S_nf_0\circ\pi=S_n. \label{eq:sum-transfer}
        \end{align}
        Thus the process $(f_0\circ T^i)_{i\in\mathbb Z}$ under $\mu$ has the same law as the coordinate process under $\nu$.
    \end{enumerate}
\end{lemma}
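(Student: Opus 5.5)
The proof is a direct transport of Theorem~\ref{thm:symbolic} through the conjugacy $\pi$. The only genuinely geometric input is the construction of $f_0$ in item~\itemref{it:transfer-ii}.

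For item~\itemref{it:transfer-i}, invariance follows from $T\circ\pi=\pi\circ\sigma$. For Borel $A$ we have $\mu(T^{-1}A)=\nu(\pi^{-1}T^{-1}A)=\nu(\sigma^{-1}\pi^{-1}A)=\nu(\pi^{-1}A)=\mu(A)$. Since $\pi$ is a homeomorphism onto $\Lambda$ and $\nu$ has full support on $\Sigma$, $\operatorname{supp}\mu=\pi(\operatorname{supp}\nu)=\Lambda$.

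For the correlation bound, fix $f_1,\ldots,f_R\in\mathcal{C}^1(M)$ and put $F_i:=f_i\circ\pi\in\mathcal F_\theta(\Sigma)$. By hypothesis $\|F_i\|_\theta\le C_1\|f_i\|_{\mathcal{C}^1(M)}$. The conjugacy gives $f_i\circ T^{n_i}\circ\pi=F_i\circ\sigma^{n_i}$, hence
$$\mu\Bigl(\prod_i f_i\circ T^{n_i}\Bigr)=\nu\Bigl(\prod_i F_i\circ\sigma^{n_i}\Bigr)\quad\text{and}\quad\mu(f_i)=\nu(F_i).$$
Applying \eqref{eq:symbolic-main} with this $\theta$ then yields \eqref{eq:correlation-transfer}, with $C_R=C_{R,\theta}C_1^R$ and $c_R=c_\theta$.

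For item~\itemref{it:transfer-ii}, consider the two cylinder sets $\Sigma_{\pm}:=\{\omega:\omega_0=\pm1\}$. They are disjoint, compact and open in $\Sigma$, so their images $K_\pm:=\pi(\Sigma_\pm)$ are disjoint compact subsets of $M$ at positive distance. A smooth Urysohn function (for instance a mollified distance cutoff, or a partition of unity) gives $f_0\in\mathcal{C}^\infty(M)$ with $f_0\equiv1$ on a neighbourhood of $K_+$ and $f_0\equiv-1$ on a neighbourhood of $K_-$. This function depends only on $\pi$, and $f_0\circ\pi=Z_0$.

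Iterating the conjugacy gives $f_0\circ T^i\circ\pi=f_0\circ\pi\circ\sigma^i=Z_0\circ\sigma^i=Z_i$ for all $i\in\mathbb Z$; negative $i$ are covered because $\pi$ and $T$ are invertible. Summing over $0\le i<n$ gives $S_nf_0\circ\pi=S_n$. Finally, for any finite set of indices, the joint law of $(f_0\circ T^{i})$ under $\mu=\pi_*\nu$ equals the joint law of $(f_0\circ T^i\circ\pi)=(Z_i)$ under $\nu$. Hence the two processes have the same law.

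None of these steps is a real obstacle. The only point needing care is that $K_+$ and $K_-$ are disjoint and closed, which follows from $\pi$ being an injective continuous map on a compact space.
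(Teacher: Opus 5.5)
Your proposal is correct and follows essentially the same route as the paper's proof. Invariance and support come from the conjugacy, the homeomorphism $\pi$ and the full support of $\nu$. The correlation bound comes from pulling back to $F_i=f_i\circ\pi$ and applying the symbolic estimate of Theorem~\ref{thm:symbolic}. The observable $f_0$ is a smooth function separating the disjoint compact images of the cylinders $\{\omega_0=\pm1\}$: the paper writes it as $\chi_+-\chi_-$, and your smooth Urysohn function serves the same purpose.
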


\begin{proof}
    For every bounded Borel function $f$, the conjugacy and $\sigma$-invariance of $\nu$ give
    $$
    \int f\circ T\,\d\mu =\int(f\circ\pi)\circ\sigma\,\d\nu =\int f\circ\pi\,\d\nu =\int f\,\d\mu.
    $$
    Hence $\mu$ is $T$-invariant. Also $\mu(\Lambda)=1$. Every open neighbourhood of a point of $\Lambda$ has a nonempty open preimage under the homeomorphism $\pi$. Since $\nu$ has full support, that neighbourhood has positive $\mu$-measure, proving $\supp\mu=\Lambda$.
    
    Fix $f_1,\ldots,f_R\in\mathcal{C}^1(M)$ and put $F_i=f_i\circ\pi$. Applying Theorem~\ref{thm:symbolic}\itemref{it:S-correlations} yields
    \begin{align*}
        \left|\mu\left(\prod_{i=1}^R f_i\circ T^{n_i}\right) -\prod_{i=1}^R\mu(f_i)\right| &=\left|\nu\left(\prod_{i=1}^RF_i\circ\sigma^{n_i}\right) -\prod_{i=1}^R\nu(F_i)\right|\leq C_{R,\theta}e^{-c_\theta\Delta(\mathbf n)} \prod_{i=1}^R\|F_i\|_\theta \\
        &\leq C_{R,\theta}C_1^R e^{-c_\theta\Delta(\mathbf n)} \prod_{i=1}^R\|f_i\|_{\mathcal{C}^1(M)}.
    \end{align*}
    This gives \eqref{eq:correlation-transfer} with the stated constants.
    
    For the observable, the sets $\Lambda_\pm:=\pi\{\omega:\omega_0=\pm1\}$ are disjoint and compact. Choose smooth cutoffs $\chi_\pm$, equal to one near $\Lambda_\pm$ and supported in disjoint neighbourhoods, and set $f_0:=\chi_+-\chi_-$. Then $f_0\circ\pi=Z_0$. For every $i\in\mathbb Z$,
    $$
    (f_0\circ T^i)\circ\pi =f_0\circ\pi\circ\sigma^i =Z_0\circ\sigma^i=Z_i.
    $$
    Summing over $i=0,\ldots,n-1$ proves \eqref{eq:sum-transfer}. Because $\mu=\pi_*\nu$, these identities identify every finite-dimensional distribution of the two processes. The cutoffs depend only on $\pi$, so the same $f_0$ works for all $R$ and $\nu$.
\end{proof}

\begin{proof}[Proof of Theorem~\ref{thm:analytic}]
    Fix $T,\Lambda,\pi$ from Lemma~\ref{lem:fixed-horseshoe} and the observable $f_0$ from Lemma~\ref{lem:transfer}. For a given $R\geq2$, take $\nu$ from Theorem~\ref{thm:symbolic} and set $\mu:=\pi_*\nu$. Lemma~\ref{lem:transfer}\itemref{it:transfer-i} gives the support and correlation conclusions \itemref{it:analytic-i}--\itemref{it:analytic-ii}.
    
    By \eqref{eq:sum-transfer}, the process of $f_0$ under $\mu$ has the same law as the symbolic coordinates under $\nu$. Thus Theorem~\ref{thm:symbolic}\itemref{it:S-independence}, \itemref{it:S-moments} gives \itemref{it:analytic-iii}--\itemref{it:analytic-iv}, including $\mu(f_0)=0$. The same identity gives, for any observation times,
    $$
    \mu\left(\prod_{i=1}^{2R+2} f_0\circ T^{n_i}\right) =\nu\left(\prod_{i=1}^{2R+2} Z_{n_i}\right).
    $$
    Item~\itemref{it:S-higher-order} of Theorem~\ref{thm:symbolic} therefore provides the separated-time lower bound in \itemref{it:analytic-vi}.
    
    Finally, \itemref{it:analytic-ii} implies exponential mixing and hence \eqref{eq:summable-correlations}. Proposition~\ref{prop:robustness} gives the neighbourhood in \itemref{it:analytic-v}, and Corollary~\ref{cor:analytic-observable} provides a real trigonometric polynomial in it. The map, coding and original observable were all fixed before $R$ was chosen.
\end{proof}

\subsection{Realisation with a physical measure}
\label{sec:physical}

We use the following smooth physical realisation result.
To avoid interrupting the flow of the argument, its proof
is given in Appendix~\ref{app:smooth-realisation-short}.

\begin{proposition}[Smooth physical realisation of symbolic measures]
\label{prop:cv-all-measures}
Let $M$ be a nonempty compact smooth surface without boundary.
There exist a $\mathcal C^\infty$ diffeomorphism  $F_0:M\to M$, a two-symbol horseshoe $\Lambda$ for $F_0$, and a homeomorphism $\pi:\Sigma\to\Lambda$ satisfying $F_0\circ\pi=\pi\circ\sigma$, such that the following holds.

For every ergodic $\sigma$-invariant Borel probability measure $\nu$ on $\Sigma$, there are a $\mathcal C^\infty$ diffeomorphism   $T:M\to M$, equal to $F_0$ in a neighbourhood of $\Lambda$, and a nonempty open set $W$ such that $\mu=\pi_*\nu$ is physical and $W$ is contained in its basin. In fact, for every $v\in\mathcal C^0(M)$,
\begin{equation}
\label{eq:cv-uniform-physical}
 \lim_{N\to\infty}\sup_{y\in W}
 \left|\frac{1}{N}\sum_{j=0}^{N-1}v(T^jy)-\int v\,d\mu\right|=0.
\end{equation}
Moreover, $T(\Lambda)=\Lambda$, $T\circ\pi=\pi\circ\sigma$,
and there exist $0<\theta<1$ and $C_1<\infty$ such that
$$
 \|v\circ\pi\|_\theta\leq C_1\|v\|_{\mathcal C^1(M)}
 \quad\text{for every }v\in\mathcal C^1(M).
$$
\end{proposition}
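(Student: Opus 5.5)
We prove Proposition~\ref{prop:cv-all-measures} by following the Colli--Vargas strategy \cite{ColliVargas2001,KLNSV}: build an affine horseshoe with a wandering-domain mechanism that forces orbits from an open set to shadow prescribed symbol sequences.

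\textbf{Step 1 (the fixed map $F_0$).} Inside a coordinate disc $D\subset M$ we construct a symmetric affine horseshoe. Take a square $Q$ and two horizontal strips $Q_{-1},Q_1$. Let $F_0$ be affine on each strip, contracting vertically by $\lambda<1/2$ and expanding horizontally by $\lambda^{-1}$. Then $F_0$ maps the $Q_{\pm1}$ onto vertical strips crossing $Q$. We extend $F_0$ smoothly to a diffeomorphism of $M$ isotopic to the identity, which is possible because everything is supported in a disc. The resulting set $\Lambda=\bigcap_n F_0^n(Q)$ is conjugate to $(\Sigma,\sigma)$ by a homeomorphism $\pi$. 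Affine rates give cylinders on $[-\ell,\ell]$ of diameter $\le C\lambda^\ell$. Hence $v\circ\pi$ is $d_\theta$-Lipschitz with $\theta=\lambda$, with constant $\lesssim\|Dv\|_\infty$, which is the claimed bound with $C_1$ depending only on $\lambda$. We additionally arrange a quadratic tangency between unstable and stable leaves of a periodic point outside $Q$. This is the homoclinic tangency used by Colli--Vargas, unfolded by a smooth perturbation supported away from $\Lambda$.

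\textbf{Step 2 (coding $\nu$ by a generic point).} Given an ergodic $\nu$, pick a $\nu$-generic $\omega^\ast$. Birkhoff's theorem gives that empirical measures along $\omega^\ast$ converge to $\nu$ on cylinders. Following \cite{KLNSV}, we build an increasing sequence of blocks of $\omega^\ast$ and perturb $F_0$ near the tangency, by arbitrarily $\mathcal C^\infty$-small bump perturbations with disjoint supports that accumulate only on the tangency point and avoid a neighbourhood of $\Lambda$. These perturbations produce a wandering rectangle $W$ with the following property. The orbit of every $y\in W$ passes the horseshoe region following $\omega^\ast$ on longer and longer blocks, with excursions near the tangency whose lengths are $o$ of the block lengths. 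Smoothness of the limit map $T$ comes from choosing the bump sizes decaying superexponentially relative to the derivatives involved. The map $T$ equals $F_0$ near $\Lambda$, so $T\circ\pi=\pi\circ\sigma$ and $T(\Lambda)=\Lambda$ persist.

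\textbf{Step 3 (uniform convergence).} For $v\in\mathcal C^0(M)$, approximate $v$ uniformly by Lipschitz functions. Along the shadowing pieces of length $\ell$, $T^j y$ lies within $C\lambda^{\min(j,\ell-j)}$ of $\pi(\sigma^j\omega^\ast)$. Excursion and transition times are a vanishing proportion, and $W$ shrinks under the dynamics uniformly. The ergodic sums of $v$ along $y\in W$ therefore differ from those of $v\circ\pi$ along $\omega^\ast$ by $o(N)$ uniformly in $y$. Genericity of $\omega^\ast$ then gives \eqref{eq:cv-uniform-physical}, so $W\subset B_T(\mu)$ and $\mu$ is physical.

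\textbf{Main obstacle.} The delicate step is Step 2: controlling the time the orbit spends near the tangency and during the transitions, while keeping the infinite composition of perturbations $\mathcal C^\infty$-convergent. We would first try to import the renormalisation estimates of \cite{KLNSV} verbatim in the affine symmetric setting. In that setting the symmetry removes the distortion terms, and the required block lengths can be chosen to grow fast enough to absorb all bounded excursion costs.
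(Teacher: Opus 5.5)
Your Steps~1 and~3 are reasonable in outline, though Step~1 has a slip. With horizontal expansion and vertical contraction, the branches must be defined on \emph{vertical} strips and map them onto horizontal strips crossing $Q$, as with the paper's $S_0,S_1$.

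The genuine gap is in Step~2, exactly where you place the main obstacle: nothing there shows that the limit map $T$ is $\mathcal C^\infty$. The amplitudes of your bumps are not free parameters. The $k$-th bump must move the returning rectangle by precisely the mismatch $\varepsilon_k$ between where the $k$-th excursion lands and where the next block has to begin. Its support radius $\rho_k$ is limited by disjointness of the supports and by having to contain the returning rectangle. Smoothness of the infinite composition needs $\sum_k|\varepsilon_k|\rho_k^{-m}<\infty$ for \emph{every} $m$. So you need a device that forces the mismatch below every power of the localisation scale, for an arbitrary generic $\omega^\ast$, while the time spent on this matching stays $o(N)$. ``Choosing the bump sizes to decay superexponentially'' presupposes such a device. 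The estimates of \cite{KLNSV} that you plan to import are stated only for $\mathcal C^r$ with $2\le r<\infty$, and you give no argument that the affine symmetric setting upgrades them to $\mathcal C^\infty$. As written, your route gives at best a finite-$r$ version of the proposition.

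The paper supplies this device and uses no tangency at all. Besides the branches $\operatorname{diag}(3,1/3)$ on $S_0,S_1$, the fixed map has a transverse return branch on the gap strip $S_2$, namely $(x,y)\mapsto\bigl(1-y,\tfrac12+\gamma(x-\tfrac12)\bigr)$, which swaps the expanded and contracted directions. Each new block $\omega^{(k+1)}$ begins with the reversed, sign-flipped last $h_{k+1}$ symbols of $\omega^{(k)}$, and then copies $\omega$ at the correct global times. The identity $1-\phi_s(u)=\phi_{-s}(1-u)$ then puts $1-z_k$ and $x_{k+1}$ in a common interval of length $3^{-h_{k+1}}$, so $|\varepsilon_k|\le3^{-h_{k+1}}$. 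The corrections are vertical translations on squares centred at the distinct points $(v_k,z_k)$, with $v_k=\tfrac12+\tfrac1{48(k+1)}$. Hence $\rho_k\asymp k^{-2}$ is only polynomially small, while $h_{k+1}\asymp\sqrt k$ gives $|\varepsilon_k|\rho_k^{-m}\lesssim k^{2m}3^{-\sqrt k}$, which is summable for every $m$. At the same time $h_k/n_k\to0$ with $n_k\asymp k$, so the non-tracking times have zero density, and the rectangles satisfy $G^{n_k+1}(R_k)=R_{k+1}$ exactly. This decoupling of the correction size from the support scale is what your proposal is missing.
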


\begin{proof}[Proof of Theorem~\ref{thm:physical}]
Fix $M$ and $R\geq2$. Let $F_0$, $\Lambda$ and $\pi$
be given by Proposition~\ref{prop:cv-all-measures}.
Take $\nu$ from Theorem~\ref{thm:symbolic}.
It is ergodic by Corollary~\ref{cor:mixing}.

Proposition~\ref{prop:cv-all-measures} gives a $\mathcal C^\infty$ diffeomorphism $T:M\to M$ and a nonempty open
set $W$ such that $\mu=\pi_*\nu$ satisfies \itemref{it:physical-v}.
The same proposition gives the conjugacy and the pullback
bound required by Lemma~\ref{lem:transfer}.
That lemma proves \itemref{it:physical-i}--\itemref{it:physical-iii} and \itemref{it:physical-vi}
by transferring the symbolic estimates and the
coordinate-process law to $M$.
Finally, Proposition~\ref{prop:robustness} gives \itemref{it:physical-iv}.
\end{proof}

\part{Probabilistic construction}
\label{part:probabilistic}
\addtocontents{toc}{\protect\addvspace{4pt}}

In this part, we construct the stationary process $(Z_i)_{i\in\mathbb Z}$ used in Part~\ref{part:dynamical} and prove Theorem~\ref{thm:symbolic}. We begin with a sketch of the construction. The details and proofs follow in Sections~\ref{sec:geometry}--\ref{sec:moment}.

\subsection*{Sketch of the construction}

Fix an integer $R\geq2$ and set $L:=2R+2$. We construct the stationary process $(Z_i)_{i\in\Z}$ by defining its probability measure $\nu$ on $\Sigma=\{-1,1\}^{\Z}$. The construction has three steps, followed by references to the proofs of its properties.

\paragraph{\textbf{Step~1. Random interval partitions.}}
We first construct a nested sequence of random interval partitions of $\Z$. Starting from $\mathcal M_0=\Z$, we choose random sets of markers $\mathcal M_k\subset\mathcal M_{k-1}\subset\Z$. The intervals between consecutive markers in $\mathcal M_k$ form a partition $\mathcal P_k$ of $\Z$. Its elements, called level-$k$ blocks, are the intervals $[a,b)\cap\Z$ with $a,b\in\mathcal M_k$, $a<b$, and $(a,b)\cap\mathcal M_k=\varnothing$. Since the marker sets are nested, every level-$k$ block is a union of consecutive level-$(k-1)$ blocks, which we call its children.

The sets $\{\mathcal M_k\}_{k\geq 0}$ are constructed so that, with probability one, every level-$k$ block $B\in\mathcal P_k$ satisfies
$$
(4L)^k<|B|\leq3(4L)^k, \quad\text{for every }k\geq1,
$$
(Lemma~\ref{lem:geometry}\itemref{it:geometry-iii}), and has at least $L$ children (Lemma~\ref{lem:geometry}\itemref{it:geometry-iv}). Thus, the block lengths grow at a controlled rate, and each parent has enough children for the interaction introduced in the next step.

The nested marker sets $\mathcal H=(\mathcal M_k)_{k\geq0}$ are stationary, in the sense that translating all of them by the same integer leaves their joint law unchanged (Lemma~\ref{lem:stationarity-geometry}), i.e.
$$
(\mathcal M_k+m)_{k\geq0} \stackrel{d}{=}(\mathcal M_k)_{k\geq0} \quad\text{for every }m\in\Z,
$$
where $\mathcal M_k+m:=\{x+m:x\in\mathcal M_k\}$.

\paragraph{\textbf{Step~2. Recursive construction.}}
Fix a realisation $h$ of the nested marker sets $\mathcal H$, and write $\mathcal P_k(h)$ for the corresponding level-$k$ partition. We recursively define a probability measure $\rho_B$ on $\{-1,1\}^B$ for each block $B\in\mathcal P_k(h)$.

At level $0$, every block is a singleton $B=\{j\}$, and $\rho_B$ assigns probability $1/2$ to each of the values $X_j=-1$ and $X_j=1$.

Let $k\geq1$ and $B\in\mathcal P_k(h)$. Write $B_1,\ldots,B_r\in\mathcal P_{k-1}(h)$ for its children (i.e. $B = B_1\cup B_2\cup \ldots \cup B_r$), ordered from left to right. Assume that the laws $\rho_{B_1},\ldots,\rho_{B_r}$ have already been defined. Observe that, from Step~1, that $r\geq L$.

For $1\leq i\leq L$, Lemma~\ref{lem:bias} gives an odd function $\psi_i=\psi_{\rho_{B_i}}$ taking values in $[-1,1]$ such that
$$
\E_{\rho_{B_i}}\bigl[\psi_i(X[B_i])\bigr]=0 \quad\text{and}\quad \E_{\rho_{B_i}}\bigl[X_j\psi_i(X[B_i])\bigr] =\frac{1}{\sqrt{3|B_i|}} \quad\text{for every }j\in B_i.
$$
Here $X_j(x)=x_j$ and $X[B_i]=(X_j)_{j\in B_i}$. With coordinates ordered from left to right, we use the same function $\psi_\rho$ whenever the same law $\rho$ occurs.

We define the probability $\rho_B$ on $\{-1,1\}^B$ as
$$
\rho_B(dx)= \left(1-\frac12\prod_{i=1}^{L}\psi_i(x[B_i])\right) \bigotimes_{i=1}^{r}\rho_{B_i}(dx[B_i]).
$$
The product measure includes all $r$ blocks, whereas the density depends only on the first $L$. The density lies between $1/2$ and $3/2$ and integrates to one, so $\rho_B$ is a probability measure with full support (Lemma~\ref{lem:block-properties}\itemref{it:block-properties-i}). Integrating over any one of the first $L$ blocks eliminates the product term because the corresponding $\psi_i$ has mean zero. Thus, under $\rho_B$, any collection of at most $L-1$ vectors $X[B_i]$ is independent, and each has law $\rho_{B_i}$ (Lemma~\ref{lem:block-properties}\itemref{it:block-properties-iii}).

The law $\rho_B$ remains invariant under $x\mapsto-x$, and its coordinates remain $(L-1)$-tuplewise independent and uniform on $\{-1,1\}$ (Lemma~\ref{lem:block-properties}\itemref{it:block-properties-i}--\itemref{it:block-properties-ii}). These properties allow Lemma~\ref{lem:bias} to be applied at every level. Choosing one coordinate in each of the first $L$ blocks gives a negative $L$-point correlation (Lemma~\ref{lem:block-correlations}).

\paragraph{\textbf{Step~3. The stationary process.}}
Once again we fix a realisation $h$ of the nested marker sets $\mathcal H$. For each $k\geq0$, define the probability $\nu_h^{(k)}$ on $\{-1,1\}^\mathbb Z$ as
$$
\nu_h^{(k)}:=\bigotimes_{B\in\mathcal P_k(h)}\rho_B.
$$
For every nonempty finite set $E\subset\Z$, choose $k$ such that $(4L)^k\geq\max E-\min E+1$. Then
$$
\nu_h^{(\ell)}\big|_E=\nu_h^{(k)}\big|_E \quad\text{for every }\ell\geq k
$$
(Lemma~\ref{lem:stabilisation}). These stabilised finite-dimensional distributions are consistent and, by the Kolmogorov extension theorem, determine a probability measure $\nu_h$ on $\Sigma$ (Proposition~\ref{prop:limit}\itemref{it:limit-i}).

Averaging over the nested marker sets gives
$$
\nu(A):=\int\nu_h(A)\,\P_{\mathcal H}(dh) \quad\text{for }A\in\mathcal B(\Sigma),
$$
where $\P_{\mathcal H}$ is the law of $\mathcal H$. On the probability space $(\Sigma,\mathcal B(\Sigma),\nu)$, define
$$
Z_i(\omega):=\omega_i \quad\text{for every }\omega\in\Sigma\text{ and }i\in\Z.
$$

\paragraph{\textbf{Properties of the process.}}
The coordinate process $(Z_i)_{i\in\Z}$ is stationary, and its law $\nu$ has full support on $\Sigma$ (Proposition~\ref{prop:limit}\itemref{it:limit-ii}). Item~\itemref{it:S-independence} of Theorem~\ref{thm:symbolic} follows from Proposition~\ref{prop:limit}\itemref{it:limit-iii}. Items~\itemref{it:S-windows} and~\itemref{it:S-correlations} of Theorem~\ref{thm:symbolic} are proved in Propositions~\ref{prop:windows} and~\ref{prop:holder}, respectively.

The key properties of the process are stated in items~\itemref{it:S-moments}--\itemref{it:S-higher-order}. Item~\itemref{it:S-moments} gives the failure of the CLT for any subsequence (Corollaries~\ref{cor:moment} and~\ref{cor:no-gaussian}). Item~\itemref{it:S-higher-order} shows that exponential mixing does not extend to all orders. It holds at order $R$ but fails at order $L=2R+2$ (Corollary~\ref{cor:higher-order}).

\section{Stationary nested marker sets and interval partitions}
\label{sec:geometry}
\label{sec:construction}

Fix an integer $R\geq2$ and set $L:=2R+2$, so that $L\geq6$ is even. We construct stationary nested random marker sets and the associated interval partitions on which the coordinate process will be built. At level $k$, the block lengths and the dependence range of the marker configurations are comparable to $(4L)^k$. The factor $4$ in this scale ensures that each parent contains at least $L$ children, allowing the interaction in Section~\ref{sec:block-laws} to involve $L$ distinct children. The bounds on block lengths and dependence ranges will yield finite-window independence and exponential correlation bounds. Proposition~\ref{prop:hierarchy} records the four properties used in the later sections; those sections do not use the particular marker selection rule.

We use the finitely dependent colourings of Holroyd and Liggett~\cite[Corollary~20]{HolroydLiggett} to construct nested marker sets. Our selection of markers is similar to the construction of nets in~\cite[proof of Corollary~11]{HolroydSchrammWilson2017}. Here we repeat the selection within each preceding marker set and establish the block-length and finite-dependence bounds needed for our construction.
\subsection{Nested marker sets and associated partitions}
We consider a sequence $\mathcal H=(\mathcal M_k)_{k\geq0}$ of nested bi-infinite subsets of $\mathbb Z$, with
$$
\mathbb Z=\mathcal M_0\supset\mathcal M_1 \supset\mathcal M_2\supset\cdots.
$$
We use the following notation.

\begin{itemize}
    \item The elements of $\mathcal M_k$ are the \emph{level-$k$ markers}.
    The intervals between consecutive markers form the partition
    $$
    \mathcal P_k:= \bigl\{[u,v)\cap\mathbb Z: u,v\in\mathcal M_k,\ u<v,\ \mathcal M_k\cap(u,v)=\varnothing\bigr\}
    $$
    of $\mathbb Z$ into finite intervals, called \emph{level-$k$ blocks}.
    
    \item For $k\geq1$, each block $B\in\mathcal P_k$ is a union of consecutive blocks $B_1,\ldots,B_r\in\mathcal P_{k-1}$, because $\mathcal M_k\subset\mathcal M_{k-1}$.
    We call these blocks the \emph{children} of $B$ and write $N_k(B):=r$ for their number. Thus $ B=B_1\cup\cdots\cup B_{N_k(B)}$, with $B_i\in\mathcal P_{k-1}.$
    
    \item The \emph{configuration} of $\mathcal H$ at $i\in\mathbb Z$ through level $k$ is
    \begin{align}
        \mathcal H_i^{(k)} := \bigl( \one_{\{i\in\mathcal M_0\}}, \ldots, \one_{\{i\in\mathcal M_k\}} \bigr). \label{eq:hierarchy-configuration}
    \end{align}
    We write $\mathcal H^{(k)}:=(\mathcal H_i^{(k)})_{i\in\mathbb Z}$ for the process recording marker membership through level $k$.
    
    \item For a finite interval $I=[u,v]\cap\mathbb Z$, write
    $$
    \mathcal H^{(k)}[I] := (\mathcal H_u^{(k)},\ldots,\mathcal H_v^{(k)}).
    $$
    It determines all blocks and their subdivisions in $I$ up to level $k$.
    
    \item For an interval $I$ and $r\geq0$, write
\begin{equation}
I^{+r}:=\{j\in\Z:\dist(j,I)\leq r\}. \label{eq:interval-enlargement}
\end{equation}
\end{itemize}

Finite dependence will allow us to make sufficiently separated local marker configurations independent.
\begin{definition}[Finite dependence]
    For $d\geq0$, a process indexed by $\mathbb Z$ is \emph{$d$-dependent} if its restrictions to any two sets of indices at distance greater than $d$ generate independent $\sigma$-algebras.
\end{definition}
Figures~\ref{fig:nested-markers-blocks} and~\ref{fig:marker-configurations} illustrate the notation introduced above.

\begin{figure}[htbp]
    \centering
    \begin{tikzpicture}[
        x=.74cm,y=1cm,
        font=\small,
        line width=.45pt,
        marker/.style={circle,fill=black,inner sep=1.65pt},
        guide/.style={densely dotted,black!40},
        block/.style={draw=black,fill=black!5},
        brace/.style={decorate,decoration={brace,amplitude=4pt}}
        ]
        \def\labelx{-1.25}
        
        \foreach \x in {0,9}
        \draw[guide] (\x,.35) -- (\x,5.32);
        \foreach \x in {3,5}
        \draw[guide] (\x,.35) -- (\x,3.32);
        
        \node[anchor=east] at (\labelx,4.55)
        {$\mathcal M_2$};
        \node[anchor=east] at (\labelx,5.05)
        {$\mathcal P_2$};
        
        \draw[black!45] (-.25,4.55) -- (12.3,4.55);
        
        \foreach \x in {0,9}
        \node[marker] at (\x,4.55) {};
        
        \draw[block,fill=black!12]
        (0,4.8) rectangle (9,5.32);
        \node at (4.5,5.06)
        {$B=[0,9)\cap\mathbb Z$};
        
        \draw (9,4.8) -- (12.3,4.8);
        \draw (9,5.32) -- (12.3,5.32);
        \node at (11,5.06) {$\cdots$};
        \node at (-.65,5.06) {$\cdots$};
        
        \node[anchor=east] at (\labelx,2.55)
        {$\mathcal M_1$};
        \node[anchor=east] at (\labelx,3.05)
        {$\mathcal P_1$};
        
        \draw[black!45] (-.25,2.55) -- (12.3,2.55);
        
        \foreach \x in {0,3,5,9,12}
        \node[marker] at (\x,2.55) {};
        
        \foreach \a/\b/\lab in {0/3/B_1,3/5/B_2,5/9/B_3}
        \draw[block] (\a,2.8) rectangle (\b,3.32)
        node[midway] {$\lab$};
        
        \draw (9,2.8) rectangle (12,3.32);
        \node at (10.5,3.06) {$\cdots$};
        \node at (-.65,3.06) {$\cdots$};
        
        \draw[brace] (0,3.52) -- (9,3.52)
        node[midway,above=6pt]
        {$B=B_1\cup B_2\cup B_3,\quad N_2(B)=3$};
        
        \node[anchor=east] at (\labelx,.55)
        {$\mathcal M_0=\mathbb Z$};
        \node[anchor=east] at (\labelx,1.05)
        {$\mathcal P_0$};
        
        \draw[black!45] (-.25,.55) -- (12.3,.55);
        
        \foreach \x in {0,...,12} {
            \node[marker] at (\x,.55) {};
            \node[below=5pt,font=\scriptsize] at (\x,.55)
            {$\x$};
        }
        
        \foreach \x in {0,...,11}
        \draw[block] (\x,.8) rectangle (\x+1,1.32);
        
        \foreach \x in {0,...,8}
        \node[font=\footnotesize] at (\x+.5,1.06) {$\{\x\}$};
        
        \foreach \y in {.55,2.55,4.55} {
            \node at (-.65,\y) {$\cdots$};
            \node at (12.7,\y) {$\cdots$};
        }
    \end{tikzpicture}
    
    \caption{Visual representation of $\mathcal M_0,\mathcal M_1,\mathcal M_2,\mathcal P_0,\mathcal P_1,\mathcal P_2$. The level-$2$ block $B$ has the three level-$1$ children
        $B_1=[0,3)\cap\mathbb Z$, $B_2=[3,5)\cap\mathbb Z$ and $B_3=[5,9)\cap\mathbb Z$. Each level-$1$ block is a union of level-$0$ singletons. The rows continue in both directions.}
    \label{fig:nested-markers-blocks}
\end{figure}

\begin{figure}[htbp]
    \centering
    \begin{tikzpicture}[
        x=.92cm,y=1cm,
        font=\small,
        line width=.45pt,
        present/.style={
            circle,draw=black,fill=black,inner sep=1.65pt
        },
        absent/.style={
            circle,draw=black!50,fill=white,inner sep=1.65pt
        },
        brace/.style={
            decorate,
            decoration={brace,amplitude=4pt,mirror}
        }
        ]
        \fill[black!7] (1.55,1.55) rectangle (6.45,5.35);
        \draw[black!40,densely dashed]
        (1.55,1.55) -- (1.55,5.35);
        \draw[black!40,densely dashed]
        (6.45,1.55) -- (6.45,5.35);
        
        \node[anchor=south,font=\footnotesize] at (4,5.45)
        {$I=[2,6]\cap\mathbb Z$};
        
        \foreach \k/\y in {0/4.95,1/4.2,2/3.45} {
            \node[anchor=east] at (-.65,\y)
            {$\mathcal M_{\k}$};
            \draw[black!25] (-.2,\y) -- (9.2,\y);
            \node at (-.4,\y) {$\cdots$};
            \node at (9.55,\y) {$\cdots$};
        }
        
        \foreach \x in {0,...,9}
        \node[present] at (\x,4.95) {};
        
        \foreach \x in {0,3,5,9}
        \node[present] at (\x,4.2) {};
        \foreach \x in {1,2,4,6,7,8}
        \node[absent] at (\x,4.2) {};
        
        \foreach \x in {0,9}
        \node[present] at (\x,3.45) {};
        \foreach \x in {1,...,8}
        \node[absent] at (\x,3.45) {};
        
        \node[anchor=east] at (-.65,2.8) {$i$};
        \foreach \x in {0,...,9}
        \node at (\x,2.8) {$\x$};
        
        \node[anchor=east] at (-.65,2.0)
        {$\mathcal H_i^{(2)}$};
        
        \foreach \x/\b/\c in {
            0/1/1,
            1/0/0,
            2/0/0,
            3/1/0,
            4/0/0,
            5/1/0,
            6/0/0,
            7/0/0,
            8/0/0,
            9/1/1
        }
        \node at (\x,2.0)
        {$\left(
            \begin{smallmatrix}
                1\\ \b\\ \c
            \end{smallmatrix}
            \right)$};
        
        \draw[brace] (1.65,1.36) -- (6.35,1.36)
        node[midway,below=7pt]
        {$\mathcal H^{(2)}[I]
            =(\mathcal H_2^{(2)},\ldots,\mathcal H_6^{(2)})$};
        
        \node[present] at (1,.25) {};
        \node[anchor=west,font=\footnotesize] at (1.2,.25)
        {marker (entry $1$)};
        
        \node[absent] at (5.3,.25) {};
        \node[anchor=west,font=\footnotesize] at (5.5,.25)
        {not a marker (entry $0$)};
    \end{tikzpicture}
    
    \caption{Configurations of the nested marker sets through level $2$.
        For each integer $i$, the tuple $\mathcal H_i^{(2)}$ records membership in $\mathcal M_0$, $\mathcal M_1$ and $\mathcal M_2$, with its entries displayed vertically in this order. The shaded columns form the restriction $\mathcal H^{(2)}[I]$ to the interval $I=[2,6]\cap\mathbb Z$.}
    \label{fig:marker-configurations}
\end{figure}

The following proposition gives the required stationary nested marker sets.

\begin{proposition}
    \label{prop:hierarchy}
    There exist a finite constant $D=D(L)$ and a random sequence of nested marker sets $\mathcal H=(\mathcal M_k)_{k\geq0}$, with law $\mathbb P_{\mathcal H}$, i.e.\ the distribution of $\mathcal H$ on $\bigl(\{0,1\}^{\mathbb Z}\bigr)^{\mathbb Z_{\geq0}}$ with its product $\sigma$-algebra, such that the following hold.
    \begin{enumerate}[label=\textup{(H\arabic*)},ref=H\arabic*]
        \item
        \label{it:H-lengths}
        Almost surely, for every $k\geq1$ and every $B\in\mathcal P_k$, $ (4L)^k<|B|\leq3(4L)^k. $
        
        \item
        \label{it:H-children}
        Almost surely, for every $k\geq1$ and every $B\in\mathcal P_k$, $N_k(B)>4L/3>L.$
        
        \item
        \label{it:H-stationary}
        For every $k\geq0$, $p\geq1$ and $i_1,\ldots,i_p,h\in\mathbb Z$,
        $$
        \Law\bigl(\mathcal H_{i_1+h}^{(k)},\ldots,\mathcal H_{i_p+h}^{(k)}\bigr) = \Law\bigl(\mathcal H_{i_1}^{(k)},\ldots,\mathcal H_{i_p}^{(k)}\bigr).
        $$
        
        \item
        \label{it:H-dependence}
        For every $k\geq0$, the process $\mathcal H^{(k)}$ is $(4L)^k D$-dependent.
    \end{enumerate}
\end{proposition}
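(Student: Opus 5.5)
We build the marker sets recursively, selecting $\mathcal M_k$ inside $\mathcal M_{k-1}$ by a finitely dependent colouring. Write $Q:=4L$. Fix, by Holroyd--Liggett~\cite[Corollary~20]{HolroydLiggett}, a stationary $1$-dependent (or $d_0$-dependent, $d_0$ absolute) proper $q$-colouring of $\mathbb Z$, for some fixed $q$. Take independent copies $\xi^{(1)},\xi^{(2)},\ldots$, independent across levels. Given $\mathcal M_{k-1}$, enumerated in increasing order as $(m_j)_{j\in\mathbb Z}$ (an enumeration defined only up to shift, which is harmless since the rule is shift-equivariant), we first group consecutive markers into ``super-sites'' at scale $Q^k$. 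Concretely, we select a subset $\mathcal M_k\subset\mathcal M_{k-1}$ as a maximal $Q^k$-separated net in the style of~\cite[proof of Corollary~11]{HolroydSchrammWilson2017}. The net is built by a greedy procedure that processes colours $1,\ldots,q'$ in turn. A point of $\mathcal M_{k-1}$ is admitted when its colour is processed, provided no already-admitted point lies within distance $Q^k$. To run this, we colour the points of $\mathcal M_{k-1}$ by applying $\xi^{(k)}$ to a coarse-grained index, namely $\lfloor x/Q^k\rfloor$ in a randomly shifted lattice. This must be made stationary, so instead we colour each $x\in\mathcal M_{k-1}$ by a $1$-dependent colouring of the graph on $\mathcal M_{k-1}$ joining points at distance $\le Q^k$. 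That colouring is obtained by applying a finite-range block-factor rule to $\xi^{(k)}$ read along a fine grid, and it is a proper colouring with a bounded number $q'$ of colours because $\mathcal M_{k-1}$ is $Q^{k-1}$-separated.

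The length bounds (H1) come out as follows. By maximality, consecutive selected markers are more than $Q^k$ apart. They are also at most $2Q^k+\max|B'|\le 2Q^k+3Q^{k-1}\le 3Q^k$ apart, because any gap longer than this would contain a point of $\mathcal M_{k-1}$ farther than $Q^k$ from both ends, contradicting maximality. The separation threshold $Q^k$ versus the strict lower bound $>Q^k$ will need adjusting (e.g. threshold $Q^k+1$). The children bound (H2) then follows: $N_k(B)\ge |B|/\max|B'| > Q^k/(3Q^{k-1})=4L/3$.

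Stationarity (H3) holds because every step is a shift-equivariant factor of stationary inputs. For (H4) we use induction. Whether $x$ belongs to $\mathcal M_k$ is determined by $\mathcal H^{(k-1)}$ and $\xi^{(k)}$ in a window of radius $q'Q^k$, since the greedy procedure propagates information by at most $Q^k$ per colour round. Combined with independence across levels, this gives dependence range $D_k\le D_{k-1}+2q'Q^k+c$. Summing the geometric series yields $D_k\le DQ^k$ with $D$ depending only on $L$.

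The main obstacle is producing, at each level, a stationary finitely dependent proper colouring of the irregular point set $\mathcal M_{k-1}$ with dependence range $O(Q^k)$ and a bounded number of colours. Coupling Holroyd--Liggett colourings to an irregular, random point set while keeping the dependence range uniform is where I expect most of the care to go. My first attempt is to colour the integer lattice at spacing $Q^k/2$ with an independent copy of the Holroyd--Liggett $4$-colouring of $\mathbb Z$. Each marker inherits the colour of its cell, with ties within a cell broken by taking the leftmost marker. Any remaining non-stationarity from the lattice offset is then removed by averaging over a uniform offset, which is possible because finite dependence is preserved.
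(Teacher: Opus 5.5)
Your greedy maximal-net selection and your derivations of (H1) and (H2) match the paper. The gap is the step you yourself single out as the main obstacle: a stationary colouring of $\mathcal M_{k-1}$ with dependence range $O(Q^k)$ and boundedly many colours, in which markers within distance $Q^k$ get distinct colours. You do not resolve it, and the one concrete construction you give violates (H4). Averaging over a uniform lattice offset $U$ does not preserve finite dependence. $U$ is a single global random variable, and a mixture over $U$ of conditionally independent laws is not a product law. Here $U$ is visible in every window: at level $1$, the cells of the first processed colour admit exactly their left endpoints $U+c\,Q/2$. So for $J=I+m\,Q/2$, the covariance of $\{\mathcal M_1\cap I\in A\}$ and $\{\mathcal M_1\cap J\in A+m\,Q/2\}$ equals $\operatorname{Var}\bigl(\mathbb P(\mathcal M_1\cap I\in A\mid U)\bigr)$. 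This is positive for suitable $A$, however large $m$ is. The geometry also fails. With cells of length $Q^k/2$ and a colouring proper only between adjacent cells, markers in cells two apart can share a colour while lying within distance $Q^k$. Both may then be admitted in the same round, losing the separation needed for the lower bound in (H1). If your tie-breaking means only the leftmost marker of a cell is a candidate, the net radius degrades to about $\tfrac32Q^k$ and the upper bound $3Q^k$ fails. The ``block-factor rule read along a fine grid'' is not specified enough to check.

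The missing idea is to index the colouring by the enumeration of $\mathcal M_{k-1}$ you already wrote down, not by positions. The paper fixes one stationary $4L$-dependent Holroyd--Liggett colouring $V$ with $V_i\neq V_j$ whenever $0<|i-j|\le4L$, and takes an independent copy $V^{(k)}$ at each level. The marker $t^{(k-1)}_j$ (enumeration anchored by $t^{(k-1)}_0\le0<t^{(k-1)}_1$) receives the colour $V^{(k)}_j$. Every level-$(k-1)$ block is longer than $Q^{k-1}$, so markers within distance $Q^k$ have index difference less than $Q$. Hence same-coloured markers are automatically more than $Q^k$ apart, and the same $q=q(L)$ colours serve at every level. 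Stationarity holds because the selection rule is translation-equivariant and a translation only re-anchors the enumeration by an $\mathcal H^{(k-1)}$-measurable shift; $V^{(k)}$ is stationary and independent of earlier levels. For (H4), $\mathcal M_k\cap I$ is determined by the old markers and colours in $I^{+qQ^k}$. If $\dist(I,J)>4qQ^k$, the old configurations there are independent by induction. The index sets of their markers are more than $4L$ apart, by the bound $3Q^{k-1}$ on block lengths. So the colour vectors are conditionally independent given the old markers, with conditional laws depending only on the number of markers in each window. These conditional laws depend on the random old configuration, so independence across levels alone does not finish the argument. The paper uses Lemma~\ref{lem:local-kernels} to upgrade to unconditional independence, giving $D=4q$.
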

We now construct the marker sets and prove Proposition~\ref{prop:hierarchy}. A reader who assumes the proposition may proceed directly to Section~\ref{sec:block-laws}.

\subsection{A stationary finitely dependent colouring}
Recall that $L=2R+2$ is fixed throughout. We use the following consequence of~\cite[Corollary~20]{HolroydLiggett}.

\begin{theorem}[{\cite[Corollary~20]{HolroydLiggett}}]
    \label{thm:colouring}
    There are an integer $q=q(L)\geq2$ and a process $V=(V_i)_{i\in\mathbb Z}$ taking values in the set of colours $\{1,\ldots,q\}$ with the following properties.
    \begin{enumerate}[label=\textup{(\roman*)},ref=\roman*]
        \item\label{it:colouring-i} The process is strictly stationary, meaning that for every $p\geq1$ and every $j_1,\ldots,j_p,h\in\mathbb Z$, $\Law(V_{j_1+h},\ldots,V_{j_p+h}) = \Law(V_{j_1},\ldots,V_{j_p}).$
        
        \item\label{it:colouring-ii} The process is $(4L)$-dependent.
        
        \item\label{it:colouring-iii} Almost surely,
        \begin{align}
            V_i\neq V_j \quad\text{whenever}\quad 0<|i-j|\leq4L. \label{eq:colouring}
        \end{align}
    \end{enumerate}
\end{theorem}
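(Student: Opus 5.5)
This statement is a direct consequence of \cite[Corollary~20]{HolroydLiggett}, so the plan is to reduce it to that result rather than build the colouring by hand. Holroyd and Liggett construct, for every $m\geq1$, a stationary $1$-dependent proper $4$-colouring of $\mathbb Z$. Their Corollary~20 (or its standard consequence) gives, for every range $r\geq1$, a stationary finitely dependent process $V$ with values in a finite colour set $\{1,\ldots,q\}$ such that $V_i\neq V_j$ whenever $0<|i-j|\leq r$. We take $r=4L$. Since $L$ is fixed, this yields $q=q(L)$ and properties~\itemref{it:colouring-i} and~\itemref{it:colouring-iii}.

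The first step is to check the exact form of the dependence range. If the cited corollary gives only $d$-dependence for some finite $d=d(L)$, rather than $4L$-dependence, I would use the standard \emph{blocking device}. Start from a stationary $1$-dependent proper colouring $U$ of $\mathbb Z$ with $q_0$ colours. For each $i$, consider the vector $(U_{j})_{j}$ of colours along the residue class of $i$ modulo $4L$, or more precisely take a proper $1$-dependent colouring of the graph $\mathbb Z$ with edges $\{i,j\}$, $0<|i-j|\leq 4L$. This graph is the $4L$-th power of the path, and it is still a Cayley graph of $\mathbb Z$ to which Holroyd--Liggett's results apply; this is what their Corollary~20 addresses. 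A $1$-dependent process on this graph is one in which sets of indices at graph distance greater than $1$, i.e.\ at integer distance greater than $4L$, are independent. That is exactly~\itemref{it:colouring-ii}.

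The second step is stationarity. Holroyd--Liggett's processes are invariant in law under translations of $\mathbb Z$, which gives~\itemref{it:colouring-i}.

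The third step is the almost sure properness~\eqref{eq:colouring}. This holds by definition of a proper colouring of the power graph.

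The only point requiring care, and the main obstacle, is matching the dependence radius and the properness radius to the same $4L$. I would first try to get both from the single power-graph statement as above. If that fails, I would take a $k$-dependent colouring of the power graph and, to restore $4L$-dependence, apply a block-factor or recolouring argument. That route risks changing the range, so the direct power-graph citation is preferred.
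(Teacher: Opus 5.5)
Your proposal takes the same route as the paper, which gives no argument of its own and simply imports the statement from \cite[Corollary~20]{HolroydLiggett}. Your reading of that result is correct: a translation-invariant $1$-dependent proper colouring of the $4L$-th power of the path is exactly a stationary $4L$-dependent process with $V_i\neq V_j$ whenever $0<|i-j|\leq4L$.

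Your hedged fallback is unnecessary, since a block-factor or recolouring only guarantees a dependence range at least as large as the original. Your residue-class idea already gives a self-contained argument. For each $1\leq d\leq4L$, put independent copies of the $1$-dependent $4$-colouring of $\mathbb Z$ on the residue classes modulo $d$; the unit shift merely permutes these classes cyclically. The product of the $4L$ resulting independent layers is then stationary, $4L$-dependent and proper at range $4L$, with $q=4^{4L}$.
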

Following~\cite{HolroydLiggett}, we refer to the values of $V$ as colours. Property \eqref{eq:colouring} says that nearby indices receive different colours, which is the feature used in the selection rule below. Fix this process once and for all. Only the fact that $q<\infty$ is used.

\subsection{The marker selection rule}
\label{subsec:markers}
We construct the marker sets recursively. For each level $\ell\geq0$, we denote the increasing enumeration of $\mathcal M_\ell$ by $(t_j^{(\ell)})_{j\in\mathbb Z}$, chosen so that $t_0^{(\ell)}\leq0<t_1^{(\ell)}$.
\begin{itemize}
    \item \emph{Initial level.} Set $\mathcal M_0:=\mathbb Z$, so that $t_j^{(0)}=j$ and $\mathcal P_0=\bigl\{\{j\}:j\in\mathbb Z\bigr\}$.
    
    \item \emph{Enumerate the old markers.} Fix $k\geq1$ and suppose that $\mathcal M_{k-1}$ has been constructed.
    Write its increasing enumeration as
    $$
    \mathcal M_{k-1}=\left\{t_j^{(k-1)}:j\in\mathbb Z\right\}\quad\text{with}\quad \cdots<t_{-1}^{(k-1)}<t_0^{(k-1)}\leq0<t_1^{(k-1)}<t_2^{(k-1)}<\cdots.
    $$
    \item \emph{Assign colours.} Let $V^{(k)}=(V_i^{(k)})_{i\in\mathbb Z}$ be an independent copy of the process in Theorem~\ref{thm:colouring}, independent of all randomness used at the previous levels, and assign the colour $V_j^{(k)}$ to the marker $t_j^{(k-1)}$.
    The colour attached to $t_j^{(k-1)}$ is $V_j^{(k)}$, not $V_{t_j^{(k-1)}}^{(k)}$.
    
    \item \emph{Select the new markers.} Set $A_0:=\varnothing$, with $\dist(t,\varnothing):=+\infty$, and, for $a=1,\ldots,q$, define
    \begin{align}
        A_a := A_{a-1} \cup \left\{ t_j^{(k-1)}\in\mathcal M_{k-1}: V_j^{(k)}=a,\ \dist\left(t_j^{(k-1)},A_{a-1}\right)>(4L)^k \right\}. \label{eq:marker-selection}
    \end{align}
    Set $\mathcal M_k:=A_q$. In particular, $\mathcal M_k\subset\mathcal M_{k-1}$.
\end{itemize}
The enumeration $t_0^{(k-1)}\leq0<t_1^{(k-1)}$ is used only to assign the colouring.

The following lemma controls the geometry of the partitions at every level. The block lengths are comparable to $(4L)^k$, and each parent $B\in\mathcal P_k$ contains at least $L$ children $B_i\in\mathcal P_{k-1}$, allowing the recursive construction of the block laws.

\begin{lemma}
    \label{lem:geometry}
    For every $k\geq1$, the marker set $\mathcal M_k$ is almost surely a bi-infinite subset of $\mathcal M_{k-1}$. Moreover, the following hold.
    \begin{enumerate}[label=\textup{(\roman*)},ref=\roman*]
        \item\label{it:geometry-i} The set $\mathcal M_k$ is $(4L)^k$-separated, that is,
        $$
        |u-v|>(4L)^k \quad \text{for all distinct }u,v\in\mathcal M_k.
        $$
        \item\label{it:geometry-ii} The set $\mathcal M_k$ is maximal with this property inside $\mathcal M_{k-1}$, meaning that for every $t\in\mathcal M_{k-1}$ there exists $u\in\mathcal M_k$ with $|t-u|\leq(4L)^k$.
        Equivalently, $\mathcal M_k$ is a $(4L)^k$-net in $\mathcal M_{k-1}$.
        \item\label{it:geometry-iii} The family $\mathcal P_k$ is a partition of $\mathbb Z$ into finite intervals, and every $B\in\mathcal P_k$ satisfies
        \begin{align}
            (4L)^k<|B| \leq 2(4L)^k+3(4L)^{k-1} \leq 3(4L)^k. \label{eq:block-lengths}
        \end{align}
        \item\label{it:geometry-iv} Every $B\in\mathcal P_k$ is the union of consecutive blocks of $\mathcal P_{k-1}$, and
        $$
        N_k(B)>\frac{4L}{3}.
        $$
        In particular, every level-$k$ block has at least $L$ children.
    \end{enumerate}
\end{lemma}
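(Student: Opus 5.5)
The plan is to prove the four items by induction on $k$, running a greedy-selection argument inside $\mathcal M_{k-1}$. The colouring property \eqref{eq:colouring} guarantees that markers of equal colour are at least $4L+1$ apart in index along $\mathcal M_{k-1}$. By the inductive hypothesis, consecutive level-$(k-1)$ markers are more than $(4L)^{k-1}$ apart (for $k=1$ they are at distance $1$), so same-coloured markers are at distance greater than $(4L)\cdot(4L)^{k-1}=(4L)^k$ in $\mathbb Z$.

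For \itemref{it:geometry-i}, we check that each stage of \eqref{eq:marker-selection} preserves $(4L)^k$-separation. A candidate of colour $a$ is added only if it is farther than $(4L)^k$ from $A_{a-1}$. Two distinct candidates of the same colour $a$ are themselves farther than $(4L)^k$ apart by the colouring remark. Hence $A_a$ stays separated, and so does $\mathcal M_k=A_q$.

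For \itemref{it:geometry-ii}, take $t=t_j^{(k-1)}$ with colour $a$. Either $t\in A_a$, or it was rejected because $\dist(t,A_{a-1})\le(4L)^k$. In both cases $A_q\supset A_a$ contains a point within $(4L)^k$ of $t$.

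Bi-infiniteness follows from maximality: $\mathcal M_{k-1}$ is bi-infinite by induction, every old marker has a new marker within $(4L)^k$, and by separation only finitely many new markers lie in any bounded window. So $\mathcal M_k$ is unbounded in both directions. Hence $\mathcal P_k$ is a partition of $\mathbb Z$ into finite intervals, and it refines consistently because $\mathcal M_k\subset\mathcal M_{k-1}$. This also gives the first half of \itemref{it:geometry-iv}.

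For \itemref{it:geometry-iii}, let $B=[u,v)$ with $u,v$ consecutive in $\mathcal M_k$.
\begin{itemize}
\item The lower bound $|B|=v-u>(4L)^k$ is exactly \itemref{it:geometry-i}.
\item For the upper bound, suppose $v-u>2(4L)^k+3(4L)^{k-1}$. Since the gaps of $\mathcal M_{k-1}$ are at most $3(4L)^{k-1}$ (by the inductive \itemref{it:geometry-iii}; for $k=1$ the gap is $1$), there is an old marker $t\in(u,v)$ with $t-u>(4L)^k$ and $v-t>(4L)^k$. Choose $t$ as the first old marker after $u+(4L)^k$; then $t\le u+(4L)^k+3(4L)^{k-1}$, so $v-t>(4L)^k$.
\item Now apply \itemref{it:geometry-ii} to $t$: there is $w\in\mathcal M_k$ with $|t-w|\le(4L)^k$. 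This $w$ cannot be $u$ or $v$, nor lie outside $[u,v]$. So $w\in(u,v)$, contradicting consecutiveness.
\end{itemize}
The final inequality $2(4L)^k+3(4L)^{k-1}\le3(4L)^k$ holds because $3\le 4L$.

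For the child count in \itemref{it:geometry-iv}, each child has length at most $3(4L)^{k-1}$ (or $1$ when $k=1$), so
$$
N_k(B)\ge\frac{|B|}{3(4L)^{k-1}}>\frac{(4L)^k}{3(4L)^{k-1}}=\frac{4L}{3}.
$$
For $k=1$ we get $N_1(B)=|B|>4L$ directly. Since $4L/3>L$, every level-$k$ block has at least $L$ children.

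The main point needing care is the first step: converting the index separation in the colouring (indices $j$, not positions) into spatial separation. This uses the inductive lower bound on gaps of $\mathcal M_{k-1}$ together with the fact that the colour attached to $t_j^{(k-1)}$ is $V_j^{(k)}$. The almost-sure qualifiers simply come from the almost-sure validity of \eqref{eq:colouring} at every level, a countable intersection of full-measure events.
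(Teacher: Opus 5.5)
Your proposal is correct and follows essentially the same route as the paper. Both argue by induction on $k$. The colouring's index separation becomes spatial separation through the inductive lower bound on level-$(k-1)$ gaps. Item (ii) comes from the greedy rejection rule. The upper bound in (iii) follows by contradiction with maximality, using an old marker placed more than $(4L)^k$ from both endpoints. The child count follows from the ratio of block lengths. The only cosmetic difference is that you treat $k=1$ uniformly instead of as a separate base step.
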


\begin{proof}
    We argue by induction on $k\geq1$, recalling that $\mathcal M_0=\mathbb Z$ and $\mathcal P_0=\bigl\{\{j\}:j\in\mathbb Z\bigr\}$.
    
    \begin{step}[1]
        The conclusions \itemref{it:geometry-i}--\itemref{it:geometry-iv} hold for $k=1$.
    \end{step}
    \begin{stepproof}[Proof of Step 1]
        Write $\mathcal M_0=\{t_j^{(0)}:j\in\mathbb Z\}$ with $t_j^{(0)}=j$. We first prove \itemref{it:geometry-i}. We claim that if $V_i^{(1)}=V_j^{(1)}$ with $i\neq j$, then $|t_i^{(0)}-t_j^{(0)}|>4L$. Indeed, if $|t_i^{(0)}-t_j^{(0)}|\leq4L$, then $|i-j|=|t_i^{(0)}-t_j^{(0)}|\leq4L$, and hence $V_i^{(1)}\neq V_j^{(1)}$ by \eqref{eq:colouring}. Let $u\neq v$ belong to $\mathcal M_1$. If they are selected at the same stage of \eqref{eq:marker-selection}, they have the same colour, so the claim gives $|u-v|>4L$. If they are selected at different stages, the marker selected later is added only if its distance from every previously selected marker is greater than $4L$. Thus $\mathcal M_1$ is $4L$-separated, proving \itemref{it:geometry-i}. We next prove \itemref{it:geometry-ii}. Take $t_j^{(0)}\in\mathcal M_0$. If $t_j^{(0)}\in\mathcal M_1$, then $\dist(t_j^{(0)},\mathcal M_1)=0$. Otherwise, let $a:=V_j^{(1)}$. Since $t_j^{(0)}\notin A_a$, the definition of $A_a$ gives $u\in A_{a-1}$ with $|t_j^{(0)}-u|\leq4L$. As $A_{a-1}\subset\mathcal M_1$, we obtain
        \begin{align}
            \dist\left(t_j^{(0)},\mathcal M_1\right)\leq4L. \label{eq:marker-maximality-base}
        \end{align}
        This proves \itemref{it:geometry-ii}, and it implies that $\mathcal M_1$ is unbounded in both directions. We now prove \itemref{it:geometry-iii}. Since $\mathcal M_1$ is bi-infinite, its consecutive points determine a partition $\mathcal P_1$ of $\mathbb Z$ into finite intervals. Let $B=[u,v)\cap\mathbb Z\in\mathcal P_1$. By \itemref{it:geometry-i}, $v-u>4L$. Suppose, for a contradiction, that $v-u>2(4L)+3$. Then $(u+4L,v-4L)$ has length greater than $3$ and therefore contains an integer $t\in\mathcal M_0$. Since $u,v$ are consecutive points of $\mathcal M_1$, this gives $\dist(t,\mathcal M_1)>4L$, contradicting \eqref{eq:marker-maximality-base}. Thus
        $$
        4L<|B|\leq2(4L)+3\leq3(4L),
        $$
        which proves \itemref{it:geometry-iii}. Finally, the level-$0$ blocks are singletons, so $N_1(B)=|B|>4L>4L/3$, proving \itemref{it:geometry-iv}.
    \end{stepproof}
    
    \begin{step}[2]
        Assume the conclusions at level $k-1$, with $k\geq2$. Then \itemref{it:geometry-i}--\itemref{it:geometry-ii} hold at level $k$.
    \end{step}
    \begin{stepproof}[Proof of Step 2]
        Write the increasing enumeration of $\mathcal M_{k-1}$ as $(t_j^{(k-1)})_{j\in\mathbb Z}$. We first claim that if $V_i^{(k)}=V_j^{(k)}$ with $i\neq j$, then $|t_i^{(k-1)}-t_j^{(k-1)}|>(4L)^k$. Indeed, suppose that $|t_i^{(k-1)}-t_j^{(k-1)}|\leq(4L)^k$. By \itemref{it:geometry-iii} at level $k-1$, every level-$(k-1)$ block has length greater than $(4L)^{k-1}$, so
        $$
        (4L)^k\geq\left|t_i^{(k-1)}-t_j^{(k-1)}\right|>|i-j|(4L)^{k-1},
        $$
        whence $|i-j|<4L$ and, by \eqref{eq:colouring}, $V_i^{(k)}\neq V_j^{(k)}$. Let $u\neq v$ belong to $\mathcal M_k$. If they are selected at the same stage, the claim gives $|u-v|>(4L)^k$. If they are selected at different stages, the marker selected later is retained only if its distance from every previously selected marker is greater than $(4L)^k$. This proves \itemref{it:geometry-i}. For \itemref{it:geometry-ii}, take $t_j^{(k-1)}\in\mathcal M_{k-1}$. If $t_j^{(k-1)}\in\mathcal M_k$ there is nothing to prove. Otherwise, let $a:=V_j^{(k)}$. Since $t_j^{(k-1)}\notin A_a$, \eqref{eq:marker-selection} gives $u\in A_{a-1}$ with $|t_j^{(k-1)}-u|\leq(4L)^k$. As $A_{a-1}\subset\mathcal M_k$, we obtain
        \begin{align}
            \dist\left(t_j^{(k-1)},\mathcal M_k\right)\leq(4L)^k. \label{eq:marker-maximality}
        \end{align}
        In particular $\mathcal M_k$ is maximal among the $(4L)^k$-separated subsets of $\mathcal M_{k-1}$, and is unbounded in both directions.
    \end{stepproof}
    
    \begin{step}[3]
        Under the same induction hypothesis, \itemref{it:geometry-iii}--\itemref{it:geometry-iv} hold at level $k$.
    \end{step}
    \begin{stepproof}[Proof of Step 3]
        Since $\mathcal M_k$ is bi-infinite, its consecutive points partition $\mathbb Z$ into finite intervals, so $\mathcal P_k$ is a partition. Let $B=[u,v)\cap\mathbb Z\in\mathcal P_k$. From  \itemref{it:geometry-i}, $v-u>(4L)^k$. Suppose, for a contradiction, that
        $$
        v-u>2(4L)^k+3(4L)^{k-1}.
        $$
        Then $\bigl(u+(4L)^k,v-(4L)^k\bigr)$ has length greater than $3(4L)^{k-1}$. By \itemref{it:geometry-iii} at level $k-1$, every level-$(k-1)$ block has length at most $3(4L)^{k-1}$, so this interval contains some $t\in\mathcal M_{k-1}$. As $u$ and $v$ are consecutive points of $\mathcal M_k$, this gives $\dist(t,\mathcal M_k)>(4L)^k$, contradicting \eqref{eq:marker-maximality}. Therefore $v-u\leq2(4L)^k+3(4L)^{k-1}\leq3(4L)^k$, which proves \itemref{it:geometry-iii}. For \itemref{it:geometry-iv}, write $u=t_{j_0}^{(k-1)}<t_{j_0+1}^{(k-1)}<\cdots<t_{j_1}^{(k-1)}=v$ for the level-$(k-1)$ markers between $u$ and $v$, which exist because $\mathcal M_k\subset\mathcal M_{k-1}$. Then $B$ is the union of the $N_k(B)=j_1-j_0$ consecutive level-$(k-1)$ blocks $[t_j^{(k-1)},t_{j+1}^{(k-1)})\cap\mathbb Z$. Each has length at most $3(4L)^{k-1}$ by \itemref{it:geometry-iii} at level $k-1$, so $|B|\leq3(4L)^{k-1}N_k(B)$, and the lower bound in \itemref{it:geometry-iii} gives
        $$
        N_k(B)>\frac{(4L)^k}{3(4L)^{k-1}}=\frac{4L}{3}.
        $$
    \end{stepproof}
    Steps~2--3 complete the induction.
\end{proof}

\subsection{Stationarity of the nested marker sets}

The following lemma shows that, for every $k\geq0$, the process $(\mathcal H_i^{(k)})_{i\in\mathbb Z}$ is stationary.

\begin{lemma}
    \label{lem:stationarity-geometry}
    For every $k\geq0$, $p\geq1$ and $i_1,\ldots,i_p,h\in\mathbb Z$,
    \begin{align}
        \Law\bigl( \mathcal H_{i_1+h}^{(k)},\ldots,\mathcal H_{i_p+h}^{(k)} \bigr) = \Law\bigl( \mathcal H_{i_1}^{(k)},\ldots,\mathcal H_{i_p}^{(k)} \bigr). \label{eq:hierarchy-stationarity}
    \end{align}
\end{lemma}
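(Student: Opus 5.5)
The plan is an induction on $k$ proving a stronger statement: the whole random object $\mathcal H^{\leq k}:=(\mathcal M_0,\ldots,\mathcal M_k)$ has a shift-invariant law, i.e.
$$
(\mathcal M_0+h,\ldots,\mathcal M_k+h)\stackrel{d}{=}(\mathcal M_0,\ldots,\mathcal M_k)\quad\text{for every }h\in\mathbb Z,
$$
as random elements of $(\{0,1\}^{\mathbb Z})^{k+1}$ with the product $\sigma$-algebra. Since $\mathcal H^{(k)}_{i+h}$ is the value at $i$ of the configuration of $(\mathcal M_\ell-h)_{\ell\leq k}$, \eqref{eq:hierarchy-stationarity} follows by taking finite-dimensional marginals. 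The case $k=0$ is trivial because $\mathcal M_0=\mathbb Z$.

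The first step isolates the one non-equivariant feature of the construction. The colour attached to $t_j^{(k-1)}$ is $V^{(k)}_j$, indexed by the enumeration anchored at $0$, not by position. I will write $\mathcal M_k=\Phi(\mathcal M_{k-1},V^{(k)})$. Here $\Phi(S,v)$, for a bi-infinite set $S$ with anchored enumeration $(s_j)$ and a colour sequence $v$, is the output of rule \eqref{eq:marker-selection} applied to the labelled set $\{(s_j,v_j):j\in\mathbb Z\}$. The key observation is that \eqref{eq:marker-selection} depends only on this labelled set, through distances and colours, and not on the indices $j$. Therefore it commutes with translations:
$$
\Phi(S+h,v)=\Phi\bigl(S,\vartheta^{m}v\bigr)+h,
$$
where $\vartheta$ is the left shift on colour sequences and $m=m(S,h)\in\mathbb Z$ is the index offset defined by requiring that the anchored enumeration of $S+h$ be $s'_j=s_{j+m}+h$. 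The right-hand side is then built from the pairs $(s_{j+m},v_j)=(s_i,v_{i-m})$, so the correct shift is $\vartheta^{-m}$; I will fix this sign carefully. Along the way I will check that $m(S,h)$ and $\Phi$ are measurable, which holds because everything is determined by countably many coordinate events.

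The second step is the induction. Suppose $G:=(\mathcal M_0,\ldots,\mathcal M_{k-1})$ satisfies $G+h\stackrel{d}{=}G$, and $V:=V^{(k)}$ is independent of $G$ and stationary by Theorem~\ref{thm:colouring}\itemref{it:colouring-i}. By the first step,
$$
(\mathcal M_0+h,\ldots,\mathcal M_k+h)=\Psi\bigl(G+h,\vartheta^{-m'}V\bigr),\qquad \Psi(g,v):=(g,\Phi(g_{k-1},v)),
$$
with $m'=m(\mathcal M_{k-1},h)$ a function of $G$. I will condition on $G$: since $V$ is independent of $G$ and its law is invariant under every deterministic shift $\vartheta^{n}$, the pair $(G,\vartheta^{-m'}V)$ has the same law as $(G,V)$. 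Then $(G+h,\vartheta^{-m'}V)\stackrel{d}{=}(G+h,V)\stackrel{d}{=}(G,V)$, using the induction hypothesis together with independence. Applying the measurable map $\Psi$ gives the law of $(\mathcal M_0,\ldots,\mathcal M_k)=\Psi(G,V)$, which closes the induction.

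The main obstacle is exactly this random index offset $m'$. Rule \eqref{eq:marker-selection} is not pathwise translation-equivariant, because translating the old markers by $h$ re-anchors the enumeration and thus reassigns the colours. This is resolved only in law, and only because the offset depends on the past levels alone while $V^{(k)}$ is independent of them and stationary. The remaining points, almost-sure bi-infiniteness from Lemma~\ref{lem:geometry} and measurability, are routine.
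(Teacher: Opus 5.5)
Your proposal is correct and follows essentially the same route as the paper: both prove the stronger joint shift-invariance of $(\mathcal M_0,\ldots,\mathcal M_k)$ by induction, treat the re-anchoring offset (your $m'$, the paper's $r$) as a function of the previous levels absorbed by the independence and stationarity of $V^{(k)}$, and use that rule \eqref{eq:marker-selection} commutes with translations of the labelled marker set (the paper checks this as $A_a'=A_a+h$ by induction on $a$). Your chain of equalities in law via $\Psi$ is more explicit than the paper's final sentence. With your corrected identity $\Phi(S+h,v)=\Phi(S,\vartheta^{-m}v)+h$, the second display should read $\Psi(G+h,\vartheta^{m'}V)$ rather than $\vartheta^{-m'}V$, but this sign is immaterial because the law of $V$ is invariant under every deterministic shift.
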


\begin{proof}
    We prove by induction on $k$ that
    \begin{align}
        \bigl(\mathcal M_0+h,\ldots,\mathcal M_k+h\bigr) \stackrel{d}{=} \bigl(\mathcal M_0,\ldots,\mathcal M_k\bigr) \quad\text{for every }h\in\mathbb Z. \label{eq:marker-hierarchy-stationarity}
    \end{align}
    The case $k=0$ is immediate since $\mathcal M_0=\mathbb Z$. Assume the assertion through level $k-1$ and fix $h\in\mathbb Z$. If $(t_j^{(k-1)})_{j\in\mathbb Z}$ is the increasing enumeration of $\mathcal M_{k-1}$ anchored at zero, then $(t_j^{(k-1)}+h)_{j\in\mathbb Z}$ enumerates $\mathcal M_{k-1}+h$, but need not be anchored at zero. The anchored enumeration of the translated set has the form $(t_{j+r}^{(k-1)}+h)_{j\in\mathbb Z}$, where $r$ is determined by $\mathcal M_{k-1}$ and $h$. Conditional on $\mathcal M_0,\ldots,\mathcal M_{k-1}$, the integer $r$ is fixed. Independence and stationarity of $V^{(k)}$ give
    $$
    \Law\bigl((V_{i+r}^{(k)})_{i\in\mathbb Z}\mid\mathcal M_0,\ldots,\mathcal M_{k-1}\bigr)=\Law\bigl((V_i)_{i\in\mathbb Z}\bigr).
    $$
    In this way, the conditional law of the colouring on the translated markers is the same whether we use the anchored enumeration or $(t_j^{(k-1)}+h)_{j\in\mathbb Z}$. We may therefore compare the selections from $(t_j^{(k-1)},V_j^{(k)})$ and $(t_j^{(k-1)}+h,V_j^{(k)})$ on the same probability space. Let $A_0,\ldots,A_q$ be obtained from $(t_j^{(k-1)},V_j^{(k)})$ by \eqref{eq:marker-selection}, and let $A_0',\ldots,A_q'$ be obtained from $(t_j^{(k-1)}+h,V_j^{(k)})$. We claim that
    \begin{align}
        A_a'=A_a+h \quad\text{for each }0\leq a\leq q. \label{eq:selection-translation}
    \end{align}
    For $a=0$ this is immediate. If $A_{a-1}'=A_{a-1}+h$, then, since $\dist(\cdot,\cdot)$ is translation invariant,
    $$
    A_a' = \left( A_{a-1} \cup \left\{ t_j^{(k-1)}: V_j^{(k)}=a,\ \dist\left(t_j^{(k-1)},A_{a-1}\right)>(4L)^k \right\} \right)+h = A_a+h.
    $$
    Thus \eqref{eq:selection-translation} holds for every $a$, and in particular $\mathcal M_k+h$ is the marker set produced from the translated old markers with the same colours. Since $V^{(k)}$ is independent of the previous levels and $(\mathcal M_0,\ldots,\mathcal M_{k-1})$ has the same law after translation, \eqref{eq:marker-hierarchy-stationarity} follows. Finally, $\mathcal H_i^{(k)}$ is determined by the memberships $\one_{\{i\in\mathcal M_j\}}$, $j\leq k$, so \eqref{eq:marker-hierarchy-stationarity} gives \eqref{eq:hierarchy-stationarity}.
\end{proof}

\subsection{Finite dependence of the marker sets at each scale}
Recall that $I^{+r}$ denotes the set of integers at distance at most $r$ from $I$, as defined in \eqref{eq:interval-enlargement}. The following lemma shows that $\mathcal M_k\cap I$ is determined by $\mathcal M_{k-1}\cap I^{+q(4L)^k}$ and the colours attached to those markers.

\begin{lemma}
    \label{lem:local-selection}
    Fix $k\geq1$ and an interval $I\subset\mathbb Z$. Consider two level-$(k-1)$ marker sets $\mathcal M_{k-1}$ and $\widetilde{\mathcal M}_{k-1}$, with increasing enumerations
    $$
    \mathcal M_{k-1}=\left\{t_j^{(k-1)}:j\in\mathbb Z\right\}\quad\text{and}\quad \widetilde{\mathcal M}_{k-1}=\left\{\widetilde t_j^{\,(k-1)}:j\in\mathbb Z\right\},
    $$
    and attach to their markers the colours $(V_i^{(k)})_{i\in\mathbb Z}$ and $(\widetilde V_i^{(k)})_{i\in\mathbb Z}$ respectively. Let $\mathcal M_k$ and $\widetilde{\mathcal M}_k$ be the marker sets produced from these two configurations by \eqref{eq:marker-selection}. If
    \begin{align}
        \left\{\left(t_j^{(k-1)},V_j^{(k)}\right):t_j^{(k-1)}\in I^{+q(4L)^k}\right\} = \left\{\left(\widetilde t_j^{\,(k-1)},\widetilde V_j^{(k)}\right):\widetilde t_j^{\,(k-1)}\in I^{+q(4L)^k}\right\}, \label{eq:local-selection-assumption}
    \end{align}
    then $\mathcal M_k\cap I=\widetilde{\mathcal M}_k\cap I$.
\end{lemma}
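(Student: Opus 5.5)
The plan is to prove a stronger, stage-by-stage statement by induction on the stage index $a$ of \eqref{eq:marker-selection}. Write $r:=(4L)^k$ and, for $0\leq a\leq q$, set $I_a:=I^{+(q-a)r}$, so that $I_q=I$ and $I_0=I^{+qr}$. Let $A_a$ and $\widetilde A_a$ be the sets produced at stage $a$ from the two configurations. I will show
\begin{align}
A_a\cap I_a=\widetilde A_a\cap I_a \quad\text{for every }0\leq a\leq q.
\end{align}
Taking $a=q$ gives $\mathcal M_k\cap I=\widetilde{\mathcal M}_k\cap I$. The base case $a=0$ is trivial, since $A_0=\widetilde A_0=\varnothing$.

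For the inductive step, fix $a\geq1$ and a point $t\in I_a$. Since $A_{a-1}\subset A_a$ and the same holds for the tilde sets, the point $t$ lies in $A_a$ exactly when one of two things happens. Either $t\in A_{a-1}$, or $t$ is an old marker $t=t_j^{(k-1)}$ with colour $V_j^{(k)}=a$ and $\dist(t,A_{a-1})>r$.

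Both conditions depend only on local data.
\begin{itemize}
\item Because $I_a\subset I_{a-1}$, membership $t\in A_{a-1}$ is the same as $t\in\widetilde A_{a-1}$ by the induction hypothesis.
\item Because $I_a\subset I^{+qr}$, the facts that $t$ is an old marker and that its colour equals $a$ are identical in the two configurations by the assumption \eqref{eq:local-selection-assumption}. That assumption is stated as an equality of sets of marker–colour pairs, so it identifies the attached colours even though the two enumerations $t_j$ and $\widetilde t_j$ may have shifted indices.
\item For the distance condition, note that $\dist(t,A_{a-1})>r$ holds exactly when $A_{a-1}\cap[t-r,t+r]=\varnothing$. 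Since $t\in I_a=I^{+(q-a)r}$, the window $[t-r,t+r]$ lies in $I^{+(q-a+1)r}=I_{a-1}$. By the induction hypothesis $A_{a-1}$ and $\widetilde A_{a-1}$ agree on that window, so the two distance conditions coincide.
\end{itemize}

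Hence $t\in A_a$ if and only if $t\in\widetilde A_a$, for every $t\in I_a$, which closes the induction.

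The main point requiring care is the bookkeeping of the shrinking neighbourhoods $I_a$: one must check that each stage consumes exactly one margin of width $r$, so that $q$ stages fit inside the given margin $qr$. A secondary subtlety is the index shift between the two enumerations, which is handled by the pairwise formulation of the hypothesis noted above. The argument uses nothing beyond the selection rule itself, and in particular does not need Lemma~\ref{lem:geometry}.
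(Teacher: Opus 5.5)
Your proposal is correct and follows essentially the same route as the paper: induction on the stage $a$ with the shrinking neighbourhoods $I^{+(q-a)(4L)^k}$, using the pairwise hypothesis to match marker–colour data and the induction hypothesis on $I^{+(q-a+1)(4L)^k}$ to compare the distance conditions. The only cosmetic difference is that you phrase the step as a single local ``if and only if'' characterisation of membership in $A_a$, while the paper proves one inclusion and then appeals to symmetry.
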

\begin{proof}
    We prove by induction on $a=0,\ldots,q$ that
    \begin{align}
        A_a\cap I^{+(q-a)(4L)^k} = \widetilde A_a\cap I^{+(q-a)(4L)^k}. \label{eq:local-selection-induction}
    \end{align}
    For $a=0$ this follows from $A_0=\widetilde A_0=\varnothing$. Fix $1\leq a\leq q$ and assume \eqref{eq:local-selection-induction} with $a-1$ in place of $a$. Take $t\in A_a\cap I^{+(q-a)(4L)^k}$. If $t\in A_{a-1}$, then $t\in I^{+(q-a+1)(4L)^k}$ and the induction hypothesis gives $t\in\widetilde A_{a-1}\subset\widetilde A_a$. Suppose instead that $t\notin A_{a-1}$. By the definition of $A_a$ there is $j$ with $t=t_j^{(k-1)}$, $V_j^{(k)}=a$ and $\dist(t,A_{a-1})>(4L)^k$. Since $t\in I^{+q(4L)^k}$, assumption \eqref{eq:local-selection-assumption} provides $m$ with $\widetilde t_m^{(k-1)}=t$ and $\widetilde V_m^{(k)}=a$. We claim that $\dist(t,\widetilde A_{a-1})>(4L)^k$. Otherwise some $u\in\widetilde A_{a-1}$ satisfies $|u-t|\leq(4L)^k$, hence $u\in I^{+(q-a+1)(4L)^k}$, and the induction hypothesis puts $u\in A_{a-1}$, contradicting $\dist(t,A_{a-1})>(4L)^k$. The definition of $\widetilde A_a$ therefore gives $t\in\widetilde A_a$. This proves one inclusion in \eqref{eq:local-selection-induction}. The reverse is obtained by interchanging the two configurations. Taking $a=q$ gives $\mathcal M_k\cap I=\widetilde{\mathcal M}_k\cap I$.
\end{proof}

The following lemma is a standard consequence of the properties of conditional independence~\cite[Theorems~8.9 and~8.12]{Kallenberg}. We state and prove it in the form needed below.

\begin{lemma}
    \label{lem:local-kernels}
    Let $\mathcal H$ be a random element in a measurable space, and let $H_1,\ldots,H_s,X_1,\ldots,X_s$ be random elements taking values in finite sets. Suppose that $H_1,\ldots,H_s$ are jointly independent and measurable with respect to $\sigma(\mathcal H)$, and that $X_1,\ldots,X_s$ are conditionally independent given $\mathcal H$, with $\Law(X_i\mid\mathcal H)=\Law(X_i\mid H_i)$ almost surely for every $1\leq i\leq s.$ Then the pairs $(H_1,X_1),\ldots,(H_s,X_s)$ are jointly independent. In particular, $X_1,\ldots,X_s$ are jointly independent.
\end{lemma}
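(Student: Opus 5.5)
The plan is to compute the joint law of the pairs directly. Because every $X_i$ and $H_i$ takes values in a finite set, it is enough to show that for all values $h_1,\ldots,h_s$ and $x_1,\ldots,x_s$,
\begin{align*}
\P\Bigl(\bigcap_{i=1}^s\{H_i=h_i,\ X_i=x_i\}\Bigr)=\prod_{i=1}^s\P(H_i=h_i,\ X_i=x_i).
\end{align*}

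First I condition on $\mathcal H$. Each indicator $\one_{\{H_i=h_i\}}$ is $\sigma(\mathcal H)$-measurable, so it comes out of the conditional expectation. Conditional independence of the $X_i$ given $\mathcal H$ then factorises the rest. The left-hand side therefore equals
\begin{align*}
\E\Bigl[\prod_{i=1}^s\one_{\{H_i=h_i\}}\,\P(X_i=x_i\mid\mathcal H)\Bigr].
\end{align*}

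Next I use the hypothesis $\Law(X_i\mid\mathcal H)=\Law(X_i\mid H_i)$ almost surely. This lets me replace $\P(X_i=x_i\mid\mathcal H)$ by $g_i(H_i)$, where $g_i(h):=\P(X_i=x_i\mid H_i=h)$ is a measurable function on the finite range of $H_i$. The point that needs care is making this identification precise. I would read the hypothesis as saying that some version of $\P(X_i=x_i\mid\mathcal H)$ is $\sigma(H_i)$-measurable. Since $\sigma(H_i)\subset\sigma(\mathcal H)$, the tower property shows that this version is also a version of $\P(X_i=x_i\mid H_i)$, so it equals $g_i(H_i)$ almost surely. Values $h$ with $\P(H_i=h)=0$ do not affect the computation.

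The integrand is now $\prod_i\one_{\{H_i=h_i\}}g_i(H_i)$, a product of functions of $H_1,\ldots,H_s$ separately. Joint independence of the $H_i$ factorises its expectation into
\begin{align*}
\prod_{i=1}^s\E\bigl[\one_{\{H_i=h_i\}}\,g_i(H_i)\bigr].
\end{align*}
Each factor equals $\P(H_i=h_i)\,g_i(h_i)=\P(H_i=h_i,\ X_i=x_i)$. This gives the displayed product formula, so the pairs $(H_i,X_i)$ are jointly independent.

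Finally, the $X_i$ are functions of the independent pairs, so they are jointly independent as well. I expect no real obstacle. The only delicate step is the measurability identification of the conditional probabilities described above.
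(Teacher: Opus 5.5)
Your proof is correct, but it takes a different route from the paper. Write $H_{<i}=(H_1,\ldots,H_{i-1})$ and similarly $X_{<i}$. The paper argues by induction on $i$, using two cited facts about conditional independence. The first is the characterisation via $\Law(U\mid V,W)=\Law(U\mid W)$. The second is the chain rule: if $U$ is independent of $V$ and conditionally independent of $W$ given $V$, then $U$ is independent of $(V,W)$. The paper first observes that $\Law(X_{<i}\mid\mathcal H)=\bigotimes_{j<i}\Law(X_j\mid H_j)$ is $\sigma(H_{<i})$-measurable. From this it deduces that $H_i$ is independent of $(H_{<i},X_{<i})$. It then shows that $X_i$ and $(H_{<i},X_{<i})$ are conditionally independent given $H_i$, and combines the two facts to get independence of $(H_i,X_i)$ from all preceding pairs.

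You instead verify the factorisation of the joint mass function in a single computation. You condition on $\mathcal H$, pull out the $\sigma(\mathcal H)$-measurable indicators, factorise by conditional independence, substitute $g_i(H_i)$, and finish with the independence of the $H_i$. Your version is shorter and needs only the defining property of conditional expectation. It uses the finiteness of the ranges to reduce to atoms, but it extends verbatim to measurable rectangles, with Doob--Dynkin functions $g_i$ and a $\pi$--$\lambda$ argument. The paper's version works directly with conditional laws and standard lemmas, avoids enumerating atoms, and yields the intermediate fact that $H_i$ is independent of $(H_{<i},X_{<i})$.

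The step you flagged as delicate is simpler than you make it. The hypothesis already gives $\P(X_i=x_i\mid\mathcal H)=\P(X_i=x_i\mid H_i)$ almost surely. The right-hand side equals $g_i(H_i)$ almost surely because $H_i$ is finite-valued, so no reinterpretation via versions is needed.
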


\begin{proof}
    We use the following standard properties of conditional independence.
    \begin{enumerate}[label=\textup{(P\arabic*)},ref=P\arabic*]
        \item
        \label{it:local-kernels-characterisation}
        $U$ and $V$ are conditionally independent given $W$ if and only if
        $$
        \Law(U\mid V,W)=\Law(U\mid W)
        $$
        almost surely (see~\cite[Theorem~8.9]{Kallenberg}).
        
        \item
        \label{it:local-kernels-chain-rule}
        If $U$ is independent of $V$, and $U$ and $W$ are conditionally independent given $V$, then $U$ is independent of $(V,W)$ (see~\cite[Theorem~8.12]{Kallenberg}).
    \end{enumerate}
    
    The case $s=1$ is immediate. Fix $2\leq i\leq s$ and write
    $$
    H_{<i}:=(H_1,\ldots,H_{i-1}) \quad\text{and}\quad X_{<i}:=(X_1,\ldots,X_{i-1}).
    $$
    Conditional independence given $\mathcal H$ and the hypothesis on the conditional laws imply
    $$
    \Law(X_{<i}\mid\mathcal H) =\bigotimes_{j<i}\Law(X_j\mid H_j).
    $$
    The right-hand side is measurable with respect to $\sigma(H_{<i})$, so the tower property gives
    $$
    \Law(X_{<i}\mid H_{<i},H_i) =\bigotimes_{j<i}\Law(X_j\mid H_j) =\Law(X_{<i}\mid H_{<i}).
    $$
    By \itemref{it:local-kernels-characterisation}, $H_i$ and $X_{<i}$ are conditionally independent given $H_{<i}$. Since $H_i$ is independent of $H_{<i}$, \itemref{it:local-kernels-chain-rule} shows that $H_i$ is independent of $(H_{<i},X_{<i})$.
    
    Next, conditional independence given $\mathcal H$ and \itemref{it:local-kernels-characterisation} yield
    $$
    \Law(X_i\mid\mathcal H,X_{<i}) =\Law(X_i\mid\mathcal H) =\Law(X_i\mid H_i).
    $$
    Since $H_i$ and $H_{<i}$ are measurable with respect to $\sigma(\mathcal H)$, conditioning this identity on $(H_i,H_{<i},X_{<i})$ gives
    $$
    \Law(X_i\mid H_i,H_{<i},X_{<i}) =\Law(X_i\mid H_i).
    $$
    Thus \itemref{it:local-kernels-characterisation} shows that $X_i$ and $(H_{<i},X_{<i})$ are conditionally independent given $H_i$. Together with the independence established above, \itemref{it:local-kernels-chain-rule} implies that $(H_i,X_i)$ is independent of $(H_{<i},X_{<i})$.
    
    Hence each pair $(H_i,X_i)$ is independent of all preceding pairs jointly. This proves joint independence of the pairs, and therefore of their second components.
\end{proof}

The following proposition bounds the dependence range of $(\mathcal H_i^{(k)})_{i\in\mathbb Z}$ by $4q(4L)^k$ for each $k\geq0$.

\begin{proposition}
    \label{prop:hierarchy-dependence}
    Let $q=q(L)$ be as in Theorem~\ref{thm:colouring}. For every $k\geq0$ and all finite intervals $I,J\subset\mathbb Z$ with $\dist(I,J)>4q(4L)^k$, the random vectors
    $$
    \bigl(\mathcal H_i^{(k)}\bigr)_{i\in I} \quad\text{and}\quad \bigl(\mathcal H_j^{(k)}\bigr)_{j\in J}
    $$
    are independent. In particular, $\mathcal H^{(k)}$ is $4q(4L)^k$-dependent.
\end{proposition}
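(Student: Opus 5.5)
We argue by induction on $k$, proving the statement exactly as formulated for finite intervals $I,J$ with $\dist(I,J)>4q(4L)^k$. For $k=0$ there is nothing to prove, since $\mathcal H^{(0)}\equiv(1)$ is deterministic. For the inductive step, fix $k\geq1$ and such $I,J$, and set $r:=q(4L)^k$. By Lemma~\ref{lem:local-selection}, $\mathcal M_k\cap I$ is determined by the finite list of pairs $(t_j^{(k-1)},V_j^{(k)})$ with $t_j^{(k-1)}\in I^{+r}$. Since $\mathcal H^{(k)}[I]=(\mathcal H^{(k-1)}[I],(\one_{\{i\in\mathcal M_k\}})_{i\in I})$, the vector $\mathcal H^{(k)}[I]$ is a measurable function of the pair $(H_1,X_1)$. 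Here
\begin{itemize}
\item $H_1:=\mathcal H^{(k-1)}[I^{+r}]$, which encodes the positions of the level-$(k-1)$ markers in $I^{+r}$;
\item $X_1$ is the finite vector of colours attached to these markers, listed from left to right.
\end{itemize}
Define $(H_2,X_2)$ analogously for $J$. It then suffices to show that the pairs $(H_1,X_1)$ and $(H_2,X_2)$ are independent. We will deduce this from Lemma~\ref{lem:local-kernels} with $\mathcal H:=(\mathcal M_0,\ldots,\mathcal M_{k-1})$. The colour vectors take values in finite sets because their lengths are bounded by the separation in Lemma~\ref{lem:geometry}.

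We verify the hypotheses of Lemma~\ref{lem:local-kernels} in three steps.

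\emph{Independence of $H_1$ and $H_2$.} We have $\dist(I^{+r},J^{+r})\geq\dist(I,J)-2r>2q(4L)^k>4q(4L)^{k-1}$. Hence the induction hypothesis applied to the intervals $I^{+r},J^{+r}$ at level $k-1$ gives that $H_1$ and $H_2$ are independent.

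\emph{Conditional independence of $X_1$ and $X_2$ given $\mathcal H$.} The colouring $V^{(k)}$ is independent of $\mathcal H$. Therefore, conditionally on $\mathcal H$, the colours are read off $V^{(k)}$ at the enumeration indices of the markers in each window, and these index sets are deterministic given $\mathcal H$. By Lemma~\ref{lem:geometry}\itemref{it:geometry-iii}, consecutive level-$(k-1)$ markers are at most $3(4L)^{k-1}$ apart. Any two markers, one in $I^{+r}$ and one in $J^{+r}$, are more than $2q(4L)^k$ apart, so their enumeration indices differ by more than
$$
\frac{2q(4L)^k}{3(4L)^{k-1}}=\frac{2q}{3}\,4L\geq\frac{4}{3}\,4L>4L,
$$
using $q\geq2$. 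The $(4L)$-dependence of $V^{(k)}$ (Theorem~\ref{thm:colouring}\itemref{it:colouring-ii}) then yields conditional independence of $X_1$ and $X_2$ given $\mathcal H$.

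\emph{Locality of the conditional laws.} The markers in a window carry consecutive indices $j_0+1,\ldots,j_0+m$, where $m$ is the number of markers in the window. By stationarity of $V^{(k)}$ (Theorem~\ref{thm:colouring}\itemref{it:colouring-i}), the conditional law of $X_1$ given $\mathcal H$ is $\Law(V_1,\ldots,V_m)$. This law depends only on $m$, which is a function of $H_1$. Hence $\Law(X_1\mid\mathcal H)=\Law(X_1\mid H_1)$, and likewise for $X_2$.

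Lemma~\ref{lem:local-kernels} then gives independence of the two pairs, completing the induction.

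The main obstacle is the third step. The colour attached to a marker is indexed by its global enumeration $j$, not by its position, so it is not a priori a local function. The remedy, as above, is to condition on all lower levels, identify the colour vector with a consecutive block of $V^{(k)}$, and use stationarity of $V^{(k)}$ to remove the dependence on the global offset $j_0$. The second step uses the same identification together with the block-length upper bound to turn spatial separation into index separation. The final assertion, that $\mathcal H^{(k)}$ is $4q(4L)^k$-dependent, follows by applying the result to the convex hulls of finite index sets, within which all points are connected by consecutive steps. If the definition of $d$-dependence is read for arbitrary, possibly interleaved, sets, one would rerun the same argument with $I$ and $J$ replaced by finite unions of separated intervals, which requires no new ideas.
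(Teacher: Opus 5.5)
Your proposal is correct and follows the paper's proof essentially step for step: the same induction with enlargement $r=q(4L)^k$, the same conversion of spatial separation $>2q(4L)^k$ into index separation $>4L$ via the block-length upper bound, and the same use of stationarity of $V^{(k)}$ to make the conditional colour law depend only on the marker count in the window, combined with Lemma~\ref{lem:local-kernels} and Lemma~\ref{lem:local-selection}. The only difference is the final $d$-dependence clause, where the paper does not rerun the induction for unions of intervals (which would also need the $4L$-dependence to decouple the several index runs inside one union); instead it derives joint independence of finitely many ordered intervals with successive gaps $>4q(4L)^k$ from the two-interval statement, by testing each interval against the convex hull of its predecessors, and then splits interleaved finite sets $E,F$ into alternating runs whose convex hulls form such a family.
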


\begin{proof}
    We argue by induction. For $k=0$ the marker configuration is deterministic. Suppose the assertion holds at level $k-1$, and let $\dist(I,J)>4q(4L)^k$. Set
    $$
    I':=I^{+q(4L)^k}\quad\text{and}\quad J':=J^{+q(4L)^k}.
    $$
    Their distance exceeds $2q(4L)^k>4q(4L)^{k-1}$, so the old restrictions $\mathcal H^{(k-1)}[I']$ and $\mathcal H^{(k-1)}[J']$ are independent.
    
    Enumerate $\mathcal M_{k-1}$ as $(t_m^{(k-1)})_{m\in\mathbb Z}$ and put $K_I:=\{m:t_m^{(k-1)}\in I'\}$ and $K_J:=\{m:t_m^{(k-1)}\in J'\}$. For $m\in K_I$ and $n\in K_J$, $|t_m^{(k-1)}-t_n^{(k-1)}|>2q(4L)^k\geq4(4L)^k$. If $|m-n|\leq4L$, the interval between $t_m^{(k-1)}$ and $t_n^{(k-1)}$ would consist of at most $4L$ blocks in $\mathcal P_{k-1}$. Each has length at most $3(4L)^{k-1}$, including the singleton case $k=1$, so
    $$
    \left|t_m^{(k-1)}-t_n^{(k-1)}\right|\leq |m-n|\,3(4L)^{k-1} \leq3(4L)^k,
    $$
    a contradiction. Thus $\dist(K_I,K_J)>4L$.
    
    The colouring $V^{(k)}$ is independent of all marker sets constructed at the previous levels. Conditioned on those markers, its law is therefore unchanged, and its finite dependence makes the two colour vectors independent. List each vector in increasing order, without its absolute enumeration indices. Each $K_I$ is an interval of indices. Stationarity of $V^{(k)}$ therefore makes its conditional law depend only on the number of markers in $I'$, which is determined by $\mathcal H^{(k-1)}[I']$. The same holds for $J'$. Lemma~\ref{lem:local-kernels} makes the two pairs consisting of an old local configuration and its colour vector independent. By Lemma~\ref{lem:local-selection}, the new configurations on $I$ and $J$ are functions of the respective pairs. This proves the induction step.

To extend the conclusion to arbitrary index sets, first observe that restrictions to finitely many ordered intervals with successive distances greater than $4q(4L)^k$ are jointly independent. Indeed, each interval is independent of the convex hull of all preceding intervals, so induction gives joint independence. Now let $E,F\subset\mathbb Z$ be finite sets with $\operatorname{dist}(E,F)>4q(4L)^k$. Order the points of $E\cup F$ and group consecutive points according to whether they belong to $E$ or to $F$. The convex hulls of these groups are ordered intervals with successive distances greater than $4q(4L)^k$. Their restrictions are therefore jointly independent, which implies independence of the restrictions to $E$ and $F$. For infinite index sets, the conclusion follows because cylinder events generate the corresponding $\sigma$-algebras.
\end{proof}

The local dependence of the conditional laws is essential in Lemma~\ref{lem:local-kernels}, since conditional independence alone would not justify averaging.

\begin{remark}
    \label{rem:enumeration}
    The colouring is attached to the indices $j$ in the enumeration $(t_j^{(k-1)})$, not to the positions $t_j^{(k-1)}\in\mathbb Z$. The block-length lower bound gives
    $$
    0<\left|t_i^{(k-1)}-t_j^{(k-1)}\right|\leq(4L)^k\quad\Longrightarrow\quad0<|i-j|\leq4L.
    $$
    Thus the same colouring, with a fixed finite number $q=q(L)$ of colours, can be used at every level. Lemma~\ref{lem:local-selection} then requires information only within distance $q(4L)^k$, and Proposition~\ref{prop:hierarchy-dependence} gives dependence range $4q(4L)^k$. Both distances are fixed multiples of the block scale. This is the estimate used later to choose a cylinder radius proportional to the separation of the observation times.
\end{remark}

\begin{proof}[Proof of Proposition~\ref{prop:hierarchy}]
    Take the nested marker sets $\mathcal H=(\mathcal M_k)_{k\geq0}$ constructed in Section~\ref{subsec:markers}. Lemma~\ref{lem:geometry} gives \itemref{it:H-lengths} and \itemref{it:H-children}, and Lemma~\ref{lem:stationarity-geometry} gives \itemref{it:H-stationary}. Proposition~\ref{prop:hierarchy-dependence} gives \itemref{it:H-dependence}, with $D:=4q$.
\end{proof}

\section{A finite-dimensional biasing lemma and the block laws}
\label{sec:block-laws}
Throughout this section and Sections~\ref{sec:limit}--\ref{sec:moment}, we work with the random nested marker sets $\mathcal H$ of Proposition~\ref{prop:hierarchy}, with law $\mathbb P_{\mathcal H}$. We use only \itemref{it:H-lengths}--\itemref{it:H-dependence}. For each block $B$ of a fixed realisation of $\mathcal H$, we construct a law $\rho_B$ on its coordinates. These laws preserve every marginal on at most $L-1$ children (Lemma~\ref{lem:block-properties}) and produce negative $L$-point correlations across the first $L$ children (Lemma~\ref{lem:block-correlations}). The recursive construction follows the principle in~\cite[Lemma~2.8]{BradleyPruss} of introducing higher-order dependence while preserving lower-order marginals.

The nested partitions are now fixed independently of the coordinate variables. The children of a parent may have different lengths and may be subdivided differently at lower levels. We therefore need a function positively correlated with every coordinate, even when the child laws are not exchangeable.

The following notation identifies a block law with a finite-dimensional law, allowing Lemma~\ref{lem:bias} to be applied to children with different lengths and different laws.
\begin{notation}[Coordinates on a finite block]
For a finite block $B\subset\mathbb Z$, write
$$
\{-1,1\}^B:=\{x=(x_j)_{j\in B}:x_j\in\{-1,1\}\}.
$$
For each $j\in B$, define the coordinate map
$$
X_j:\{-1,1\}^B\to\{-1,1\}\quad\text{with}\quad X_j(x):=x_j.
$$
Thus, whenever $\rho$ is a probability measure on $\{-1,1\}^B$, we regard $ X=(X_j)_{j\in B}$ as the corresponding random vector under $\rho$. For $C\subset B$, write
$$
x[C]:=(x_j)_{j\in C}\quad\text{and}\quad X[C]:=(X_j)_{j\in C}.
$$
We identify $\{-1,1\}^B$ with $\{-1,1\}^{|B|}$ by ordering the coordinates of $B$ from left to right.
\end{notation}

\subsection{Construction of the auxiliary function $\psi_\rho$}

The following lemma provides the functions used to introduce the dependence between blocks that leads to failure of the CLT.

\begin{lemma}
    \label{lem:bias}
    Let $\rho$ be a probability measure on $\{-1,1\}^d$, invariant under the global sign reversal $x\mapsto-x$, and let $X=(X_1,\ldots,X_d)$ be a random vector with law $\rho$. Suppose that
    \begin{align}
        \E_\rho(a\cdot X)^2=\|a\|_2^2\quad\text{and}\quad \E_\rho(a\cdot X)^4\le3\|a\|_2^4 \quad\text{for every }a\in\R^d. \label{eq:linear-form-moments}
    \end{align}
    There exists an odd function $\psi_\rho:\{-1,1\}^d\to[-1,1]$ such that
    \begin{align}
        \E_\rho[X_j\psi_\rho(X)]=\frac1{\sqrt{3d}}, \quad\text{for each }1\le j\le d. \label{eq:coordinate-bias}
    \end{align}
    In particular $\E_\rho\psi_\rho=0$. Both conditions in \eqref{eq:linear-form-moments} hold when the coordinates are four-tuplewise independent and satisfy $\rho(X_j=-1)=\rho(X_j=1)=1/2$ for every $1\leq j\leq d$, since then $\E_\rho(a\cdot X)^4=3\|a\|_2^4-2\sum_ia_i^4$.
\end{lemma}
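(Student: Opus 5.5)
The plan is a convex-duality argument. A single explicit function such as $\operatorname{sign}(\sum_j X_j)$ controls only the sum of the correlations, not each coordinate separately, because $\rho$ need not be exchangeable. Instead, I will show that the target vector $c\mathbf 1$, with $c:=1/\sqrt{3d}$, lies in the set of all achievable correlation vectors, by checking a support-function inequality coming from the moment hypotheses \eqref{eq:linear-form-moments}.

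\textbf{Step 1: the set of achievable vectors.} Let $\Psi$ be the set of odd functions $\psi:\{-1,1\}^d\to[-1,1]$. It is a convex, compact subset of a finite-dimensional space. Let $K:=\{v(\psi):\psi\in\Psi\}\subset\R^d$, where $v(\psi)_j:=\E_\rho[X_j\psi(X)]$. Then $K$ is convex and compact, being the image of $\Psi$ under a linear map.

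\textbf{Step 2: the support function.} Fix $a\in\R^d$ and put $Y:=a\cdot X$. For $\psi\in\Psi$ we have $a\cdot v(\psi)=\E_\rho[Y\psi(X)]\le\E_\rho|Y|$. Equality is attained by $\psi=\operatorname{sign}(a\cdot x)$, with $\operatorname{sign}(0):=0$, which is odd and takes values in $[-1,1]$. Hence $h_K(a)=\E_\rho|a\cdot X|$.

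\textbf{Step 3: the key inequality.} By H\"older's inequality, $\E Y^2=\E\bigl[|Y|^{1/2}|Y|^{3/2}\bigr]\le(\E|Y|)^{2/3}(\E Y^4)^{1/3}$. Therefore
$$
\E|Y|\ \ge\ \frac{(\E Y^2)^{3/2}}{(\E Y^4)^{1/2}}\ \ge\ \frac{\|a\|_2^3}{\sqrt3\,\|a\|_2^2}=\frac{\|a\|_2}{\sqrt3}
$$
for $a\neq0$, using \eqref{eq:linear-form-moments}. By Cauchy--Schwarz, $c\,\mathbf 1\cdot a\le c\sqrt d\,\|a\|_2=\|a\|_2/\sqrt3$. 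So $a\cdot(c\mathbf 1)\le h_K(a)$ for every $a$.

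\textbf{Step 4: conclusion.} If $c\mathbf 1$ were not in $K$, the separating hyperplane theorem would give some $a$ with $a\cdot(c\mathbf 1)>h_K(a)$, contradicting Step 3. Hence $c\mathbf 1=v(\psi_\rho)$ for some $\psi_\rho\in\Psi$, which is \eqref{eq:coordinate-bias}. Since $\rho$ is invariant under $x\mapsto-x$ and $\psi_\rho$ is odd, $\E_\rho\psi_\rho(X)=\E_\rho\psi_\rho(-X)=-\E_\rho\psi_\rho(X)$, so $\E_\rho\psi_\rho=0$. If a canonical choice is wanted for the later convention that the same $\psi_\rho$ is used whenever $\rho$ recurs, fix any deterministic selection rule, e.g.\ a minimiser of a strictly convex functional over the compact convex set $\{\psi\in\Psi:v(\psi)=c\mathbf 1\}$.

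\textbf{Step 5: the final sentence of the statement.} Assume the coordinates are four-tuplewise independent and uniform on $\{-1,1\}$. Expand $(a\cdot X)^4$ and $(a\cdot X)^2$. Every monomial in which some index appears an odd number of times has zero mean, because it involves at most four distinct coordinates, which are independent and centred. The surviving terms give $\E(a\cdot X)^2=\|a\|_2^2$ and
$$
\E(a\cdot X)^4=\sum_i a_i^4+3\sum_{i\ne j}a_i^2a_j^2=3\|a\|_2^4-2\sum_i a_i^4\le3\|a\|_2^4.
$$

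\textbf{Main obstacle.} The only delicate point is Step 3: recognising that the fourth-moment bound, through H\"older's inequality, gives exactly the first-absolute-moment lower bound $\|a\|_2/\sqrt3$ needed to dominate $c\,\mathbf 1\cdot a$ for every direction $a$, not just for $a=\mathbf 1$.
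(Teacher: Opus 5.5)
Your proposal is correct and follows essentially the paper's proof: Hölder gives $\E_\rho|a\cdot X|\ge\|a\|_2/\sqrt3$, and a separation argument on the convex set of correlation vectors, with $\operatorname{sgn}(a\cdot X)$ realising the support function, does the rest; the only difference is that you restrict to odd $\psi$ from the start (using $\operatorname{sign}(0)=0$), whereas the paper takes the odd part at the end. One small slip: the Hölder factorisation that yields $(\E|Y|)^{2/3}(\E Y^4)^{1/3}$ is $Y^2=|Y|^{2/3}|Y|^{4/3}$ with exponents $3/2$ and $3$, not $|Y|^{1/2}|Y|^{3/2}$, although the inequality you state is correct.
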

\begin{proof}
    Fix $a\in\R^d\setminus\{0\}$ and set $Y:=a\cdot X$. The case $a=0$ is immediate. Writing $Y^2=|Y|^{2/3}\cdot|Y|^{4/3}$ and applying H\"older's inequality with the conjugate exponents $3/2$ and $3$ gives
    \begin{align}
        \E_\rho Y^2\leq\bigl(\E_\rho|Y|\bigr)^{2/3}\bigl(\E_\rho Y^4\bigr)^{1/3},\label{eq:ey2}
    \end{align}
    since $\bigl\||Y|^{2/3}\bigr\|_{3/2}=(\E_\rho|Y|)^{2/3}$ and $\bigl\||Y|^{4/3}\bigr\|_{3}=(\E_\rho Y^4)^{1/3}$. Raising both sides of \eqref{eq:ey2} to the power $3/2$ gives $\bigl(\E_\rho Y^2\bigr)^{3/2}\leq\E_\rho|Y|\,\bigl(\E_\rho Y^4\bigr)^{1/2}$. Thus
    \begin{align}
        \E_\rho|a\cdot X| \geq\frac{\bigl(\E_\rho(a\cdot X)^2\bigr)^{3/2}}{\bigl(\E_\rho(a\cdot X)^4\bigr)^{1/2}} \geq\frac{\|a\|_2^3}{\sqrt3\,\|a\|_2^2} =\frac{\|a\|_2}{\sqrt3}, \label{eq:linear-form-lower-bound}
    \end{align}
    where the second inequality uses \eqref{eq:linear-form-moments}. The bound also holds for $a=0$.
    
    Consider the set $\mathcal K:= \bigl\{\E_\rho[X\psi(X)]:\psi:\{-1,1\}^d\to[-1,1]\bigr\}\subset\R^d,$ which is clearly convex and compact. We claim that $\mathcal K$ contains every $v\in\R^d$ with $\|v\|_2\leq1/\sqrt3$. If some such $v$ were not in $\mathcal K$, then, by the Hahn--Banach separation theorem, there would exist $a\in\mathbb R^d$ such that $ a\cdot v>\sup_{w\in\mathcal K}a\cdot w.$ But the choice $\psi=\operatorname{sgn}(a\cdot X)$ produces the point $w:=\E_\rho[X\operatorname{sgn}(a\cdot X)]$ of $\mathcal K$, and for it
    $$
    a\cdot w=\E_\rho\bigl[(a\cdot X)\operatorname{sgn}(a\cdot X)\bigr]=\E_\rho|a\cdot X| \geq\frac{\|a\|_2}{\sqrt3}\geq\|a\|_2\|v\|_2\geq a\cdot v,
    $$
    by \eqref{eq:linear-form-lower-bound} and the Cauchy--Schwarz inequality, a contradiction. The vector $v:=(1,\ldots,1)/\sqrt{3d}$ satisfies $\|v\|_2=1/\sqrt3$, so $v\in\mathcal K$. Thus there is $\psi$ with values in $[-1,1]$ such that $\E_\rho[X_j\psi(X)]=1/\sqrt{3d}$ for every $j$. Replacing $\psi$ by its odd part $x\mapsto(\psi(x)-\psi(-x))/2$, which again takes values in $[-1,1]$, leaves these expectations unchanged, because $X$ and $-X$ have the same law.
\end{proof}

\begin{remark}
The convex set $\mathcal K$ used in the proof is a scalar multiple of a zonoid~\cite[Section~2]{KoshevoyMosler1998}. A zonoid is a convex set associated with a probability distribution, obtained by collecting weighted averages of the values of a random vector. For a random vector $X$ with law $\rho$, it is defined by $\mathcal Z(\rho)=\bigl\{\E_\rho[Xg(X)]:g:\mathbb R^d\to[0,1]\text{ measurable}\bigr\}.$ In our proof, $\mathcal K$ collects all vectors of correlations with the coordinates that can be obtained from a single function $\psi$ taking values in $[-1,1]$. Since $\E_\rho X=0$, the substitution $\psi=2g-1$ gives $\mathcal K=2\mathcal Z(\rho)$.
\end{remark}

The following lemma introduces dependence between factors while preserving every marginal on at most $L-1$ factors. It uses a product of centred functions to perturb a product measure, a construction used in the Sarmanov family of distributions~\cite{Sarmanov1966,LeeSarmanov1996}.

\begin{lemma}
    \label{lem:product-perturbation}
    Let $\rho_1,\ldots,\rho_r$ be probability measures, with $r\geq L$, and let $\psi_i$ satisfy $|\psi_i|\leq1$ and $\rho_i(\psi_i)=0$ for $1\leq i\leq L$. Then
    $$
    d\rho=\left(1-\frac12\prod_{i=1}^L\psi_i(x_i)\right) \,d(\rho_1\otimes\cdots\otimes\rho_r)
    $$
    defines a probability measure with the same null sets as the product measure. For bounded measurable $F_i$,
    \begin{align}
        \rho\left(\prod_{i=1}^rF_i(x_i)\right) &=\prod_{i=1}^r\rho_i(F_i) -\frac12\prod_{i=1}^L\rho_i(F_i\psi_i) \prod_{i=L+1}^r\rho_i(F_i). \label{eq:product-perturbation}
    \end{align}
    Any collection of at most $L-1$ coordinates is independent under $\rho$, with the original marginal laws. That is,
    $$
    \rho(x_i\in A_i\text{ for all }i\in J) = \prod_{i\in J}\rho_i(A_i)
    $$
    for every nonempty $J\subset\{1,\ldots,r\}$ with $|J|\leq L-1$ and measurable sets $A_i$.
\end{lemma}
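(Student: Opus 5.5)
The plan is to write $P:=\rho_1\otimes\cdots\otimes\rho_r$ and $g(x):=1-\frac12\prod_{i=1}^L\psi_i(x_i)$, and to derive everything from Fubini's theorem applied to the product structure of $P$.

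First I will check that $\rho$ is a probability measure. Since $|\psi_i|\leq1$, the product $\prod_{i\le L}\psi_i$ takes values in $[-1,1]$. Hence $g$ takes values in $[1/2,3/2]$. Because $g$ is bounded away from zero and bounded above, $\rho$ is a finite positive measure with $\frac12P\leq\rho\leq\frac32P$. It follows at once that $\rho$ and $P$ have the same null sets. The total mass is the case $F_i\equiv1$ of \eqref{eq:product-perturbation}. The second term then vanishes, because $\rho_1(\psi_1)=0$ and $L\geq1$, so $\rho$ has mass one.

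Next I will establish \eqref{eq:product-perturbation}. I expand
$$\rho\Bigl(\prod_{i=1}^rF_i\Bigr)=P\Bigl(\prod_{i=1}^rF_i\Bigr)-\frac12P\Bigl(\prod_{i=1}^LF_i\psi_i\prod_{i=L+1}^rF_i\Bigr).$$
Each integrand is a product of bounded functions of distinct coordinates. Fubini's theorem therefore factorises both terms into the products of one-dimensional integrals, which gives the stated formula.

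For the marginal statement, I fix $J$ with $1\le|J|\leq L-1$. I set $F_i=\one_{A_i}$ for $i\in J$ and $F_i\equiv1$ otherwise. Since $|J|<L$, some index $i_0\in\{1,\ldots,L\}\setminus J$ exists. For that index $F_{i_0}\psi_{i_0}=\psi_{i_0}$, and $\rho_{i_0}(\psi_{i_0})=0$, so the correction term in \eqref{eq:product-perturbation} vanishes. What remains is $\prod_{i\in J}\rho_i(A_i)$, which is both the independence claim and the identification of the marginals.

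I do not expect a genuine obstacle. The only points needing care are that the $\psi_i$ are measurable, which is implicit in the expression $\rho_i(\psi_i)=0$, and the counting argument that produces the index $i_0$. That count is where the hypothesis $|J|\le L-1$ enters.
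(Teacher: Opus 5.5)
Your proposal is correct and follows essentially the same route as the paper: the density bounds $1/2\leq g\leq 3/2$ give equivalence with the product measure, Fubini factorises both terms to give \eqref{eq:product-perturbation}, and an index in $\{1,\ldots,L\}\setminus J$ kills the correction term. The only cosmetic difference is that you read off the total mass as the case $F_i\equiv1$ of the product formula, whereas the paper computes the normalisation directly before proving that formula.
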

\begin{proof}
    Write $\rho_0:=\rho_1\otimes\cdots\otimes\rho_r$. Since $|\psi_i|\leq1$, the proposed density lies between $1/2$ and $3/2$. Since $\rho_i(\psi_i)=0$ for each $1\leq i\leq L$ we obtain that
    $$
    \int\left(1-\frac12\prod_{i=1}^L\psi_i(x_i)\right) \,d\rho_0 =1-\frac12\prod_{i=1}^L\rho_i(\psi_i)=1.
    $$
    Thus $\rho$ is a probability measure, and the positive lower bound on the density gives equivalence with $\rho_0$.
    
    To obtain \eqref{eq:product-perturbation}, observe that Fubini's theorem gives
    \begin{align}
        \int\prod_{i=1}^rF_i(x_i)\,d\rho_0 =\prod_{i=1}^r\rho_i(F_i)\ \text{and}\ \int\left(\prod_{i=1}^rF_i(x_i)\right) \left(\prod_{i=1}^L\psi_i(x_i)\right)\,d\rho_0 =\prod_{i=1}^L\rho_i(F_i\psi_i) \prod_{i=L+1}^r\rho_i(F_i).\label{eq:prod2}
    \end{align}
    Subtracting half the second term from the first, we obtain \eqref{eq:product-perturbation}.
    
    Fix a nonempty $J\subset\{1,\ldots,r\}$ with $|J|\leq L-1$, and choose measurable sets $A_i$ for $i\in J$. In \eqref{eq:product-perturbation}, take $F_i=\mathbf{1}_{A_i}$ for $i\in J$ and $F_i=1$ otherwise. Since $J$ contains at most $L-1$ indices, there exists $s\in\{1,\ldots,L\}\setminus J$. For such an index, $F_s=1$, hence $\rho_s(F_s\psi_s)=\rho_s(\psi_s)=0.$ Thus the second term in \eqref{eq:product-perturbation} vanishes. The remaining term gives
    $$
    \rho(x_i\in A_i\text{ for all }i\in J) = \prod_{i\in J}\rho_i(A_i),
    $$
\end{proof}

\subsection{Recursive definition of the block distributions}

We now define the block laws recursively along the subdivision tree of each block.

Fix a realisation $h$ of the nested marker sets $\mathcal H$ given by Proposition~\ref{prop:hierarchy}, and write $\mathcal P_k(h)$ for its level-$k$ block partition.

For each $B\in\mathcal P_k(h)$, consider the following tree. Its root is $B$. Each vertex $C\in\mathcal P_\ell(h)$ with $\ell\geq1$ is joined to the blocks of $\mathcal P_{\ell-1}(h)$ contained in $C$. These are its children, ordered from left to right. The subdivision stops at level $0$, where each block consists of a single index $j\in B$ (see Figure~\ref{fig:block-law-tree}).

We now define, recursively over the levels, a probability measure $\rho_B$ on the coordinates of every block $B\in\mathcal P_k(h)$. At level $0$, every $B=\{j\}\in\mathcal P_0(h)$ receives the uniform law on $\{-1,1\}$.

Suppose that the laws have been defined for all blocks in $\mathcal P_{k-1}(h)$. Let $B\in\mathcal P_k(h)$ have consecutive children $B_1,\ldots,B_r\in\mathcal P_{k-1}(h)$. For each $1\leq i\leq r$, let $\psi_i:=\psi_{\rho_{B_i}}$ be a function given by Lemma~\ref{lem:bias}. Define
\begin{align}
    \rho_B(dx):= \left(1-\frac12\prod_{i=1}^L\psi_i(x[B_i])\right) \bigotimes_{i=1}^r\rho_{B_i}(dx[B_i]). \label{eq:parent-law}
\end{align}
There are at least $L$ children by \itemref{it:H-children}, and the density in \eqref{eq:parent-law} lies between $1/2$ and $3/2$.

\begin{figure}[htbp]
    \centering
    \begin{tikzpicture}[bl base,x=1cm,y=1cm]
        \path[use as bounding box]
        (-.2,-.25) rectangle (13.9,8);
        
        \node[anchor=east] at (1.2,7.5) {$\mathcal P_2(h)$};
        \node[anchor=east] at (1.2,6.5) {$\mathcal P_1(h)$};
        \node[anchor=east] at (1.2,5.5) {$\mathcal P_0(h)$};
        
        \path[bl block] (1.6,7.15) rectangle (13.5,7.85);
        \node at (7.55,7.5) {$B$};
        
        \foreach \a/\b/\lab in {
            1.6/4/B_1,
            4/6.4/B_2,
            7.2/9.6/B_i,
            10.4/13.5/B_r
        }{
            \path[bl block]
            (\a,6.15) rectangle (\b,6.85);
            
            \node at ({(\a+\b)/2},6.5) {$\lab$};
            
            \draw[black,dashed] (\a,6.1) -- (\a,5.05);
            \draw[black,dashed] (\b,6.1) -- (\b,5.05);
            
            \foreach \xx in {\a+.2,\a+.6,\b-.6,\b-.2}{
                \path[bl block]
                (\xx-.2,5.2) rectangle (\xx+.2,5.8);
                
                \node[bl dot] at (\xx,5.5) {};
            }
            
            \node at ({(\a+\b)/2},5.5) {$\cdots$};
        }
        
        \foreach \xx in {6.8,10}{
            \node at (\xx,6.5) {$\cdots$};
            \node at (\xx,5.5) {$\cdots$};
        }
        
        \node[bl note,text width=12cm] at (7.55,4.5)
        {At level $0$, the blocks inside $B$ are the singletons
            $\{j\}$ with $j\in B$, ordered from left to right.};
        
        \node[bl node,label=above:{$B$}]
        (root) at (7.55,3.1) {};
        
        \foreach \a/\b/\name/\lab/\side/\offset in {
            1.6/4/c1/B_1/east/-.15,
            4/6.4/c2/B_2/east/-.15,
            7.2/9.6/ci/B_i/west/.15,
            10.4/13.5/cr/B_r/west/.15
        }{
            \node[bl node]
            (\name) at ({(\a+\b)/2},1.95) {};
            
            \node[anchor=\side]
            at ({(\a+\b)/2+\offset},2.12) {$\lab$};
            
            \draw[bl tree] (root) -- (\name);
            
            \foreach \xx [count=\n] in {
                \a+.2,\a+.6,\b-.6,\b-.2
            }{
                \node[bl dot]
                (\name-leaf-\n) at (\xx,.65) {};
                
                \draw[bl tree]
                (\name) -- (\name-leaf-\n);
            }
            
            \node at ({(\a+\b)/2},.65) {$\cdots$};
        }
        
        \foreach \xx in {6.8,10}{
            \node at (\xx,1.95) {$\cdots$};
            \node at (\xx,.65) {$\cdots$};
        }
        
        \node[bl note] at (7.55,0)
        {At each leaf,
            $\rho_{\{j\}}=\tfrac12(\delta_{-1}+\delta_1)$.};
    \end{tikzpicture}

    \caption{A level-$2$ block $B$, its children $B_1,\ldots,B_r$,
        and its singleton leaves at level $0$, shown as nested intervals above and as a tree below.}
    \label{fig:block-law-tree}
\end{figure}

The construction of $\rho_B$ depends only on this tree, including the left-to-right order of its children. There are only countably many finite trees of this form. We fix, once and for all, one admissible function $\psi_\rho$ from Lemma~\ref{lem:bias} for each law $\rho$ arising in the construction, and use the same choice whenever that law occurs again. Thus the choice of $\psi_{\rho_B}$ introduces no additional randomness.

The recursion preserves symmetry and every marginal on at most $L-1$ children, and the coordinates remain $(L-1)$-tuplewise independent. These properties ensure that Lemma~\ref{lem:bias} remains applicable at every level and yield exact stabilisation of the finite-dimensional distributions.

\begin{lemma}
    \label{lem:block-properties}
    For every $k\geq0$ and every $B\in\mathcal P_k$, the law $\rho_B$ satisfies \itemref{it:block-properties-i}--\itemref{it:block-properties-ii} below. For $k\geq1$, write $B_1,\ldots,B_r\in\mathcal P_{k-1}$ for its consecutive children. Then \itemref{it:block-properties-iii} also holds.
    \begin{enumerate}[label=\textup{(\roman*)},ref=\roman*]
        \item\label{it:block-properties-i} $\rho_B$ is a probability measure on $\{-1,1\}^B$ with full support, and
        $$
        \rho_B(\{x\})=\rho_B(\{-x\})\quad\text{for every }x\in\{-1,1\}^B.
        $$
        \item\label{it:block-properties-ii} For every $1\leq m\leq L-1$, every distinct $j_1,\ldots,j_m\in B$, and every $\varepsilon_1,\ldots,\varepsilon_m\in\{-1,1\}$,
        $$
        \rho_B\bigl(X_{j_1}=\varepsilon_1,\ldots,X_{j_m}=\varepsilon_m\bigr)=2^{-m}.
        $$
        \item\label{it:block-properties-iii} For every distinct $i_1,\ldots,i_m\in\{1,\ldots,r\}$ with $1\leq m\leq L-1$ and every $E_\ell\subset\{-1,1\}^{B_{i_\ell}}$,
        $$
        \rho_B\bigl(x[B_{i_1}]\in E_1,\ldots,x[B_{i_m}]\in E_m\bigr)=\prod_{\ell=1}^m\rho_{B_{i_\ell}}(E_\ell).
        $$
    \end{enumerate}
    In particular, for every $1\leq m\leq L-1$ and every distinct $j_1,\ldots,j_m\in B$,
    $$
    \E_{\rho_B}\left[\prod_{\ell=1}^mX_{j_\ell}\right]=0.
    $$
\end{lemma}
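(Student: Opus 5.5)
The plan is to argue by induction on the level $k$, proving \itemref{it:block-properties-i}--\itemref{it:block-properties-iii} simultaneously. The case $k=0$ is immediate: $\rho_{\{j\}}$ is uniform on $\{-1,1\}$, so it has full support, is sign-symmetric, and for $m=1$ gives $\rho(X_j=\varepsilon)=1/2$. The key point of the induction is that, once \itemref{it:block-properties-i}--\itemref{it:block-properties-ii} hold at level $k-1$, Lemma~\ref{lem:bias} applies to each child law $\rho_{B_i}$. Indeed, $L-1\geq5\geq4$, so the coordinates of $\rho_{B_i}$ are four-tuplewise independent and uniform, which gives \eqref{eq:linear-form-moments}. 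Hence the functions $\psi_i$ in \eqref{eq:parent-law} exist, are odd, take values in $[-1,1]$, and have $\rho_{B_i}(\psi_i)=0$.

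For the induction step I would first invoke Lemma~\ref{lem:product-perturbation} with $\rho_i:=\rho_{B_i}$. This shows that $\rho_B$ is a probability measure equivalent to $\bigotimes_i\rho_{B_i}$. That product has full support by the induction hypothesis, so $\rho_B$ has full support. The same lemma gives \itemref{it:block-properties-iii} directly, with $A_\ell=E_\ell$.

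For the sign symmetry in \itemref{it:block-properties-i}, I would note that the product measure is symmetric under $x\mapsto-x$, because each factor is symmetric, and check that the density is symmetric as well. Each $\psi_i$ is odd, so the product $\prod_{i=1}^L\psi_i(-x[B_i])=(-1)^L\prod_{i=1}^L\psi_i(x[B_i])$, and this equals the original product because $L=2R+2$ is even. The density $1-\tfrac12\prod_i\psi_i$ is therefore invariant under $x\mapsto-x$, and so is $\rho_B$. This use of the parity of $L$ is the step most worth checking carefully, since it is the only place where that parity enters.

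For \itemref{it:block-properties-ii}, take distinct $j_1,\dots,j_m\in B$ with $m\leq L-1$. Group these indices according to the child blocks $B_i$ containing them; at most $m\leq L-1$ children are involved. By \itemref{it:block-properties-iii}, the joint law of the restrictions to these children is the product of the $\rho_{B_i}$. Within each child there are at most $m\leq L-1$ of the chosen coordinates, so the induction hypothesis \itemref{it:block-properties-ii} at level $k-1$ makes them uniform and independent. Multiplying these factors gives $2^{-m}$.

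The final assertion then follows at once from \itemref{it:block-properties-ii}. Under $\rho_B$ the variables $X_{j_1},\dots,X_{j_m}$ are independent and centred, so the expectation of their product factorises into a product of zero means and vanishes.
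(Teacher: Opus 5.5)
Your proposal is correct and follows the paper's proof essentially step for step. It uses induction on the level, with (i)--(ii) at level $k-1$ supplying the hypotheses of Lemma~\ref{lem:bias}, Lemma~\ref{lem:product-perturbation} giving normalisation, full support and (iii), evenness of $L$ giving sign symmetry of the density, and grouping the chosen indices by children giving (ii) and hence the vanishing correlations.
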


\begin{proof}
    We induct on the level of $B$. At level $0$, each singleton block $B=\{p\}$ is assigned the law $\frac12(\delta_{-1}+\delta_1)$, so \itemref{it:block-properties-i}--\itemref{it:block-properties-ii} hold. Now fix $k\geq1$ and let $B\in\mathcal P_k$ have consecutive children $B_1,\ldots,B_r\in\mathcal P_{k-1}$. Assume the conclusions for each child law $\rho_{B_i}$. Since $L-1\geq5$, their coordinates are four-tuplewise independent and symmetric. Together with central symmetry, this verifies the hypotheses of Lemma~\ref{lem:bias}. Hence the functions $\psi_i$ used in \eqref{eq:parent-law} are available and satisfy $\rho_{B_i}(\psi_i)=0$.
    
    Apply Lemma~\ref{lem:product-perturbation} with $\rho_i=\rho_{B_i}$. It gives normalisation and the marginal identity \itemref{it:block-properties-iii}. Each child law has full support, so their product assigns positive mass to every configuration. The density is at least $1/2$, proving full support of $\rho_B$. Under $x\mapsto-x$ each child law is invariant, while
    $$
    \prod_{i=1}^L\psi_i(-x[B_i]) =(-1)^L\prod_{i=1}^L\psi_i(x[B_i]) =\prod_{i=1}^L\psi_i(x[B_i]).
    $$
    Thus the density is also invariant, proving central symmetry and completing \itemref{it:block-properties-i}.
    
    For \itemref{it:block-properties-ii}, fix $E=\{j_1,\ldots,j_m\}$ with $m\leq L-1$. Let $B_{i_1},\ldots,B_{i_s}\in\mathcal P_{k-1}$ be the children of $B\in\mathcal P_k$ intersecting $E$. Then $s\leq m\leq L-1$, so \itemref{it:block-properties-iii} makes these child vectors independent. Within each child, the selected coordinates are independent and symmetric by induction. Therefore, for any prescribed values on $E$,
    $$
    \rho_B(X_j=\varepsilon_j\text{ for }j\in E) =\prod_{a=1}^s2^{-|E\cap B_{i_a}|} =2^{-|E|}.
    $$
    This is \itemref{it:block-properties-ii}. The expectation of a product of distinct selected coordinates is therefore the product of their zero means, proving the final assertion.
\end{proof}

Using Lemmas~\ref{lem:product-perturbation} and~\ref{lem:bias}, the following lemma computes the $L$-point correlations across the children $B_i\in\mathcal P_{k-1}$ of a parent $B\in\mathcal P_k$. These correlations are nonpositive, so their negative contributions accumulate without cancellation. They yield the $L$-th moment deficit used in Corollary~\ref{cor:no-gaussian} to exclude Gaussian weak subsequential limits.

\begin{lemma}
    \label{lem:block-correlations}
    Let $k\geq1$, let $B\in\mathcal P_k$ have consecutive children $B_1,\ldots,B_r\in\mathcal P_{k-1}$, and let $X=(X_j)_{j\in B}$ have law $\rho_B$. Let $j_1<\cdots<j_L$ be indices in $B$. Assume that some child $B_i\in\mathcal P_{k-1}$ contains at least one, but not all, of these indices, that is,
    $$
    1\leq\bigl|B_i\cap\{j_1,\ldots,j_L\}\bigr|\leq L-1\quad\text{for some }i\in\{1,\ldots,r\}.
    $$
    Then
    \begin{align}
        \E_{\rho_B}\left[\prod_{\ell=1}^L X_{j_\ell}\right] =
        \begin{cases}
            \displaystyle-\frac{1}{2\,3^{L/2}\sqrt{|B_1|\cdots|B_L|}}, &\text{if }|B_i\cap\{j_1,\ldots,j_L\}|=1\text{ for every }1\leq i\leq L, \\[2ex]
            0, &\text{otherwise.}
        \end{cases}
        \label{eq:cross-correlation}
    \end{align}
\end{lemma}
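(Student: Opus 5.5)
The plan is to apply the perturbation formula \eqref{eq:product-perturbation} of Lemma~\ref{lem:product-perturbation} with $\rho_i=\rho_{B_i}$ and $F_i(x[B_i]):=\prod_{\ell:\,j_\ell\in B_i}X_{j_\ell}$, with the convention $F_i\equiv1$ when $B_i$ contains none of the indices. Then $\prod_\ell X_{j_\ell}=\prod_{i=1}^rF_i$, so
$$
\E_{\rho_B}\Bigl[\prod_{\ell=1}^LX_{j_\ell}\Bigr]=\prod_{i=1}^r\rho_{B_i}(F_i)-\frac12\prod_{i=1}^L\rho_{B_i}(F_i\psi_i)\prod_{i=L+1}^r\rho_{B_i}(F_i).
$$
I then evaluate the two terms separately.

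\emph{The product term.} By hypothesis there is a child $B_i$ with $1\le m_i:=|B_i\cap\{j_1,\ldots,j_L\}|\le L-1$. By Lemma~\ref{lem:block-properties}, the final assertion applied to $\rho_{B_i}$, we get $\rho_{B_i}(F_i)=0$. Hence the first product vanishes.

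\emph{The perturbation term.} This term is nonzero only if every $i\ge L+1$ has $\rho_{B_i}(F_i)\neq0$, and every $i\le L$ has $\rho_{B_i}(F_i\psi_i)\ne0$.

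For $i\ge L+1$: if $1\le m_i\le L-1$, then $\rho_{B_i}(F_i)=0$ as above. If $m_i=L$, all indices lie in $B_i$, which is excluded by the hypothesis. So nonvanishing forces $m_i=0$ for every $i>L$.

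For $i\le L$: if $m_i=0$, then $F_i=1$ and $\rho_{B_i}(\psi_i)=0$. So each of the first $L$ children must contain at least one index. Since there are exactly $L$ indices, each of $B_1,\ldots,B_L$ contains exactly one index and no index lies elsewhere. This is precisely the first case of \eqref{eq:cross-correlation}.

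In that case $F_i=X_{j_i}$, and \eqref{eq:coordinate-bias} of Lemma~\ref{lem:bias} gives $\rho_{B_i}(X_{j_i}\psi_i)=1/\sqrt{3|B_i|}$. Here we use the identification of $\{-1,1\}^{B_i}$ with $\{-1,1\}^{|B_i|}$, so that the coordinate $j_i$ corresponds to some coordinate index in $\{1,\ldots,|B_i|\}$. The remaining factors for $i>L$ equal $1$. The product is therefore
$$
-\frac12\prod_{i=1}^L(3|B_i|)^{-1/2}=-\frac{1}{2\cdot3^{L/2}\sqrt{|B_1|\cdots|B_L|}}.
$$
In every other configuration some factor vanishes, which gives the value $0$.

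The only delicate point is the bookkeeping in the case analysis. One must check that when some $m_i\ge2$ with $i\le L$, pigeonhole leaves another first-$L$ child empty, so its factor $\rho(\psi)$ vanishes. This is already covered by the counting argument: with $L$ indices and $L$ children each required to be nonempty, every child gets exactly one. No step here seems a genuine obstacle; the proof is a direct computation.
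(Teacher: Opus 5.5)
Your proposal is correct and follows essentially the same route as the paper: you expand via the product-perturbation formula with $F_i=\prod_{\ell:\,j_\ell\in B_i}X_{j_\ell}$, kill the independent term using the child that contains between $1$ and $L-1$ indices, and then use $\rho_{B_i}(\psi_i)=0$ together with the counting argument to reduce to exactly one index in each of $B_1,\ldots,B_L$, where the bias identity gives the stated value. Your separate check that the children $B_i$ with $i>L$ must be empty is already implied by the counting argument, but it does no harm.
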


\begin{proof}
    Recall that $B\in\mathcal P_k$ has consecutive children $B_1,\ldots,B_r\in\mathcal P_{k-1}$, ordered from left to right. Put $E:=\{j_1,\ldots,j_L\}$ and $F_i:=\prod_{j\in E\cap B_i}X_j$, interpreting an empty product as one. Lemma~\ref{lem:product-perturbation} gives
    \begin{align}
        \E_{\rho_B}\prod_{\ell=1}^L X_{j_\ell} &=\prod_{i=1}^r\rho_{B_i}(F_i) -\frac12\prod_{i=1}^L\rho_{B_i}(F_i\psi_i) \prod_{i=L+1}^r\rho_{B_i}(F_i). \label{eq:block-correlation-expansion}
    \end{align}
    By hypothesis some child $B_s\in\mathcal P_{k-1}$ contains between $1$ and $L-1$ selected indices. Lemma~\ref{lem:block-properties} \itemref{it:block-properties-ii} makes their coordinates independent with zero means, so
    $$
    \rho_{B_s}(F_s) =\prod_{j\in E\cap B_s}\rho_{B_s}(X_j)=0.
    $$
    Hence the independent term in \eqref{eq:block-correlation-expansion} vanishes. Hence,
    \begin{align}
        \E_{\rho_B}\prod_{\ell=1}^L X_{j_\ell} &=-\frac12\prod_{i=1}^L\rho_{B_i}(F_i\psi_i) \prod_{i=L+1}^r\rho_{B_i}(F_i). \label{eq:block-correlation-reduced}
    \end{align}
    
    If one of the first $L$ children $B_i\in\mathcal P_{k-1}$ contains no selected coordinate, its factor in the correction is $\rho_{B_i}(\psi_i)=0$, so the correlation is zero. In the remaining case, each of the blocks $B_1,\ldots,B_L$ contains at least one of the indices $j_1,\ldots,j_L$. There are only $L$ selected coordinates, so each of those children contains exactly one and none lies in a later child. In particular,
    \begin{align}
        E\subset B_1\cup\cdots\cup B_L\quad \text{and}\quad |E\cap B_i|&=1\quad\text{for every }1\leq i\leq L. \label{eq:block-correlation-one-per-child}
    \end{align}
    Since both the children and the selected indices are ordered, $j_i\in B_i$ for $1\leq i\leq L$. Equation~\eqref{eq:coordinate-bias} then gives
    \begin{align}
        \rho_{B_i}(F_i\psi_i) =\rho_{B_i}(X_{j_i}\psi_i)=\frac1{\sqrt{3|B_i|}}. \label{eq:block-correlation-bias-factor}
    \end{align}
    For every later child $B_i\in\mathcal P_{k-1}$ with $i>L$, the factor $\rho_{B_i}(F_i)$ equals one. Substituting into \eqref{eq:block-correlation-expansion} therefore yields
    $$
    \E_{\rho_B}\prod_{\ell=1}^L X_{j_\ell} =-\frac12\prod_{i=1}^L\frac1{\sqrt{3|B_i|}} =-\frac1{2\,3^{L/2}\sqrt{|B_1|\cdots|B_L|}}.
    $$
    This proves both cases of \eqref{eq:cross-correlation}.
\end{proof}

\begin{remark}
    \label{rem:block-correlation-sign}
    The non-zero value in \eqref{eq:cross-correlation} depends only on the lengths of the first $L$ children $B_1,\ldots,B_L\in\mathcal P_{k-1}$, and not on the particular coordinate chosen in each child. Moreover, every $L$-point correlation covered by Lemma~\ref{lem:block-correlations} is nonpositive. Thus, when these correlations are summed over a block, there is no cancellation between non-zero terms.
\end{remark}

\section{Construction of the stationary process}
\label{sec:limit}
For fixed markers, we combine the level-$k$ block laws independently. Exact stabilisation of their finite-dimensional marginals defines a limiting law. Averaging over the markers then gives the stationary process.

Related ideas appear in finitary coding. The markers and skeleta of del Junco~\cite[Sections~2--5]{DelJunco1981} and the cells of Spinka's cell process~\cite[Sections~2 and~4]{Spinka2020} play a role analogous to that of our nested marker sets and interval partitions. Here we use nested interval partitions to construct a probability measure on $\{-1,1\}^{\mathbb Z}$ recursively. The preservation of marginals on at most $L-1$ blocks ensures consistency throughout the construction.
\subsection{Finite-level processes and stabilisation of finite-dimensional distributions}
Fix a realisation $h$ of $\mathcal H=(\mathcal M_k)_{k\geq0}$, and write $\mathcal P_k(h)$ for its level-$k$ block partition. For each $B\in\mathcal P_k(h)$ with $k\geq1$, the law $\rho_B$ was constructed from the laws of its consecutive children $B_1,\ldots,B_r\in\mathcal P_{k-1}(h)$. At level $0$, the law on each singleton block $\{p\}\in\mathcal P_0(h)$ is $\rho_{\{p\}} = \frac12(\delta_{-1}+\delta_1)$. On $\Sigma=\{-1,1\}^{\mathbb Z}$, define
\begin{align}
    \nu_h^{(k)}:=\bigotimes_{B\in\mathcal P_k(h)}\rho_B \quad\text{and}\quad \nu^{(k)}:=\int\nu_h^{(k)}\,\mathbb P_{\mathcal H}(dh). \label{eq:finite-level-law}
\end{align}
Write $Z^{(k)}$ for the coordinate process under $\nu^{(k)}$. For $m\in\mathbb Z$, denote by $h+m$ the configuration obtained by translating every marker set by $m$. Translation preserves the subdivision of each block into children at every level, including their left-to-right order. Thus the laws of the translated blocks are translates of the original laws. Hence,
$$
\nu_{h+m}^{(k)}=(\sigma^{-m})_*\nu_h^{(k)}.
$$
By Lemma~\ref{lem:stationarity-geometry}, the nested marker sets are stationary, so
\begin{align}
    (\mathcal M_0+m,\ldots,\mathcal M_k+m) \stackrel{d}{=}(\mathcal M_0,\ldots,\mathcal M_k) \quad\text{for }m\in\mathbb Z. \label{eq:stationarym}
\end{align}
Averaging gives
$$
(\sigma^{-m})_*\nu^{(k)} =\int\nu_{h+m}^{(k)}\,\mathbb P_{\mathcal H}(dh) =\nu^{(k)},
$$
which proves stationarity.

The following lemma shows that each finite-level process is finitely dependent after averaging over the marker sets, with dependence range bounded by a fixed multiple of the block scale.

\begin{lemma}
    \label{lem:finite-output}
    For every $k\geq0$ and finite interval $I$, the marginal $\nu_h^{(k)}|_I$ depends only on the marker configuration through level $k$ in $I^{+3(4L)^k}$. Moreover, $Z^{(k)}$ is $(D+6)(4L)^k$-dependent.
\end{lemma}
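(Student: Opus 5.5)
The first assertion is a locality statement for the product measure $\nu_h^{(k)}=\bigotimes_{B\in\mathcal P_k(h)}\rho_B$. I would restrict this product to a finite interval $I$. Only the blocks $B\in\mathcal P_k(h)$ meeting $I$ contribute, and $\nu_h^{(k)}|_I$ is the product over those blocks of the marginals $\rho_B|_{B\cap I}$. By \itemref{it:H-lengths}, every such block has length at most $3(4L)^k$, so each of them lies in $I^{+3(4L)^k}$. Each law $\rho_B$ is a deterministic function of the subdivision tree of $B$, because the choice $\psi_\rho$ is fixed once for each law. That tree is determined by the memberships $\one_{\{i\in\mathcal M_\ell\}}$, $\ell\le k$, for $i\in B$, together with the information that the endpoints of $B$ are level-$k$ markers. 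This is contained in $\mathcal H^{(k)}[I^{+3(4L)^k}]$. One small check is needed here: the right endpoint of $B=[u,v)$ is the marker $v$, and $v\le \max I+3(4L)^k$ still lies in the enlarged interval. Hence the set of blocks meeting $I$, their trees, and hence $\nu_h^{(k)}|_I$, are measurable functions of $\mathcal H^{(k)}[I^{+3(4L)^k}]$.

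For the second assertion, I would take finite intervals $I,J$ with $\dist(I,J)>(D+6)(4L)^k$ and set $I'=I^{+3(4L)^k}$ and $J'=J^{+3(4L)^k}$. Then $\dist(I',J')>D(4L)^k$, so by \itemref{it:H-dependence} the configurations $H_1:=\mathcal H^{(k)}[I']$ and $H_2:=\mathcal H^{(k)}[J']$ are independent. Next, I would realise $Z^{(k)}$ on a joint space: sample $\mathcal H$, then conditionally on $\mathcal H=h$ sample the coordinates from $\nu_h^{(k)}$. Put $X_1:=Z^{(k)}[I]$ and $X_2:=Z^{(k)}[J]$.

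The key claim is that, given $\mathcal H$, the vectors $X_1$ and $X_2$ are conditionally independent. No block of length at most $3(4L)^k$ can meet both $I$ and $J$, since their distance exceeds $3(4L)^k$. So $X_1$ and $X_2$ depend on disjoint sets of independent factors of the product measure. By the first part, $\Law(X_1\mid\mathcal H)$ is a function of $H_1$, and $\Law(X_2\mid\mathcal H)$ is a function of $H_2$. Lemma~\ref{lem:local-kernels} then gives the independence of $X_1$ and $X_2$ under $\nu^{(k)}$.

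Finally, I would pass from intervals to arbitrary index sets by the grouping argument at the end of Proposition~\ref{prop:hierarchy-dependence}. Two points need care in that step. Joint independence of several ordered intervals requires $\mathcal H^{(k)}$ to be independent across several enlarged intervals; this is obtained by induction using the convex hull of the preceding intervals, as in that proof. Infinite index sets are then handled through cylinder events. The main obstacle is the measurability bookkeeping in the first part: one must make sure that the block law is genuinely a function of the local configuration, including the left-to-right ordering and the endpoint marker. Once that is settled, the remaining steps are routine.
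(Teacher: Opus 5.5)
Your proposal is correct and follows essentially the same route as the paper. You obtain locality of $\nu_h^{(k)}|_I$ from the block-length bound $3(4L)^k$, and conditional independence given $\mathcal H$ because no level-$k$ block meets both $I$ and $J$; then you use independence of the enlarged local configurations from \itemref{it:H-dependence}, Lemma~\ref{lem:local-kernels} to average over the markers, and the grouping argument of Proposition~\ref{prop:hierarchy-dependence} for general index sets. Your extra checks (the right endpoint $v$ lying in $I^{+3(4L)^k}$, and $\rho_B$ being a deterministic function of its subdivision tree) are handled more briefly in the paper but in the same way.
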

\begin{proof}
    For a realisation $h=(M_j)_{j\geq0}$ of $\mathcal H$, write
    $$
    h_i^{(k)}:=\bigl(\one_{\{i\in M_0\}},\ldots, \one_{\{i\in M_k\}}\bigr)\quad\text{and}\quad h^{(k)}:=(h_i^{(k)})_{i\in\mathbb Z}.
    $$
    Every block $B\in\mathcal P_k(h)$ intersecting $I$ has length at most $3(4L)^k$, including the singleton case $k=0$. Both endpoints therefore lie in $I^{+3(4L)^k}$. The marker indicators through level $k$ in this enlarged interval determine those endpoints and all lower-level markers between them. Thus they determine all subdivisions of each block intersecting $I$, and hence its law. Since
    $$
    \nu_h^{(k)}|_I =\bigotimes_{\substack{B\in\mathcal P_k(h) \\
            B\cap I\neq\varnothing}} \rho_B|_{B\cap I},
    $$
    they also determine the marginal on $I$.
    
    Now suppose $\dist(I,J)>(D+6)(4L)^k$. No block $B\in\mathcal P_k(h)$ can intersect both intervals, because its length would exceed $3(4L)^k$. Under the joint law $\mathbb P_{\mathcal H}(dh)\nu_h^{(k)}(d\omega)$, the coordinate vectors on $I$ and $J$ are therefore independent conditionally on $\mathcal H$. Their respective conditional laws depend only on $\mathcal H^{(k)}[I^{+3(4L)^k}]$ and $\mathcal H^{(k)}[J^{+3(4L)^k}]$, by the locality just proved. These local configurations are independent by \itemref{it:H-dependence}, since
    $$
    \dist\bigl(I^{+3(4L)^k},J^{+3(4L)^k}\bigr) \geq\dist(I,J)-6(4L)^k>D(4L)^k.
    $$
    These are precisely the hypotheses of Lemma~\ref{lem:local-kernels}, with the two local marker configurations as the $H_i$ and the coordinate vectors as the $X_i$. It gives independence under the averaged law $\nu^{(k)}$. For bounded functions $F,G$ of the coordinates on $I,J$, respectively, the factorisation reads
    \begin{align}
        \nu^{(k)}(FG) &=\int\nu_h^{(k)}(F)\nu_h^{(k)}(G)\,\mathbb P_{\mathcal H}(dh)\notag =\left(\int\nu_h^{(k)}(F)\,\mathbb P_{\mathcal H}(dh)\right) \left(\int\nu_h^{(k)}(G)\,\mathbb P_{\mathcal H}(dh)\right)\notag \\=\nu^{(k)}(F)\nu^{(k)}(G). \label{eq:finite-output-factorisation}
    \end{align}
    The extension to arbitrary index sets follows by the same argument as in the proof of Proposition~\ref{prop:hierarchy-dependence}.
\end{proof}
In particular, restrictions to any finite family of ordered intervals with successive distances greater than $(D+6)(4L)^k$ are jointly independent.

For fixed markers, the joint law on at most $L-1$ level-$k$ blocks is unchanged at all higher levels, regardless of their separation. This preservation allows us to define the limiting law and identify its finite-dimensional distributions with those of a finite-level process.

\begin{lemma}
    \label{lem:stabilisation}
    Fix a realisation $h$ of the nested marker sets $\mathcal H$ and integers $\ell\geq k\geq0$. Suppose that $E\subset\mathbb Z$ intersects at most $L-1$ level-$k$ blocks, that is,
    $$
    \#\{B\in\mathcal P_k(h):E\cap B\neq\varnothing\}\leq L-1.
    $$
    Then
    \begin{align}
        \nu_h^{(\ell)}\big|_E = \nu_h^{(k)}\big|_E = \bigotimes_{\substack{B\in\mathcal P_k(h) \\
                E\cap B\neq\varnothing}} \rho_B\big|_{E\cap B}. \label{eq:conditional-stabilisation}
    \end{align}
\end{lemma}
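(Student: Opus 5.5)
The proof goes by induction on $\ell\geq k$, with $k$ and $E$ fixed. For $\ell=k$ there is nothing to prove. By definition $\nu_h^{(k)}=\bigotimes_{B\in\mathcal P_k(h)}\rho_B$ is a product measure. Restricting it to $E$ leaves only the factors from blocks meeting $E$, each restricted to $E\cap B$. This gives the second equality in \eqref{eq:conditional-stabilisation}.

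For the inductive step, assume $\nu_h^{(\ell-1)}|_E=\nu_h^{(k)}|_E$ for some $\ell>k$. Because the partitions are nested, each level-$(\ell-1)$ block meeting $E$ is a union of level-$k$ blocks, and at least one of those meets $E$. Different level-$(\ell-1)$ blocks contain disjoint sets of level-$k$ blocks. Hence $E$ meets at most $L-1$ blocks of $\mathcal P_{\ell-1}(h)$ as well. Now fix a block $C\in\mathcal P_\ell(h)$ meeting $E$, with children $C_1,\ldots,C_r\in\mathcal P_{\ell-1}(h)$. The set $E\cap C$ meets at most $L-1$ of these children. Lemma~\ref{lem:block-properties}\itemref{it:block-properties-iii}, applied with $E_\ell$ ranging over cylinder sets in the coordinates of $E\cap C_{i}$, shows that the marginal of $\rho_C$ on those children is the product $\bigotimes_i\rho_{C_i}$. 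Restricting further to $E\cap C$, we obtain
$$
\rho_C\big|_{E\cap C}=\bigotimes_{i:\,C_i\cap E\neq\varnothing}\rho_{C_i}\big|_{E\cap C_i}.
$$

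Taking the product over all $C\in\mathcal P_\ell(h)$ meeting $E$ gives $\nu_h^{(\ell)}|_E=\bigotimes_{C'\in\mathcal P_{\ell-1}(h),\,C'\cap E\neq\varnothing}\rho_{C'}|_{E\cap C'}=\nu_h^{(\ell-1)}|_E$, which closes the induction. The argument uses nothing beyond Lemma~\ref{lem:block-properties}. The only point needing care is the bookkeeping: the number of blocks met by $E$ does not increase with the level, so the $(L-1)$-block marginal property applies at every stage. If $E$ is infinite, the same reasoning is applied to each finite subset of $E$, and cylinder sets then determine the marginal.
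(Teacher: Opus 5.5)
Your proof is correct and is essentially the paper's argument: an induction on $\ell$ that applies Lemma~\ref{lem:block-properties}\itemref{it:block-properties-iii} inside each level-$\ell$ block meeting $E$. The only differences are minor. You compare consecutive levels directly, using that $E$ meets at most $L-1$ blocks of $\mathcal P_{\ell-1}(h)$, whereas the paper tracks the joint law of the whole level-$k$ blocks meeting $E$ and applies the induction hypothesis inside each child. Also, your closing remark about infinite $E$ is unnecessary, since a set meeting at most $L-1$ finite blocks is itself finite.
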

\begin{proof}
    We prove by induction on $\ell-k$ that the marginal on any $t\leq L-1$ distinct level-$k$ blocks $B_1,\ldots,B_t$ is $\bigotimes_{i=1}^t\rho_{B_i}$. For $\ell=k$, this is the definition of $\nu_h^{(k)}$.
    
    Suppose the assertion holds for $\ell-1$, where $\ell>k$. First consider the selected blocks contained in one level-$\ell$ block $B$. Let $C_1,\ldots,C_m\in\mathcal P_{\ell-1}(h)$ be the distinct children of $B$ containing them. Since $m\leq t\leq L-1$, Lemma~\ref{lem:block-properties}\itemref{it:block-properties-iii} gives
    $$
    \rho_B\big|_{C_1\cup\cdots\cup C_m}=\bigotimes_{a=1}^m\rho_{C_a}.
    $$
    Within each $C_a$, the induction hypothesis applies to the selected level-$k$ blocks it contains. Therefore
    $$
    \rho_B\big|_{B_1\cup\cdots\cup B_t}=\bigotimes_{a=1}^m\left(\bigotimes_{\{i:B_i\subset C_a\}}\rho_{B_i}\right)=\bigotimes_{i=1}^t\rho_{B_i}.
    $$
    If the selected blocks belong to different level-$\ell$ blocks, apply the same calculation in each of those blocks and use their independence under $\nu_h^{(\ell)}$. This completes the induction.
    
    Now take the level-$k$ blocks intersecting $E$. There are at most $L-1$ of them by hypothesis, so the identity just proved applies to their union. Taking the marginal on $E$ gives \eqref{eq:conditional-stabilisation}.
\end{proof}
Two cases are used below. Since every level-$k$ block has length at least $(4L)^k$, by \itemref{it:H-lengths} for $k\geq1$ and by the singleton construction for $k=0$, a set contained in an interval of length at most $(4L)^k$ intersects at most two level-$k$ blocks, and a union of $s\le R$ such intervals intersects at most $2s\le2R=L-2$ of them. Lemma~\ref{lem:stabilisation} applies to each fixed marker configuration. Since the block lengths and subdivisions are random, conditional independence does not imply independence after averaging over the markers. Lemma~\ref{lem:finite-output} establishes independence after averaging for coordinate vectors on sufficiently separated intervals.

\subsection{Existence and stationarity of the limiting process}

The following proposition constructs the stationary process by averaging the limiting laws over the marker configurations. It establishes full support and $(L-1)$-tuplewise independence. For each fixed marker configuration $h$, the coordinate vector on every block $B\in\mathcal P_k(h)$ has law $\rho_B$ under $\nu_h$.

\begin{proposition}
    \label{prop:limit}
    There exists a family $(\nu_h)_h$ of Borel probability measures on $\Sigma=\{-1,1\}^{\mathbb Z}$ such that $h\mapsto\nu_h(A)$ is measurable for every Borel set $A\subset\Sigma$. Define the averaged probability measure on $\Sigma$ by
    \begin{align}
        \nu(A):=\int\nu_h(A)\,\mathbb P_{\mathcal H}(dh), \quad\text{for }A\in\mathcal B(\Sigma). \label{eq:limit-law}
    \end{align}
    Then the following hold.
    \begin{enumerate}[label=\textup{(\roman*)},ref=\roman*]
        \item\label{it:limit-i} For every finite set $E\subset\mathbb Z$, there exists an integer $k_E\geq0$, depending only on $E$ and not on $h$, such that, for $\mathbb P_{\mathcal H}$-almost every $h$,
        $$
        \nu_h^{(\ell)}\big|_E=\nu_h\big|_E\quad\text{for every }\ell\geq k_E.
        $$
        Thus the joint distribution of the coordinates indexed by $E$ is unchanged after level $k_E$ and equals their joint distribution under $\nu_h$.
        \item\label{it:limit-ii} The measure $\nu$ is $\sigma$-invariant and has full support.
        More precisely, $\nu(\sigma^{-1}A)=\nu(A)$ for every Borel set $A\subset\Sigma$, and $ \nu(\omega_i=\varepsilon_i\text{ for every }i\in E)>0$ for every finite $E\subset\mathbb Z$ and every choice $(\varepsilon_i)_{i\in E}\in\{-1,1\}^E$.
        \item\label{it:limit-iii} Let $Z_i(\omega):=\omega_i$.
        For every $1\leq m\leq L-1$, distinct indices $j_1,\ldots,j_m\in\mathbb Z$, and values $\varepsilon_1,\ldots,\varepsilon_m\in\{-1,1\}$, one has, for $\mathbb P_{\mathcal H}$-almost every $h$,
        $$
        \nu_h(Z_{j_1}=\varepsilon_1,\ldots,Z_{j_m}=\varepsilon_m)=2^{-m}.
        $$
        Therefore, $\nu(Z_{j_1}=\varepsilon_1,\ldots,Z_{j_m}=\varepsilon_m)=2^{-m}.$ Hence the coordinates are $(L-1)$-tuplewise independent under both the conditional laws $\nu_h$ and the averaged law $\nu$, and take the values $-1$ and $1$ with probability $1/2$ each.
        \item\label{it:limit-iv} For $\mathbb P_{\mathcal H}$-almost every $h$, every $k\geq0$ and every block $B\in\mathcal P_k(h)$, $ \nu_h\big|_B=\rho_B,$ where $\rho_B$ is the block law constructed in Section~\ref{sec:block-laws}, using the uniform law on $\{-1,1\}$ at level $0$ and the recursion \eqref{eq:parent-law} at higher levels.
    \end{enumerate}
\end{proposition}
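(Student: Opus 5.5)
The plan is to build $\nu_h$ for each fixed $h$ from the stabilised finite-dimensional distributions via Kolmogorov extension, then check items (i)--(iv) one at a time, using Lemma~\ref{lem:stabilisation} and Lemma~\ref{lem:block-properties}.

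\emph{Choice of $k_E$ and definition of $\nu_h$.} Fix a finite $E\subset\mathbb Z$ and pick $k_E$ minimal with $(4L)^{k_E}\geq\max E-\min E+1$; this depends only on $E$. By \itemref{it:H-lengths} (or the singleton case when $k_E=0$), every level-$k_E$ block has length at least $(4L)^{k_E}$, so $E$ meets at most two level-$k_E$ blocks. Since $2\leq L-1$, Lemma~\ref{lem:stabilisation} gives $\nu_h^{(\ell)}|_E=\nu_h^{(k_E)}|_E$ for all $\ell\geq k_E$, on the almost sure event where Proposition~\ref{prop:hierarchy} holds. Define $\nu_h^E:=\nu_h^{(k_E)}|_E$. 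Consistency is then automatic: for $E\subset E'$, take $\ell\geq\max(k_E,k_{E'})$, and $\nu_h^{E'}|_E=\nu_h^{(\ell)}|_{E'}|_E=\nu_h^{(\ell)}|_E=\nu_h^E$. Kolmogorov extension on the compact product $\Sigma$ yields a unique $\nu_h$ whose marginals are the $\nu_h^E$, which proves (i). On the null set where the geometry fails, set $\nu_h$ equal to the uniform Bernoulli product measure.

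\emph{Measurability.} For a cylinder $A$, $h\mapsto\nu_h(A)=\nu_h^{(k_E)}(A)$ is given by Lemma~\ref{lem:finite-output}. That lemma shows it is a function of finitely many marker indicators $\mathcal H^{(k_E)}[E^{+3(4L)^{k_E}}]$, with only finitely many possible trees, so it is measurable. A monotone class argument (cylinders form a $\pi$-system generating $\mathcal B(\Sigma)$) extends measurability to all Borel $A$. Hence $\nu$ in \eqref{eq:limit-law} is a well-defined probability measure.

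\emph{Items (ii)--(iv).} For invariance, it suffices to check cylinders. For such a cylinder, $\nu(A)=\nu^{(k)}(A)$ for every large $k$, and the analogous identity holds for $\sigma^{-1}A$, because the stabilisation level depends only on $\max E-\min E$, which is translation invariant. Since $\nu^{(k)}$ was shown to be $\sigma$-invariant via \eqref{eq:stationarym}, invariance of $\nu$ follows. For full support, Lemma~\ref{lem:block-properties}\itemref{it:block-properties-i} gives full support of each $\rho_B$. Hence $\nu_h|_E=\bigotimes_B\rho_B|_{E\cap B}$ charges every pattern for almost every $h$, and integrating gives positive $\nu$-mass. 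For (iii), note that $m\leq L-1$ points meet at most $L-1$ level-$0$ blocks. Lemma~\ref{lem:stabilisation} with $k=0$ then gives the product of uniform laws under every $\nu_h^{(\ell)}$, hence under $\nu_h$, and averaging gives the same for $\nu$. For (iv), a block $B\in\mathcal P_k(h)$ is a single level-$k$ block, so Lemma~\ref{lem:stabilisation} with $E=B$ gives $\nu_h^{(\ell)}|_B=\rho_B$ for all $\ell\geq k$. Combined with (i) applied to $E=B$ and large $\ell$, this gives $\nu_h|_B=\rho_B$.

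\emph{Main obstacle.} The delicate point is the measurability of $h\mapsto\nu_h(A)$ together with the uniformity of $k_E$ in $h$. Both rest on the almost sure block-length lower bound and on the locality in Lemma~\ref{lem:finite-output}. I would handle this by restricting to the full-measure event from Proposition~\ref{prop:hierarchy}, where these properties hold for every $h$.
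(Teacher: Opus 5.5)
Your proposal is correct and follows essentially the same route as the paper's proof: stabilisation through Lemma~\ref{lem:stabilisation} at a level $k_E$ that depends only on the diameter of $E$, Kolmogorov extension, and measurability from the locality in Lemma~\ref{lem:finite-output} combined with a $\pi$--$\lambda$ (kernel) argument. Items (ii)--(iv) then come, as in the paper, from finite-level stationarity, full support of the $\rho_B$, and Lemma~\ref{lem:stabilisation} applied with $k=0$ and with $E=B$; the only cosmetic differences are that you allow $k_E=0$ and note $k_{E+m}=k_E$ directly, whereas the paper takes $k_E\geq1$ and $\ell\geq\max\{k_E,k_{E+m}\}$.
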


\begin{proof}
    We divide the proof into four steps. We work with realisations $h$ of $\mathcal H=(\mathcal M_k)_{k\geq0}$ for which $\mathcal M_0=\mathbb Z$, the marker sets are nested and bi-infinite, and the bounds \itemref{it:H-lengths}--\itemref{it:H-children} hold at every level. By Proposition~\ref{prop:hierarchy}, these properties hold $\mathbb P_{\mathcal H}$-almost surely and define a measurable set of marker configurations.
    \begin{step}[1]
        We construct $\nu_h$, establish its measurable dependence on $h$, and prove \itemref{it:limit-i}.
    \end{step}
    \begin{stepproof}[Proof of Step 1]
        Fix a nonempty finite set $E\subset\mathbb Z$, let $I=[\min E,\max E]\cap\mathbb Z$, and choose an integer $k_E\geq1$ such that $|I|\leq(4L)^{k_E}$. This choice depends only on $E$, not on $h$. Every level-$k_E$ block has length greater than $(4L)^{k_E}$ by \itemref{it:H-lengths}. Hence, $I$ intersects at most two such blocks. Otherwise it would contain a complete block between the first and last blocks it intersects, contradicting $|I|\leq(4L)^{k_E}$. Hence
        $$
        \#\{B\in\mathcal P_{k_E}(h):B\cap E\neq\varnothing\}\leq2\leq L-1.
        $$
        Lemma~\ref{lem:stabilisation} gives
        $$
        \nu_h^{(\ell)}\big|_E=\nu_h^{(k_E)}\big|_E\quad\text{for every }\ell\geq k_E.
        $$
        We take $\nu_h^{(k_E)}|_E$ as the proposed limiting marginal on $\{-1,1\}^E$. The identity above shows that choosing a larger level gives the same marginal.
        
        These proposed marginals are consistent. Indeed, if $E\subset F$ are finite and nonempty, choose $\ell\geq\max\{k_E,k_F\}$. Then
        $$
        \left(\nu_h^{(k_F)}\big|_F\right)\big|_E=\left(\nu_h^{(\ell)}\big|_F\right)\big|_E=\nu_h^{(\ell)}\big|_E=\nu_h^{(k_E)}\big|_E.
        $$
        The Kolmogorov extension theorem~\cite[Theorem~8.23]{Kallenberg} therefore gives a unique Borel probability measure $\nu_h$ on $\Sigma$ whose marginal on every finite nonempty $E$ is $\nu_h^{(k_E)}|_E$. Thus
        $$
        \nu_h^{(\ell)}\big|_E=\nu_h\big|_E\quad\text{for every }\ell\geq k_E,
        $$
        proving \itemref{it:limit-i}.
        
        We next verify measurability. Fix a cylinder event
        $$
        A:=\{\omega\in\Sigma:\omega_i=\varepsilon_i\text{ for every }i\in E\},
        $$
        where $E$ is finite and nonempty. By the construction of $\nu_h$ and the product definition of $\nu_h^{(k_E)}$,
        $$
        \nu_h(A)=\nu_h^{(k_E)}(A)=\prod_{\substack{B\in\mathcal P_{k_E}(h) \\
                B\cap E\neq\varnothing}}\rho_B(X_j=\varepsilon_j\text{ for every }j\in B\cap E).
        $$
        The locality assertion in Lemma~\ref{lem:finite-output} shows that this quantity is determined by the finite configuration $h^{(k_E)}[I^{+3(4L)^{k_E}}]$. That configuration consists of finitely many measurable marker indicators and takes only finitely many values. Therefore $h\mapsto\nu_h(A)$ is measurable on the full-measure set under consideration.
        
        Outside the measurable set of marker configurations specified at the start of the proof, define $\nu_h$ as the product of the uniform laws on $\{-1,1\}$. This choice on a null set does not change the averaged measure.
        
        Since cylinder sets form a generating class for the Borel $\sigma$-algebra of $\Sigma$, the kernel criterion in~\cite[Lemma~3.1]{Kallenberg} implies that $h\mapsto\nu_h$ is a probability kernel. Hence
        $$
        \nu(A):=\int\nu_h(A)\,\mathbb P_{\mathcal H}(dh), \quad\text{for }A\in\mathcal B(\Sigma),
        $$
        defines a Borel probability measure on $\Sigma$.
    \end{stepproof}
    
    \begin{step}[2]
        We prove \itemref{it:limit-ii}. The coordinate process $(Z_i)_{i\in\Z}$ is stationary under $\nu$, and $\nu$ has full support on $\Sigma$.
    \end{step}
    \begin{stepproof}[Proof of Step 2]
        Let $A$ be a cylinder event depending on coordinates in a finite nonempty set $E$, and fix $m\in\mathbb Z$. The event $\sigma^{-m}A$ depends on coordinates in $E+m$. Choose $\ell\geq\max\{k_E,k_{E+m}\}$. By Step~1, the conditional probabilities of both events agree with their level-$\ell$ probabilities for almost every $h$. Integrating and using \eqref{eq:finite-level-law} gives
        $$
        \nu(A)=\nu^{(\ell)}(A)\quad\text{and}\quad \nu(\sigma^{-m}A)=\nu^{(\ell)}(\sigma^{-m}A).
        $$
        The coordinate process $Z^{(\ell)}$ is stationary under $\nu^{(\ell)}$, as proved after \eqref{eq:stationarym}. Hence
        $$
        \nu(\sigma^{-m}A)=\nu^{(\ell)}(\sigma^{-m}A)=\nu^{(\ell)}(A)=\nu(A).
        $$
        The probability measures $(\sigma^m)_*\nu$ and $\nu$ agree on cylinder events. Since these form a $\pi$-system generating $\mathcal B(\Sigma)$, the two measures agree on every Borel set~\cite[Lemma~1.18]{Kallenberg}. This proves stationarity.
        
        For full support, fix a nonempty cylinder event $A=\{\omega:\omega_i=\varepsilon_i\text{ for every }i\in E\}$. For $\mathbb P_{\mathcal H}$-almost every realisation $h$ of $\mathcal H$, the probability $\nu_h(A)$ is the finite product displayed in Step~1. Each factor is positive because Lemma~\ref{lem:block-properties}\itemref{it:block-properties-i} gives full support of $\rho_B$, and any prescription of coordinate values on $B\cap E$ extends to a coordinate configuration on $B$. Thus $\nu_h(A)>0$ for $\mathbb P_{\mathcal H}$-almost every $h$, and
        $$
        \nu(A)=\int\nu_h(A)\,\mathbb P_{\mathcal H}(dh)>0.
        $$
        Every nonempty open subset of $\Sigma$ contains a nonempty cylinder event, so $\supp\nu=\Sigma$.
    \end{stepproof}
    
    \begin{step}[3]
        We prove \itemref{it:limit-iii}. For any distinct indices $j_1,\ldots,j_m$, with $1\leq m\leq L-1$, the variables $Z_{j_1},\ldots,Z_{j_m}$ are independent and have mean zero. This holds with respect to $\nu$ and, for $\mathbb P_{\mathcal H}$-almost every realisation $h$ of $\mathcal H$, with respect to $\nu_h$.
    \end{step}
    \begin{stepproof}[Proof of Step 3]
        Fix $1\leq m\leq L-1$, distinct indices $j_1,\ldots,j_m\in\mathbb Z$, and values $\varepsilon_1,\ldots,\varepsilon_m\in\{-1,1\}$. Put $E=\{j_1,\ldots,j_m\}$. Since the level-$0$ blocks are singletons, $E$ intersects exactly $m\leq L-1$ of them. Lemma~\ref{lem:stabilisation}, applied with $k=0$, therefore gives
        $$
        \nu_h^{(\ell)}\big|_E=\nu_h^{(0)}\big|_E=\bigotimes_{i=1}^m\rho_{\{j_i\}}\quad\text{for every }\ell\geq0.
        $$
        For $\ell\geq k_E$, Step~1 identifies the left-hand side with $\nu_h|_E$. Each singleton law is the uniform law on $\{-1,1\}$, so
        $$
        \nu_h(Z_{j_1}=\varepsilon_1,\ldots,Z_{j_m}=\varepsilon_m)=\prod_{i=1}^m\rho_{\{j_i\}}(X_{j_i}=\varepsilon_i)=2^{-m}.
        $$
        Integrating this identity over $h$ yields
        $$
        \nu(Z_{j_1}=\varepsilon_1,\ldots,Z_{j_m}=\varepsilon_m)=\int2^{-m}\,\mathbb P_{\mathcal H}(dh)=2^{-m}.
        $$
        These are precisely the joint distributions asserted in \itemref{it:limit-iii}.
    \end{stepproof}
    
    \begin{step}[4]
        We prove \itemref{it:limit-iv}. For every $k\geq0$, every block $B\in\mathcal P_k(h)$ and every $\varepsilon=(\varepsilon_i)_{i\in B}\in\{-1,1\}^B$,
        $$
        \nu_h(Z_i=\varepsilon_i\text{ for every }i\in B) =\rho_B(\{\varepsilon\}).
        $$
    \end{step}
    \begin{stepproof}[Proof of Step 4]
        Fix $h$, an integer $k\geq0$, and a block $B\in\mathcal P_k(h)$. The set $B$ intersects only one level-$k$ block, namely itself. Lemma~\ref{lem:stabilisation}, applied with $E=B$, gives
        $$
        \nu_h^{(\ell)}\big|_B=\nu_h^{(k)}\big|_B=\rho_B\quad\text{for every }\ell\geq k,
        $$
        where the last equality follows from the product definition of $\nu_h^{(k)}$. Since $B$ is finite, from Step~1 we obtain that there exists $k_B$ such that $\nu_h^{(\ell)}|_B=\nu_h|_B$ for every $\ell\geq k_B$. Taking $\ell\geq\max\{k,k_B\}$ proves
        $$
        \nu_h\big|_B=\nu_h^{(\ell)}\big|_B=\rho_B.
        $$
    \end{stepproof}
    This completes the proof.
\end{proof}

We now consider $\mathcal H$ and $Z$ on the same probability space, with joint probability measure
$$
\mathbb P(dh,d\omega) :=\mathbb P_{\mathcal H}(dh)\,\nu_h(d\omega).
$$
The process $Z=(Z_i)_{i\in\Z}$ has law $\nu$. Knowing $\mathcal H$ determines the blocks, but the coordinates $Z_i$ remain random. For a realisation $h$ of $\mathcal H$, their law is $\nu_h$. Thus the conditional expectation (see~\cite[Chapter~8]{Kallenberg} for a review) is
$$
\E[F(Z)\mid\mathcal H] =\int_\Sigma F(\omega)\,\nu_{\mathcal H}(d\omega) \quad\text{almost surely},
$$
for every bounded Borel function $F:\Sigma\to\mathbb R$. Taking $F=\one_A$ gives the probability that $Z\in A$ when the marker sets are known, namely
$$
\mathbb P(Z\in A\mid\mathcal H) =\E[\one_A(Z)\mid\mathcal H] =\nu_{\mathcal H}(A) \quad\text{almost surely},
$$
for every Borel set $A\subset\Sigma$.

The following lemma is the counterpart of Lemma~\ref{lem:block-correlations} for the limiting process $(Z_i)_{i\in\mathbb Z}$, transferring the correlation formula for the block coordinates $X_i$ to this process. The $L$-point correlations are nonpositive, so their negative contributions add without cancellation and are used to prove failure of the CLT for $(Z_i)_{i\in\mathbb Z}$.

\begin{lemma}
    \label{lem:correlation-classification}
    For $\mathbb P_{\mathcal H}$-almost every realisation $h$ of $\mathcal H$, the following statements hold for every $j_1<\cdots<j_L$ in $\mathbb Z$. Write $E:=\{j_1,\ldots,j_L\}$.
    
    Suppose that $E\subset B$ for some block $B\in\mathcal P_k(h)$. Choose $k$ minimal, and write $B_1,\ldots,B_r\in\mathcal P_{k-1}(h)$ for the children of the unique block $B\in\mathcal P_k(h)$ containing $E$, ordered from left to right. Then
    \begin{align}
        \int_\Sigma\prod_{i=1}^L\omega_{j_i}\,\nu_h(d\omega) =
        \begin{cases}
            \displaystyle -\frac{1}{2\,3^{L/2}\sqrt{|B_1|\cdots|B_L|}}, &\text{if }|E\cap B_i|=1\text{ for every }1\leq i\leq L, \\[2ex]
            0, &\text{otherwise.}
        \end{cases}
        \label{eq:correlation-classification}
    \end{align}
    If no block at any level contains $E$, the integral is zero.
    
    In particular, for every $j_1<\cdots<j_L$,
    $$
    \E\left[\prod_{i=1}^L Z_{j_i}\,\middle|\,\mathcal H\right] \leq0 \quad\text{almost surely},\ \text{and } \E\left[\prod_{i=1}^L Z_{j_i}\right]\leq0.
    $$
\end{lemma}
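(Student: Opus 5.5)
The plan is to reduce the statement to Lemma~\ref{lem:block-correlations} via the stabilisation results, working on the full-measure set of marker configurations from Proposition~\ref{prop:limit}. There \itemref{it:H-lengths}--\itemref{it:H-children} hold and $\nu_h|_B=\rho_B$ for every block $B$ at every level, by Proposition~\ref{prop:limit}\itemref{it:limit-iv}. Fix $j_1<\cdots<j_L$ and $E=\{j_1,\ldots,j_L\}$.

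\emph{Case 1: $E\subset B$ for some $B\in\mathcal P_k(h)$.} Take $k$ minimal. Since level-$0$ blocks are singletons and $L\geq 6$, we have $k\geq1$. The integral of $\prod\omega_{j_i}$ against $\nu_h$ depends only on $\nu_h|_B=\rho_B$, so it equals $\E_{\rho_B}[\prod X_{j_\ell}]$. By minimality of $k$, no child $B_i\in\mathcal P_{k-1}(h)$ contains all of $E$. Some child contains at least one index, so that child contains between $1$ and $L-1$ indices. This is exactly the hypothesis of Lemma~\ref{lem:block-correlations}, and \eqref{eq:cross-correlation} gives \eqref{eq:correlation-classification}. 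Uniqueness of the block containing $E$ at level $k$ is immediate, since $\mathcal P_k(h)$ is a partition.

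\emph{Case 2: no block at any level contains $E$.} Choose $\ell$ large, with $\ell\geq k_E$ from Proposition~\ref{prop:limit}\itemref{it:limit-i}, so that $\nu_h|_E=\nu_h^{(\ell)}|_E$. Under $\nu_h^{(\ell)}$ the blocks of $\mathcal P_\ell(h)$ are independent. By assumption $E$ meets at least two of them, and each meets at most $L-1$ points of $E$. Hence the expectation factors as a product over these blocks of $\E_{\rho_C}[\prod_{j\in E\cap C}X_j]$. Each factor involves between $1$ and $L-1$ distinct coordinates, so it vanishes by the final assertion of Lemma~\ref{lem:block-properties}. Thus the integral is zero.

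\emph{Consequences.} In both cases the value is $\leq0$. Since the conditional expectation given $\mathcal H$ equals $\int\prod\omega_{j_i}\,\nu_{\mathcal H}(d\omega)$ almost surely, it is $\leq0$ almost surely. Integrating over $\mathbb P_{\mathcal H}$ gives $\E[\prod Z_{j_i}]\leq0$.

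The only delicate point is the measure-theoretic bookkeeping. Statements hold for almost every $h$ simultaneously for all (countably many) tuples, and Proposition~\ref{prop:limit}\itemref{it:limit-iv} holds on a single full-measure set for all blocks. I expect no real obstacle beyond checking that minimality of $k$ yields a child with a proper nonempty intersection.
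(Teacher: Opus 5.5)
Your proposal is correct and follows the paper's proof essentially step for step. In the contained case you identify $\nu_h|_B$ with $\rho_B$ via Proposition~\ref{prop:limit}\itemref{it:limit-iv} and use minimality of $k$ to meet the hypothesis of Lemma~\ref{lem:block-correlations}. In the uncontained case you factorise over level-$\ell$ blocks for $\ell\geq k_E$ and use the centred $(L-1)$-tuplewise independence of Lemma~\ref{lem:block-properties}; you then integrate over the markers and intersect countably many full-measure sets.
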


\begin{proof}
    Fix a realisation $h$ of $\mathcal H$ for which Proposition~\ref{prop:limit} holds and put $E=\{j_1,\ldots,j_L\}$. Suppose first that a block contains $E$, and choose the smallest such level $k$. The block $B\in\mathcal P_k(h)$ is unique because $\mathcal P_k(h)$ is a partition. Moreover, $k\geq1$, since a singleton cannot contain $L$ distinct indices. Write $B_1,\ldots,B_r\in\mathcal P_{k-1}(h)$ for the children of $B\in\mathcal P_k(h)$, ordered from left to right. The set $E$ intersects at least two children of $B$. Otherwise one child $B_i\in\mathcal P_{k-1}(h)$ would contain it, contradicting minimality.
    
    Proposition~\ref{prop:limit}\itemref{it:limit-iv} identifies the marginal on $B$ with $\rho_B$. Thus
    $$
    \int_\Sigma\prod_{i=1}^L\omega_{j_i}\,\nu_h(d\omega) =\E_{\rho_B}\prod_{i=1}^L X_{j_i}.
    $$
    The selected coordinates intersect at least two children, so Lemma~\ref{lem:block-correlations} applies and gives \eqref{eq:correlation-classification}.
    
    Now suppose that no block contains $E$. At each level $\ell$, the blocks $C\in\mathcal P_\ell(h)$ intersecting $E$ satisfy $1\leq|C\cap E|\leq L-1$. By Lemma~\ref{lem:block-properties}\itemref{it:block-properties-ii}, the selected coordinates within each such block are independent and centred, so
    $$
    \E_{\rho_C}\prod_{j\in C\cap E}X_j =\prod_{j\in C\cap E}\E_{\rho_C}X_j=0.
    $$
    Since the blocks are independent under $\nu_h^{(\ell)}$,
    $$
    \int_\Sigma\prod_{j\in E}\omega_j\,\nu_h^{(\ell)}(d\omega) =\prod_{\substack{C\in\mathcal P_\ell(h) \\
            C\cap E\neq\varnothing}} \E_{\rho_C}\prod_{j\in C\cap E}X_j=0.
    $$
    For $\ell\geq k_E$, the marginal on $E$ agrees with that of $\nu_h$, so the same integral under $\nu_h$ is zero.
    
    In both cases the integral under $\nu_h$ is nonpositive. The conditional-expectation identity above therefore gives $\E[\prod_iZ_{j_i}\mid\mathcal H]\leq0$ almost surely, and
    $$
    \E\prod_{i=1}^L Z_{j_i} =\int\left(\int_\Sigma\prod_{i=1}^L\omega_{j_i}\, \nu_h(d\omega)\right)\mathbb P_{\mathcal H}(dh)\leq0.
    $$
    There are countably many increasing integer tuples. Intersecting their probability-one sets makes all these assertions simultaneous.
\end{proof}
The simultaneous assertion allows \eqref{eq:correlation-classification} to be applied to a block selected as a function of the markers.

\section{Exponential mixing of order \texorpdfstring{$R$}{R}}
\label{sec:correlations}
We study the process $Z=(Z_i)_{i\in\mathbb Z}$ of Proposition~\ref{prop:limit}, with law $\nu$, using the notation introduced before Theorem~\ref{thm:symbolic}. The finite-level dependence of Lemma~\ref{lem:finite-output} gives joint independence of the coordinate vectors on at most $R$ intervals separated by a fixed multiple of their lengths (Proposition~\ref{prop:windows}). This implies that $\sigma$ is mixing with respect to $\nu$ (Corollary~\ref{cor:mixing}) and yields exponential mixing of order $R$ for regular observables (Proposition~\ref{prop:holder}).

\begin{proposition}
    \label{prop:windows}
    Let $(Z_i)_{i\in\mathbb Z}$ be the process given in Proposition~\ref{prop:limit} and  define $K:=4L(D+7)$, where $D$ is the constant in Proposition~\ref{prop:hierarchy}. Let $2\le s\le R$ and let $I_1,\ldots,I_s$ be ordered disjoint nonempty intervals. If
    \begin{align}
        \min_{i<s}\dist(I_i,I_{i+1})>K\max_i|I_i|, \label{eq:window-separation}
    \end{align}
    then $Z[I_1],\ldots,Z[I_s]$ are jointly independent.
\end{proposition}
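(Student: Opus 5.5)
The plan is to reduce the statement to a finite level $k$ and then combine the stabilisation Lemma~\ref{lem:stabilisation} with the finite dependence of Lemma~\ref{lem:finite-output}. Put $m:=\max_i|I_i|$ and let $k\geq0$ be the smallest integer with $(4L)^k\geq m$. Then $(4L)^{k-1}<m$ when $k\geq1$, so $(4L)^k<4Lm$; for $k=0$ we have $(4L)^k=1\leq m$. In either case $(4L)^k\leq 4Lm$.

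\emph{Stabilisation.} Put $E:=I_1\cup\cdots\cup I_s$. Each $I_i$ has length at most $(4L)^k$, and every level-$k$ block is longer than $(4L)^k$ by \itemref{it:H-lengths} (or is a singleton when $k=0$). So each $I_i$ meets at most two level-$k$ blocks, and $E$ meets at most $2s\leq 2R=L-2\leq L-1$ of them, for every admissible $h$. Lemma~\ref{lem:stabilisation} then gives $\nu_h^{(\ell)}|_E=\nu_h^{(k)}|_E$ for all $\ell\geq k$. Taking $\ell\geq k_E$ and using Proposition~\ref{prop:limit}\itemref{it:limit-i}, we get $\nu_h|_E=\nu_h^{(k)}|_E$ for $\mathbb P_{\mathcal H}$-almost every $h$. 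Integrating over $h$ yields $\nu|_E=\nu^{(k)}|_E$. The joint law of $(Z[I_1],\ldots,Z[I_s])$ under $\nu$ is therefore that of $(Z^{(k)}[I_1],\ldots,Z^{(k)}[I_s])$ under $\nu^{(k)}$. The same identity applied to each single interval shows that the marginals agree as well.

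\emph{Finite dependence.} The successive distances satisfy
$$
\dist(I_i,I_{i+1})>Km=4L(D+7)m\geq(D+7)(4L)^k>(D+6)(4L)^k .
$$
By Lemma~\ref{lem:finite-output} and the remark following it, restrictions of $Z^{(k)}$ to ordered intervals with successive gaps exceeding $(D+6)(4L)^k$ are jointly independent. Hence $Z^{(k)}[I_1],\ldots,Z^{(k)}[I_s]$ are jointly independent under $\nu^{(k)}$. Transferring this back through the identity $\nu|_E=\nu^{(k)}|_E$ gives joint independence under $\nu$.

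\emph{Main obstacle.} The delicate point is that independence must hold after averaging over the markers, not just conditionally on $h$. Stabilisation is only a conditional statement, valid for each fixed $h$, and cannot by itself give independence under $\nu$. The plan avoids this by first equating the averaged laws $\nu$ and $\nu^{(k)}$ on $E$, which is legitimate because stabilisation holds for almost every $h$ at a level $k$ that does not depend on $h$. Only then does it invoke the averaged independence statement of Lemma~\ref{lem:finite-output}, whose proof already handles the averaging through Lemma~\ref{lem:local-kernels}.

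One bookkeeping check remains: the scale $(4L)^k$ must exceed the interval lengths, so that stabilisation applies, while staying within a factor $4L$ of $m$, so that $K$ controls the gap condition. The choice of minimal $k$ satisfies both requirements.
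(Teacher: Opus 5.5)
Your proposal is correct and follows the paper's proof essentially step for step. It uses the same minimal scale $k$ with $m\le(4L)^k\le 4Lm$, the same count of at most $2s\le L-2$ level-$k$ blocks meeting $E$ so that Lemma~\ref{lem:stabilisation} and Proposition~\ref{prop:limit}\itemref{it:limit-i} give $\nu|_E=\nu^{(k)}|_E$, and the same appeal to the averaged finite dependence of Lemma~\ref{lem:finite-output} once the gaps exceed $(D+6)(4L)^k$.
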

\begin{proof}
    Put $m:=\max_i|I_i|$ and choose the least $k\geq0$ with $(4L)^k\geq m$. Minimality gives
    $$
    m\leq(4L)^k\leq4Lm.
    $$
    The same inequalities hold when $k=0$, in which case $m=1$. For $k\geq1$, each interval intersects at most two blocks of $\mathcal P_k(h)$ by their length lower bound. For $k=0$, each window is a singleton. Hence $E:=I_1\cup\cdots\cup I_s$ intersects at most $2s\leq2R=L-2$ blocks. Lemma~\ref{lem:stabilisation} gives $\nu_h^{(\ell)}|_E=\nu_h^{(k)}|_E$ for every $\ell\geq k$. Taking $\ell$ large enough to apply Proposition~\ref{prop:limit}\itemref{it:limit-i}, and then averaging, we obtain
    $$
    \nu_h|_E=\nu_h^{(k)}|_E\quad\text{and}\quad \nu|_E=\nu^{(k)}|_E.
    $$
    On the other hand, \eqref{eq:window-separation} implies
    $$
    \min_{i<s}\dist(I_i,I_{i+1}) >4L(D+7)m\geq(D+7)(4L)^k>(D+6)(4L)^k.
    $$
    Lemma~\ref{lem:finite-output} makes the finite-level restrictions to these windows jointly independent. For events $A_i\subset\{-1,1\}^{I_i}$, equality of the joint laws on $E$ and of their marginals gives
    \begin{align}
        \nu(Z[I_i]\in A_i\text{ for all }i) &=\nu^{(k)}(Z^{(k)}[I_i]\in A_i\text{ for all }i)\notag \\
        &=\prod_{i=1}^s\nu^{(k)}(Z^{(k)}[I_i]\in A_i) =\prod_{i=1}^s\nu(Z[I_i]\in A_i). \label{eq:window-factorisation}
    \end{align}
    This is the required independence.
\end{proof}

To pass from window independence to exponential mixing, we approximate observables in $\mathcal F_\theta(\Sigma)$ by cylinder functions with window radius proportional to the smallest time gap. The cylinder functions are independent and the approximation error decays exponentially, giving the bounds needed for the horseshoe realisations.

\begin{proposition}
    \label{prop:holder}
    For every $0<\theta<1$, the measure $\nu$ satisfies
    \begin{align}
        \left|\nu\left(\prod_{i=1}^RF_i\circ\sigma^{n_i}\right) -\prod_{i=1}^R\nu(F_i)\right| \le C_{R,\theta}\,e^{-c_\theta\Delta(\mathbf n)} \prod_{i=1}^R\|F_i\|_\theta\quad\text{where}\quad c_\theta:=\frac{|\log\theta|}{4(K+1)} .\label{eq:holder-multiple}
    \end{align}
\end{proposition}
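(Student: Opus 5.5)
The plan is to approximate each $F_i$ by a cylinder function depending on a window whose radius is proportional to the minimal gap $\Delta:=\Delta(\mathbf n)$. The shifted windows will then satisfy the separation hypothesis of Proposition~\ref{prop:windows}, so they factorise exactly. What remains is to control the approximation errors, which are exponentially small in $\Delta$.

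\emph{Choice of window radius.} Fix $0<\theta<1$, set $r:=\lfloor \Delta/(2(K+1))\rfloor$, and assume first that $r\geq1$; the case of small $\Delta$ is addressed at the end. For each $i$, let $F_i^{(r)}(\omega):=\E_\nu[F_i\mid \omega_{[-r,r]}]$, or more simply $F_i$ evaluated at a fixed extension of $\omega|_{[-r,r]}$. This function depends only on the coordinates in $[-r,r]$ and satisfies
\[
\|F_i-F_i^{(r)}\|_\infty\leq \Lip_\theta(F_i)\,\theta^{r+1}\leq\|F_i\|_\theta\,\theta^{r},
\]
with $\|F_i^{(r)}\|_\infty\leq\|F_i\|_\infty$. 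After composing with $\sigma^{n_i}$, the function $F_i^{(r)}\circ\sigma^{n_i}$ depends only on $Z[I_i]$, where $I_i:=[n_i-r,n_i+r]$. These windows satisfy $|I_i|=2r+1$ and $\dist(I_i,I_{i+1})\geq\Delta-2r-1$. The radius is chosen so that $\Delta-2r-1>K(2r+1)$. It suffices to have $\Delta\geq (K+1)(2r+1)+1$, and a choice of the form $r\approx \Delta/(2(K+1))-1$ works, up to adjusting constants.

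\emph{Telescoping.} I will write
\[
\prod_i F_i\circ\sigma^{n_i}-\prod_i F_i^{(r)}\circ\sigma^{n_i}
\]
as a telescoping sum of $R$ terms. Each term has absolute value at most $\|F_i-F_i^{(r)}\|_\infty\prod_{j\neq i}\|F_j\|_\infty$. The same decomposition applies to $\prod_i\nu(F_i)-\prod_i\nu(F_i^{(r)})$, using the $\sigma$-invariance from Proposition~\ref{prop:limit}\itemref{it:limit-ii}. Proposition~\ref{prop:windows} applies with $s=R$: the windows are ordered, disjoint and nonempty, and they are separated by more than $K\max_i|I_i|$. It gives $\nu(\prod_i F_i^{(r)}\circ\sigma^{n_i})=\prod_i\nu(F_i^{(r)}\circ\sigma^{n_i})=\prod_i\nu(F_i^{(r)})$, by stationarity. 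The left-hand side of \eqref{eq:holder-multiple} is therefore at most $2R\,\theta^{r}\prod_i\|F_i\|_\theta$.

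\emph{Constants and the small-gap case.} We have $r\geq \Delta/(2(K+1))-2$, so that
\[
\theta^r\leq\theta^{-2}e^{-|\log\theta|\Delta/(2(K+1))}\leq\theta^{-2}e^{-c_\theta\Delta},
\]
with room to spare for $c_\theta=|\log\theta|/(4(K+1))$. The factor $1/4$ absorbs the integer-part losses. When $\Delta$ is too small for $r\geq1$, that is, for $\Delta$ below a constant depending only on $K$, the trivial bound $2\prod_i\|F_i\|_\infty$ suffices once $C_{R,\theta}$ is enlarged by $e^{c_\theta\Delta_0}$. The only step that needs care is the arithmetic ensuring the strict window separation $\dist(I_i,I_{i+1})>K\max_i|I_i|$ with the given $c_\theta$. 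The main point is that the radius scales linearly with $\Delta$ with slope $1/(2(K+1))$, and halving this slope yields the stated constant.
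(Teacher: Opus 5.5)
Your proposal is correct and is essentially the paper's proof: you truncate each $F_i$ to a cylinder function on a window of radius proportional to $\Delta(\mathbf n)$, factorise via Proposition~\ref{prop:windows} and stationarity, control both products by telescoping, and use the trivial bound for small gaps. The only difference is the slope. The paper takes $\ell=\lfloor\Delta/(4(K+1))\rfloor$, which makes the separation check $\Delta-2(K+1)\ell-K\geq\Delta/2-K>0$ immediate. With your slope $1/(2(K+1))$ you do need the shift $r=\lfloor\Delta/(2(K+1))\rfloor-1$ that you mention, since your first choice violates the separation hypothesis when $2(K+1)$ divides $\Delta$; the shift costs only a factor $\theta^{-2}$ in $C_{R,\theta}$.
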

\begin{proof}
    Set $s:=R$ and fix $F_1,\ldots,F_s\in\mathcal F_\theta(\Sigma)$ and the observation times. For $\ell\geq0$, let $\omega^{(\ell)}$ agree with $\omega$ on $[-\ell,\ell]$ and equal $1$ elsewhere, and put $F_{i,\ell}(\omega):=F_i(\omega^{(\ell)})$. This function depends only on the coordinates in that window. Since $\omega^{(\ell)}\in\Sigma$ and $d_\theta(\omega,\omega^{(\ell)})\leq\theta^{\ell+1}$,
    \begin{align}
        \|F_{i,\ell}\|_\infty\leq\|F_i\|_\infty\quad\text{and}\quad \|F_i-F_{i,\ell}\|_\infty \leq\theta^{\ell+1}\|F_i\|_\theta. \label{eq:cylinder-approximation}
    \end{align}
    Write $\Delta:=\Delta(\mathbf n)$. For $\Delta>2(K+1)$, choose $\ell=\lfloor\Delta/(4(K+1))\rfloor$. The function $F_{i,\ell}\circ\sigma^{n_i}$ depends on $I_i=[n_i-\ell,n_i+\ell]\cap\mathbb Z$. These windows have length $2\ell+1$ and successive distances at least $\Delta-2\ell$. The choice of $\ell$ gives
    $$
    \Delta-2\ell-K(2\ell+1) =\Delta-2(K+1)\ell-K\geq\Delta/2-K>0.
    $$
    Thus Proposition~\ref{prop:windows} applies. Independence and stationarity yield
    $$
    \nu\left(\prod_{i=1}^sF_{i,\ell}\circ\sigma^{n_i}\right) =\prod_{i=1}^s\nu(F_{i,\ell}).
    $$
    
    We now estimate the two approximation errors. For any $a_1,\ldots,a_s,b_1,\ldots,b_s$, the telescoping identity is
    $$
    \prod_{i=1}^s a_i-\prod_{i=1}^s b_i =\sum_{i=1}^s \left(\prod_{j<i}a_j\right)(a_i-b_i) \left(\prod_{j>i}b_j\right).
    $$
    Apply it pointwise with $a_i=F_i\circ\sigma^{n_i}$ and $b_i=F_{i,\ell}\circ\sigma^{n_i}$. Using \eqref{eq:cylinder-approximation}, we obtain
    \begin{align*}
        \left|\nu\left(\prod_iF_i\circ\sigma^{n_i}\right) -\nu\left(\prod_iF_{i,\ell}\circ\sigma^{n_i}\right)\right| &\leq\sum_{i=1}^s\|F_i-F_{i,\ell}\|_\infty \prod_{j\neq i}\|F_j\|_\infty\leq s\theta^{\ell+1}\prod_{i=1}^s\|F_i\|_\theta.
    \end{align*}
    For the product of expectations, use the same identity with $a_i=\nu(F_i)$ and $b_i=\nu(F_{i,\ell})$. Here $|a_i|,|b_i|\leq\|F_i\|_\infty$ and $|a_i-b_i|\leq\|F_i-F_{i,\ell}\|_\infty$, so
    $$
    \left|\prod_i\nu(F_i)-\prod_i\nu(F_{i,\ell})\right| \leq s\theta^{\ell+1}\prod_i\|F_i\|_\theta.
    $$
    The cylinder factorisation and triangle inequality now give
    \begin{align*}
        \left|\nu\left(\prod_iF_i\circ\sigma^{n_i}\right) -\prod_i\nu(F_i)\right| &\leq2s\theta^{\ell+1}\prod_i\|F_i\|_\theta\leq2se^{-c_\theta\Delta}\prod_i\|F_i\|_\theta,
    \end{align*}
    where $\ell+1>\Delta/(4(K+1))$ gives $c_\theta=|\log\theta|/(4(K+1))$. For $\Delta\leq2(K+1)$, the left side is at most $2\prod_i\|F_i\|_\theta$, so all cases are covered by $ C_{s,\theta}:=\max\{2s,2e^{2c_\theta(K+1)}\}. $
\end{proof}

\section{Failure of the CLT and of higher-order exponential mixing}
\label{sec:moment}
Recall that $(Z_i)_{i\in\Z}$ is the stationary process of Proposition~\ref{prop:limit}, with $Z_i(\omega)=\omega_i$ under $\nu$. We use $S_n=\sum_{i=0}^{n-1}Z_i$ and $L=2R+2$. Proposition~\ref{prop:key-estimate} finds a contribution of order $n^{L/2}$ to the sum of negative $L$-point correlations, using tuples whose successive gaps are proportional to $n$. This gives both the moment deficit excluding Gaussian weak subsequential limits and a polynomial lower bound for a separated correlation, excluding exponential mixing of order $L$.

\begin{proposition}
    \label{prop:key-estimate}
    Let $n\ge6(4L)^2$ and let $k\ge2$ be the integer with
    \begin{align}
        6(4L)^k\le n<6(4L)^{k+1}. \label{eq:moment-scale}
    \end{align}
    Then, writing $g_n:=(4L)^{k-1}/2$,
    \begin{align}
        -\sum_{\substack{0\le j_1<\cdots<j_L<n \\
                \Delta(\mathbf j)\ge g_n}} \E\left[\prod_{i=1}^L Z_{j_i}\right] \ \ge\ \frac{(4L)^{(k-1)L/2}}{2^{L+1}3^{L/2}} \ \ge\ \frac1{2^{L+1}3^{L/2}}\left(\frac n{6(4L)^2}\right)^{L/2}, \label{eq:key-estimate}
    \end{align}
    and $g_n\ge n/(12(4L)^2)$.
\end{proposition}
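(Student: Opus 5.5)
Since every $L$-point correlation is nonpositive (Lemma~\ref{lem:correlation-classification}), I would restrict the sum to a favourable subfamily of tuples. Conditioning on $\mathcal H$, I would exhibit tuples for which \eqref{eq:correlation-classification} gives the nonzero value. Writing
$$
-\E\Bigl[\prod_i Z_{j_i}\Bigr]=\E\Bigl[-\E\Bigl[\prod_i Z_{j_i}\Bigm|\mathcal H\Bigr]\Bigr],
$$
the restricted sum is at least
$$
\E\Bigl[\sum_{\mathbf j\in\mathcal T(\mathcal H)}\frac{1}{2\,3^{L/2}\sqrt{|B_1|\cdots|B_L|}}\Bigr],
$$
where $\mathcal T(h)$ is a suitable set of admissible tuples.

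\textbf{Step 1: locate a level-$k$ block inside $[0,n)$.} By \itemref{it:H-lengths}, level-$k$ blocks have length at most $3(4L)^k\le n/2$. Hence, for every realisation, $[0,n)$ contains at least one complete block $B\in\mathcal P_k(h)$. For instance, the block containing $\lceil n/4\rceil$ ends before $n/4+3(4L)^k<n$ and starts at or after $n/4-3(4L)^k\ge0$, using $n\ge6(4L)^k$; I would check this margin carefully. This choice is measurable in $h$, as allowed by the remark after Lemma~\ref{lem:correlation-classification}.

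\textbf{Step 2: the favourable tuples.} Let $B_1,\dots,B_L$ be the first $L$ children of $B$, which exist by \itemref{it:H-children}. Since the $B_i$ are consecutive, choosing $j_i\in B_i$ alone does not guarantee gaps of order $g_n$. So I would choose $j_i$ in the left half of $B_i$ when $i$ is odd and in the right half when $i$ is even. Then $j_{i+1}-j_i$ is at least half of $|B_i|$ or half of $|B_{i+1}|$, and in either case it exceeds $(4L)^{k-1}/2=g_n$, by the lower bound $|B_i|>(4L)^{k-1}$. (For $k-1\ge1$ this uses \itemref{it:H-lengths}, which holds since $k\ge2$.) Each such tuple lies in $B$, meets each of $B_1,\dots,B_L$ exactly once, and $B$ is the minimal block containing it because it meets several children. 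By Lemma~\ref{lem:correlation-classification}, its conditional correlation is $-\bigl(2\,3^{L/2}\sqrt{\prod|B_i|}\bigr)^{-1}$. The number of such tuples is at least $\prod_i\lfloor |B_i|/2\rfloor\ge\prod_i |B_i|/2^{L}$; I would handle the floor via $|B_i|\ge 2$ and a slightly careful half count. The contribution is then at least
$$
\frac{\sqrt{\prod|B_i|}}{2^{L+1}3^{L/2}}\ge\frac{(4L)^{(k-1)L/2}}{2^{L+1}3^{L/2}}.
$$

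\textbf{Step 3: conclude.} Integrating over $\mathcal H$ gives the first inequality. The second follows from $(4L)^{k-1}\ge n/(6(4L)^2)$, which comes from $n<6(4L)^{k+1}$; the same inequality gives $g_n\ge n/(12(4L)^2)$.

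The main obstacle is the bookkeeping in Step 2. The floors in the half-block counts must not cost more than the factor $2^L$, so the halves should be chosen as sets of size at least $|B_i|/2$: the left half of size $\lceil |B_i|/2\rceil$ and the right half sized appropriately. The gap bound must still be strict enough, so I would verify $\ge g_n$ using integer rounding. Step 1's containment is routine.
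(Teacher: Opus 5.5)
Your strategy matches the paper's: use nonpositivity of every $L$-point correlation to restrict to tuples with one index in each of the first $L$ children of a complete level-$k$ block in $[0,n)$, apply \eqref{eq:correlation-classification}, count, and average over $\mathcal H$. \textbf{However, the gap bound in your Step~2 is false as written.} You put $j_i$ in the left half of $B_i$ for odd $i$ and in the right half for even $i$. Then for even $i$ (e.g.\ $i=2$, which occurs since $L\ge6$) you are drawing from the right half of $B_i$ and the left half of $B_{i+1}$, and these halves are adjacent because $v_i=u_{i+1}$. Taking $j_i=v_i-1$ and $j_{i+1}=u_{i+1}$ gives $j_{i+1}-j_i=1$. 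So the tuples you count need not satisfy $\Delta(\mathbf j)\ge g_n$; alternating halves protects only every other boundary. The fix is to use the same side in every child. Take $j_i\in[u_i,u_i+\lceil|B_i|/2\rceil)$ for all $i$. Then $j_{i+1}-j_i\ge\lfloor|B_i|/2\rfloor+1>|B_i|/2>g_n$, and there are $\prod_i\lceil|B_i|/2\rceil\ge\prod_i|B_i|/2^L$ such tuples, which also settles your floor concern. The paper instead trims a margin $a=(4L)^{k-1}/4$ from both ends, $C_i=[u_i+a,v_i-a)\cap\mathbb Z$. This gives $|C_i|=|B_i|-2a\ge|B_i|/2$ and gaps at least $2a+1=g_n+1$.

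There is also a smaller slip in Step~1. The block containing $\lceil n/4\rceil$ need not start at a nonnegative index: $n/4-3(4L)^k\ge0$ needs $n\ge12(4L)^k$, but you only have $n\ge6(4L)^k$. For $n=6(4L)^k$, nothing in \itemref{it:H-lengths} prevents a block of length close to $2(4L)^k$ through $\tfrac32(4L)^k$ from starting below $0$. Your general claim that $[0,n)$ contains a complete level-$k$ block is still correct. Take, as the paper does, the block following the one containing $0$, whose right endpoint is at most $6(4L)^k\le n$. Alternatively, the block containing $\lfloor n/2\rfloor$ lies in $[1,n)$. With these two corrections, the rest of your argument goes through as in the paper: minimality of $B$, the value of each conditional correlation, the bound $|B_i|>(4L)^{k-1}$, and Step~3.
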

\begin{proof}
    The proof is divided into two steps.
    \begin{step}[1]
        Choose a complete block and separated subintervals of its children.
    \end{step}
    \begin{stepproof}[Proof of Step 1]
        Let $A=[u,v)\cap\mathbb Z\in\mathcal P_k$ be the block containing $0$. Since $u\leq0<v$, \itemref{it:H-lengths} gives $v\leq v-u\leq3(4L)^k$. The next block $A'\in\mathcal P_k$ has length at most $3(4L)^k$, so its right endpoint is at most $6(4L)^k\leq n$. Thus at least one complete level-$k$ block lies in $[0,n)$. Choose the leftmost such block $B=B(\mathcal H)\in\mathcal P_k$. Its first $L$ children are $B_1,\ldots,B_L\in\mathcal P_{k-1}$, ordered from left to right. Write $B_i=[u_i,v_i)\cap\mathbb Z$. There are at least $L$ children by \itemref{it:H-children}, and consecutive children satisfy $v_i=u_{i+1}$.
        
        Set $a:=(4L)^{k-1}/4$. This is an integer because $k\geq2$ and $4L$ is divisible by $4$. Define
        $$
        C_i:=[u_i+a,v_i-a)\cap\mathbb Z.
        $$
        Since $|B_i|>(4L)^{k-1}=4a$,
        $$
        |C_i|=|B_i|-2a\geq|B_i|/2>0.
        $$
        If $j_i\in C_i$, then $u_i+a\leq j_i\leq v_i-a-1$. Hence
        $$
        j_{i+1}-j_i \geq(u_{i+1}+a)-(v_i-a-1) =2a+1=g_n+1.
        $$
        Every selected coordinate lies in $B\subset[0,n)$, so these tuples are increasing and have $\Delta(\mathbf j)\geq g_n$. The upper bound in \eqref{eq:moment-scale} also gives
        $$
        g_n=\frac{(4L)^{k-1}}2>\frac n{12(4L)^2}.
        $$
    \end{stepproof}
    
    \begin{step}[2]
        Sum the conditional correlations and average over the markers.
    \end{step}
    \begin{stepproof}[Proof of Step 2]
        Fix a realisation $h$ of $\mathcal H$ for which Step~1 and Lemma~\ref{lem:correlation-classification} hold. Let $B=B(h)\in\mathcal P_k(h)$ be the block chosen in Step~1, with first $L$ children $B_1,\ldots,B_L\in\mathcal P_{k-1}(h)$ and trimmed intervals $C_i\subset B_i$. For this fixed $h$, these blocks and intervals are fixed. If $j_i\in C_i$, then the selected coordinates intersect distinct children $B_i\in\mathcal P_{k-1}(h)$, so no block of that level or below contains all of them. Their smallest containing block is therefore $B\in\mathcal P_k(h)$. The correlation classification gives
        $$
        \E_{\nu_h}\prod_{i=1}^L Z_{j_i} =-\frac1{2\,3^{L/2}\prod_{i=1}^L|B_i|^{1/2}}.
        $$
        This value is the same for all $\prod_i|C_i|$ choices. Using $|C_i|\geq|B_i|/2$ and $|B_i|>(4L)^{k-1}$, we obtain
        \begin{align}
            -\sum_{j_1\in C_1,\ldots,j_L\in C_L} \E_{\nu_h}\prod_{i=1}^L Z_{j_i} &=\frac{\prod_i|C_i|} {2\,3^{L/2}\prod_i|B_i|^{1/2}}\geq\frac{\prod_i|B_i|^{1/2}}{2^{L+1}3^{L/2}} \geq\frac{(4L)^{(k-1)L/2}}{2^{L+1}3^{L/2}}. \label{eq:separated-conditional-correlation-sum}
        \end{align}
        
        By Step~1, all these tuples belong to the deterministic set
        $$
        \mathcal T_n:=\{\mathbf j:0\leq j_1<\cdots<j_L<n,\ \Delta(\mathbf j)\geq g_n\}.
        $$
        Every other conditional $L$-point correlation is also nonpositive. Adding its negative therefore preserves the lower bound and gives
        $$
        -\sum_{\mathbf j\in\mathcal T_n} \E_{\nu_h}\prod_{i=1}^LZ_{j_i} \geq\frac{(4L)^{(k-1)L/2}}{2^{L+1}3^{L/2}}.
        $$
        This holds for almost every $h$. Although the block and the $C_i$ depend on $h$, the enlarged index set $\mathcal T_n$ does not. We may therefore interchange its finite sum with the integral over the marker law, obtaining
        \begin{align*}
            -\sum_{\mathbf j\in\mathcal T_n}\E\prod_{i=1}^LZ_{j_i} &=\int\left(-\sum_{\mathbf j\in\mathcal T_n} \E_{\nu_h}\prod_{i=1}^LZ_{j_i}\right) \mathbb P_{\mathcal H}(dh) \geq\frac{(4L)^{(k-1)L/2}}{2^{L+1}3^{L/2}}.
        \end{align*}
        Finally, $n<6(4L)^{k+1}$ implies $(4L)^{k-1}>n/(6(4L)^2)$, proving the second inequality in \eqref{eq:key-estimate}.
    \end{stepproof}
\end{proof}

\subsection{No Gaussian subsequential limits}
Tuplewise independence identifies the low-order moments, while Proposition~\ref{prop:key-estimate} gives a uniform deficit in the normalised $L$-th moment. Together these exclude every Gaussian weak subsequential limit.

The Khintchine inequality bounds the moments of independent symmetric $\{-1,1\}$-valued random variables by the corresponding Gaussian moments. We combine this bound with the negative $L$-point correlations of our process to obtain a strict $L$-th moment deficit.

\begin{lemma}[{\cite{Haagerup}}]
    \label{lem:khintchine}
    Let $\xi_1,\ldots,\xi_n$ be independent random variables such that
    $$
    \mathbb P(\xi_i=1)=\mathbb P(\xi_i=-1)=\frac12\quad\text{for each }1\leq i\leq n.
    $$
    For every $p\geq2$ and every $a_1,\ldots,a_n\in\mathbb R$,
    $$
    \left(\E\left|\sum_{i=1}^n a_i\xi_i\right|^p\right)^{1/p} \leq \left(\E|G|^p\right)^{1/p} \left(\sum_{i=1}^n a_i^2\right)^{1/2},
    $$
    where $G\sim\mathcal N(0,1)$. In particular, if $L\geq2$ is even, then $ \E(\xi_1+\cdots+\xi_n)^L\leq(L-1)!!\,n^{L/2}. $
\end{lemma}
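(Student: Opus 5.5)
The plan is to prove the Gaussian comparison via the standard symmetrisation route, taking the general inequality as Haagerup's theorem and then deriving the even-moment consequence directly.

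\emph{Step 1: the even case.} For $p=L$ an even integer, expand
$$
\E\Bigl(\sum_i a_i\xi_i\Bigr)^L=\sum_{i_1,\ldots,i_L}a_{i_1}\cdots a_{i_L}\,\E[\xi_{i_1}\cdots\xi_{i_L}].
$$
By independence and symmetry, the expectation equals $1$ if every index appears an even number of times, and $0$ otherwise. Expand $\E\bigl(\sum_i a_ig_i\bigr)^L$ in the same way, with i.i.d.\ standard Gaussians $g_i$. The corresponding expectations are $\prod_i\E g^{2m_i}=\prod_i(2m_i-1)!!\geq1$, where $2m_i$ is the multiplicity of index $i$. All coefficients $a_{i_1}\cdots a_{i_L}$ with even multiplicities are nonnegative, being products of even powers. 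Hence the termwise comparison gives
$$
\E\Bigl(\sum a_i\xi_i\Bigr)^L\leq\E\Bigl(\sum a_ig_i\Bigr)^L=\E G^L\Bigl(\sum a_i^2\Bigr)^{L/2},
$$
since $\sum a_ig_i\sim\mathcal N(0,\sum a_i^2)$. Taking $a_i=1$ and using $\E G^L=(L-1)!!$ yields the particular claim $\E(\xi_1+\cdots+\xi_n)^L\leq(L-1)!!\,n^{L/2}$, which is the only consequence used later.

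\emph{Step 2: general $p\geq2$.} For non-integer $p$ the monomial comparison is unavailable, so I will cite Haagerup's optimal-constant theorem as stated. If a self-contained argument is wanted, one option is the convex-order route. Write $g_i\stackrel{d}{=}\xi_i|g_i|$ with $\xi_i$ independent of $|g_i|$, and use $\E|g_i|=\sqrt{2/\pi}<1$. That only gives a comparison in the wrong direction, so the real tool is Haagerup's analysis of $\E|\sum a_i\xi_i|^p$ via integral representations of $|x|^p$. This is the main obstacle, and I will not redo it.

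For the use in the paper, Step~1 suffices.
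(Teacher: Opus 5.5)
Your proposal is correct, but it does more than the paper does. The paper gives no proof of this lemma. It cites Haagerup for the whole statement, including the sharp constant for every real $p\geq2$, and later uses only the special case $\E(\xi_1+\cdots+\xi_n)^L\leq(L-1)!!\,n^{L/2}$, in Corollary~\ref{cor:moment}.

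Your Step~1 replaces the citation for that case with the classical termwise moment comparison. After expanding both $L$-th moments:
\begin{itemize}
\item each Rademacher mixed moment is $0$ or $1$;
\item each Gaussian mixed moment is $0$ or $\prod_i(2m_i-1)!!\geq1$, vanishing for exactly the same index tuples;
\item on those tuples the coefficients $\prod_i a_i^{2m_i}$ are nonnegative.
\end{itemize}
So the comparison holds term by term. This gives the displayed inequality for every even integer $p$ and arbitrary $a_i$, which is everything the paper actually needs, with no external input. For other $p$ you fall back on the same citation as the paper, so nothing is lost.

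One small correction to your aside in Step~2: the substitution $g_i\stackrel{d}{=}\xi_i|g_i|$ does not point the wrong way. Applying Jensen's inequality conditionally on $(\xi_i)$, with $\E|g_i|=\sqrt{2/\pi}$, gives $\E|\sum a_i\xi_i|^p\leq(\pi/2)^{p/2}\,\E|\sum a_ig_i|^p$. That is the correct direction, but with a non-sharp constant. What requires Haagerup's analysis for non-even $p$ is the optimal constant $(\E|G|^p)^{1/p}$, not the direction of the inequality.
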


\begin{corollary}
    \label{cor:moment}
    Let $n\geq1$ and let $\xi_1,\ldots,\xi_n$ be independent random variables such that
    $$
    \mathbb P(\xi_i=1)=\mathbb P(\xi_i=-1)=\frac12\quad\text{for each }1\leq i\leq n.
    $$
    Then
    $$
    \E S_n^p=\E(\xi_1+\cdots+\xi_n)^p\quad\text{for every integer }1\leq p\leq L-1.
    $$
    In particular,
    \begin{align}
        \E S_n=0,\quad \E S_n^2=n\quad\text{and}\quad \E S_n^4=3n^2-2n. \label{eq:low-moments}
    \end{align}
    Moreover, for every $n\geq6(4L)^2$,
    \begin{align}
        \E\left(\frac{S_n}{\sqrt n}\right)^L \leq (L-1)!!-\delta_L\quad\text{and}\quad \delta_L:= \frac{L!}{2^{L+1}3^{L/2}\bigl(6(4L)^2\bigr)^{L/2}}>0. \label{eq:moment-deficit}
    \end{align}
\end{corollary}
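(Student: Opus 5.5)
The plan is to expand the moments of $S_n$ into sums of expectations of products of coordinates, and then compare term by term with the i.i.d.\ case. For an integer $p\geq1$,
$$
\E S_n^p=\sum_{i_1,\ldots,i_p\in[0,n)}\E\left[Z_{i_1}\cdots Z_{i_p}\right].
$$
Since $Z_i^2=1$, each product reduces to $\prod_{j\in E}Z_j$, where $E$ is the set of indices appearing an odd number of times. Thus $|E|\leq p$ and $|E|\equiv p \pmod 2$, and the same reduction holds for the $\xi_i$.

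\textbf{Low moments.} If $p\leq L-1$, then $|E|\leq L-1$. By Proposition~\ref{prop:limit}\itemref{it:limit-iii}, the coordinates are $(L-1)$-tuplewise independent, so every such term equals the corresponding term for the $\xi_i$: it is $1$ if $E=\varnothing$ and $0$ otherwise. This gives $\E S_n^p=\E(\xi_1+\cdots+\xi_n)^p$. The displayed values in \eqref{eq:low-moments} are then the standard Rademacher moments; for the fourth moment, count $3n(n-1)+n$ pairings.

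\textbf{The $L$-th moment.} For $p=L$, reduce each term in the same way. If $|E|\leq L-2$, the term agrees with the i.i.d.\ term (again $1$ or $0$). The only remaining case is $|E|=L$, i.e.\ all indices are distinct. For the $\xi_i$ these terms vanish, while for $Z$ each ordered $L$-tuple of distinct indices gives $\E\prod_{j\in E}Z_j$, which is nonpositive by Lemma~\ref{lem:correlation-classification}. Each unordered set $j_1<\cdots<j_L$ occurs $L!$ times among the ordered tuples. Hence
$$
\E S_n^L=\E(\xi_1+\cdots+\xi_n)^L+L!\sum_{0\leq j_1<\cdots<j_L<n}\E\left[\prod_{i=1}^L Z_{j_i}\right].
$$
Every summand is nonpositive, so restricting to tuples with $\Delta(\mathbf j)\geq g_n$ only weakens the bound. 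Applying Proposition~\ref{prop:key-estimate} to this restricted sum gives
$$
\sum_{0\leq j_1<\cdots<j_L<n}\E\left[\prod_{i=1}^L Z_{j_i}\right]\leq -\frac{1}{2^{L+1}3^{L/2}}\left(\frac{n}{6(4L)^2}\right)^{L/2}.
$$
Lemma~\ref{lem:khintchine} bounds the i.i.d.\ term by $(L-1)!!\,n^{L/2}$. Dividing by $n^{L/2}$ yields \eqref{eq:moment-deficit} with exactly the stated $\delta_L$.

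\textbf{Main obstacle.} The only delicate point is the combinatorial bookkeeping in the reduction of repeated indices. One has to check that every term with $|E|<L$ matches the independent case, which holds because tuplewise independence covers all $|E|\leq L-1$. One also has to get the multiplicity $L!$ right for the fully distinct tuples. The sign information from Lemma~\ref{lem:correlation-classification} is what allows discarding the unseparated tuples.
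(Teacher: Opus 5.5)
Your proposal is correct and follows essentially the same route as the paper: match every term involving at most $L-1$ distinct coordinates using $(L-1)$-tuplewise independence, write $\E S_n^L=\E(\xi_1+\cdots+\xi_n)^L+L!\sum_{j_1<\cdots<j_L}\E\prod_i Z_{j_i}$, use the nonpositivity from Lemma~\ref{lem:correlation-classification} to restrict to the separated tuples of Proposition~\ref{prop:key-estimate}, and bound the independent term by Khintchine. Your parity reduction via $Z_i^2=1$ is only a cosmetic variant of the paper's direct appeal to equality of joint laws on at most $L-1$ distinct coordinates.
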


\begin{proof}
    Fix $1\leq p\leq L-1$. Each monomial $Z_{j_1}\cdots Z_{j_p}$ in $S_n^p$ involves at most $L-1$ distinct coordinates. By Proposition~\ref{prop:limit}\itemref{it:limit-iii}, the corresponding expectations agree even when some indices are repeated:
$$
\E[Z_{j_1}\cdots Z_{j_p}] =\E[\xi_{j_1+1}\cdots\xi_{j_p+1}].
$$
Summing over $j_1,\ldots,j_p\in\{0,\ldots,n-1\}$ gives the asserted moment equality. Taking $p=1,2,4$ gives \eqref{eq:low-moments}.
    
    For the $L$-th moment, subtracting the two expansions gives
    \begin{align*}
        \E S_n^L-\E(\xi_1+\cdots+\xi_n)^L =\sum_{j_1,\ldots,j_L=0}^{n-1} \left(\E\prod_{i=1}^LZ_{j_i} -\E\prod_{i=1}^L\xi_{j_i+1}\right).
    \end{align*}
    Terms with repeated indices involve at most $L-1$ distinct coordinates and cancel by the same argument. For $L$ distinct indices, independence and centring give $\E\prod_i\xi_{j_i+1}=0$. Each increasing tuple has $L!$ permutations, all with the same coordinate product. Thus
    \begin{align}
        \E S_n^L =\E(\xi_1+\cdots+\xi_n)^L +L!\sum_{0\leq j_1<\cdots<j_L<n} \E\prod_{i=1}^L Z_{j_i}. \label{eq:moment-expansion}
    \end{align}
    By Lemma~\ref{lem:correlation-classification}, all summands in the last sum are nonpositive. Restricting to the separated tuples in Proposition~\ref{prop:key-estimate} therefore gives the upper bound
    $$
    \sum_{0\leq j_1<\cdots<j_L<n}\E\prod_iZ_{j_i} \leq-\frac1{2^{L+1}3^{L/2}} \left(\frac n{6(4L)^2}\right)^{L/2}.
    $$
    Lemma~\ref{lem:khintchine} bounds the independent moment by $(L-1)!!\,n^{L/2}$. Substituting both bounds into \eqref{eq:moment-expansion} yields
    $$
    \E S_n^L\leq(L-1)!!\,n^{L/2} -\frac{L!}{2^{L+1}3^{L/2}} \left(\frac n{6(4L)^2}\right)^{L/2}.
    $$
    Dividing by $n^{L/2}$ proves \eqref{eq:moment-deficit}.
\end{proof}

The exclusion of Gaussian weak subsequential limits from the moment estimates follows closely the argument in~\cite[Step~3.11]{BradleyPruss}. The fourth-moment bound identifies the mean and variance of any hypothetical Gaussian limit, while the strict deficit in the $L$-th moment rules out that limit. Our estimate on correlations at widely separated coordinates also yields the failure of exponential mixing at order $L$.
\begin{corollary}
    \label{cor:no-gaussian}
    The sequence $S_n/\sqrt n$ has no Gaussian weak subsequential limit.
\end{corollary}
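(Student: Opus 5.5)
The plan is to argue by contradiction along a subsequence, using moment convergence. Suppose that $S_{n_k}/\sqrt{n_k}\Rightarrow\mathcal N(0,\sigma^2)$ for some strictly increasing $n_k\to\infty$ and some $\sigma^2\geq0$, where the point mass corresponds to $\sigma^2=0$.

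The first step identifies the variance. By \eqref{eq:low-moments},
\begin{align*}
\E\bigl(S_n/\sqrt n\bigr)^4=3-2/n\leq3,
\end{align*}
so the family $(S_n/\sqrt n)^2$ is bounded in $L^2$ and hence uniformly integrable. Convergence in distribution together with uniform integrability yields convergence of second moments. Since $\E(S_n/\sqrt n)^2=1$ for every $n$, this forces $\sigma^2=1$, which already excludes the degenerate Gaussian. The fourth moment alone is not enough to pass to the limit in the $L$-th moment. For that I would use Lemma~\ref{lem:khintchine} together with \eqref{eq:moment-expansion}. Every $L$-point correlation is nonpositive by Lemma~\ref{lem:correlation-classification}, so \eqref{eq:moment-expansion} gives
\begin{align*}
\E S_n^L\leq\E(\xi_1+\cdots+\xi_n)^L\leq(L-1)!!\,n^{L/2}.
\end{align*}
This bound controls the $L$-th moment, but uniform integrability of $|S_n/\sqrt n|^L$ would need a slightly higher moment. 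To sidestep that, I plan to use a one-sided argument, Fatou's lemma, which only requires lower semicontinuity.

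The key step is the lower bound in the limit. By Skorokhod representation, or directly because $x\mapsto x^L$ is continuous and nonnegative for even $L$, the portmanteau/Fatou inequality gives
\begin{align*}
\E G^L\leq\liminf_k\E\bigl(S_{n_k}/\sqrt{n_k}\bigr)^L
\end{align*}
for $G\sim\mathcal N(0,1)$. Here $\E G^L=(L-1)!!$. On the other hand, Corollary~\ref{cor:moment}, specifically \eqref{eq:moment-deficit}, gives
\begin{align*}
\E\bigl(S_{n_k}/\sqrt{n_k}\bigr)^L\leq(L-1)!!-\delta_L
\end{align*}
for all $n_k\geq6(4L)^2$. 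Combining the two yields $(L-1)!!\leq(L-1)!!-\delta_L$, a contradiction since $\delta_L>0$. The direction of the inequalities is exactly the favourable one: the deficit is an upper bound, and Fatou supplies the matching lower bound, so no upper uniform integrability at order $L$ is needed.

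The only delicate point is the variance identification. If $\sigma^2$ were smaller than $1$, the Fatou inequality would hold trivially and give no contradiction, so the conclusion $\sigma^2=1$ is essential. This is why I rely on the fourth-moment bound: it gives uniform integrability of the squares, which makes the second moments converge to $\sigma^2$ and therefore pins it to $1$. As a remark, the same argument also excludes non-centred Gaussian limits, since $\E S_n/\sqrt n=0$ and uniform integrability pass the mean to the limit, although the statement only concerns centred ones.
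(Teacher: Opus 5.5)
Your proposal is correct and follows essentially the same route as the paper. The paper also uses the fourth-moment bound to obtain uniform integrability and force the limit to be $\mathcal N(0,1)$. It then obtains lower semicontinuity of the $L$-th moment by truncating with $\min\{|x|^L,M\}$ and applying monotone convergence, which is your Fatou/portmanteau step, and this contradicts the deficit \eqref{eq:moment-deficit}. Your detour through Khintchine and \eqref{eq:moment-expansion} is harmless but not needed.
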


\begin{proof}
    Suppose $Y_j:=S_{n_j}/\sqrt{n_j}$ converges in distribution to a Gaussian random variable $G$. By \eqref{eq:low-moments},
    $$
    \E Y_j=0,\quad \E Y_j^2=1\quad\text{and}\quad \E Y_j^4=3-\frac2{n_j}\leq3.
    $$
    In particular, for $A>0$,
    $$
    \E\bigl[|Y_j|\one_{\{|Y_j|>A\}}\bigr]\leq\frac3{A^3} \quad\text{and}\quad \E\bigl[Y_j^2\one_{\{|Y_j|>A\}}\bigr]\leq\frac3{A^2}.
    $$
    These uniform tail bounds give uniform integrability of both $(Y_j)$ and $(Y_j^2)$ by~\cite[condition~(3.18)]{Billingsley}. The continuous mapping theorem~\cite[Theorem~2.7]{Billingsley} gives $Y_j^2\mathrel{\xRightarrow{j\to\infty}}G^2$. Uniform integrability then passes the first two moments to the limit by~\cite[Theorem~3.5]{Billingsley}, giving
    $$
    \E G=\lim_j\E Y_j=0\quad\text{and}\quad \E G^2=\lim_j\E Y_j^2=1.
    $$
    Thus the only possible Gaussian limit is $\mathcal N(0,1)$. In particular, a degenerate Gaussian is already excluded.
    
    For the higher moment, fix $M>0$. The function $x\mapsto\min\{|x|^L,M\}$ is bounded and continuous, so the continuous mapping theorem~\cite[Theorem~2.7]{Billingsley}, boundedness and \eqref{eq:moment-deficit} imply
    $$
    \E\min\{|G|^L,M\} =\lim_j\E\min\{|Y_j|^L,M\} \leq(L-1)!!-\delta_L.
    $$
    Let $M\to\infty$ and apply monotone convergence. This is the usual lower-semicontinuity argument for a nonnegative moment~\cite[Theorem~3.4]{Billingsley}, and it gives
    $$
    \E|G|^L\leq(L-1)!!-\delta_L.
    $$
    But $L$ is even and $G\sim\mathcal N(0,1)$, so $\E|G|^L=(L-1)!!$, which yields a contradiction.
\end{proof}

\begin{remark}
    For $R=2$, Proposition~\ref{prop:holder} gives exponential two-point mixing, while Corollary~\ref{cor:no-gaussian} excludes the CLT for the coordinate observable $\varphi(\omega)=\omega_0$.
\end{remark}

\subsection{Failure of exponential mixing at order \texorpdfstring{$L$}{L}}
Dividing the lower bound in Proposition~\ref{prop:key-estimate} by the number of available tuples gives a lower bound for a single correlation. We again use the coordinate observable $\varphi(\omega)=\omega_0$.

\begin{corollary}
    \label{cor:higher-order}
    For every $N\ge6(4L)^2$ there are integers $0\le n_1<\cdots<n_L<N$ with
    \begin{align}
        \Delta(\mathbf n)\ \ge\ \frac N{12(4L)^2}\quad\text{and}\quad -\E\left[\prod_{i=1}^L Z_{n_i}\right]\ \ge\ \frac{N^{-L/2}}{2^{L+1}3^{L/2}\bigl(6(4L)^2\bigr)^{L/2}} . \label{eq:higher-order-lower-bound}
    \end{align}
    Therefore $(\Sigma,\nu,\sigma)$ is not exponentially mixing of order $L$ in $\mathcal F_\theta(\Sigma)$ for any $0<\theta<1$.
\end{corollary}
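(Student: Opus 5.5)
The plan is a pigeonhole argument applied to Proposition~\ref{prop:key-estimate}. Fix $N\geq6(4L)^2$ and apply that proposition with $n=N$. Let $k\geq2$ be determined by \eqref{eq:moment-scale} and set $g_N=(4L)^{k-1}/2$. The proposition shows that the sum of $-\E[\prod_i Z_{j_i}]$ over the set
$$
\mathcal T_N:=\{\mathbf j:0\leq j_1<\cdots<j_L<N,\ \Delta(\mathbf j)\geq g_N\}
$$
is at least $\frac1{2^{L+1}3^{L/2}}\bigl(N/(6(4L)^2)\bigr)^{L/2}$. Moreover, $g_N\geq N/(12(4L)^2)$, so every tuple in $\mathcal T_N$ already satisfies the gap condition in \eqref{eq:higher-order-lower-bound}.

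Next, bound the number of tuples: $|\mathcal T_N|\leq\binom{N}{L}\leq N^L$. It follows that some $\mathbf n\in\mathcal T_N$ satisfies
$$
-\E\Bigl[\prod_{i=1}^L Z_{n_i}\Bigr]\geq\frac{1}{N^L}\cdot\frac{N^{L/2}}{2^{L+1}3^{L/2}(6(4L)^2)^{L/2}}=\frac{N^{-L/2}}{2^{L+1}3^{L/2}(6(4L)^2)^{L/2}},
$$
which is exactly the required bound. One could keep the slightly better factor $L!$ from $\binom NL\leq N^L/L!$, but it is not needed.

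For the final assertion, take $F_i=\varphi=Z_0$ for all $i$. This function lies in $\mathcal F_\theta(\Sigma)$ with $\|\varphi\|_\theta\leq1+\theta^{-0}\cdot 1$, a finite constant, since $|\omega_0-\omega'_0|\leq 2\,d_\theta(\omega,\omega')$ (the two differ only when $d_\theta=1$). Also $\nu(\varphi)=0$, so the product of the means vanishes. By stationarity, $\nu(\prod_i\varphi\circ\sigma^{n_i})=\E[\prod_i Z_{n_i}]$.

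Now suppose that exponential mixing of order $L$ held with constants $C,c$. Then we would have
$$
\bigl|\E[\textstyle\prod_i Z_{n_i}]\bigr|\leq C'e^{-cN/(12(4L)^2)}.
$$
This contradicts the polynomial lower bound $\gtrsim N^{-L/2}$ once $N$ is large. The main point to check is only that the tuples in $\mathcal T_N$ lie in $[0,N)$ with gap at least $g_N$, and this is provided directly by Proposition~\ref{prop:key-estimate}. No step is a serious obstacle.
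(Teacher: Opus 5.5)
Your proposal is correct and follows essentially the same route as the paper: apply Proposition~\ref{prop:key-estimate} with $n=N$, pigeonhole over the at most $N^L$ tuples in $\mathcal T_N$, then test the order-$L$ mixing bound on the coordinate observable $\varphi$. Two harmless slips: $\|\varphi\|_\theta=1+2=3$, not $1+\theta^{-0}\cdot 1$, and the identity $\varphi\circ\sigma^{n_i}=Z_{n_i}$ holds by definition rather than by stationarity.
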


\begin{proof}
    We divide the proof into two steps.
    
    \begin{step}[1]
        We find a tuple $\mathbf n=(n_1,\ldots,n_L)$ with $0\leq n_1<\cdots<n_L<N$ satisfying \eqref{eq:higher-order-lower-bound}.
    \end{step}
    
    \begin{stepproof}[Proof of Step 1]
        Let $ \mathcal T_N:=\bigl\{(j_1,\ldots,j_L):0\leq j_1<\cdots<j_L<N,\ \Delta(\mathbf j)\geq g_N\bigr\}.$ Recall the definition of $g_N$ from Proposition~\ref{prop:key-estimate}. From Proposition~\ref{prop:key-estimate}, we obtain
        \begin{align}
            \sum_{\mathbf j\in\mathcal T_N}\left(-\E\left[\prod_{i=1}^L Z_{j_i}\right]\right)\geq\frac{N^{L/2}}{2^{L+1} 3^{L/2}\bigl(6(4L)^2\bigr)^{L/2}}. \label{eq:ineq2}
        \end{align}
        Each summand is nonnegative by Lemma~\ref{lem:correlation-classification}. Since $\mathcal T_N$ contains at most $N^L$ tuples, the sum is at most $N^L$ times its largest summand,
        \begin{align}
            \sum_{\mathbf j\in\mathcal T_N}\left(-\E\left[\prod_{i=1}^L Z_{j_i}\right]\right)\leq N^L\max_{\mathbf j\in\mathcal T_N}\left(-\E\left[\prod_{i=1}^L Z_{j_i}\right]\right).\label{eq:ineq1}
        \end{align}
        Choose $\mathbf n=(n_1,\ldots,n_L)\in\mathcal T_N$ at which this maximum is attained. Combining \eqref{eq:ineq2} and \eqref{eq:ineq1} and dividing by $N^L$ we obtain
        \begin{align*}
            -\E\left[\prod_{i=1}^L Z_{n_i}\right]\geq\frac{N^{-L/2}}{2^{L+1} 3^{L/2}\bigl(6(4L)^2\bigr)^{L/2}}.
        \end{align*}
        From the definition of $\mathcal T_N$ and Proposition~\ref{prop:key-estimate} we obtain $ \Delta(\mathbf n)\geq g_N\geq {N}/({12(4L)^2}). $ These are precisely the two inequalities in \eqref{eq:higher-order-lower-bound}.
    \end{stepproof}
    
    \begin{step}[2]
        The system $(\Sigma,\nu,\sigma)$ is not exponentially mixing of order $L$.
    \end{step}
    
    \begin{stepproof}[Proof of Step 2]
        Fix $0<\theta<1$. The coordinate observable $\varphi(\omega):=\omega_0$ belongs to $\mathcal F_\theta(\Sigma)$ and has mean zero by Proposition~\ref{prop:limit}\itemref{it:limit-iii}. Since $\varphi\circ\sigma^{n_i}=Z_{n_i}$, we have
        $$
        \nu\left(\prod_{i=1} ^L\varphi\circ\sigma^{n_i}\right)-\nu(\varphi)^L=\E\left[\prod_{i=1}^L Z_{n_i}\right].
        $$
        
        If exponential mixing of order $L$ held, applying its defining estimate to the tuples from Step~1 would give
        \begin{align*}
            \frac{N^{-L/2}}{2^{L+1}3^{L/2}\bigl(6(4L)^2\bigr)^{L/2}} &\leq \left|\E\left[\prod_{i=1}^L Z_{n_i}\right]\right|\leq Ce^{-c\Delta(\mathbf n)}\|\varphi\|_\theta^L \leq Ce^{-cN/(12(4L)^2)}\|\varphi\|_\theta^L,
        \end{align*}
        with $C,c>0$ independent of $N$. Multiplying by $N^{L/2}$ gives a fixed positive number bounded above by a quantity tending to zero. This contradiction proves the assertion.
    \end{stepproof}
\end{proof}

\begin{remark}
Corollary~\ref{cor:higher-order} excludes exponential mixing at order $L$, not qualitative mixing at that order. The subintervals $C_i$ in Proposition~\ref{prop:key-estimate} force the successive gaps to be proportional to the observation scale, a property that does not follow from the moment deficit alone.
\end{remark}

\begin{proof}[Proof of Theorem~\ref{thm:symbolic}]
    Fix $R\geq2$, set $L:=2R+2$, and take the measure $\nu$ constructed in Proposition~\ref{prop:limit}.
    
    Proposition~\ref{prop:limit}\itemref{it:limit-ii}--\itemref{it:limit-iii} gives stationarity, full support and the coordinate distributions and tuplewise independence in item~\itemref{it:S-independence}. Propositions~\ref{prop:windows} and~\ref{prop:holder} give items~\itemref{it:S-windows} and~\itemref{it:S-correlations}, with $K_R=K$.
    
    Corollary~\ref{cor:moment} gives the moment bound in item~\itemref{it:S-moments}, with $\delta=\delta_L$ and $n_0=6(4L)^2$, and Corollary~\ref{cor:no-gaussian} excludes Gaussian weak subsequential limits. Finally, Corollary~\ref{cor:higher-order} gives item~\itemref{it:S-higher-order}, including failure of exponential mixing at order $L$. All properties hold for the same measure $\nu$, completing the proof.
\end{proof}

\addtocontents{toc}{\protect\addvspace{9pt}}
\section{Lean formalisation}\label{sec:lean-formalisation}
Theorems~\ref{thm:analytic}, \ref{thm:physical} and~\ref{thm:symbolic} have been formalised in Lean~4 using Mathlib. The complete source code is available at \url{https://github.com/Manzatto/finite-order-mixing-does-not-imply-clt}.

Each main theorem file contains its statement and proof. All declarations below belong to the namespace \texttt{FiniteOrderMixing}, and the paths are relative to \texttt{FiniteOrderMixing/}.
\begin{itemize}
\item Theorem~\ref{thm:analytic}: statement \texttt{TheoremA} and proof \texttt{theoremA}, in \texttt{TheoremA.lean}.
\item Theorem~\ref{thm:physical}: statement \texttt{TheoremB}
and proof \texttt{theoremB}, in \texttt{TheoremB.lean}.
\item Theorem~\ref{thm:symbolic}: statement \texttt{TheoremC} and proof \texttt{theoremC}, in \texttt{TheoremC.lean}.
\end{itemize}
The formalisation establishes all six assertions of Theorems~\ref{thm:analytic} and~\ref{thm:physical}, and all five assertions of Theorem~\ref{thm:symbolic}.

The external results cited in the paper and used in the formal proofs are also formalised in Lean, in the forms required by our argument rather than in their full published generality. Their hypotheses are verified at each application. For example, the Kolmogorov extension theorem is proved for a finite alphabet, which suffices to construct the symbolic process.

The project contains no \texttt{sorry} placeholders or additional axioms. The three main proofs depend only on Lean's standard axioms \texttt{propext}, \texttt{Classical.choice} and \texttt{Quot.sound}. With the versions pinned by \texttt{lean-toolchain} and \texttt{lake-manifest.json}, compilation and axiom dependencies can be checked from the repository root by running
\begin{verbatim}
lake build
lake env lean CheckAxioms.lean
\end{verbatim}
\vspace{1em}
\paragraph{\textbf{Declaration of AI use.}} OpenAI's ChatGPT 6.0 (Astra) was used as a research assistant during the development of this work.
AI assistance was also used to develop and revise the Lean formalisation of the main results. The resulting proofs were checked by Lean's kernel, with the axiom dependencies described above.
The authors take full responsibility for the correctness of all mathematical statements and proofs, the accuracy of the references, and the final content of the manuscript.

\section*{Acknowledgements}
BBC is supported by a Chapman Fellowship at Imperial College London. MMC is supported by the São Paulo Research Foundation (FAPESP, grant no. 2025/26997-9).

\bibliographystyle{plainnat}
\bibliography{refs}

\appendix 

\section{Smooth physical realisation of symbolic measures}
\label{app:smooth-realisation-short}

In this appendix, we prove Proposition~\ref{prop:cv-all-measures}, which realises every prescribed ergodic invariant measure of the full two-symbol shift as a physical measure of a $\mathcal{C}^{\infty}$ diffeomorphism on any compact smooth surface without boundary.

Colli and Vargas construct a $\mathcal{C}^{\infty}$ surface diffeomorphism with a wandering domain whose orbits are all generic for a particular invariant measure supported on a horseshoe \cite[Theorem~1]{ColliVargas2001}.
Kiriki, Li, Nakano, Soma and Vargas \cite[Theorem~A]{KLNSV} develop methods for realising any prescribed ergodic invariant measure of the full
two-symbol shift as a physical measure of a $\mathcal{C}^r$ surface diffeomorphism, for $2\leq r<\infty$ (see also \cite{KirikiSoma2017,KirikiNakanoSoma2024}). Our proof adapts these methods to a symmetric affine horseshoe.

\begin{figure}[htbp]
\centering
\definecolor{tzerocol}{RGB}{55,105,165}
\definecolor{tonecol}{RGB}{48,135,112}
\definecolor{tgapcol}{RGB}{205,132,42}
\resizebox{.75\textwidth}{!}{%
\begin{tikzpicture}[
    x=3.05cm,y=3.05cm,
    >=Stealth,
    font=\small,
    zero strip/.style={draw=tzerocol,fill=tzerocol!18,line width=.7pt},
    one strip/.style={draw=tonecol,fill=tonecol!18,line width=.7pt},
    gap strip/.style={draw=tgapcol,fill=tgapcol!25,line width=.7pt},
    unit square/.style={draw=red,line width=1pt},
    square label/.style={font=\small,text=red,fill=white,
        inner sep=2pt,anchor=north east},
    common rectangle/.style={black,line width=.9pt},
    cantor cell/.style={fill=black!30,draw=black!75,line width=.25pt},
    tick label/.style={font=\small,inner sep=2pt}
]

\begin{scope}
    \path[zero strip] (0,-1/4) rectangle (1/3,5/4);
    \path[one strip] (2/3,-1/4) rectangle (1,5/4);
    \path[gap strip] (11/24,-1/4) rectangle (13/24,5/4);

    \draw[unit square] (0,0) rectangle (1,1);
\node[anchor=west,text=red,font=\scriptsize]
    at (0.97,.85) {$[0,1]^2$};
    \draw[common rectangle] (-1/4,-1/4) rectangle (5/4,5/4);
    \node[anchor=west] at (1.28,1.25) {$R$};

    \node[text=tzerocol] at (1/6,1.34) {$S_0$};
    \node[text=tgapcol] at (1/2,1.34) {$S_2$};
    \node[text=tonecol] at (5/6,1.34) {$S_1$};

    \foreach \x/\lab in {
        0/0,{1/3}/{\frac{1}{3}},{2/3}/{\frac{2}{3}},1/1
    }{
        \draw (\x,-1/4) -- (\x,-.28);
        \node[tick label,anchor=north] at (\x,-.29) {$\lab$};
    }
    \node[tick label,anchor=east] at (-.275,-1/4)
        {$-\frac{1}{4}$};
    \node[tick label,anchor=east] at (-.275,5/4)
        {$\frac{5}{4}$};
    \node[tick label,text=tgapcol] at (.5,-.59)
        {$\frac{11}{24}\leq x\leq\frac{13}{24}$};
\end{scope}

\draw[->,line width=1pt] (1.3,.5) --
    node[above] {$T_0$} (1.83,.5);

\begin{scope}[shift={(2.30,0)}]
    \path[zero strip] (0,-1/12) rectangle (1,5/12);
    \path[one strip] (0,7/12) rectangle (1,13/12);

    \path[gap strip] (-1/4,{1/2-1/48})
        rectangle (5/4,{1/2+1/48});

    \draw[unit square] (0,0) rectangle (1,1);
\node[anchor=west,text=red,font=\scriptsize]
    at (0.97,.85) {$[0,1]^2$};
    \draw[common rectangle] (-1/4,-1/4) rectangle (5/4,5/4);
    \node[anchor=west] at (1.28,1.25) {$R$};
    \draw[common rectangle] (-1/4,-1/4) rectangle (5/4,5/4);
    \node[anchor=west] at (1.28,1.25) {$R$};

    \node[text=tzerocol] at (.5,.31) {$T_0(S_0)$};
    \node[text=tonecol] at (.5,.82) {$T_0(S_1)$};

    \draw[tgapcol,line width=.6pt] (5/4,.5) -- (1.31,.5);
    \node[text=tgapcol,anchor=west,inner sep=2pt] at (1.31,.5)
        {$T_0(S_2)$};

    \foreach \y/\lab in {
        {-1/12}/{-\frac{1}{12}},
        {5/12}/{\frac{5}{12}},
        {7/12}/{\frac{7}{12}},
        {13/12}/{\frac{13}{12}}
    }{
        \node[tick label,anchor=east] at (-.275,\y) {$\lab$};
    }
    \foreach \x/\lab in {
        {-1/4}/{-\frac{1}{4}},0/0,1/1,{5/4}/{\frac{5}{4}}
    }{
        \node[tick label,anchor=north] at (\x,-.29) {$\lab$};
    };
\end{scope}

\newcommand{\CantorPanel}[4]{%
    \begin{scope}[shift={(#1,-2.10)}]
        \foreach \a in {#4}{
            \foreach \b in {#4}{
                \path[cantor cell]
                    ({\a/#3},{\b/#3}) rectangle
                    ({(\a+1)/#3},{(\b+1)/#3});
            }
        }
        \draw[unit square] (0,0) rectangle (1,1);
        \node at (.5,1.17) {$C_{#2}\times C_{#2}$};
        \node[tick label] at (.5,-.17) {$m=#2$};
    \end{scope}
}

\CantorPanel{-.15}{1}{3}{0,2}
\CantorPanel{1.40}{2}{9}{0,2,6,8}
\CantorPanel{2.95}{3}{27}{0,2,6,8,18,20,24,26}

\draw[->,black!65,line width=.7pt]
    (.98,-1.60) -- (1.27,-1.60);
\draw[->,black!65,line width=.7pt]
    (2.53,-1.60) -- (2.82,-1.60);

\end{tikzpicture}%
}
 \caption{The strips $S_0,S_1,S_2$ and their images under $T_0$ (top), together with the sets $\bigcap_{i=-m}^{m-1}T_0^{-i}(S_0\cup S_1\cap [0,1])=C_m\times C_m$ for $m=1,2,3$ (bottom). The black frame is $R=[-\frac14,\frac54]^2$ and the red frame is $[0,1]^2$. }
\label{fig:t0-strips}
\end{figure}

We first explain the idea of the construction. Fix a nonempty compact smooth surface $M$ without boundary.
Set $\gamma=1/225$ and consider the disjoint rectangles
$$
 \begin{gathered}
 S_0=\left[0,\frac{1}{3}\right]\times
     \left[-\frac{1}{4},\frac{5}{4}\right],\quad
 S_1=\left[\frac{2}{3},1\right]\times
     \left[-\frac{1}{4},\frac{5}{4}\right],\ \text{and } S_2=\left[\frac{11}{24},\frac{13}{24}\right]\times
     \left[-\frac{1}{4},\frac{5}{4}\right].
 \end{gathered}
$$
Choose a smooth diffeomorphism $T_0:\mathbb R^2 \to \mathbb R^2$ equal to the identity outside a compact set ($R$ in Figure~\ref{fig:t0-strips}), with the following expressions near $S_0,S_1$ and $S_2$
\begin{equation}
\label{eq:short-affine}
 T_0(x,y)=
\begin{cases}
  \displaystyle \left(3x,y/3\right),&(x,y)\in S_0,\\[3pt]
  \left(3x-2, (y+2)/{3}\right),&(x,y)\in S_1,\\[3pt]
  \left(1-y,\frac{1}{2}+\gamma\left(x-\frac{1}{2}\right)\right),
       &(x,y)\in S_2.
 \end{cases}
\end{equation}

Choose a chart $\chi:V\subset M\to\Omega\subset\mathbb R^2$, where $\Omega$
is a disc in which $[-2,2]^2$ and the closure of the support of $T_0$ are compactly contained. Define $F_0=\chi^{-1}\circ T_0\circ\chi$ on $V$ and extend it by
the identity outside $V$. Then $F_0:M\to M$ is a $\mathcal C^\infty $ diffeomorphism satisfying $F_0(V)=V$.

Let $C\subset [0,1]$ be the (classical) middle-thirds Cantor set and let
$\Sigma=\{-1,1\}^{\mathbb Z}$, with
$(\sigma\omega)_i=\omega_{i+1}$.
The two outer branches give the affine horseshoe
$\Lambda=\chi^{-1}(C\times C)$, with coding
\begin{align}
 \pi(\omega)=\chi^{-1}\left(
   \sum_{i\geq0}\frac{\omega_i+1}{3^{i+1}},
   \sum_{i\geq0}\frac{\omega_{-(i+1)} +1}{3^{i+1}}
 \right).    \label{eq:pi}
\end{align}
Thus $F_0(\Lambda)=\Lambda$ and $F_0\circ\pi=\pi\circ\sigma$. Moreover, $\Lambda$ is a uniformly hyperbolic set for $F_0$. The map $\pi$ is a homeomorphism, and the derivative on the horseshoe is $\operatorname{diag}(3,1/3)$ in these coordinates. For $0<\theta<1$, use $
 d_\theta(\omega,\omega')=
 \theta^{\min\{|i|:\omega_i\ne\omega'_i\}}$ and $\|u\|_\theta=\|u\|_\infty+\operatorname{Lip}_{d_\theta}(u).$

Let $\nu$ be the prescribed ergodic invariant measure. The conjugacy $\pi$ transfers the statistical properties of $(\Sigma,\sigma,\nu)$ to $(\Lambda,F_0,\mu)$, where $\mu=\pi_*\nu$. We choose a $\nu$-generic point $\omega$ and set $x=\pi(\omega)$, so that the time averages along the $F_0$-orbit of $x$ converge to the integrals against $\mu$.

We then perturb $F_0$ away from $\Lambda$ to obtain a
smooth diffeomorphism $T$ and a nonempty open set $W$.
Since $T=F_0$ near $\Lambda$, we have $T^i(x)=F_0^i(x)$ for every $i\geq0$. The orbits of points in $W$ under $T$ follow the orbit $(T^i(x))_{i\in\mathbb N}$ most of the time. Hence, they have the same statistics as the orbit of $x$ in the sense that, for every continuous observable, their time averages converge uniformly on $W$ to its integral against $\mu$. Thus $\mu$ is physical.

Geometrically, this resembles the figure-eight example
described by Young \cite[Section 2.2, Figure~1]{Young2002}.
Here the prescribed statistical behaviour comes from
a generic orbit in a horseshoe.

\begin{proof}[Proof of Proposition \ref{prop:cv-all-measures}]
Since $\nu$ is ergodic, Birkhoff's ergodic theorem allows us to
choose $\omega=(\omega_j)_{j\in\mathbb Z}\in\Sigma$ in the basin of $\nu$ under $\sigma$.
We use a fixed Riemannian distance $d$ on $M$.
Until the transfer to $M$, all constructions take place in
the coordinates of \eqref{eq:short-affine}. The sequence $\omega\in\Sigma$ is fixed throughout the proof. 

\begin{step}[1]
For every integer $N_0\geq3$, there exist a strictly increasing
sequence of positive integers $(n_k)_{k\geq0}$, sequences
of points $((x_k,y_k))_{k\geq0}$ in $[0,1]^2$ and
$((v_k,z_k))_{k\geq0}$ in $S_2$, and a sequence of real
numbers $(\varepsilon_k)_{k\geq0}$, such that, for every $k\geq0$,
\begin{itemize}
\item $T_0^j(x_k,y_k)\in S_0\cup S_1$ for $0\leq j<n_k$.
\item $T_0^{n_k}(x_k,y_k)=(v_k,z_k)$.
\item $T_0(v_k,z_k+\varepsilon_k)=(x_{k+1},y_{k+1})$.
\item $|\varepsilon_k|\leq3^{-N_0-\lfloor\sqrt{k+2}\rfloor}$.
\end{itemize}
\end{step}

\begin{stepproof}[Proof of Step 1]
Fix an integer $N_0\geq3$ and set
\begin{equation}
\label{eq:cv-block-lengths}
 h_0=0,\quad h_{k+1}=N_0+\lfloor\sqrt{k+2}\rfloor,\quad
 n_k=k+1+h_{k+1},\quad t_0=0,\quad t_{k+1}=t_k+n_k+1.
\end{equation}
It follows from \eqref{eq:cv-block-lengths} that
$h_{k+1}\leq n_k$, $1\leq n_{k+1}-n_k\leq2$
and $h_k=O(\sqrt{k})$.

We call a finite word
$\omega^{(k)}=(\omega^{(k)}_0,\ldots,\omega^{(k)}_{n_k-1})
\in\{-1,1\}^{n_k}$ a block of length $n_k$.
We will choose $(x_k,y_k)$ so that, for $0\leq j<n_k$,
$$
T_0^j(x_k,y_k)\in
\begin{cases}
S_0,&\omega^{(k)}_j=-1\\
S_1,&\omega^{(k)}_j=1
\end{cases},\quad\text{and}\quad T_0^{n_k}(x_k,y_k)\in S_2.$$
Define the words by $\omega^{(0)} \in \{-1,1\}^{n_0}$ with $\omega^{(0)}_j:=\omega_j$ for $0\leq j<n_0$. Moreover, we recursively define
\begin{equation}
\label{eq:cv-block-words}
 \omega^{(k+1)}_j:=
 \begin{cases}
  -\omega^{(k)}_{n_k-1-j},&0\leq j<h_{k+1},\\
  \omega_{t_{k+1}+j},&h_{k+1}\leq j<n_{k+1}.
 \end{cases}
\end{equation}
Observe that $\omega^{(k+1)}$ is obtained by reversing the order and changing the signs of the last $h_{k+1}$ symbols of $\omega^{(k)}$, then completing the block with symbols from $\omega$.

For $s\in\{-1,1\}$, write $\phi_s(u):=(u+1+s)/3$.
The maps $\phi_{-1}$ and $\phi_1$ contract $[0,1]$ onto
its left and right thirds, respectively. By \eqref{eq:short-affine},
\begin{equation}
\label{eq:cv-branch-form}
T_0(x,y)
=
\begin{cases}
\bigl(\phi_{-1}^{-1}(x),\phi_{-1}(y)\bigr),
    &(x,y)\in S_0,\\
\bigl(\phi_1^{-1}(x),\phi_1(y)\bigr),
    &(x,y)\in S_1.
\end{cases}
\end{equation}

Set
\begin{equation}
\label{eq:cv-coordinate-parameters}
 v_k=\frac{1}{2}+\frac{1}{48(k+1)},\quad
 y_0=\frac{1}{2}\quad\text{and}\quad
 y_{k+1}=\frac{1}{2}+\gamma\left(v_k-\frac{1}{2}\right),
\end{equation}
and define
\begin{align}
 x_k=\phi_{\omega^{(k)}_0}\circ\cdots
       \circ\phi_{\omega^{(k)}_{n_k-1}}(v_k),\  z_k=\phi_{\omega^{(k)}_{n_k-1}}\circ\cdots
       \circ\phi_{\omega^{(k)}_0}(y_k) \text{ and }\varepsilon_k&=1-z_k-x_{k+1}.
\label{eq:cv-block-coordinates}
\end{align}
Since $\phi_s([0,1])\subset[0,1]$,
\eqref{eq:cv-coordinate-parameters} and
\eqref{eq:cv-block-coordinates} give
$$
 (x_k,y_k)\in[0,1]^2\quad\text{and}\quad
 (v_k,z_k)\in\left[\frac{1}{2},\frac{25}{48}\right]\times[0,1]\subset S_2.
$$
The maps $\phi_{-1}$ and $\phi_1$ send $[0,1]$ into
$[0,1/3]$ and $[2/3,1]$, respectively.

We claim that, for
$0\leq j\leq n_k$,
\begin{equation}
\label{eq:cv-block-iterates}
 T_0^j(x_k,y_k)=
 \left(
 \phi_{\omega^{(k)}_j}\circ\cdots
 \circ\phi_{\omega^{(k)}_{n_k-1}}(v_k),
 \phi_{\omega^{(k)}_{j-1}}\circ\cdots
 \circ\phi_{\omega^{(k)}_0}(y_k)
 \right),
\end{equation}
where an empty composition should be understood as the identity.
For $j=0$, \eqref{eq:cv-block-iterates} follows from
the definition of $x_k$ in \eqref{eq:cv-block-coordinates}.

Suppose \eqref{eq:cv-block-iterates} holds for some $j<n_k$.
Since $v_k,y_k\in[0,1]$, the first coordinate of $T_0^j(x_k,y_k)$ lies in
$$
 \phi_{\omega^{(k)}_j}([0,1])=
 \begin{cases}
 \left[0,\frac{1}{3}\right],&\omega^{(k)}_j=-1,\\
 \left[\frac{2}{3},1\right],&\omega^{(k)}_j=1,
 \end{cases}
$$
and its second coordinate lies in $[0,1]$.  Hence, $T_0^j(x_k,y_k)\in S_0$ if
$\omega^{(k)}_j=-1$, and $T_0^j(x_k,y_k)\in S_1$ if
$\omega^{(k)}_j=1$. Using \eqref{eq:cv-branch-form} and
$\phi_{\omega^{(k)}_j}^{-1}\circ\phi_{\omega^{(k)}_j}
=\operatorname{id}$, we obtain
$$
 T_0^{j+1}(x_k,y_k)=
 \left(
 \phi_{\omega^{(k)}_{j+1}}\circ\cdots
 \circ\phi_{\omega^{(k)}_{n_k-1}}(v_k),
 \phi_{\omega^{(k)}_j}\circ\cdots
 \circ\phi_{\omega^{(k)}_0}(y_k)
 \right).
$$
This proves \eqref{eq:cv-block-iterates} by induction, together with
$T_0^j(x_k,y_k)\in S_0\cup S_1$ for $0\leq j<n_k$.
Taking $j=n_k$ in \eqref{eq:cv-block-iterates} and using
the definition of $z_k$ in \eqref{eq:cv-block-coordinates} gives
$$
 T_0^{n_k}(x_k,y_k)=
 \left(
 v_k,
 \phi_{\omega^{(k)}_{n_k-1}}\circ\cdots
 \circ\phi_{\omega^{(k)}_0}(y_k)
 \right)
 =(v_k,z_k).
$$

From \eqref{eq:cv-block-coordinates},
$z_k+\varepsilon_k=1-x_{k+1}\in[0,1]$.
Since $v_k\in[1/2,25/48]$ from \eqref{eq:cv-coordinate-parameters},
the point $(v_k,z_k+\varepsilon_k)$ lies in $S_2$.
Using \eqref{eq:short-affine},
\eqref{eq:cv-coordinate-parameters} and
\eqref{eq:cv-block-coordinates}, we obtain
$$
 T_0(v_k,z_k+\varepsilon_k)
 =
 \left(
 1-z_k-\varepsilon_k,
 \frac{1}{2}+\gamma\left(v_k-\frac{1}{2}\right)
 \right)
 =(x_{k+1},y_{k+1}),
$$
which proves the second identity.

Finally, the identity $1-\phi_s(u)=\phi_{-s}(1-u)$,
together with \eqref{eq:cv-block-words} and
\eqref{eq:cv-block-coordinates}, implies
$$
 1-z_k,\ x_{k+1}\in
 \left(
 \phi_{\omega^{(k+1)}_0}\circ\cdots
 \circ\phi_{\omega^{(k+1)}_{h_{k+1}-1}}
 \right)([0,1]).
$$
This interval has length $3^{-h_{k+1}}$.
Since $\varepsilon_k=1-z_k-x_{k+1}$ by
\eqref{eq:cv-block-coordinates}, we obtain, using
\eqref{eq:cv-block-lengths},
\begin{equation}
\label{eq:short-correction}
 |\varepsilon_k|\leq3^{-h_{k+1}}
 =3^{-N_0-\lfloor\sqrt{k+2}\rfloor}.
\end{equation}
\end{stepproof}

\begin{step}[2]
For $N_0$ sufficiently large, there exists a smooth
diffeomorphism $P$ of $\mathbb R^2$, equal to the identity
outside $\operatorname{int}S_2$, such that, for every
$k\geq0$, $P(x,y)=(x,y+\varepsilon_k)$
on a neighbourhood of $(v_k,z_k)$.
\end{step}

\begin{stepproof}[Proof of Step 2]
Define the constants
\begin{equation}
\label{eq:cv-cutoff-scales}
 \rho_k=\frac{1}{100(k+2)^2}.
\end{equation}
Define the closed squares
$$
 Q_k=(v_k,z_k)+[-\rho_k,\rho_k]^2\quad\text{and}
 \quad Q_k'=(v_k,z_k)+[-\rho_k/2,\rho_k/2]^2.
$$
They are centred at $(v_k,z_k)$ and have sides
$2\rho_k$ and $\rho_k$, respectively.
Choose a smooth function $\beta:\mathbb R^2\to[0,1]$,
equal to one on $[-1/2,1/2]^2$ and supported in $(-1,1)^2$.
Define
\begin{equation}
\label{eq:cv-perturbation}
 f(z)=\sum_{k\geq0}\varepsilon_k
       \beta\left(\frac{z-(v_k,z_k)}{\rho_k}\right)\quad \text{and}
 \quad P(x,y)=(x,y+f(x,y)).
\end{equation}
For each $k$, the term
$\varepsilon_k\beta((z-(v_k,z_k))/\rho_k)$ in the sum above is zero
outside $Q_k$.
Since $\beta(x)=1$, for each $x\in[-1/2,1/2]^2$, it follows that
$$
 \varepsilon_k\beta\left(\frac{z-(v_k,z_k)}{\rho_k}\right)
 =\varepsilon_k
$$
for every $z\in Q_k'$.
From \eqref{eq:cv-coordinate-parameters} and
\eqref{eq:cv-cutoff-scales}, the squares $Q_k$ are pairwise disjoint. Indeed, if $j<k$, then $v_k\leq v_{j+1}$, so
$$
 v_j-v_k\geq v_j-v_{j+1}
 =\frac{1}{48(j+1)(j+2)}.
$$
Also, $\rho_k\leq\rho_j$, giving
$$
 \rho_j+\rho_k\leq2\rho_j
 =\frac{1}{50(j+2)^2}
 <\frac{1}{48(j+1)(j+2)}.
$$
Comparing these estimates gives $v_k+\rho_k<v_j-\rho_j$, and therefore $Q_k$ and $Q_j$ are disjoint.
The definition of $v_k$ gives $1/2<v_k\leq25/48$.
Also, $z_k\in[0,1]$, because $y_k\in[0,1]$ and
each of the maps $\phi_{-1}$ and $\phi_1$ maps $[0,1]$ into itself.
Since $\rho_k\leq1/400$, it follows that  
$$Q_k\subset 
 \left[\frac{1}{2}-\frac{1}{400},
       \frac{25}{48}+\frac{1}{400}\right]
 \times
 \left[-\frac{1}{400},1+\frac{1}{400}\right]\ \text{for every }k\geq 0.
$$
Moreover, $\overline{Q_k}\subset \operatorname{int}S_2$. Thus $\bigcup_{k\geq0}Q_k$ is compactly contained in
$\operatorname{int}S_2$.

For each integer $m\geq0$, we differentiate the function
$z\mapsto\varepsilon_k\beta((z-(v_k,z_k))/\rho_k)$,
$m$ times with respect to $z=(x,y)$, keeping $k$ fixed.
The chain rule gives
$$
 D^m\left[
 \varepsilon_k\beta\left(\frac{z-(v_k,z_k)}{\rho_k}\right)
 \right]
 =
 \frac{\varepsilon_k}{\rho_k^{m}}
  D^m\beta\left(\frac{z-(v_k,z_k)}{\rho_k}\right).
$$
Thus, $$\left\|D^m\left[
 \varepsilon_k\beta\left(\frac{\cdot-(v_k,z_k)}{\rho_k}\right)
 \right]\right\|_\infty\leq  C_m|\varepsilon_k|\rho_k^{-m},\ \text{where }C_m=\|D^m\beta\|_\infty$$ 
 is finite and depends only on $\beta$ and $m$.
Using \eqref{eq:short-correction} and \eqref{eq:cv-cutoff-scales},
we obtain
\begin{equation}
\label{eq:cv-derivative-bound}
 C_m\sum_{k\geq0}|\varepsilon_k|\rho_k^{-m}
 \leq C_m100^m3^{-N_0}
 \sum_{k\geq0}(k+2)^{2m}3^{-\lfloor\sqrt{k+2}\rfloor}<\infty.
\end{equation}
Therefore the series defining $f$, and the series obtained
by differentiating its summands any number of times,
converge uniformly. Hence $f$ is smooth.
Taking $m=1$ in \eqref{eq:cv-derivative-bound} allows us
to fix $N_0$ sufficiently large that $\|Df\|_\infty<1/2$.
For each fixed $x$, the map $y\mapsto y+f(x,y)$ is then
strictly increasing and equals the identity for large
$|y|$. Thus $P$ is a smooth diffeomorphism.

For  $z\in Q_k'$, we have $
 \varepsilon_k\beta\left(( z-(v_k,z_k))/\rho_k\right)
 =\varepsilon_k. $ For $j\ne k$, the squares $Q_j$ and $Q_k$ are disjoint.
Since $Q_k'\subset Q_k$, every $z\in Q_k'$ lies outside
$Q_j$, implying $
 \varepsilon_j\beta\left((z-(v_j,z_j))/{\rho_j}\right)
 =0.$
Thus, for every $z\in Q_k'$,
$$
 \begin{aligned}
 f(z)
 &=\sum_{j\geq0}\varepsilon_j
   \beta\left(\frac{z-(v_j,z_j)}{\rho_j}\right)=\varepsilon_k
   \beta\left(\frac{z-(v_k,z_k)}{\rho_k}\right)
   +\sum_{\substack{j\geq0\\j\ne k}}\varepsilon_j
   \beta\left(\frac{z-(v_j,z_j)}{\rho_j}\right)=\varepsilon_k.
 \end{aligned}
$$
This implies
\begin{equation}
\label{eq:cv-local-translation}
 P(z)=z+(0,\varepsilon_k)
 \quad\text{for }z\in Q_k'.
\end{equation}
This proves the stated local translation property.
\end{stepproof}

\begin{step}[3]
Choose $N_0$ and $P$ as in Step~2 and let $G=T_0\circ P$.
There exists a sequence of closed rectangles $(R_k)_{k\geq0}$,
centred at $(x_k,y_k)$ and with nonempty interiors, such that
\begin{equation}
\label{eq:cv-rectangle-chain}
 G^{n_k+1}(R_k)=R_{k+1}
 \quad\text{for every }k\geq0.
\end{equation}
Moreover, for $0\leq j<n_k$, one has
$G^j(R_k)\subset S_0$ if $\omega^{(k)}_j=-1$, and
$G^j(R_k)\subset S_1$ if $\omega^{(k)}_j=1$.
\end{step}

\begin{stepproof}[Proof of Step 3]
Set $\delta=10^{-3}$ and define positive numbers
\begin{equation}
\label{eq:cv-rectangle-sizes}
 p_0=\delta3^{-n_0},\quad q_0=\frac{\delta}{45},\quad
 p_{k+1}=3^{-n_k}q_k\quad\text{and}\quad q_{k+1}=\gamma3^{n_k}p_k.
\end{equation}
Let $R_k := (x_k,y_k)+ [-p_k,p_k]\times[-q_k,q_k]$. 

Since $n_{k+1}-n_k\leq2$ by
\eqref{eq:cv-block-lengths} and $\gamma=1/225$,
\eqref{eq:cv-rectangle-sizes} gives
$$
 p_{k+1}=3^{-n_k}q_k
 \leq9\cdot3^{-n_{k+1}}q_k
 \quad\text{and}\quad
 q_{k+1}=\frac{3^{n_k}p_k}{225}.
$$
Starting from $p_0=\delta3^{-n_0}$ and $q_0=\delta/45$,
induction gives
\begin{equation}
\label{eq:cv-rectangle-decay}
 p_k\leq\delta3^{-n_k}5^{-k}
 \quad\text{and}\quad
 q_k\leq\frac{\delta}{45}5^{-k}.
\end{equation}

We prove the claimed inclusions by induction on $j$.
Assume that they hold for $0\leq i<j$.
Then $G^j(R_k)=T_0^j(R_k)$ is a rectangle centred at
$T_0^j(x_k,y_k)$, with horizontal side $2\cdot3^jp_k$.
By \eqref{eq:cv-block-iterates},
$$
 \bigl(T_0^j(x_k,y_k)\bigr)_1
 =\phi_{\omega^{(k)}_j}\circ\cdots\circ
 \phi_{\omega^{(k)}_{n_k-1}}(v_k),
$$
where $(x,y)_1:=x.$ Since each $\phi_s$ is increasing with derivative $1/3$,
and $\min\{v_k,1-v_k\}\geq23/48$, if
$\omega^{(k)}_j=-1$, then
$$
 \frac{23}{48}3^{-(n_k-j)}
 \leq\bigl(T_0^j(x_k,y_k)\bigr)_1
 \leq\frac{1}{3}-\frac{23}{48}3^{-(n_k-j)}.
$$
If $\omega^{(k)}_j=1$, then
$$
 \frac{2}{3}+\frac{23}{48}3^{-(n_k-j)}
 \leq\bigl(T_0^j(x_k,y_k)\bigr)_1
 \leq1-\frac{23}{48}3^{-(n_k-j)}.
$$
From \eqref{eq:cv-rectangle-decay},
$$
 2\cdot3^jp_k
 \leq2\delta5^{-k}3^{-(n_k-j)}
 <\frac{23}{48}3^{-(n_k-j)}.
$$
Hence the horizontal projection of $G^j(R_k)$ lies
in $[0,1/3]$ if $\omega^{(k)}_j=-1$, and in $[2/3,1]$
if $\omega^{(k)}_j=1$.

For the vertical coordinates,
\eqref{eq:cv-coordinate-parameters} and
\eqref{eq:cv-rectangle-decay} give
$$
 [y_k-q_k,y_k+q_k]
 \subset
 \left[\frac{1}{2}-\frac{\delta}{45},
       \frac{1}{2}+\frac{\gamma}{48}+\frac{\delta}{45}\right]
 \subset[0,1].
$$
Moreover,
$$
 \phi_{-1}([0,1])=\left[0,\frac{1}{3}\right]
 \quad\text{and}\quad
 \phi_1([0,1])=\left[\frac{2}{3},1\right].
$$
Thus the vertical projection of $G^j(R_k)$ remains in $[0,1]$.
Therefore, $G^j(R_k)\subset S_0$ if
$\omega^{(k)}_j=-1$, and $G^j(R_k)\subset S_1$ if
$\omega^{(k)}_j=1$, completing the induction.

The inclusions proved above show that all points of $R_k$
follow the same sequence of branches in $S_0$ and $S_1$
for the first $n_k$ iterates. Hence
$G^{n_k}(R_k)=T_0^{n_k}(R_k)$.
Taking $j=n_k$ in \eqref{eq:cv-block-iterates} gives
$$
 G^{n_k}(x_k,y_k)=T_0^{n_k}(x_k,y_k)=(v_k,z_k).
$$
On both $S_0$ and $S_1$, the affine formula for $G$ has
linear part $\operatorname{diag}(3,1/3)$.
Thus, for $|u|\leq p_k$ and $|w|\leq q_k$,
$$
 \begin{aligned}
 G^{n_k}(x_k+u,y_k+w)
 &=G^{n_k}(x_k,y_k)+(3^{n_k}u,3^{-n_k}w)=(v_k+3^{n_k}u,z_k+3^{-n_k}w).
 \end{aligned}
$$
Since $R_k=(x_k,y_k)+[-p_k,p_k]\times[-q_k,q_k]$,
letting $u$ and $w$ vary over these intervals gives
$$
 G^{n_k}(R_k)
 =(v_k,z_k)
 +[-3^{n_k}p_k,3^{n_k}p_k]
 \times[-3^{-n_k}q_k,3^{-n_k}q_k].
$$
By \eqref{eq:cv-rectangle-decay}, its sides satisfy
$$
 2\cdot3^{n_k}p_k\leq2\delta5^{-k}
 \quad\text{and}\quad
 2\cdot3^{-n_k}q_k
 \leq\frac{2\delta}{45}3^{-n_k}5^{-k}
 \leq2\delta5^{-k}.
$$
Since $\delta=10^{-3}$ and $(k+2)^2\leq5^{k+1}$,
$$
 2\cdot \delta\cdot 5^{-k}
 =\frac{1}{500\cdot5^k}
 \leq\frac{1}{100(k+2)^2}
 =\rho_k.
$$
Thus $ G^{n_k}(R_k)
 \subset(v_k,z_k)+[-\rho_k/2,\rho_k/2]^2
 =Q_k'.$ From \eqref{eq:cv-local-translation}, $P$ translates this
rectangle by $(0,\varepsilon_k)$. Its centre is therefore $
 (v_k,z_k+\varepsilon_k)=(v_k,1-x_{k+1}),$ where we used $\varepsilon_k=1-z_k-x_{k+1}$.
Thus
$$
 P\bigl(G^{n_k}(R_k)\bigr)
 =(v_k,1-x_{k+1})
 +[-3^{n_k}p_k,3^{n_k}p_k]
 \times[-3^{-n_k}q_k,3^{-n_k}q_k].
$$
Since $(v_k,1-x_{k+1})\in[1/2,25/48]\times[0,1],$
and both sides of this rectangle are at most $2\delta$,
we obtain
$$
 P\bigl(G^{n_k}(R_k)\bigr)
 \subset
 \left[\frac{1}{2}-\delta,\frac{25}{48}+\delta\right]
 \times[-\delta,1+\delta]
 \subset S_2.
$$
We can therefore apply the formula for $T_0$ on $S_2$,
which sends $(v_k,1-x_{k+1})$, the centre of $P(G^{n_k}(R_k))$, to
$$
 T_0(v_k,1-x_{k+1})
 =\left(x_{k+1},
        \frac{1}{2}+\gamma\left(v_k-\frac{1}{2}\right)\right)
 =(x_{k+1},y_{k+1}).
$$
The first coordinate of $T_0(x,y)$ is $1-y$, and the
second coordinate is $1/2+\gamma(x-1/2)$. Thus
$$
 \begin{aligned}
 G^{n_k+1}(R_k)
 &=T_0\bigl(P(G^{n_k}(R_k))\bigr)=(x_{k+1},y_{k+1})
   +[-3^{-n_k}q_k,3^{-n_k}q_k]
   \times[-\gamma3^{n_k}p_k,\gamma3^{n_k}p_k]\\
 &=(x_{k+1},y_{k+1})
   +[-p_{k+1},p_{k+1}]\times[-q_{k+1},q_{k+1}]=R_{k+1},
 \end{aligned}
$$
where we used \eqref{eq:cv-rectangle-sizes}.
Hence \eqref{eq:cv-rectangle-chain} holds.
\end{stepproof}

\begin{step}[4]
There exists a $\mathcal C^\infty $ diffeomorphism $T:M\to M$, equal to
$F_0$ near $\Lambda$, and a nonempty open set $W\subset M$
such that
\begin{equation}
\label{eq:cv-orbit-following}
 \lim_{N\to\infty}\frac{1}{N}\sum_{j=0}^{N-1}
 \sup_{y\in W}d(T^jy,\pi(\sigma^j\omega))=0.
\end{equation}
\end{step}

\begin{stepproof}[Proof of Step 4]
Define $T$ on $M$ by
$$
 T(x)=
 \begin{cases}
  \chi^{-1}\circ G\circ\chi(x),&x\in V,\\
  x,&x\in M\setminus V.
 \end{cases}
$$ Since $G$ equals the identity outside
a compact subset of $\Omega$, this defines a $\mathcal C^\infty$ diffeomorphism $T:M\to M$.
The equality $G=T_0$ near $C\times C$ gives $T=F_0$
near $\Lambda$. Define
$$
 W:=\chi^{-1}(\operatorname{int}R_0),
$$
which is nonempty and open.

Fix an integer $q\geq1$ and let $j=t_k+\ell$, where $
 h_k+q\leq\ell<n_k-q.$
For every integer $s$ with $-q\leq s\leq q$, we have $ h_k\leq\ell+s<n_k.$ Thus \eqref{eq:cv-block-words} gives
$$
 \omega^{(k)}_{\ell+s}
 =\omega_{t_k+\ell+s}
 =\omega_{j+s}.
$$
Now let $x\in W$. Since $\chi(x)\in R_0$,
\eqref{eq:cv-rectangle-chain} implies $
 \chi(T^{t_k}x)=G^{t_k}(\chi(x))\in R_k$ and therefore $\chi(T^{j+s}x)
 =G^{\ell+s}\bigl(\chi(T^{t_k}x)\bigr)
 \in G^{\ell+s}(R_k).$ From Step~3,
$$
 \chi(T^{j+s}x)\in S_0\ \text{if }\omega^{(k)}_{\ell+s}=\omega_{j+s}=-1\quad\text{and}\quad
 \chi(T^{j+s}x)\in S_1\ \text{if }\omega^{(k)}_{\ell+s}=\omega_{j+s}=1.
$$

For $-q\leq s\leq q$, write
$$
 \chi(T^{j+s}x)=(u_s,v_s)
 \quad\text{and}\quad
 \chi(\pi(\sigma^{j+s}\omega))=(\bar u_s,\bar v_s).
$$
Since $(u_s,v_s)\in G^{\ell+s}(R_k)\subset[0,1]^2$ by
Step~3, we have $u_s,v_s\in[0,1]$.
Also, $\bar u_s,\bar v_s\in[0,1]$, because
$\chi(\Lambda)=C\times C\subset[0,1]^2$.
By \eqref{eq:pi}, $(\bar u_s,\bar v_s)$ belongs to
$S_0$ if $\omega_{j+s}=-1$, and to $S_1$ if
$\omega_{j+s}=1$.
Since $G=T_0$ on $S_0\cup S_1$, subtracting the
coordinate formulas in \eqref{eq:short-affine} for
$G(u_s,v_s)$ and $G(\bar u_s,\bar v_s)$ gives,
for $-q\leq s<q$,
$$
 u_{s+1}-\bar u_{s+1}=3(u_s-\bar u_s)
 \quad\text{and}\quad
 v_{s+1}-\bar v_{s+1}=\frac{v_s-\bar v_s}{3}.
$$
Iterating these identities gives
$$
 \begin{aligned}
 |u_0-\bar u_0|
 &=3^{-q}|u_q-\bar u_q|\leq3^{-q}\quad\text{and}\quad |v_0-\bar v_0|=3^{-q}|v_{-q}-\bar v_{-q}|\leq3^{-q}.
 \end{aligned}
$$
Hence $\|\chi(T^jx)-\chi(\pi(\sigma^j\omega))\|_{\mathbb R^2}
 \leq\sqrt{2}\,3^{-q}.$
Since $\chi^{-1}$ is Lipschitz on $[0,1]^2$ and $x\in W$
was arbitrary, we obtain
\begin{equation}
\label{eq:cv-cylinder-distance}
 \sup_{y\in W}d(T^jy,\pi(\sigma^j\omega))\leq C3^{-q},
\end{equation}
where $C$ is independent of $k$, $q$ and $j$.
Fix $q\geq1$ and, for each $N\geq1$, let $E_N$ be the
set of times $j=t_k+\ell\in\{0,\ldots,N-1\}$, with
$0\leq\ell\leq n_k$, for which $\ell<h_k+q$ or
$\ell\geq n_k-q$.
Thus \eqref{eq:cv-cylinder-distance} holds for every
$j\in\{0,\ldots,N-1\}\setminus E_N$.
We now show that $\#E_N/N\to0$ and deduce
\eqref{eq:cv-orbit-following}.

Each interval of times $t_k,\ldots,t_k+n_k$ contains
at most $h_k+q$ excluded times at its beginning,
$q$ at its end, and the return time $t_k+n_k$.
Hence it contains at most $h_k+2q+1$ excluded times.
Choose $K$ such that $t_K\leq N<t_{K+1}$.
Counting the completed intervals $0,\ldots,K-1$
and bounding the contribution from the unfinished interval
by $n_K+1$ gives
$$
 \#E_N
 \leq\sum_{k=0}^{K-1}(h_k+2q+1)+(n_K+1)
 \leq K(N_0+\sqrt{K}+2q+1)+(n_K+1).
$$
By \eqref{eq:cv-block-lengths},
$$
 t_K=\sum_{k=0}^{K-1}(n_k+1)\geq\frac{K(K+1)}{2}
 \quad\text{and}\quad
 n_K+1=K+2+N_0+\lfloor\sqrt{K+2}\rfloor.
$$
Since $N\geq t_K$, for $K\geq1$ we obtain
$$
 \frac{\#E_N}{N}
 \leq
 \frac{2(N_0+\sqrt{K}+2q+1)}{K+1}
 +\frac{2(n_K+1)}{K(K+1)}
 \longrightarrow0
 \quad\text{as }N\to\infty,
$$
with $q$ fixed. For $j\notin E_N$, \eqref{eq:cv-cylinder-distance} bounds
the distance by $C3^{-q}$. For $j\in E_N$, the distance
is at most $\operatorname{diam}(M)$. Therefore
$$
 \begin{aligned}
 \frac{1}{N}\sum_{j=0}^{N-1}
   \sup_{y\in W}d(T^jy,\pi(\sigma^j\omega))&=
 \frac{1}{N}\sum_{\substack{0\leq j<N\\j\notin E_N}}
   \sup_{y\in W}d(T^jy,\pi(\sigma^j\omega))
 +\frac{1}{N}\sum_{j\in E_N}
   \sup_{y\in W}d(T^jy,\pi(\sigma^j\omega))\\
 &\quad\leq
 C3^{-q}\frac{N-\#E_N}{N}
 +\operatorname{diam}(M)\frac{\#E_N}{N}.
 \end{aligned}
$$
Taking the limit superior as $N\to\infty$ gives
$$
 \limsup_{N\to\infty}\frac{1}{N}\sum_{j=0}^{N-1}
 \sup_{y\in W}d(T^jy,\pi(\sigma^j\omega))
 \leq C3^{-q}.
$$
Letting $q\to\infty$ proves \eqref{eq:cv-orbit-following}.
The final averaging argument follows \cite[proof of Theorem~A]{KirikiNakanoSoma2024}.
 \end{stepproof}

\begin{step}[5]
The measure $\mu=\pi_*\nu$ is $T$-invariant and, for every
$v\in\mathcal C^0(M)$,
$$
 \lim_{N\to\infty}\sup_{y\in W}
 \left|\frac{1}{N}\sum_{j=0}^{N-1}v(T^jy)-\int v\,d\mu\right|=0.
$$
In particular, $\mu$ is physical and $W$ is contained
in its basin. Moreover, $T(\Lambda)=\Lambda$,
$T\circ\pi=\pi\circ\sigma$, and there exists $C_1<\infty$
such that $\|v\circ\pi\|_{1/3}\leq C_1\|v\|_{\mathcal C^1(M)}$ for every $v\in\mathcal C^1(M).$
\end{step}

\begin{stepproof}[Proof of Step 5]
By Step~4, $T=F_0$ near $\Lambda$. Hence
$T(\Lambda)=\Lambda$ and $T\circ\pi=\pi\circ\sigma$.
Since $\nu$ is $\sigma$-invariant, $\mu=\pi_*\nu$ is
$T$-invariant.

Since $\omega$ lies in the basin of $\nu$, for every
$v\in\mathcal C^0(M)$,
\begin{equation}
\label{eq:cv-generic-averages}
 \frac{1}{N}\sum_{j=0}^{N-1}v(\pi(\sigma^j\omega))
 \xrightarrow[]{N\to\infty} \int v\,d\mu.
\end{equation}
For Lipschitz $v$, \eqref{eq:cv-orbit-following} transfers
the convergence in \eqref{eq:cv-generic-averages}
uniformly to $W$. Uniform approximation by Lipschitz
functions gives the same conclusion for every continuous
$v$, proving \eqref{eq:cv-uniform-physical}.
The set $W$ is nonempty and open, hence has positive
smooth volume. Thus $\mu$ is physical.

Finally, the coding of $\Lambda$ and the affine formulas
in \eqref{eq:short-affine} give
\(
 d\bigl(\pi(\omega),\pi(\omega')\bigr)
 \leq C d_{1/3}(\omega,\omega')\)
for all \(\omega,\omega'\in\Sigma\).
Composing with $v\in\mathcal C^1(M)$ gives
the stated norm bound for a constant $C_1$ independent of $v$.
\end{stepproof}
\end{proof}

\end{document}